%% file: main.tex
\documentclass[article]{amsart}
\pdfoutput=1
\usepackage{main}
\usepackage{standalone}

\title{Symmetries of colored Khovanov--Rozansky homology}
\author{Alexis Guérin}
\address{Université Clermont Auvergne, LMBP, Campus des Cézeaux, 3 place Vasarely, TSA 60026, CS 60026, 63178 Aubière Cedex, France}
\email{\href{mailto:alexis.guerin@uca.fr}{alexis.guerin@uca.fr}}

\author{Felix Roz}
\address{Department of Mathematics Columbia University Room 509, MC 4406 2990 Broadway New York, NY 10027}
\email{\href{mailto:fr2588@columbia.edu}{fr2588@columbia.edu}}
\date{}

\begin{document}

\begin{abstract}

We construct an action of the positive half of the Witt algebra on colored $\mathfrak{gl}_N$-Khovanov--Rozansky homology and prove that this action is compatible with the functoriality of colored link homology. To establish this result, we use Bar-Natan's framework of canopolies to formulate and prove the relevant statements at the level of tangles.

\end{abstract}

\subjclass[2020]{57K18, 57K16, 17B10, 18N25, 18G35}

\pagenumbering{arabic}

\maketitle

\setcounter{tocdepth}{2}
\tableofcontents

\addtocontents{toc}{\protect\setcounter{tocdepth}{1}}
\section{Introduction}

\input{Z_Introduction}

\subsection{Notations}

\input{Z_Notations}

\addtocontents{toc}{\protect\setcounter{tocdepth}{2}}

\section{The Witt action on foams}
\label{sec_Witt&foams}

\subsection{Webs and foams}
\label{sec_web_foam}

\input{Z_Web_and_Foams}

\subsection{The Witt algebra}

\input{Z_Witt}

\subsection{The action on foams}
\input{Z_action}

\section{Locality}
\label{sec_can}

\subsection{The canopolis}
\input{Z_canopoly}

\subsection{Between topology and algebra}

\input{Z_BtwTA}

\section{Red dots}
\label{sec_reddots}

\subsection{Twisted actions}

\input{Z_twist_summary}

\subsection{The relative homotopy category}
\label{RHC}

\input{Z_RHC_2}

% moved to a summarized version 
% \section{Twist the action}
% \input{Z_more_equi}

% \subsection{Maurer--Cartan elements}
% \input{Z_Maurer-Cartan}

% \subsection{Maurer--Cartan elements in $\algtop$}
% \input{Z_twists_on_webs}

% \subsection{Red dots}
% \input{Z_red_dot}

% \subsection{Useful morphisms}
% \input{Z_useful_morph}

\section{Colored Khovanov--Rozansky homology}
\label{secConstr}

\subsection{The complex}

\input{Z_complex}

\subsection{cKR-simplicity}

\input{Z_simplicity}

\subsection{The blist hypothesis}
\label{sec_blist}

\input{Z_blist}

\subsection{Forkslide moves}

\input{Z_forkslide}

\subsection{Forkslide isomorphisms}
\input{Z_forkiso}

\section{Invariance}
\label{sec_invar}

\subsection{The uncolored case}
\label{sec_uncol}

\input{Z_uncolored}

\subsection{Reidemeister 2 and 3}
\input{Z_R23}

\subsection{Reidemeister 1}

\input{Z_R1}

\section{Functoriality}
\label{sec_func}
This section is devoted to the functoriality of the construction introduced above. We first establish the sliding lemma stated in Proposition~\ref{slid_rd}, together with the various intermediate results needed for its proof. The functoriality statement is then formulated in Subsection~\ref{funct}, using this proposition.

\subsection{Sliding lemmas}
\label{sliding_sec}

\input{Z_sliding_g}

\subsection{Extended canopoly}
\label{ext_can}

\input{Z_ext_can}

\subsection{Witt and extended foams}
\label{ext_f_w}
\input{Z_witt_ext}

\subsection{Orange dot sliding}
\label{slid_or}

\input{Z_sliding_orange}

\subsection{Functoriality}
\label{funct}

\input{Z_functoriality}

\newpage

\newpage 
\bibliographystyle{alphaurl}
\bibliography{biblio}

\end{document}

%% file: Z_Introduction.tex
\subsection{Goal of the paper}

Khovanov's categorification of the Jones polynomial~\cite{Kho0} led to the development of different homological theories. Khovanov and Rozansky introduced a link homology theory that categorifies the $\mathfrak{gl}_N$ Reshetikhin--Turaev invariant \cite{KR08a,KR08b}. The colored version of this homology was introduced by Yonezawa \cite{Yo11} and Wu \cite{Wu07}, where the color refers to the $k$-th quantum exterior power of the standard $U_q(\mathfrak{gl}_N)$-representation assigned to each component of the link. The construction of Lie algebra actions on this homology was initiated by a conjecture of Gorsky, Oblomkov, and Rasmussen~\cite{GOR13}, which relates colored link homologies to characters of representations of affine Lie algebras. In this paper, we establish the following theorem. 
\begin{thm}
    The colored $\mathfrak{gl}_N$-Khovanov--Rozansky homology is a representation of the positive part of the Witt algebra and this structure is invariant under links isotopies.
\end{thm}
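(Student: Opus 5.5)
The plan follows the structure laid out in the table of contents.

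\emph{Step 1: action on foams.} First I will define the action of $L_n$ for $n\geq -1$ (or $n \geq 0$, the positive half) on the state space of $\mathfrak{gl}_N$ foams. On a facet of thickness $k$, decorated by symmetric polynomials in $x_1,\dots,x_k$, I let $L_n$ act by the derivation $\sum_i x_i^{n+1}\partial_{x_i}$. I then check that these operators respect the foam relations (seams, digon/square removal, evaluation formula), so that they define derivations of the foam category. The relation $[L_m,L_n]=(n-m)L_{m+n}$ holds because it holds for the polynomial vector fields.

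\emph{Step 2: extension to tangles.} To pass from closed webs to tangles, I will organize everything in a canopolis. The action is then a family of derivations on the planar algebra of foams, compatible with horizontal and vertical composition, and hence local.

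\emph{Step 3: colored complex.} The colored Khovanov--Rozansky complex of a crossing is built from foams such as unzip and zip maps between webs. These differentials are generally \emph{not} Witt-equivariant: $L_n$ applied to a zip foam yields extra dot terms. I will correct this by twisting. For each crossing I look for a Maurer--Cartan-type correction (red dots), that is, elements $\theta_n$ such that $L_n+\theta_n$ commutes with the differential. I then work in the relative homotopy category, where morphisms are chain maps intertwining the twisted actions up to homotopies that are themselves compatible. Cabling by forks and forkslide isomorphisms reduce the colored crossing to uncolored ones, where the twist can be written explicitly.

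\emph{Step 4: invariance and functoriality.} Invariance means that the Reidemeister homotopy equivalences are equivariant in the relative homotopy category. I first treat the uncolored case by direct computation, then reduce colored R2 and R3 to it via forkslides and cKR-simplicity (the relevant hom-spaces are one-dimensional in the appropriate degree, so equivariance up to scalar is forced). R1 needs a separate check of the twist on the curl. For functoriality I prove sliding lemmas showing that red dots and orange dots slide through crossings and cobordism generators in the extended canopolis. This makes the movie-move maps commute with the twisted $L_n$.

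\emph{Main obstacle.} I expect the hardest part to be constructing the twists so that they are simultaneously compatible with the differential, with composition in the canopolis, and with the Reidemeister maps, in particular the sliding of red dots past crossings. I would first attempt to define the twist locally at each strand endpoint and check the sliding on thin crossings.
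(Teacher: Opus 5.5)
Your outline follows the paper's architecture: foam action, canopoly, red-dot twists, a relative homotopy category, and reduction of colored moves by blisting and cKR-simplicity. However, two of the specific technical choices you commit to would fail, and the invariance argument is missing a necessary ingredient.

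\emph{Step~1 does not work as stated, and Step~3 silently relies on what it omits.} If $L_n$ acts only through the derivation $\sum_i x_i^{n+1}\partial_{x_i}$ on decorations, it does not descend to $\catf$. Closed foams are identified with their Robert--Wagner evaluations, and that derivation ignores the denominators of the evaluation formula. For $N=2$, the thickness-one sphere decorated by $x$ evaluates to $1$, so $L_0$ must kill it. Your rule instead returns the same sphere decorated by $\pm x$, which evaluates to $\pm1$. The action the paper uses, from \cite{QRSW24}, carries extra terms:
\begin{itemize}
\item on cups and caps, $\tfrac12\sum_{k+\ell=n}p_k\widehat p_\ell$;
\item on saddles, $-\tfrac12\sum_{k+\ell=n}$ decorated by $p_k$ and $\widehat p_\ell$;
\item on zips, unzips and digon cups and caps, terms in $\lambda(n+1)$, $s$ and $\bar s$.
\end{itemize}
With the paper's convention $L_0x=-x$, the sphere above gives $-1+\tfrac12+\tfrac12=0$, the two halves coming from the cup and the cap. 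This also shows your Step~3 is inconsistent with your Step~1. Under your definition an undecorated zip is killed by $L_n$, so the differentials would already be equivariant and no red dots would be needed. The non-equivariance that red dots correct comes precisely from the omitted $\lambda$- and $s$-terms. Accordingly, the twist must be specified on each colored web $C_i^\pm$ itself. The paper writes explicit hollow and triangular labels in terms of $s$, $\lambda$ and the thicknesses, and sets $\lambda=\mu$ so that this is possible. Blisting is used only afterwards, to transport isomorphisms, not to produce the twist.

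\emph{Second, the category you describe is the wrong one.} Equivariant chain maps modulo equivariant homotopies is just $\mathcal{C}(\ralgtop)$. There the forkslide, forktwist and Reidemeister comparisons are generally not isomorphisms, because the available contractions and homotopy inverses are not equivariant. The missing idea is the Verdier quotient of $\mathcal{C}(\ralgtop)$ by the kernel of the forgetful functor. In it, an equivariant map becomes invertible as soon as its cone is contractible after forgetting the action. Every invariance proof in the paper goes through Corollary~\ref{lemme_RHC}: equivariant short exact sequences that split after forgetting and have a contractible end term. Such isomorphisms still induce Witt-equivariant isomorphisms on homology, which is all the theorem needs. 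Your use of cKR-simplicity to de-blist is the paper's blist hypothesis and is fine.

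\emph{Finally, colored R1 needs the sliding lemma, not just a check on the curl.} R1 does not return the bare strand but a shifted strand carrying hollow, solid and triangular red dots with labels $\pm\tfrac12$. So the Witt structure is an isotopy invariant only once compensating red dots are carried on each component. That requires sliding red dots through arbitrary colored crossings (Proposition~\ref{slid_rd}), so the sliding lemma is part of invariance, not merely of functoriality. It is also where the extended canopoly and the orange dots are genuinely needed. After blisting, Lemma~\ref{decomp_hollow} splits a hollow dot on an $(a+b)$-edge into hollow dots on the $a$- and $b$-edges plus a cross term $-2\sum_k p_k^{(a)}p_{n-k}^{(b)}$. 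After the forkslide, this term couples two separated strands. Its passage through a crossing is proved with an auxiliary variable $z$, via $\mathrm{Coker}(x-z)\cong\mathrm{Cone}(x-z)$ in the relative category.
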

Moreover this construction gives a functor from a category of colored links with additional decorations called red dots to the category of Witt-modules. 
\[\left \{
\begin{array}{c}
    \text{Colored links in $\R^3$ with red dots}   \\
    \text{Cobordisms}  
\end{array}
\right \}
\longrightarrow
\left \{
\begin{array}{c}
    \text{Witt-modules}\\
    \text{Witt equivariant maps}
\end{array} 
\right \}
\]

\subsection{Webs and foams}

Colored Khovanov--Rozansky homology is a categorification of the quantum $\mathfrak{gl}_N$ Reshetikhin--Turaev link invariant colored by exterior powers. Since its original construction in terms of matrix factorizations \cite{KR08a,KR08b}, Khovanov--Rozansky homology has been studied from several complementary perspectives, including higher representation theory, algebraic geometry, and symplectic geometry \cite{SS06,Man07,CK08,CK09}. In particular, it can be formulated via a complex of webs and foams, making the local structure of the theory and its functorial properties manifest \cite{ETW17}. 

The foam formulation of link homology was first introduced by Khovanov for $\mathfrak{sl}_3$ homology \cite{Kho04}. This construction was subsequently extended to higher-rank link homologies by several authors \cite{KR07,KR08a,KR08b,MSV09,QR16}. The construction used in this paper is based on the Robert--Wagner foam evaluation \cite{RW20a}
which categorifies MOY evaluation \cite{MOY98},
and uses the universal construction of \cite{BHMV92}.
 
\subsection{Locality via canopolies}

The foam formulation is closely related to the cobordism-based approach to Khovanov homology developed by Bar-Natan \cite{Bar05} and can be used to provide a local, tangle-based framework for studying link homology.
A convenient formalism for local considerations is given by the language of canopolies, which categorify Jones's notion of planar algebra \cite{Jon21}. This framework was used in the context of $\mathfrak{su}_3$ homology by Morrison and Nieh \cite{MN08}, for colored Khovanov homology by Carmen \cite{Ca14} and for odd Khovanov by Ebert and Schelstraete \cite{ES25}.
It was subsequently adapted to the matrix factorization formulation of Khovanov--Rozansky homology by Webster \cite{W07}.
In the setting of colored Khovanov--Rozansky homology, Ehrig, Tubbenhauer, and Wedrich developed the corresponding tangle and foam version of this framework and used it to establish the functoriality of colored Khovanov--Rozansky homology \cite{ETW17}.

The canopoly framework is essential for the formulation of the so-called \textit{blist hypothesis} of Proposition \ref{blisthyp}. This hypothesis was first established by Qi, Sussan, Robert, and Wagner \cite{QRSW21} in the setting of Soergel bimodules. We follow the same general strategy, using the blist hypothesis to extend the arguments of the uncolored case developed in \cite{GR25} to the colored setting.

\subsection{Symmetries of link homology}

Various symmetries have been constructed on link homologies, starting with the action of a positive part of the graded Witt algebra on HOMFLYPT homology introduced by Khovanov and Rozansky \cite{KR16}. Further symmetries have since been discovered \cite{GHM24,GLW18,CG24,KS25,ES25}. In \cite{QRSW23}, Qi, Robert, Sussan, and Wagner constructed action of the $\sld$ on $\mathfrak{gl}_N$-Khovanov--Rozansky homology, providing $p$-DG structures in positive characteristic.

Here, we study the action of the positive part of the Witt algebra, denoted by $\Witt$, following the work of Qi, Robert, Sussan, and Wagner \cite{QRSW24}. This algebra is also known as a Cartan-type Lie algebra, often denoted by $W_1$ for example in \cite{Ru74}. It acts naturally on symmetric polynomials, and this action can be extended to foams via the Robert--Wagner evaluation \cite{QRSW24}, and subsequently to the uncolored $\mathfrak{gl}_N$-Khovanov--Rozansky homology \cite{GR25}. 

The Lie algebra $\Witt$ is generated by a countable family of operators $(\wgen_k)_{k\geq -1}$, several of which have already appeared in the study of link homologies and related structures. Building on the results of \cite{KR16}, the generator $\wgen_1$ was used to define a $p$-DG structure on HOMFLY-PT homology and $\gl_{-2}$-homology, leading to categorifications of the Jones and colored Jones polynomials at roots of unity \cite{QRSW21,qi_p-differential_2022}. Similarly, the generator $\wgen_{-1}$ in \cite{QRSW24} coincides with the operator $\nabla$ introduced by Wang \cite{Wang24}, extending earlier work of Shumakovitch \cite{Shumakovitch_2014}.

\subsection{Structure of the paper}

In Section~\ref{sec_Witt&foams}, we recall the definitions of webs and foams and define a collection of categories denoted by $\catf$. We also recall the Witt action on foams, originally defined in \cite{QRSW24}.

In Section~\ref{sec_defcan}, we define the notion of a canopoly following the work of \cite{Bar05}, and in Section~\ref{BtwT&A}, we construct the canopoly $\algtop$.This canopoly has the same objects as $\catf$, but we only retain foams that behave “nicely” with respect to the Witt action.

Section~\ref{sec_reddots} summarizes the notions of Maurer--Cartan elements and their application to foams, which we call red dots. These elements allow us to construct more $\Witt$-equivariant foams. This leads to the definition of the canopolies $\ralgtop$ and the relative homotopy canopoly.

Sections~\ref{secConstr} and~\ref{sec_invar} are devoted to constructing the complex that we associate with each tangle. To prove the invariance under Reidemeister moves of this complex, we use the blist hypothesis from Section~\ref{sec_blist} to reduce the argument to the uncolored case treated in \cite{GR25}.

Finally, the functoriality statement of Section~\ref{funct} is preceded by four sections devoted to proving the sliding lemma of Proposition~\ref{slid_rd}. This result allows us to slide red dots along tangle diagrams and construct the canopoly $\rTD$. To prove this lemma, we need to extend the original canopoly of webs and foams by adding dots and trajectories. This new canopoly, denoted by $\extcatf$, allows us to work with new variables that are not involved in the decoration of foams.

\subsection{Acknowledgment}

We would like to thank Louis-Hadrien Robert, Joshua Sussan and Emmanuel Wagner for advising us on this project.
% as well as Matt Hogancamp for helpful discussions related to twisted representations.
We thank the authors of \cite{QRSW23} and \cite{QRSW24} for allowing us to use the diagrams from their papers.
The second author was partially supported by NSF grant DMS-2503714.

\subsection{AI usage}
AI was used solely to correct language errors in this paper.

%% file: Z_Notations.tex
Let us denote by $\N$ the non-negative integers, $\N^*:=\N \setminus \{0\}$, and $\N_{-1}:=\N \cup \{-1\}$.
Let \(N\) be a natural number which is fixed throughout the entire paper.

Let $\scalars$ be a unital ring in which 2 is invertible.
For all $x \in \scalars$, set $\Bar{x} := 1 - x$. 

% Let $\bnd$ be a finite sequence of integers such that exactly its first $n_\bnd$ terms are nonzero: $\bnd=(\bnd_1,\ldots,\bnd_{n_\bnd}) \in \Z_{\neq 0}^{n_\bnd}$. We allow $\bnd$ to be the empty sequence. We denote by $\Disk$ an oriented disk with a marked point on its boundary.

The symbol $\interval$ denotes the interval $[0,1]$. 

For \(k \in \N\), let us denote by $Sym_k := \scalars[x_1, \ldots, x_k]^{S_k}$ the ring of symmetric polynomials with $\scalars$ coefficients in $k$ variables. When working in such a ring, we will let \(e_i\), \(h_i\), and $p_i$ be the $i$th elementary, complete homogeneous, and power sum symmetric polynomials respectively without reference to the variables. The ring $Sym_k$ is graded by imposing $deg(x_i) = 2$.

We often work with polynomials in \(N\) variables, where \(N\) is our fixed natural number.
In this case, we write \(\scalars_N:= Sym_N\), and refer to the variables, the elementary, complete homogeneous, and power sum symmetric polynomials with capital letters: \(X_i\), \(E_i\), \(H_i\), and \(P_i\) respectively.

For $n \in \Z$, and $q$ a formal variable, let $[n] := \frac{q^n-q^{-n}}{q-q^{-1}} = \sum_{i=0}^{n-1}q^{n-1-2i}$, for $k \in \N$, we let $[k]! = \prod_{j=1}^k[j]$. Finally, for $m \in \Z$ and $a \in \N$, define: 
\[ \croch{m}{a} = \prod_{i=1}^{a}\frac{[m+1-i]}{[i]} \in \Z[q,q^{-1}].\]
Note that if $m$ is non-negative, one has $\croch{m}{a}=\frac{[m]!}{[a]![m-a]!}$.

For a \(\Z\)-graded module \(M\), let \(M_i\) denote the subspace in degree $i$. Let $q^nM$ denotes the $\Z$-graded module where $(q^nM)_i = M_{i-n}$. 

Complexes will be taken to be cohomologically graded.
In other words, the $i$th differential goes from $C_i$ to $C_{i+1}$. For a complex $C$, we let $t^iC$ denote the complex with \((t^iC)_j = C_{j-i}.\)

%% file: Z_Web_and_Foams.tex
\begin{dfn}
    A \textit{web} is a finite, oriented, trivalent graph $\web$ properly embedded in $\Disk$ together with a $\N^*$-labeling of edges satisfying a flow condition at every vertex:
    \begin{equation}
        \label{vertex1}
         \NB{\tikz[scale=0.6]{\input{\imagesfolder/sch/vertex_merge}}} \hspace{5mm} \text{and} \hspace{5mm} \NB{\tikz[scale=0.6]{\input{\imagesfolder/sch/vertex_split}}}.
    \end{equation}
    The boundary of a web is its intersection with the boundary of $\Disk$.
    Vertices on the boundary must have index 1.
    A \textit{closed web} is a web with empty boundary.
\end{dfn}

\begin{rmk}
    In web diagrams, edges of thickness~1 will be represented by an unlabeled edge (\NB{\tikz{\draw (0,0) -- (0.5,0); }}) and edges of thickness~2 will be represented by unlabeled double edges (\NB{\tikz{\draw[double] (0,0) -- (0.5,0);}}). See, for example, Figure \ref{fig:ex_web}.
\end{rmk}

The marked point and orientation of $\Disk$ determines an order on the points of $\partial\Disk$, called the \textit{oriented order}.

\begin{figure}[h!]
    \centering
    \NB{\tikz[]{\input{\imagesfolder/sch/order_disk}}}
    \caption{Example of the oriented order on five points of $\partial \Disk$. The numbering here refers only to the order, not to the thickness of an edge.}
\end{figure}

With this order the boundary of a web $\web$ can be described by
a finite sequence of non-zero integers,
$\bnd:=(\bnd_1,\ldots,\bnd_{n})$
of length $n$.
The integer $\bnd_i$ determines the label and the orientation of the edge of $\web$ connected to the \(i\)th boundary point.
Negative labels correspond to outgoing edges and positive labels correspond to incoming edges.
Empty sequences are allowed and are used to represent the boundary of closed webs.

\begin{figure}[h!]
    \centering
    \NB{\tikz[scale=.8]{\input{\imagesfolder/sch/exa_web}}}
    \caption{Example of a web with boundary $\bnd = (-1,-1,1,1)$.}
    \label{fig:ex_web}
\end{figure}

\begin{dfn}
The oriented $2$-dimensional CW-complexes in $\Disk \times \interval$ shown in Figure~\ref{localmod}
are called \textit{basic foams}.
The $2$-cells are smoothly embedded oriented surfaces called \textit{facets}.
Each facets $f$ is assigned a thickness $\ell(f) \in \N^*$ and
a polynomial $ Q_f \in Sym_{\ell(f)}^\scalars \otimes Sym_{N-\ell(f)}^\scalars$,
called a \textit{decoration}.
The interior $1$-cells are smoothly embedded arcs with index 3 called \textit{bindings}.
The orientations of bindings are induced by those on the thin facets and by the opposite of that on the thick facet.
The interior $0$-cells are called \textit{singular vertices}.
\end{dfn}

\begin{rmk}
     Every basic foam has a \textit{degree}, denoted $\deg_N$, given in Figure~\ref{localmod}.
\end{rmk}

\begin{figure}[ht]
   \centering
   \begin{tikzpicture}[xscale=2.4, yscale =-3.2]
     \node (pol) at (0,0) {\NB{\tikz[font=\tiny,scale=1.5]{\input{\imagesfolder/sch/foam_polynomial}}}};
     \node[yshift= -1.4cm] at (pol) {polynomial};
     \node[yshift = -1.8cm] at (pol) {$\deg(Q_1)+\deg(Q_2)$};
     \node (asso) at(1.85,0) {\NB{\tikz[font=\tiny,scale=0.7]{\input{\imagesfolder/sch/foam_associativity}}}};
     \node[yshift = -1.4cm] at (asso) {associativity};
     \node[yshift= -1.8cm] at (asso) {$0$};
     \node (coasso) at (3.7,0) {\NB{\tikz[font=\tiny, xscale=-0.7, yscale=0.7]{\input{\imagesfolder/sch/foam_coassociativity}}}};
     \node[yshift = -1.4cm] at (coasso) {associativity};
     \node[yshift= -1.8cm] at (coasso) {$0$};
     \node (digcup) at (0,1) {\NB{\tikz[font=\tiny]{\input{\imagesfolder/sch/foam_digoncup_ab_arrows}}}};
     \node[yshift = -1.64cm] at (digcup) {digon-cup};
     \node[yshift= -2.04cm] at (digcup) {$-ab$};
     \node (digcap) at (2,1.1) {\NB{\tikz[font=\tiny]{\input{\imagesfolder/sch/foam_digoncap_ab_arrows}}}};
     \node[yshift = -1.4cm] at (digcap) {digon-cap};
     \node[yshift= -1.8cm] at (digcap) {$-ab$};
     \node (zip) at (4,1.1) {\NB{\tikz[font=\tiny]{\input{\imagesfolder/sch/foam_unzip_ab_arrows}}}};
     \node[yshift = -1.4cm] at (zip) {unzip};
     \node[yshift= -1.8cm] at (zip) {$ab$};
     \node (unzip) at (0,2.1) {\NB{\tikz[font=\tiny]{\input{\imagesfolder/sch/foam_zip_ab_arrows}}}};
     \node[yshift = -1.4cm] at (unzip) {zip};
     \node[yshift= -1.8cm] at (unzip) {$ab$};
     \node (cup) at (1.4,2.1) {\NB{\tikz[font=\tiny]{\input{\imagesfolder/sch/foam_cup_a_empty}}}};
     \node[yshift = -1.4cm] at (cup) {cup};
     \node[yshift= -1.8cm] at (cup) {$-a(N-a)$};
     \node (cap) at (2.6,2.1) {\NB{\tikz[font=\tiny]{\input{\imagesfolder/sch/foam_cap_a_empty}}}};
     \node[yshift = -1.4cm] at (cap) {cap};
     \node[yshift= -1.8cm] at (cap) {$-a(N-a)$};
     \node (saddle) at (4,2.1) {\NB{\tikz[font=\tiny]{\input{\imagesfolder/sch/foam_saddle_a_empty}}}};
     \node[yshift = -1.4cm] at (saddle) {saddle};
     \node[yshift= -1.8cm] at (saddle) {$a(N-a)$};
   \end{tikzpicture}
   \caption{The degree of a basic foam is given below its name.}\label{localmod}
 \end{figure}

\begin{dfn}
    Let $\web$ be a web and let $h \colon \Disk \times \interval \rightarrow \Disk$ be a smooth, boundary-preserving isotopy of $\Disk$.
    The subspace $F(h)_t := h(\web,t)$ in $\Disk\times \interval$ is called a \textit{trace of an isotopy} of $\web$.
    The $\N^*$-labeling and the orientation of the web define the thicknesses and orientations for the facets and bindings of \(F(h)\).
    The degree of the trace of any isotopy is $0$.
\end{dfn}

\begin{dfn}
    A \textit{foam} $\foam \subset \Disk \times \interval$ is a finite collection of basic foams and traces of isotopies glued together along their boundaries with common labels. The intersection of the foam with $\partial \Disk \times \interval$ is a collection of vertical arcs.
    Decorations are multiplied and degrees are added under gluing.
\end{dfn}

The boundary of a foam $\foam \subset \Disk \times \interval$ contains
a web in $\Disk \times \{0\}$
and a web in $\Disk \times \{1\}$.
A foam can be viewed as a ``singular cobordism'' between two webs
and the following family of categories is defined:

\begin{dfn}
For a given boundary data $\bnd$, let $\widehat{\catf}(\bnd)$ be the category whose objects are webs in $\Disk$ with boundary $\bnd$
and whose morphisms are foams in \(\Disk \times \interval\) modulo ambient, boundary-preserving isotopy.
Composition is given by stacking and identities are given by cylinders on webs.
The category \(\widehat{\catf}(\bnd)\) is formally extended to the category \(\catf(\bnd)\) by taking an additive, \(\scalars_N\)-linear closure, adding formal \(\Z\)-grading shifts and imposing the local foam relations defined by Robert--Wagner evaluation \cite{RW20a}.
\end{dfn}

% defined by:
% \[
% \text{Hom}(\web_0, \web_1) =  \sfrac{\{\text{foams} \foam { in } \Disk \times \interval \text{ with } \foam_0 = -\web_0 \text{ and } \foam_1 = \web_1 \}}{\sim} ,
% \]
% where $-\web_0$ is the same web as $\web_0$ but with all edges oriented in the opposite direction. The "$\sim$" means up to ambient isotopy that preserve the boundary of $\Disk \times \interval$ and local relations are defined by the Robert--Wagner evaluation. We choose not to discuss these relations here and refer the curious reader to \cite{RW20a}.

\begin{notation}
    For two webs $\web$ and $\web'$ with the same boundary, we denote by $\morf(\web,\web')$ the space of morphisms from $\web$ to $\web'$, and by $\morf(\web)$ the endomorphism of $\web$ in $\catf(\bnd)$.
\end{notation}

% A power sum decoration on a facet of thickness \(a\) will be denoted by
% \[
% \ttkz[scale=.8]{sch/notation_deco2}:= \ttkz[scale=.8]{sch/notation_deco1} \hspace{5mm} \text{and} \hspace{5mm} \ttkz[scale=.8]{sch/notation_deco3}:= \ttkz[scale=.8]{sch/notation_antideco}
% \]
% where \(p_i = p_i \otimes 1 \in Sym_a^\scalars \otimes Sym_{N-a}^\scalars\) and \(\widehat{p}_i = 1 \otimes p_i \in Sym_a^\scalars \otimes Sym_{N-a}^\scalars\).
% In particular, \(\deco_0=a\) and \(\antideco_0 = N-a\).

%% file: sch/vertex_merge.tex
\begin{scope}[font=\tiny]
    \draw [->-] (0,0) -- (90:1) node[pos =1, above] {$a+b$};
    \draw [-<-] (0,0) -- (-30:1) node[pos =1, below] {$b$};
    \draw [-<-] (0,0) -- (-150:1) node[pos =1, below] {$a$};
    \draw[dotted] (0,0) circle (1); 
    \draw (1,0) node[font=\tiny] {$*$};
\end{scope}

%% file: sch/vertex_split.tex
\begin{scope}[font=\tiny]
    \draw [-<-] (0,0) -- (-90:1) node[pos =1, below] {$a+b$};
    \draw [->-] (0,0) -- (30:1) node[pos =1, above] {$b$};
    \draw [->-] (0,0) -- (150:1) node[pos =1, above] {$a$};
    \draw[dotted] (0,0) circle (1);
    \draw (1,0) node[font=\tiny] {$*$};
\end{scope}

%% file: Z_Witt.tex
\begin{dfn}
    Let $\Wittt$ be the Lie algebra over $\scalars$ generated by symbols $(\wgen_n)_{n \in \Z}$ with bracket
    \begin{equation} \label{witt} [\wgen_n,\wgen_m] := (n-m) \wgen_{n+m}. \end{equation}
    This algebra is called \textit{Witt algebra}. 
    Let $\Witt$ denote the Lie subalgebra generated by symbols $(\wgen_n)_{n \geqslant -1}$. 
\end{dfn}
The Lie algebra $\Witt$ acts on polynomial rings.
For any $Q \in \scalars[x_1,\ldots,x_k]$, let
\begin{equation}
    \wgen_n\cdot Q := -\sum_{i=1}^k x_i^{n+1}\frac{\partial Q}{\partial x_i}
\end{equation}
Note that the subspace $\scalars[x_1,\ldots,x_k]^{S_k}$ is fixed by this action.
% The map
% \begin{equation} 
%     i : \left | \begin{array}{lll}
%     e &\mapsto \wgen_{-1} \\
%     h &\mapsto 2\wgen_{0} \\
%     f &\mapsto -\wgen_1 
% \end{array} \right. 
% \end{equation}
% defines an injective morphism of Lie algebras from $\sld$ to $\Witt$.

%% file: Z_action.tex
\label{sec_action}

Fix $s, \lambda \in \scalars$ and let
$\lambda_n := \lambda(n+1)$. % \footnote{In \cite{QRSW24}, authors use the terminology of Witt sequences. When $2$ is invertible in the ground ring, these sequences are of the form $(\lambda(n+1))_{n\in \N_{-1}}$ with $\lambda \in \scalars$.}.
Using these distinguished elements, the following operators $\Wgen_n$ are defined on basic foams:

\begin{align}
    \label{polyn}
    \Wgen_n \left (~\NB{\tikz[scale=1.7, font=\tiny]{\input{\imagesfolder/sch/foam_poly}}}~\right) :=  \NB{\tikz[scale=1.7, font=\tiny]{\input{\imagesfolder/sch/foam_polynomial_L}}}
\end{align}
\begin{align}
    \Wgen_n \left (  \NB{\tikz[xscale=.9,yscale=.7,font=\tiny]{\input{\imagesfolder/sch/foam_associativity}}} \right ) := \Wgen_n \left (  \NB{\tikz[xscale=-.9, yscale=.7,font=\tiny]{\input{\imagesfolder/sch/foam_coassociativity}}} \right ) := 0
\end{align}
\begin{align}
    \Wgen_n \left ( ~~ \NB{\tikz[scale=1.2,font=\tiny]{\input{\imagesfolder/sch/foam_digoncup_ab_arrows}}}  \right ) := ~~ &\lambda_n ~~ \NB{\tikz[scale=1.2,font=\tiny]{\input{\imagesfolder/sch/foam_digoncup_ab_0n}}}+ \lambda_n ~~ \NB{\tikz[scale=1.2,font=\tiny]{\input{\imagesfolder/sch/foam_digoncup_ab_n0}}} \\ \nonumber &+ s\sum_{k+\ell=n} \NB{\tikz[scale=1.2,font=\tiny]{\input{\imagesfolder/sch/foam_digoncup_ab_kl}}} 
\end{align}
\begin{align}
    \Wgen_n \left (~~ \NB{\tikz[scale=1.2,font=\tiny]{\input{\imagesfolder/sch/foam_digoncap_ab_arrows}}} \right ) := ~~ & -\lambda_n ~~ \NB{\tikz[scale=1.2,font=\tiny]{\input{\imagesfolder/sch/foam_digoncap_ab_0n}}} - \lambda_n ~~ \NB{\tikz[scale=1.2,font=\tiny]{\input{\imagesfolder/sch/foam_digoncap_ab_n0}}} \\ \nonumber &+ \bar{s}\sum_{k+\ell=n} \NB{\tikz[scale=1.2,font=\tiny]{\input{\imagesfolder/sch/foam_digoncap_ab_kl}}} 
\end{align}
\begin{align}
    \Wgen_n \left ( ~~ \NB{\tikz[scale=1.2,font=\tiny]{\input{\imagesfolder/sch/foam_zip_ab_arrows}}}  \right ) := ~~ &\lambda_n ~~ \NB{\tikz[scale=1.2,font=\tiny]{\input{\imagesfolder/sch/foam_zip_ab_0n}}}+ \lambda_n ~~ \NB{\tikz[scale=1.2,font=\tiny]{\input{\imagesfolder/sch/foam_zip_ab_n0}}} \\ \nonumber &- \bar{s}\sum_{k+\ell=n} \NB{\tikz[scale=1.2,font=\tiny]{\input{\imagesfolder/sch/foam_zip_ab_kl}}}
\end{align}
\begin{align}
    \Wgen_n \left ( \NB{\tikz[scale=1.2,font=\tiny]{\input{\imagesfolder/sch/foam_unzip_ab_arrows}}} ~~ \right ) := ~~ & -\lambda_n  \NB{\tikz[scale=1.2,font=\tiny]{\input{\imagesfolder/sch/foam_unzip_ab_0n}}} - \lambda_n  \NB{\tikz[scale=1.2,font=\tiny]{\input{\imagesfolder/sch/foam_unzip_ab_n0}}} \\ \nonumber &-s\sum_{k+\ell=n} \NB{\tikz[scale=1.2,font=\tiny]{\input{\imagesfolder/sch/foam_unzip_ab_kl}}}
\end{align}
\begin{align}
    \Wgen_n \left ( \NB{\tikz[scale=1.5,font=\tiny]{\input{\imagesfolder/sch/foam_cup_a_empty}}} \right ) := \frac{1}{2} \sum_{k+\ell=n} \NB{\tikz[scale=1.5,font=\tiny]{\input{\imagesfolder/sch/foam_cup_a_kl}}} \hspace{1.2cm} \Wgen_n \left ( \NB{\tikz[scale=1.5,font=\tiny]{\input{\imagesfolder/sch/foam_cap_a_empty}}} \right ) := \frac{1}{2} \sum_{k+\ell=n} \NB{\tikz[scale=1.5,font=\tiny]{\input{\imagesfolder/sch/foam_cap_a_kl}}}
\end{align}

\begin{align}
    \label{saddle}
    \Wgen_n \left ( \NB{\tikz[scale=1.3,font=\tiny]{\input{\imagesfolder/sch/foam_saddle_a_empty}}} \right ) := - \frac{1}{2} \sum_{k+\ell=n}~~ \NB{\tikz[scale=1.3,font=\tiny]{\input{\imagesfolder/sch/foam_saddle_a_kl}}}
\end{align}

These operators are defined to be 0 on traces of isotopy and extended to gluings by the Leibniz rule.

\begin{prop}[{\cite[Prop 4.3]{QRSW23}}]
    Let $\foam$ be a closed foam, then for all $n \in \N_{-1}$
    \begin{equation}
        \langle \Wgen_n \cdot \foam \rangle_N = \wgen_n \cdot \langle \foam \rangle_N. 
    \end{equation}
\end{prop}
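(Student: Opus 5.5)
Since $\Wgen_n$ is defined on basic foams and extended to gluings by the Leibniz rule, both sides of the identity are "derivations" in a suitable sense. The plan is to work directly with the Robert--Wagner evaluation formula
\[
\langle \foam \rangle_N = \sum_{c} (-1)^{s(\foam,c)} \frac{P(\foam,c)}{Q(\foam,c)},
\]
where the sum runs over colorings $c$ of the facets by subsets of $\{1,\ldots,N\}$ (of cardinality the thickness), and $P$, $Q$ are built from the decorations and from factors $(X_i - X_j)$ attached to bichrome surfaces $\foam_{ij}(c)$. The operator $\wgen_n$ acts on $\scalars_N$, and more generally on the fraction field of $\scalars[X_1,\ldots,X_N]$, by the derivation $-\sum X_i^{n+1}\partial_{X_i}$. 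Hence $\wgen_n\langle\foam\rangle_N = \sum_c (-1)^{s}\,\wgen_n(P/Q)$, and it suffices to understand $\wgen_n$ on each coloring term.

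First, on a single coloring term, $\wgen_n$ acts on the product of decorations by the Leibniz rule. This reproduces exactly the contributions of the polynomial basic foams in \eqref{polyn}, since the decoration $Q_f$ evaluated at the colors $X_{c(f)}$ is mapped to $(\wgen_n Q_f)(X_{c(f)})$. The remaining contribution is
\[
\wgen_n\big(Q(\foam,c)^{-1}\big)\cdot(\text{decoration part}),
\]
with $Q(\foam,c)=\prod_{i<j}(X_i-X_j)^{\chi(\foam_{ij}(c))/2}$. Writing $\wgen_n(X_i - X_j)/(X_i - X_j) = -\sum_{k+\ell=n} X_i^k X_j^\ell$, this term becomes
\[
\tfrac12\sum_{i<j}\chi(\foam_{ij}(c))\sum_{k+\ell=n}X_i^kX_j^\ell\cdot(\text{term}).
\]

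Second, I localize the Euler characteristics. Decompose $\chi(\foam_{ij}(c))$ as a sum of local contributions over the basic foams: cups and caps contribute $+1$ and saddles $-1$ to the relevant surfaces, while zips, unzips and digons contribute via their singular vertices and bindings. Distributing these contributions into the sum should produce exactly the coloring-term expansions of the foams appearing on the right-hand sides of the defining formulas. For cup, cap and saddle, the coefficients $\pm\tfrac12\sum_{k+\ell=n}$ match the $\pm\tfrac12$ Euler contribution, with the dots $X^k$, $X^\ell$ placed on the facet and the complementary facet ($p_k$ and $\widehat p_\ell$). For digons and zips, the $s$- and $\bar s$-terms account for the binding and vertex contributions. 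The $\lambda_n$-terms should be checked to cancel globally: summed over the foam, they are total derivatives, or they cancel by a parity/flow argument since $\lambda_n$ appears with opposite signs on digon-cup versus digon-cap and zip versus unzip, and these come in balanced pairs on closed foams. The same must hold for the $s$ versus $\bar s$ splitting, since $s+\bar s=1$ recovers the full contribution. The sign $s(\foam,c)$ is locally constant under these dot insertions, so it causes no trouble.

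The main obstacle is this bookkeeping for singular pieces: showing that the local Euler characteristic contributions at zips, unzips, digon-cups and digon-caps, together with the pairing of these pieces on a closed foam, make the $\lambda_n$-terms and the $s/\bar s$ split sum to exactly the $\chi$-derivative term. My first approach would be to check this on each basic singular piece using the formula for $\chi$ of a bichrome surface near a singular circle, and then argue that the dependence on $s$ and $\lambda$ vanishes on closed foams, since the evaluation is independent of them.
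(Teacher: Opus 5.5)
Your reduction is the right one: work coloring by coloring, get the polynomial pieces from the Leibniz rule on $P(F,c)$, write $\wgen_n(Q^{-1})=Q^{-1}\cdot\tfrac12\sum_{i<j}\chi(F_{ij}(c))\,h_n(X_i,X_j)$, and localize $\chi(F_{ij}(c))$ along the stacking of basic foams. For cups, caps and saddles this matches the definition, since $\sum_{i\in A,\,j\notin A}h_n(X_i,X_j)=\sum_{k+\ell=n}p_k(X_A)p_\ell(X_{\bar A})$. There is, however, a genuine gap, and it lies exactly in the step you defer.

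\textbf{What the local computation actually gives.} Zips and digons contain no singular vertices; those occur only in the associativity foams, which contribute $0$. For a digon-cup or digon-cap whose thin facets are colored $A,B$, the surface $F_{ij}(c)$ gains a birth or death of a circle, contributing $+1$, exactly when $i$ and $j$ are split between $A$ and $B$, and gains nothing otherwise. A zip or unzip gives a saddle, contributing $-1$, for the same pairs. So $\tfrac12\chi$ reproduces the defining formulas only when $s=\tfrac12$ and $\lambda=0$. What remains is
\[
\sum_q \mu(q)\,\Phi(q),\qquad \Phi(q)=\lambda_n\bigl(a\,p_n(B)+b\,p_n(A)\bigr)+\bigl(s-\tfrac12\bigr)\sum_{k+\ell=n}p_k(A)p_\ell(B),
\]
summed over binding critical points, with $\mu=+1$ at binding minima (digon-cup, zip) and $\mu=-1$ at binding maxima (digon-cap, unzip).

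\textbf{Why your proposed mechanism fails.} Digon-cups do not pair with digon-caps, nor zips with unzips. The theta foam built as cup, digon-cup, unzip and two caps has one digon-cup and no digon-cap. Even the correct pairing of minima against maxima along binding circles is not enough once singular vertices occur. The thin facets, and hence $\Phi$, change at vertices, so a binding arc between two vertices can carry a single unmatched minimum. Finally, appealing to ``the evaluation is independent of $s$ and $\lambda$'' is circular.

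\textbf{The missing idea.} Re-index the remainder by pairs of pigments, using
\[
a\,p_n(B)+b\,p_n(A)=\sum_{i\in A,\,j\in B}(X_i^n+X_j^n),\qquad \sum_{k+\ell=n}p_k(A)p_\ell(B)=\sum_{i\in A,\,j\in B}h_n(X_i,X_j).
\]
The remainder then becomes $\sum_{i<j}g(X_i,X_j)\,\delta(B_{ij})$, where $g(x,y)=\lambda_n(x^n+y^n)+(s-\tfrac12)h_n(x,y)$. Here $B_{ij}$ is the set of binding points at which $i$ and $j$ lie on different thin facets, and $\delta(B_{ij})$ is the number of local minima minus the number of local maxima of the height function on $B_{ij}$.

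It remains to see that $\delta(B_{ij})=0$. The set $B_{ij}$ is exactly where the surface $F_{ij}(c)$ passes from facets containing only $i$ to facets containing only $j$, so it is a closed $1$-manifold. A direct check on the associativity foams shows that $B_{ij}$ passes monotonically through each singular vertex, using one binding below and one above. Hence its only critical points are the split digon and zip pieces, it has as many minima as maxima, and the remainder vanishes coloring by coloring. With this step supplied, your argument is complete.
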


\begin{prop}[{\cite[Prop 3.6]{GR25}}]
    \label{action}
    Let $\web, \web'$ be two webs with the same boundary.
    The operators $\Wgen_n$ define a $\Witt$-action on the $\scalars_N$-module $\morf(\web,\web')$.
    Moreover, for any foam $\foam$:
    \[\deg_N(\wgen_n \cdot \foam) = 2n + \deg_N(\foam).\]
\end{prop}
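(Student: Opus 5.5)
Since $\morf(\web,\web')$ is defined as a quotient of the free $\scalars_N$-module on foams, the plan is to build the action upstairs on formal linear combinations of foams and then show it descends. Three things must be checked in order: well-definedness on foams up to isotopy, compatibility with the local Robert--Wagner relations, and the Witt bracket relation \eqref{witt} together with the degree formula.

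\emph{Well-definedness on isotopy classes.} Each $\Wgen_n$ is defined on basic foams and traces of isotopies, and extended to gluings by the Leibniz rule. Two decompositions of the same foam into basic pieces differ by a finite sequence of local moves (movie moves, e.g.\ sliding a cup past a saddle, associativity interchanges). I would check, move by move, that the two Leibniz expansions agree. The contributions are the $\lambda_n$-terms, the $s$- and $\bar s$-terms, and the $\tfrac12\sum_{k+\ell=n}$ terms. Under a move these are decorations placed on the same facets, differing only by where they sit, and decorations slide freely along facets.

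\emph{Compatibility with the foam relations.} The relations in $\catf(\bnd)$ are exactly the kernel of the universal construction: a combination $\sum c_i\foam_i$ is zero iff $\langle \sum c_i \foam_i \cup G\rangle_N=0$ for every closing foam $G$. Suppose $\sum c_i\foam_i=0$. By the Leibniz rule,
\[
\Wgen_n\Big(\sum c_i\foam_i\Big)\cup G=\Wgen_n\Big(\sum c_i\foam_i\cup G\Big)-\sum c_i \foam_i\cup \Wgen_n G .
\]
Apply the evaluation and Proposition [QRSW23, Prop 4.3]. The first term evaluates to $\wgen_n\cdot 0=0$. The second term vanishes because it is a sum of closures of $\sum c_i\foam_i$ by the foams appearing in $\Wgen_nG$. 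Scalars in $\scalars_N$ need the same treatment: $\wgen_n$ acts on $\scalars_N$ as a derivation, and the Leibniz rule is consistent with this since $\scalars_N$ acts by decorations, on which $\Wgen_n$ acts by $\wgen_n$ via \eqref{polyn}. Hence $\Wgen_n$ descends to a derivation-type operator on $\morf(\web,\web')$, semilinear over $\scalars_N$.

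\emph{Bracket relation and degree.} Using the same nondegeneracy trick, $[\Wgen_n,\Wgen_m]-(n-m)\Wgen_{n+m}$ is a derivation by Leibniz. Closing by an arbitrary $G$ reduces the claim to closed foams, where it holds because $\wgen$ is a genuine $\Witt$-action on $\scalars_N$. I expect the main subtlety here: evaluation only detects the closed foam, so one has to apply the bracket identity to $\foam\cup G$ and then peel off the $G$-terms inductively. This works because the commutator is itself a derivation and $G$ is arbitrary. For the degree, each basic-foam formula replaces a foam by the same underlying foam carrying decorations of total polynomial degree $n+1$ in the $x$'s on one side, or $k+\ell=n$ with one unit lost. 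Checking each case, for instance the $\lambda_n$-term $x^{n+1}\partial$ versus the $k+\ell$ terms, gives a shift of $2n$. Traces have degree $0$ and degrees are additive under gluing, so $\deg_N(\wgen_n\cdot\foam)=2n+\deg_N(\foam)$.
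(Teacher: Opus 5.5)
The genuine gap is in your bracket argument. Your descent argument via closures is the standard one and is fine, and so is the degree count.

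Set $D=[\Wgen_n,\Wgen_m]-(n-m)\Wgen_{n+m}$. Closing up only gives $\langle D(\foam)\cup G\rangle_N=-\langle \foam\cup D(G)\rangle_N$, and nothing lets you ``peel off'' $D(G)$. The two facts you actually use are that $D$ is a derivation for both gluings and that it dies under evaluation of closed foams. Together they do not imply $D=0$ on foams with boundary.

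Here is a concrete obstruction. Let $v(\web)$ be the number of trivalent vertices of $\web$, and put $\theta(\foam):=(v(\web')-v(\web))\,\foam$ for $\foam\in\morf(\web,\web')$. Then $\theta$ has the following properties:
\begin{itemize}
\item it is a derivation for vertical and horizontal gluing, because arcs of planar diagrams add no trivalent vertices;
\item it is zero on closed foams;
\item it commutes with every $\Wgen_k$.
\end{itemize}
Now set $\Wgen'_k:=\Wgen_k+\theta$. Concretely, this adds $2$ times the foam to the formulas for zips and digon-cups and $-2$ times the foam for unzips and digon-caps. It is defined by the same Leibniz recipe and satisfies $\langle \Wgen'_k\foam\rangle_N=\wgen_k\langle\foam\rangle_N$ for closed $\foam$, so your argument applies to it verbatim. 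Yet $[\Wgen'_n,\Wgen'_m]-(n-m)\Wgen'_{n+m}=-(n-m)\theta$, which is nonzero on a zip for $(n,m)=(1,0)$. Evaluation of closed foams cannot see such local twists (the same mechanism as the red dots), so it cannot certify the Witt relations on $\morf(\web,\web')$.

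What works is to check the relation on generators, where it holds before any quotient is taken. Each basic foam $B$ satisfies $\Wgen_n(B)=B\cdot\delta^B_n$ for an explicit decoration $\delta^B_n$. By Leibniz, $[\Wgen_n,\Wgen_m](B)=B\cdot\bigl(\wgen_n\delta^B_m-\wgen_m\delta^B_n\bigr)$, since the products $\delta^B_n\delta^B_m$ cancel. Using $\wgen_n p_k=-k\,p_{n+k}$ one verifies
\[
\wgen_n(\lambda_m p_m)-\wgen_m(\lambda_n p_n)=(n-m)\lambda_{n+m}p_{n+m},\qquad \wgen_nT_m-\wgen_mT_n=(n-m)T_{n+m}.
\]
Here $T_m=\sum_{k+\ell=m}p_kq_\ell$, with $p,q$ power sums in two disjoint sets of variables: the two facets of a digon, or $p_k$ and $\widehat{p}_\ell$ on a single facet. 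The remaining basic foams are immediate: polynomial foams carry the Witt action on polynomials, and associativity foams and traces are trivial. This gives $D(B)=0$ for every basic $B$. Since $D$ is a derivation, $D=0$ on every glued foam, and your descent argument then transports the identity to $\morf(\web,\web')$.

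Your move-by-move isotopy check is unnecessary, and its on-the-nose claim is unverified. Independence of the decomposition in the quotient already follows from the closure argument, because $\langle\Wgen_n(\foam)\cup G\rangle_N=\wgen_n\langle\foam\cup G\rangle_N-\langle\foam\cup\Wgen_nG\rangle_N$ depends only on the isotopy class of $\foam$.
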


\begin{rmk}
In \cite{QRSW23} and \cite{GR25}, the action of $\Witt$ on foams depends on three parameters $(\lambda,\mu,s)$. Here we set $\lambda = \mu$ for reasons detailed in Remark \ref{parameters}.
\end{rmk}

%% file: sch/foam_poly.tex
\begin{scope}
  \draw (0,0) rectangle (1,1) coordinate [midway] (A);
  \fill (A) circle (0.33mm) node[below] {$P_1 \otimes P_2$};
  \node[above] at (.2,.7) {$a$};
\end{scope}

%% file: sch/foam_polynomial_L.tex
\begin{scope}
  \draw (0,0) rectangle (1,1) coordinate [midway] (A);
  \fill (A) circle (0.29mm) node[below] {$\wgen_n\cdot (P_1 \otimes P_2)$};
  \node[above] at (.2,.7) {$a$};
\end{scope}

%% file: sch/foam_associativity.tex
\begin{scope}
  \coordinate (ABCt) at (0,2);
  \coordinate (Ot) at (1,2);
  \coordinate (At) at (1.5,2.5);
  \coordinate (Bt) at (2,2);
  \coordinate (Ct) at (2.5,1.5);
  \begin{scope}[yshift= -2cm]
    \coordinate (ABCb) at (0,2);
    \coordinate (Ob) at (1,2);
    \coordinate (Ab) at (1.5,2.5);
    \coordinate (Bb) at (2,2);
    \coordinate (Cb) at (2.5,1.5);
  \end{scope}
  \begin{scope}[fill = white, fill opacity = 0.45, draw = black,
    draw opacity =1, very thin]
    \filldraw (Ob) .. controls +(0,0) and +(-0.3, 0) .. (Ab)
    coordinate[pos=0.5] (ABb) -- (At)
    .. controls +(-0.3, 0) and +(0,0) .. (Ot) -- (Ob);
    \fill (Ob) .. controls +(0,0) and +(-0.5, 0) .. (Cb) -- (Ct)
    .. controls +(-0.5, 0) and +(0,0) .. (Ot) coordinate[pos=0.5]
    (BCt) -- (Ob) coordinate[pos=0.5] (Om);
    \filldraw (ABb) .. controls +(0,0) and +(-0.3, 0) .. (Bb)-- (Bt)
    .. controls +(-0.3, 0) and +(0,0) .. (BCt) .. controls +(0, -0.3) and
    +(0,0) .. (Om) .. controls +(0,0) and +(0, 0.3)  .. (ABb);
    \filldraw (Ob) .. controls +(0,0) and +(-0.5, 0) .. (Cb) -- (Ct)
    .. controls +(-0.5, 0) and +(0,0) .. (Ot) -- (Ob);
    \filldraw (ABCt) -- (Ot) -- (Ob) -- (ABCb) -- (ABCt);
  \end{scope}
  
  \begin{scope}
    \node[above] at (ABCt) {$a+b+c$};
    \node[right] at (At) {$a$};
    \node[right] at (Bt) {$b$};
    \node[right] at (Ct) {$c$};
  \end{scope}
\end{scope}

%% file: sch/foam_coassociativity.tex
\begin{scope}
  \coordinate (ABCt) at (0,2);
  \coordinate (Ot) at (1,2);
  \coordinate (At) at (1.5,2.5);
  \coordinate (Bt) at (2,2);
  \coordinate (Ct) at (2.5,1.5);
  \begin{scope}[yshift= -2cm]
    \coordinate (ABCb) at (0,2);
    \coordinate (Ob) at (1,2);
    \coordinate (Ab) at (1.5,2.5);
    \coordinate (Bb) at (2,2);
    \coordinate (Cb) at (2.5,1.5);
  \end{scope}
  \begin{scope}[fill = white, fill opacity = 0.45, draw = black,
    draw opacity =1, very thin]
    \filldraw (Ob) .. controls +(0,0) and +(-0.3, 0) .. (Ab)
    coordinate[pos=0.5] (ABb) -- (At)
    .. controls +(-0.3, 0) and +(0,0) .. (Ot) -- (Ob);
    \fill (Ob) .. controls +(0,0) and +(-0.5, 0) .. (Cb) -- (Ct)
    .. controls +(-0.5, 0) and +(0,0) .. (Ot) coordinate[pos=0.5]
    (BCt) -- (Ob) coordinate[pos=0.5] (Om);
    \filldraw (ABb) .. controls +(0,0) and +(-0.3, 0) .. (Bb)-- (Bt)
    .. controls +(-0.3, 0) and +(0,0) .. (BCt) .. controls +(0, -0.3) and
    +(0,0) .. (Om) .. controls +(0,0) and +(0, 0.3)  .. (ABb);
    \filldraw (Ob) .. controls +(0,0) and +(-0.5, 0) .. (Cb) -- (Ct)
    .. controls +(-0.5, 0) and +(0,0) .. (Ot) -- (Ob);
    \filldraw (ABCt) -- (Ot) -- (Ob) -- (ABCb) -- (ABCt);
  \end{scope}
  
  \begin{scope}
    \node[above] at (ABCt) {$a+b+c$};
    \node[left] at (At) {$a$};
    \node[left] at (Bt) {$b$};
    \node[left] at (Ct) {$c$};
  \end{scope}
\end{scope}

%% file: sch/foam_digoncup_ab_arrows.tex
\begin{scope}
  \begin{scope}
    \coordinate (L) at (0,0);
    \coordinate (R) at (2,0);
    \coordinate (ML) at (0.5, 0);
    \coordinate (MR) at (1.5, 0);
    \draw[->-] (L) -- (ML);
    \draw[->-] (MR) -- (R) node[above] {$a+b$};
    \draw[->-] (ML).. controls + (0.4, 0.4) and +(-0.2, 0.4) .. (MR)
    node[above, midway] {$a$};
    \draw[->-] (ML).. controls + (0.2, -0.4) and +(-0.4, -0.4) .. (MR)
    node[below, midway] {$b$};
  \end{scope}  
 \begin{scope}[yshift = -1.3cm]
    \coordinate (LB) at (0,0);
    \coordinate (RB) at (2,0);
    \draw[->-] (LB) -- (RB);
  \end{scope}  
  \draw (R) -- (RB);
  \draw (L) -- (LB);
  \draw (ML) .. controls +(0, -1) and +(0, -1) .. (MR);
\end{scope}

%% file: sch/foam_digoncup_ab_0n.tex
\begin{scope}
  \begin{scope}
    \coordinate (L) at (0,0);
    \coordinate (R) at (2,0);
    \coordinate (ML) at (0.5, 0);
    \coordinate (MR) at (1.5, 0);
    \draw[->-] (L) -- (ML);
    \draw[->-] (MR) -- (R) node[above] {$a+b$};
    \draw[->-] (ML).. controls + (0.4, 0.4) and +(-0.2, 0.4) .. (MR)
    node[above, midway] {$a$} node[pos=0.4,below] {$\deco_0$};
    \draw[->-] (ML).. controls + (0.2, -0.4) and +(-0.4, -0.4) .. (MR)
    node[below, pos=0.35] {$b$} node[pos=0.75,below] {$\deco_n$};
  \end{scope}  
 \begin{scope}[yshift = -1.3cm]
    \coordinate (LB) at (0,0);
    \coordinate (RB) at (2,0);
    \draw[->-] (LB) -- (RB);
  \end{scope}  
  \draw (R) -- (RB);
  \draw (L) -- (LB);
  \draw(ML) .. controls +(0, -1) and +(0, -1) .. (MR);
\end{scope}

%% file: sch/foam_digoncup_ab_n0.tex
\begin{scope}
  \begin{scope}
    \coordinate (L) at (0,0);
    \coordinate (R) at (2,0);
    \coordinate (ML) at (0.5, 0);
    \coordinate (MR) at (1.5, 0);
    \draw[->-] (L) -- (ML);
    \draw[->-] (MR) -- (R) node[above] {$a+b$};
    \draw[->-] (ML).. controls + (0.4, 0.4) and +(-0.2, 0.4) .. (MR)
    node[above, midway] {$a$} node[pos=0.4,below] {$\deco_n$};
    \draw[->-] (ML).. controls + (0.2, -0.4) and +(-0.4, -0.4) .. (MR)
    node[below, pos=0.35] {$b$} node[pos=0.75,below] {$\deco_0$};
  \end{scope}  
 \begin{scope}[yshift = -1.3cm]
    \coordinate (LB) at (0,0);
    \coordinate (RB) at (2,0);
    \draw[->-] (LB) -- (RB);
  \end{scope}  
  \draw (R) -- (RB);
  \draw (L) -- (LB);
  \draw (ML) .. controls +(0, -1) and +(0, -1) .. (MR);
\end{scope}

%% file: sch/foam_digoncup_ab_kl.tex
\begin{scope}
  \begin{scope}
    \coordinate (L) at (0,0);
    \coordinate (R) at (2,0);
    \coordinate (ML) at (0.5, 0);
    \coordinate (MR) at (1.5, 0);
    \draw[->-] (L) -- (ML);
    \draw[->-] (MR) -- (R) node[right] {$a+b$};
    \draw[->-] (ML).. controls + (0.4, 0.4) and +(-0.2, 0.4) .. (MR)
    node[above, midway] {$a$} node[pos=0.4,below] {$\deco_k$};
    \draw[->-] (ML).. controls + (0.2, -0.4) and +(-0.4, -0.4) .. (MR)
    node[below, pos=0.35] {$b$} node[pos=0.75,below] {$\deco_l$};
  \end{scope}  
 \begin{scope}[yshift = -1.3cm]
    \coordinate (LB) at (0,0);
    \coordinate (RB) at (2,0);
    \draw[->-] (LB) -- (RB);
  \end{scope}  
  \draw (R) -- (RB);
  \draw (L) -- (LB);
  \draw (ML) .. controls +(0, -1) and +(0, -1) .. (MR);
\end{scope}

%% file: sch/foam_digoncap_ab_arrows.tex
\begin{scope}
  \begin{scope}
    \coordinate (L) at (0,0);
    \coordinate (R) at (2,0);
    \coordinate (ML) at (0.5, 0);
    \coordinate (MR) at (1.5, 0);
    \draw[->-] (L) -- (ML);
    \draw[->-] (MR) -- (R) node[below] {$a+b$};
    \draw[->-] (ML).. controls + (0.4, 0.4) and +(-0.2, 0.4) .. (MR)
    node[above, midway] {$a$};
    \draw[->-] (ML).. controls + (0.2, -0.4) and +(-0.4, -0.4) .. (MR)
    node[below, midway] {$b$};
  \end{scope}  
 \begin{scope}[yshift = 1.3cm]
    \coordinate (LB) at (0,0);
    \coordinate (RB) at (2,0);
    \draw (LB) -- (RB);
  \end{scope}  
  \draw (R) -- (RB);
  \draw (L) -- (LB);
  \draw (ML) .. controls +(0, 1) and +(0, 1) .. (MR);
\end{scope}

%% file: sch/foam_digoncap_ab_0n.tex
 \begin{scope}
  \begin{scope}
    \coordinate (L) at (0,0);
    \coordinate (R) at (2,0);
    \coordinate (ML) at (0.5, 0);
    \coordinate (MR) at (1.5, 0);
    \draw[->-] (L) -- (ML);
    \draw[->-] (MR) -- (R) node[below] {$a+b$};
    \draw[->-] (ML).. controls + (0.4, 0.4) and +(-0.2, 0.4) .. (MR)
    node[above, pos=0.2] {$a$} node[pos=0.6,above] {$\deco_0$};
    \draw[->-] (ML).. controls + (0.2, -0.4) and +(-0.4, -0.4) .. (MR)
    node[below, pos=0.5] {$b$} node[pos=0.6, above] {$\deco_n$};
  \end{scope}  
 \begin{scope}[yshift = 1.3cm]
    \coordinate (LB) at (0,0);
    \coordinate (RB) at (2,0);
    \draw (LB) -- (RB);
  \end{scope}  
  \draw (R) -- (RB);
  \draw (L) -- (LB);
  \draw (ML) .. controls +(0, 1) and +(0, 1) .. (MR);
\end{scope}

%% file: sch/foam_digoncap_ab_n0.tex
 \begin{scope}
  \begin{scope}
    \coordinate (L) at (0,0);
    \coordinate (R) at (2,0);
    \coordinate (ML) at (0.5, 0);
    \coordinate (MR) at (1.5, 0);
    \draw[->-] (L) -- (ML);
    \draw[->-] (MR) -- (R) node[below] {$a+b$};
    \draw[->-] (ML).. controls + (0.4, 0.4) and +(-0.2, 0.4) .. (MR)
    node[above, pos=0.2] {$a$} node[pos=0.6,above] {$\deco_n$};
    \draw[->-] (ML).. controls + (0.2, -0.4) and +(-0.4, -0.4) .. (MR)
    node[below, pos=0.5] {$b$} node[pos=0.6, above] {$\deco_0$};
  \end{scope}  
 \begin{scope}[yshift = 1.3cm]
    \coordinate (LB) at (0,0);
    \coordinate (RB) at (2,0);
    \draw (LB) -- (RB);
  \end{scope}  
  \draw (R) -- (RB);
  \draw (L) -- (LB);
  \draw (ML) .. controls +(0, 1) and +(0, 1) .. (MR);
\end{scope}

%% file: sch/foam_digoncap_ab_kl.tex
 \begin{scope}
  \begin{scope}
    \coordinate (L) at (0,0);
    \coordinate (R) at (2,0);
    \coordinate (ML) at (0.5, 0);
    \coordinate (MR) at (1.5, 0);
    \draw[->-] (L) -- (ML);
    \draw[->-] (MR) -- (R) node[below] {$a+b$};
    \draw[->-] (ML).. controls + (0.4, 0.4) and +(-0.2, 0.4) .. (MR)
    node[above, pos=0.2] {$a$} node[pos=0.6,above] {$\deco_k$};
    \draw[->-] (ML).. controls + (0.2, -0.4) and +(-0.4, -0.4) .. (MR)
    node[below, pos=0.5] {$b$} node[pos=0.6, above] {$\deco_l$};
  \end{scope}  
 \begin{scope}[yshift = 1.3cm]
    \coordinate (LB) at (0,0);
    \coordinate (RB) at (2,0);
    \draw (LB) -- (RB);
  \end{scope}  
  \draw (R) -- (RB);
  \draw (L) -- (LB);
  \draw (ML) .. controls +(0, 1) and +(0, 1) .. (MR);
\end{scope}

%% file: sch/foam_zip_ab_arrows.tex
\begin{scope}
  \begin{scope}
    \coordinate (L1) at (0.2,0.4);
    \coordinate (L2) at (0,0);
    \coordinate (R1) at (2.2,0.4);
    \coordinate (R2) at (2,0);
    \coordinate (ML) at (0.6, 0.2);
    \coordinate (MR) at (1.6, 0.2);
    \draw[->-] (ML) -- (MR) node[above, midway] {$a+b$};
    \draw (MR) .. controls +(0, 0) and +(-0.3,0) .. (R1) ;
    \draw (MR) .. controls +(0, 0) and +(-0.3,0) .. (R2);
    \draw (L1) .. controls +( 0.3, 0) and +(0,0) .. (ML);
    \draw (L2) .. controls +( 0.3, 0) and +(0,0) .. (ML);
  \end{scope}  
 \begin{scope}[yshift = -1cm]
    \coordinate (L1B) at (0.2,0.4);
    \coordinate (L2B) at (0,0);
    \coordinate (R1B) at (2.2,0.4);
    \coordinate (R2B) at (2,0);
    \draw[->-] (L1B) .. controls +( 0, 0) and +(0,0) .. (R1B) node [right, pos
    = 1] {$a$};
    \draw[->-] (L2B) .. controls +( 0, 0) and +(0,0) .. (R2B) node [right, pos
    = 1] {$b$};
 \end{scope}  
  \draw (R1) -- (R1B);
  \draw (R2) -- (R2B);
  \draw (L1) -- (L1B);
  \draw (L2) -- (L2B);
  \draw (ML) .. controls +(0, -0.6) and +(0, -0.6) .. (MR);
\end{scope}

%% file: sch/foam_zip_ab_0n.tex
\begin{scope}
  \begin{scope}
    \coordinate (L1) at (0.2,0.4);
    \coordinate (L2) at (0,0);
    \coordinate (R1) at (2.2,0.4);
    \coordinate (R2) at (2,0);
    \coordinate (ML) at (0.6, 0.2);
    \coordinate (MR) at (1.6, 0.2);
    \draw[->-] (ML) -- (MR) node[above, midway] {$a+b$};
    \draw (MR) .. controls +(0, 0) and +(-0.3,0) .. (R1) ;
    \draw (MR) .. controls +(0, 0) and +(-0.3,0) .. (R2);
    \draw (L1) .. controls +( 0.3, 0) and +(0,0) .. (ML);
    \draw (L2) .. controls +( 0.3, 0) and +(0,0) .. (ML);
  \end{scope}  
 \begin{scope}[yshift = -1cm]
    \coordinate (L1B) at (0.2,0.4);
    \coordinate (L2B) at (0,0);
    \coordinate (R1B) at (2.2,0.4);
    \coordinate (R2B) at (2,0);
    \draw[->-] (L1B) .. controls +( 0, 0) and +(0,0) .. (R1B) node [right, pos = 1] {$a$} node[above,pos=0.6] {$\deco_0$};
    \draw[->-] (L2B) .. controls +( 0, 0) and +(0,0) .. (R2B) node [right, pos = 1] {$b$} node[above,pos=0.4] {$\deco_n$};
 \end{scope}  
  \draw (R1) -- (R1B);
  \draw (R2) -- (R2B);
  \draw (L1) -- (L1B);
  \draw (L2) -- (L2B);
  \draw (ML) .. controls +(0, -0.6) and +(0, -0.6) .. (MR);
\end{scope}

%% file: sch/foam_zip_ab_n0.tex
\begin{scope}
  \begin{scope}
    \coordinate (L1) at (0.2,0.4);
    \coordinate (L2) at (0,0);
    \coordinate (R1) at (2.2,0.4);
    \coordinate (R2) at (2,0);
    \coordinate (ML) at (0.6, 0.2);
    \coordinate (MR) at (1.6, 0.2);
    \draw[->-] (ML) -- (MR) node[above, midway] {$a+b$};
    \draw (MR) .. controls +(0, 0) and +(-0.3,0) .. (R1) ;
    \draw (MR) .. controls +(0, 0) and +(-0.3,0) .. (R2);
    \draw (L1) .. controls +( 0.3, 0) and +(0,0) .. (ML);
    \draw (L2) .. controls +( 0.3, 0) and +(0,0) .. (ML);
  \end{scope}  
 \begin{scope}[yshift = -1cm]
    \coordinate (L1B) at (0.2,0.4);
    \coordinate (L2B) at (0,0);
    \coordinate (R1B) at (2.2,0.4);
    \coordinate (R2B) at (2,0);
    \draw[->-] (L1B) .. controls +( 0, 0) and +(0,0) .. (R1B) node [right, pos = 1] {$a$} node[above,pos=0.6] {$\deco_n$};
    \draw[->-] (L2B) .. controls +( 0, 0) and +(0,0) .. (R2B) node [right, pos = 1] {$b$} node[above,pos=0.4] {$\deco_0$};
 \end{scope}  
  \draw (R1) -- (R1B);
  \draw (R2) -- (R2B);
  \draw (L1) -- (L1B);
  \draw (L2) -- (L2B);
  \draw (ML) .. controls +(0, -0.6) and +(0, -0.6) .. (MR);
\end{scope}

%% file: sch/foam_zip_ab_kl.tex
\begin{scope}
  \begin{scope}
    \coordinate (L1) at (0.2,0.4);
    \coordinate (L2) at (0,0);
    \coordinate (R1) at (2.2,0.4);
    \coordinate (R2) at (2,0);
    \coordinate (ML) at (0.6, 0.2);
    \coordinate (MR) at (1.6, 0.2);
    \draw[->-] (ML) -- (MR) node[above, midway] {$a+b$};
    \draw (MR) .. controls +(0, 0) and +(-0.3,0) .. (R1) ;
    \draw (MR) .. controls +(0, 0) and +(-0.3,0) .. (R2);
    \draw (L1) .. controls +( 0.3, 0) and +(0,0) .. (ML);
    \draw (L2) .. controls +( 0.3, 0) and +(0,0) .. (ML);
  \end{scope}  
 \begin{scope}[yshift = -1cm]
    \coordinate (L1B) at (0.2,0.4);
    \coordinate (L2B) at (0,0);
    \coordinate (R1B) at (2.2,0.4);
    \coordinate (R2B) at (2,0);
    \draw[->-] (L1B) .. controls +( 0, 0) and +(0,0) .. (R1B) node [right, pos = 1] {$a$} node[above,pos=0.6] {$\deco_k$};
    \draw[->-] (L2B) .. controls +( 0, 0) and +(0,0) .. (R2B) node [right, pos = 1] {$b$} node[above,pos=0.4] {$\deco_l$};
 \end{scope}  
  \draw (R1) -- (R1B);
  \draw (R2) -- (R2B);
  \draw (L1) -- (L1B);
  \draw (L2) -- (L2B);
  \draw(ML) .. controls +(0, -0.6) and +(0, -0.6) .. (MR);
\end{scope}

%% file: sch/foam_unzip_ab_arrows.tex
\begin{scope}
  \begin{scope}
    \coordinate (L1) at (0.2,0.4);
    \coordinate (L2) at (0,0);
    \coordinate (R1) at (2.2,0.4);
    \coordinate (R2) at (2,0);
    \coordinate (ML) at (0.6, 0.2);
    \coordinate (MR) at (1.6, 0.2);
    \draw[->-] (ML) -- (MR) node[below, midway] {$a+b$};
    \draw (MR) .. controls +(0, 0) and +(-0.3,0) .. (R1) ;
    \draw (MR) .. controls +(0, 0) and +(-0.3,0) .. (R2);
    \draw (L1) .. controls +( 0.3, 0) and +(0,0) .. (ML);
    \draw (L2) .. controls +( 0.3, 0) and +(0,0) .. (ML);
  \end{scope}  
 \begin{scope}[yshift = 1cm]
    \coordinate (L1B) at (0.2,0.4);
    \coordinate (L2B) at (0,0);
    \coordinate (R1B) at (2.2,0.4);
    \coordinate (R2B) at (2,0);
    \draw[->-] (L1B) .. controls +( 0, 0) and +(0,0) .. (R1B) node [left, pos
    = 0] {$a$};
    \draw[->-] (L2B) .. controls +( 0, 0) and +(0,0) .. (R2B) node [left, pos
    = 0] {$b$};
 \end{scope}  
  \draw (R1) -- (R1B);
  \draw (R2) -- (R2B);
  \draw (L1) -- (L1B);
  \draw (L2) -- (L2B);
  \draw (ML) .. controls +(0, 0.6) and +(0, 0.6) .. (MR);
\end{scope}

%% file: sch/foam_unzip_ab_0n.tex
\begin{scope}
  \begin{scope}
    \coordinate (L1) at (0.2,0.4);
    \coordinate (L2) at (0,0);
    \coordinate (R1) at (2.2,0.4);
    \coordinate (R2) at (2,0);
    \coordinate (ML) at (0.6, 0.2);
    \coordinate (MR) at (1.6, 0.2);
    \draw[->-] (ML) -- (MR) node[below, midway] {$a+b$};
    \draw (MR) .. controls +(0, 0) and +(-0.3,0) .. (R1) ;
    \draw (MR) .. controls +(0, 0) and +(-0.3,0) .. (R2);
    \draw (L1) .. controls +( 0.3, 0) and +(0,0) .. (ML);
    \draw (L2) .. controls +( 0.3, 0) and +(0,0) .. (ML);
  \end{scope}  
 \begin{scope}[yshift = 1cm]
    \coordinate (L1B) at (0.2,0.4);
    \coordinate (L2B) at (0,0);
    \coordinate (R1B) at (2.2,0.4);
    \coordinate (R2B) at (2,0);
    \draw[->-] (L1B) .. controls +( 0, 0) and +(0,0) .. (R1B) node [left, pos
    = 0] {$a$} node[pos=0.75,below] {$\deco_0$};
    \draw[->-] (L2B) .. controls +( 0, 0) and +(0,0) .. (R2B) node [left, pos
    = 0] {$b$} node[pos=0.35,below] {$\deco_n$};
 \end{scope}  
  \draw (R1) -- (R1B);
  \draw (R2) -- (R2B);
  \draw (L1) -- (L1B);
  \draw (L2) -- (L2B);
  \draw (ML) .. controls +(0, 0.6) and +(0, 0.6) .. (MR);
\end{scope}

%% file: sch/foam_unzip_ab_n0.tex
\begin{scope}
  \begin{scope}
    \coordinate (L1) at (0.2,0.4);
    \coordinate (L2) at (0,0);
    \coordinate (R1) at (2.2,0.4);
    \coordinate (R2) at (2,0);
    \coordinate (ML) at (0.6, 0.2);
    \coordinate (MR) at (1.6, 0.2);
    \draw[->-] (ML) -- (MR) node[below, midway] {$a+b$};
    \draw (MR) .. controls +(0, 0) and +(-0.3,0) .. (R1) ;
    \draw (MR) .. controls +(0, 0) and +(-0.3,0) .. (R2);
    \draw (L1) .. controls +( 0.3, 0) and +(0,0) .. (ML);
    \draw (L2) .. controls +( 0.3, 0) and +(0,0) .. (ML);
  \end{scope}  
 \begin{scope}[yshift = 1cm]
    \coordinate (L1B) at (0.2,0.4);
    \coordinate (L2B) at (0,0);
    \coordinate (R1B) at (2.2,0.4);
    \coordinate (R2B) at (2,0);
    \draw[->-] (L1B) .. controls +( 0, 0) and +(0,0) .. (R1B) node [left, pos
    = 0] {$a$} node[pos=0.75,below] {$\deco_n$};
    \draw[->-] (L2B) .. controls +( 0, 0) and +(0,0) .. (R2B) node [left, pos
    = 0] {$b$} node[pos=0.35,below] {$\deco_0$};
 \end{scope}  
  \draw (R1) -- (R1B);
  \draw (R2) -- (R2B);
  \draw (L1) -- (L1B);
  \draw (L2) -- (L2B);
  \draw (ML) .. controls +(0, 0.6) and +(0, 0.6) .. (MR);
\end{scope}

%% file: sch/foam_unzip_ab_kl.tex
\begin{scope}
  \begin{scope}
    \coordinate (L1) at (0.2,0.4);
    \coordinate (L2) at (0,0);
    \coordinate (R1) at (2.2,0.4);
    \coordinate (R2) at (2,0);
    \coordinate (ML) at (0.6, 0.2);
    \coordinate (MR) at (1.6, 0.2);
    \draw[->-] (ML) -- (MR) node[below, midway] {$a+b$};
    \draw (MR) .. controls +(0, 0) and +(-0.3,0) .. (R1) ;
    \draw (MR) .. controls +(0, 0) and +(-0.3,0) .. (R2);
    \draw (L1) .. controls +( 0.3, 0) and +(0,0) .. (ML);
    \draw (L2) .. controls +( 0.3, 0) and +(0,0) .. (ML);
  \end{scope}  
 \begin{scope}[yshift = 1cm]
    \coordinate (L1B) at (0.2,0.4);
    \coordinate (L2B) at (0,0);
    \coordinate (R1B) at (2.2,0.4);
    \coordinate (R2B) at (2,0);
    \draw[->-] (L1B) .. controls +( 0, 0) and +(0,0) .. (R1B) node [left, pos
    = 0] {$a$} node[pos=0.75,below] {$\deco_k$};
    \draw[->-] (L2B) .. controls +( 0, 0) and +(0,0) .. (R2B) node [left, pos
    = 0] {$b$} node[pos=0.35,below] {$\deco_l$};
 \end{scope}  
  \draw (R1) -- (R1B);
  \draw (R2) -- (R2B);
  \draw (L1) -- (L1B);
  \draw (L2) -- (L2B);
  \draw (ML) .. controls +(0, 0.6) and +(0, 0.6) .. (MR);
\end{scope}

%% file: sch/foam_cup_a_empty.tex
\begin{scope}
  \draw (0,0) arc (180 :0: 0.5cm and 0.2cm) 
  node[above, pos=0.5] {$a$};
  \draw[very thin] (0,0) arc (180 :0: 0.5cm and -0.6cm) node[pos=0.5, above] {};
  \draw (0,0) arc (180 :0: 0.5cm and -0.2cm);
  \node at (.5,-.6) {};
\end{scope}

%% file: sch/foam_cup_a_kl.tex
\begin{scope}
  \draw (0,0) arc (180 :0: 0.5cm and 0.2cm) 
  node[above, pos=0.5] {$a$};
  \draw[very thin] (0,0) arc (180 :0: 0.5cm and -0.6cm);
  \draw (0,0) arc (180 :0: 0.5cm and -0.2cm) node[pos=0.5, below] {$\deco_k \antideco_l$};
\end{scope}

%% file: sch/foam_cap_a_empty.tex
\begin{scope}[-]
  \draw (0,0) arc (180 :0: 0.5cm and 0.2cm);
  \draw[very thin] (0,0) arc (180 :0: 0.5cm and 0.6cm) node[pos=0.5,
  below] {};
  \draw (0,0) arc (180 :0: 0.5cm and -0.2cm)node[ below, pos =0.5] {$a$};
  \node at (.5,.6) {};
\end{scope}

%% file: sch/foam_cap_a_kl.tex
\begin{scope}[-]
  \draw (0,0) arc (180 :0: 0.5cm and 0.2cm);
  \draw[very thin] (0,0) arc (180 :0: 0.5cm and 0.6cm) node[pos=0.5, yshift=-2mm,below] {$\deco_k \antideco_l$};
  \draw (0,0) arc (180 :0: 0.5cm and -0.2cm)node[ below, pos =0.5] {$a$};
\end{scope}

%% file: sch/foam_saddle_a_empty.tex
\begin{scope}[scale=0.6]
\tdplotsetmaincoords{70}{25}
\begin{scope}[scale = 1.5, tdplot_main_coords]
  \tikzset{yxplane/.style={canvas is xy plane at z=#1}}
  \begin{scope}[yxplane=1]
    \coordinate (AT) at ({cos(  45)}, {sin(  45)});
    \coordinate (BT) at ({cos(135)}, {sin(135)});
    \coordinate (CT) at ({cos(225)}, {sin(225)});
    \coordinate (DT) at ({cos(315)}, {sin(315)});
    \coordinate (aT) at (  0.3, 0);
    \coordinate (bT) at ( -0.3, 0);
    \draw (AT) .. controls (aT) and (bT) .. (BT)
    coordinate[pos=0.5] (eT) node [pos =0.5, above] {$a$};
    \draw (DT) .. controls (aT) and (bT) .. (CT) coordinate[pos=0.5] (fT);
  \end{scope}

  \begin{scope}[yxplane=0]
    \coordinate (AM) at ({cos(  45)}, {sin(  45)});
    \coordinate (BM) at ({cos(135)}, {sin(135)});
    \coordinate (CM) at ({cos(225)}, {sin(225)});
    \coordinate (DM) at ({cos(315)}, {sin(315)});
    \coordinate (aM) at (0, 0.3);
    \coordinate (bM) at (0, -0.3);
    \draw (AM) .. controls (aM) and (bM) .. (DM) coordinate[pos=0.5] (eM);
    \draw (BM) .. controls (aM) and (bM) .. (CM) coordinate[pos=0.5] (fM);
\end{scope}
  \coordinate (OT) at (0,0, 0.5);
\draw (AM) -- (AT);
  \draw (BM) -- (BT);
  \draw (CM) -- (CT);
  \draw (DM) -- (DT);
  \draw[densely dotted] (eM) ..controls +(0,0,0.2) and + (0.1,0,0).. (OT);
  \draw[densely dotted] (fM) ..controls +(0,0,0.2) and + (-0.1,0,0).. (OT);
  \draw[densely dotted] (eT) ..controls +(0,0,-0.2) and + (0,0.1,0).. (OT);
  \draw[densely dotted] (fT) ..controls +(0,0,-0.2) and + (0,-0.1,0).. (OT);
\end{scope}  
\end{scope}

%% file: sch/foam_saddle_a_kl.tex
\begin{scope}[scale=0.6]
\tdplotsetmaincoords{70}{25}
\begin{scope}[scale = 1.5, tdplot_main_coords]
  \tikzset{yxplane/.style={canvas is xy plane at z=#1}}
  \begin{scope}[yxplane=1]
    \coordinate (AT) at ({cos(  45)}, {sin(  45)});
    \coordinate (BT) at ({cos(135)}, {sin(135)});
    \coordinate (CT) at ({cos(225)}, {sin(225)});
    \coordinate (DT) at ({cos(315)}, {sin(315)});
    \coordinate (aT) at (  0.3, 0);
    \coordinate (bT) at ( -0.3, 0);
    \draw (AT) .. controls (aT) and (bT) .. (BT)
    coordinate[pos=0.5] (eT) node [pos =0.5, above] {$a$};
    \draw (DT) .. controls (aT) and (bT) .. (CT) coordinate[pos=0.5] (fT);
  \end{scope}

  \begin{scope}[yxplane=0]
    \coordinate (AM) at ({cos(45)}, {sin(45)});
    \coordinate (BM) at ({cos(135)}, {sin(135)});
    \coordinate (CM) at ({cos(225)}, {sin(225)});
    \coordinate (DM) at ({cos(315)}, {sin(315)});
    \coordinate (aM) at (0, 0.3);
    \coordinate (bM) at (0, -0.3);
    \draw (AM) .. controls (aM) and (bM) .. (DM) coordinate[pos=0.5] (eM);
    \draw (BM) .. controls (aM) and (bM) .. (CM) coordinate[pos=0.5] (fM);
\end{scope}
  \coordinate (OT) at (0,0, 0.5);
    \draw (AM) -- (AT) node[pos=.5,left] {$\antideco_\ell$};
    \draw (BM) -- (BT) ;
    \draw (CM) -- (CT) node[pos=.5,right] {$\deco_k$};
    \draw (DM) -- (DT);
  \draw[densely dotted] (eM) ..controls +(0,0,0.2) and + (0.1,0,0).. (OT);
  \draw[densely dotted] (fM) ..controls +(0,0,0.2) and + (-0.1,0,0).. (OT);
  \draw[densely dotted] (eT) ..controls +(0,0,-0.2) and + (0,0.1,0).. (OT);
  \draw[densely dotted] (fT) ..controls +(0,0,-0.2) and + (0,-0.1,0).. (OT);
\end{scope}  
\end{scope}

%% file: Z_canopoly.tex
\label{sec_defcan}
The canopoly structure that we introduce in this section was first introduced by Bar-Natan in~\cite{Bar05} and can be seen as a categorification of Jones's notion of planar algebra \cite{Jon21}.

\begin{dfn}
    A $d$-input \textit{colored planar arc diagram} $\cpd$ consists of a disk called the \textit{output} with $d$ disjoint disks called \textit{inputs} removed and a finite collection of oriented arcs properly embedded in the complement of the inputs.
    % numbered $1$ through $d$ 
    The input disks and arcs are labeled by nonnegative integers.
    Arcs begin and end on the boundary of $\cpd$.
    % This information is considered only up to planar isotopy that preserve the boundary. 
\end{dfn}

\begin{figure}[h!]
    \centering
    \NB{\tikz[scale=1.5]{\input{\imagesfolder/sch/ex_plan_arc_diag}}}
    \caption{Example of a colored planar arc diagram.}
\end{figure}

\begin{dfn}
    A \textit{colored planar algebra} is a collection of sets $(\mathcal{P}(\bnd))_{\bnd}$ indexed by finite sequences of integers together with a function
    \[\cpd : \mathcal{P}(\bnd_1) \times \ldots \times \mathcal{P}(\bnd_d) \rightarrow \mathcal{P}(\bnd),\]
    for each \(d\)-input colored planar arc diagram \(\cpd\) with the boundaries of the input disks described by \(\bnd_i\), and the boundary of the output disk described by \(\bnd\).
\end{dfn}

% The curious reader can find a lot of examples of planar algebras in \cite{Bar05}. To build our first example we need the two following definitions.

\begin{dfn}
    A \textit{colored tangle diagram} is an oriented tangle diagram $T$ in $\Disk$ with components labeled by positive non-negative integers and boundary described by a finite sequence of non-zero integers $\bnd=(\bnd_1,\ldots,\bnd_{n})$.
    % where each of its components is labeled by an element of $\N^*$, with $n_\bnd$ points on $\partial\Disk$ labeled according to the oriented order from $1$ to $n_\bnd$. The integer $\bnd_i$ wears the information of the label and the orientation of the component of $T$ connected to the ith boundary point. If  $-\bnd_i \in \N$ then the orientation of the component is going outside $\Disk$ and is colored by $-\bnd_i$ else the component orientation goes inside $\Disk$ and is labeled by $\bnd_i \in \N$.
\end{dfn}

\begin{figure}[h!]
    \centering
    \NB{\tikz[scale=1.5]{\input{\imagesfolder/sch/ex_col_tang_diag}}}
    \caption{Example of a colored tangle diagram with $\bnd=(2,4,-1,-2,-1,1,1,-4)$.}
\end{figure}

\begin{dfn}
    \label{wt_def}
    A \textit{colored web–tangle diagram} with boundary $\bnd$ is a colored tangle diagram with boundary $\bnd$ from which a collection of disjoint open disks $(\mathcal{D}_k)_k$ has been removed and replaced by webs $(\web_k)_k$ having the same boundaries as the removed tangles and the color of the tangle matches the labels of the webs. The data $(\mathcal{D}_k,\web_k)_k$ is called the \textit{web part}. It is considered up to the equivalence relation on web parts generated by: \[ (\mathcal{D}_k,\web_k)_{k\in K} \sim (\mathcal{D}_j,\web_j)_{j\in J} \] if for all $k \in K$ there exists $J' \subset J$ such that \[ \bigcup_{j \in J'} \mathcal{D}_j \subset \mathcal{D}_k \quad \text{and} \quad \web_k = (\mathcal{D}_k \setminus \bigcup_{j \in J'} \mathcal{D}_j)(\web_j, j\in J'),\]  where $\mathcal{D}_k \setminus \cup_{j \in J'} \mathcal{D}_j$ is regarded as a colored planar arc diagram applied to the web $(\web_j)_{j\in J'}$. 
    %We say that two web-tangle diagrams are \textit{isotopic} if they have the same web parts and if the tangles in $\mathcal{D} \setminus \cup_k \mathcal{D}_k$ represent isotopic tangles.
\end{dfn}

\begin{figure}[H]
    \centering
    \NB{\tikz[scale=2]{\input{\imagesfolder/sch/ex_web-tang}}}
    \caption{A colored web-tangle diagram with $\bnd=(3,1,-1,-3,-1,1,1,-1)$ with its web part in blue.}
\end{figure}

\begin{exa}
    \label{ex_pa_1}
    Let $\wt(\bnd)$ be the collection of all colored web-tangle diagrams with a boundary sequence $\bnd$.
    The collection $(\wt(\bnd))_\bnd$ indexed over all boundary sequences \(u\) is a colored planar algebra.
    % $\bnd=(\bnd_1,\ldots,\bnd_{n_\bnd}) \in \Z_{\neq 0}^{n_\bnd}$,
    The action of a colored planar arc diagram $\cpd$ is given by the horizontal composition of web–tangle diagrams with the matching labeled boundary components of $\cpd$. 
\end{exa}

\begin{exa}
    \label{ex_pa_2}
   Similarly, the collection $(\cweb(\bnd))_\bnd$ of webs in $\Disk$ forms a planar algebra.
\end{exa}

\begin{figure}[H]
    \centering
    \includegraphics[]{sch/ex_comp_plandiag.tex}
    \caption{Map induced by a colored planar arc diagram that goes from $\wt(1,-1) \times \wt(2,-1,-1) \times \wt(-2,1,1) $ to $\wt(1,1,-1,-1)$.}
\end{figure}

\begin{dfn}[\cite{Bar05}]
    A \textit{colored canopoly} is a collection of small categories $C := (C(\bnd))_\bnd$ such that, 
    \begin{enumerate}
        \item $(\mathrm{Obj}(C(\bnd)))_\bnd$ is a colored planar algebra,
        \item $(\mathrm{Morph}(C(\bnd)))_\bnd$ is a colored planar algebra,
        \item the planar algebra operations commute with the compositions in the various categories.  
    \end{enumerate} 
     % A composition of morphisms is called \textit{vertical} if it comes from the category structure, and \textit{horizontal} if it comes from the planar algebra structure.
\end{dfn}

\begin{exa}
    Building on the Example \ref{ex_pa_1} of the colored planar algebra $\wt(\bnd)$, define the colored canopoly $\WT := (\WT(u))_u$:
    \begin{itemize}
        \item The objects of \(\WT(u)\) are the elements of $\wt(\bnd)$.
        \item The morphisms of \(\WT(u)\) between two colored web-tangles with the same web part $(\mathcal{D}_k,\web_k)_k$, are cobordisms between tangles where a collection of disjoint cylinder $(\mathcal{D}_k\times \interval)_k$ has been removed and replaced by $(\web_k\times \interval)_k$.
        \item The categorical composition is given by gluing morphisms vertically
        \item The planar algebra operations is given by gluing objects or morphisms horizontally.
    \end{itemize}
\end{exa}

\begin{exa}
    \label{ex:foam_can}
    A colored canopoly associated to Example~\ref{ex_pa_2} is the collection of categories $(\catf(\bnd))_\bnd$ denoted $\catf$. The action of a planar diagram at the level of morphisms (on foams) is given by vertical composition, which is the identity on all edges involved in the planar diagram.
\end{exa}

The proof of Proposition~\ref{prop:htpcanopoly} is left as an exercise. Nevertheless, it follows the same ideas as those used in Example~8.4 of \cite{Bar05} and Theorem~2.1 of \cite{W07}.

\begin{prop}
    \label{prop:htpcanopoly}
    Let $(C(\bnd))_\bnd$ be a colored canopoly, then the collection of the homotopy category of the categories $C(\bnd)$ is also a colored canopoly. 
\end{prop}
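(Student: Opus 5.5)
The plan is to build the planar algebra operations on complexes directly from those on objects and morphisms of $C$, via a tensor-product-of-complexes construction, and then check that they descend to homotopy classes. Implicitly $C(\bnd)$ must be additive (or we first pass to its additive closure by formal direct sums; planar operations extend to matrices entrywise). We also need the planar operations on morphisms to be multilinear over the ground ring, so that signs and sums make sense. I will assume both, as holds for $\catf$ after the linear closure of its definition.

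\emph{Objects.} Let $\cpd$ be a $d$-input colored planar arc diagram and $A^1,\ldots,A^d$ complexes in $C(\bnd_1),\ldots,C(\bnd_d)$. Define
\[\cpd(A^1,\ldots,A^d)^k := \bigoplus_{i_1+\cdots+i_d=k} \cpd(A^1_{i_1},\ldots,A^d_{i_d}).\]
The differential restricted to the summand indexed by $(i_1,\ldots,i_d)$ is
\[\sum_j (-1)^{i_1+\cdots+i_{j-1}}\, \cpd(\mathrm{id},\ldots,d_{A^j},\ldots,\mathrm{id}).\]
Its square vanishes for two reasons. First, axiom (3), compatibility of planar operations with composition, gives $\cpd(f_1,\ldots,f_d)\circ\cpd(g_1,\ldots,g_d)=\cpd(f_1g_1,\ldots,f_dg_d)$. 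Hence the terms with two differentials in the same slot give $\cpd(\ldots,d^2,\ldots)=0$. Second, the terms with differentials in two distinct slots commute at the level of $C$ by the same identity, and the Koszul signs make them cancel in pairs.

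\emph{Morphisms.} A tuple of chain maps $f^j$ goes to the chain map $\cpd(f^1,\ldots,f^d)$ acting degreewise by $\cpd(f^1_{i_1},\ldots,f^d_{i_d})$. Commutation with the differential again follows from axiom (3). No signs are needed because chain maps have degree $0$.

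\emph{Axioms on complexes.} The operations must be compatible with gluing of arc diagrams, i.e.\ satisfy the planar algebra axioms, including the identity diagram and associativity of nesting. They must also commute with composition of chain maps. Both hold because they hold degreewise in $C$; the sign conventions are associative because the Koszul rule is.

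\emph{Descent to homotopy.} It remains to show that homotopies are respected. By multilinearity it suffices to treat one slot at a time. Suppose $f^1-g^1 = d h + h d$, with all other slots fixed to chain maps $f^j$ for $j\geq 2$. Then the degreewise map $\cpd(h,f^2,\ldots,f^d)$, with the appropriate sign, is a homotopy between $\cpd(f^1,f^2,\ldots)$ and $\cpd(g^1,f^2,\ldots)$. The general case then follows by the telescoping
\[\cpd(f)-\cpd(g)=\sum_j \cpd(g^1,\ldots,g^{j-1},f^j-g^j,f^{j+1},\ldots,f^d).\]
Consequently the morphism-level planar operations are well defined on homotopy classes, and the collection of homotopy categories is a colored canopoly.

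The main obstacle is the sign bookkeeping. The Koszul signs depend on an ordering of the inputs, so one must check that the planar algebra axioms hold when diagrams are glued into one another. When the ordering of inputs changes under nesting, the resulting complexes are isomorphic rather than equal, via the canonical sign-twisted isomorphism. I would handle this as in Example~8.4 of \cite{Bar05}: either fix orderings of inputs as part of the arc diagram data, or accept canonical isomorphisms. The latter makes the structure a canopoly up to coherent isomorphism, which is harmless in the homotopy category.
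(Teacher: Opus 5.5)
Your proposal is correct and follows the route the paper intends. The paper leaves this as an exercise, referring to Example~8.4 of \cite{Bar05} and Theorem~2.1 of \cite{W07}, which build exactly this Koszul-signed tensor product of complexes and check slot by slot that chain homotopies are preserved. Your additivity and multilinearity hypotheses are the ones needed for the statement to make sense, and you should add boundedness of the complexes, so that the degreewise direct sums are finite.
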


\begin{dfn}
    A \textit{morphism of colored canopoly} between $(C(\bnd))_\bnd$ and $(C'(\bnd))_\bnd$ is a collection of functors $(F_\bnd)_\bnd$, where $F_\bnd : C(\bnd) \to C'(\bnd)$, which commute with the operators defined by colored planar diagrams.
\end{dfn}

%% file: Z_BtwTA.tex
\label{BtwT&A}

If we restrict our work to links (and not tangles), only closed webs appear so we can use the Robert--Wagner evaluation defined in \cite{RW20a} to associate to each closed web $\web$ a graded $\scalars_N$-module. This module is denoted by $\ssp_N(\web)$ and called the \textit{$\gl_N$-state space} of $\web$. This construction is functorial:
\begin{align*}
    \ssp_N : \catf(\varnothing)  &\longrightarrow{} \Rmod \\
    \web &\longmapsto \ssp_N(\web) \\
    \left (\web \xrightarrow{\foam} \web' \right)  &\longmapsto  \left (\ssp_N(\web) \xrightarrow{F \circ _{-}} \ssp_N(\web') \right ).
\end{align*}

This functor is compatible with the $\Witt$-action on foams.

\begin{thm}[{\cite[Theorem 4.4]{QRSW24}}]
    For any closed web $\web$, the operators $(\Wgen_n)_{n\geq -1}$ induce an action of $\Witt$ on the state space $\ssp_N(\web)$.
\end{thm}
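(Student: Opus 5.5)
The plan is to realise $\ssp_N(\web)$ through the universal construction and push the foam-level action of Proposition~\ref{action} down to it. Recall that $\ssp_N(\web)$ is the quotient of the free $\scalars_N$-module spanned by foams $\foam\colon \varnothing \to \web$ by the kernel of the bilinear pairing $(\foam,\foam') \mapsto \langle \bar{\foam'}\circ \foam\rangle_N$. Here $\bar{\foam'}\colon \web\to\varnothing$ is the mirror image of $\foam'$.

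The first step is to define the action on representatives. For a foam $\foam$ with boundary $\web$, set $\wgen_n\cdot[\foam] := [\Wgen_n\foam]$, extending $\scalars_N$-semilinearly. That is, for a coefficient $P\in\scalars_N$ we put $\wgen_n\cdot(P[\foam]) = (\wgen_n\cdot P)[\foam] + P[\Wgen_n \foam]$. This is the Leibniz rule with respect to the module structure, and it is consistent with how $\Wgen_n$ treats polynomial decorations in \eqref{polyn}.

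The second step, which I expect to be the main obstacle, is well-definedness. We must show that if $\sum_i P_i \foam_i$ pairs to zero with every foam $\foam'$, then so does $\sum_i \wgen_n\cdot(P_i\foam_i)$. The key identity comes from the Leibniz rule for gluing together with \cite[Prop 4.3]{QRSW23}:
\[
\langle \bar{\foam'}\circ \Wgen_n\foam\rangle_N = \wgen_n\cdot\langle \bar{\foam'}\circ\foam\rangle_N - \langle \Wgen_n(\bar{\foam'})\circ \foam\rangle_N .
\]
Here $\Wgen_n(\bar{\foam'})$ is again a linear combination of foams $\web\to\varnothing$, each of which is the mirror of some foam $\varnothing\to\web$. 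Summing over $i$ with coefficients $P_i$, the first terms reassemble into $\wgen_n$ applied to $\sum_i P_i \langle\bar{\foam'}\circ\foam_i\rangle_N = 0$. The coefficient derivatives $(\wgen_n\cdot P_i)$ cancel against the Leibniz expansion of $\wgen_n\cdot(P_i\langle\cdots\rangle_N)$. The second terms vanish by the hypothesis applied to the foams appearing in $\Wgen_n(\bar{\foam'})$. The delicate point is to check that the Leibniz rule for $\Wgen_n$ really holds for a gluing along a closed web, including the decorations and signs in the digon, zip and saddle terms. This holds by construction, since $\Wgen_n$ is defined on basic pieces and extended by Leibniz, and it is independent of the decomposition by Proposition~\ref{action}.

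The third step is to verify the bracket relation $[\wgen_n,\wgen_m]=(n-m)\wgen_{n+m}$ on $\ssp_N(\web)$. This is immediate from Proposition~\ref{action} applied to $\morf(\varnothing,\web)$: the operators $\Wgen_n$ already satisfy the Witt relations on foams, and the quotient map is equivariant by the second step. The action on coefficients is the standard Witt action on $\scalars_N$. Finally, I would record that the action is compatible with the functor $\ssp_N$, in the sense that $\Wgen_n(G\circ \foam) = \Wgen_n(G)\circ\foam + G\circ\Wgen_n\foam$. This is again the Leibniz rule, and it is what later sections will need.
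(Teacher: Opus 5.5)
The paper does not reprove this statement; it quotes \cite[Theorem 4.4]{QRSW24}. Your Step~2 is the standard descent argument (Leibniz rule plus \cite[Prop 4.3]{QRSW23}, pushed through the universal construction), and it is correct.

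The gap is Step~3. You cannot use Proposition~\ref{action} there. With empty source web, $\morf(\varnothing,\web)$ in $\catf(\varnothing)$ \emph{is} $\ssp_N(\web)$; the paper uses it that way, e.g.\ in the proof of Lemma~\ref{pairing}. So that proposition already contains the theorem. If it were admissible, your Step~2 would be superfluous; since it is circular, the Witt relations are simply asserted.

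Nor do the relations follow from Step~2 and compatibility with evaluation. The defect $D=[\Wgen_m,\Wgen_n]-(m-n)\Wgen_{m+n}$ is a Leibniz operator with $\langle D\foam\rangle_N=0$ for closed $\foam$. But this only gives $\langle G\circ D\foam\rangle_N=-\langle D(G)\circ\foam\rangle_N$, not vanishing. An inner operator $\foam\mapsto\phi_{\web_1}\circ\foam-\foam\circ\phi_{\web_0}$ with $\phi_\varnothing=0$ has the same two properties, yet acts on $\ssp_N(\web)$ by $\phi_\web\circ-$. In fact the $\lambda_n$-terms of $\Wgen_n$ are of exactly this inner type. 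Here $\phi_\web$ is the sum, over trivalent vertices of $\web$ with thin edges $a,b$, of the decoration $\frac{\lambda_n}{2}(a\,p_n^{(b)}+b\,p_n^{(a)})$ on those edges. This is why evaluation cannot detect any constraint on $(\lambda_n)$.

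What you need is the relation on decorated foams before passing to the quotient, and this is a local computation. A commutator of Leibniz operators is again Leibniz, so it suffices to check basic foams.
On polynomial pieces it is the Witt relation for $-\sum_i x_i^{n+1}\partial_{x_i}$.
On every other basic foam, $\Wgen_n$ only adds a decoration $A_n$, so the defect there is $\wgen_m\cdot A_n-\wgen_n\cdot A_m-(m-n)A_{m+n}$; the product terms cancel because decorations commute.
Using $\wgen_m\cdot p_k=-k\,p_{k+m}$, one checks
\[
\sum_{k+l=n}\wgen_m\cdot(p_kq_l)-\sum_{k+l=m}\wgen_n\cdot(p_kq_l)=(m-n)\sum_{k+l=m+n}p_kq_l
\]
for power sums $p_k,q_l$ in two disjoint sets of variables. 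This handles the cup, cap and saddle, as well as the $s$- and $\bar s$-terms.
The $\lambda$-terms require $m\lambda_m-n\lambda_n=(m-n)\lambda_{m+n}$. This holds for $\lambda_n=\lambda(n+1)$ but not for an arbitrary sequence, and it is where the specific choice of parameters enters.

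Once the relations hold on foams, your Step~2 transports them to $\ssp_N(\web)$. Similarly, you do not need Proposition~\ref{action} for independence of the decomposition. Given \cite[Prop 4.3]{QRSW23} for every decomposition, your Step~2 identity shows that two decompositions give classes with the same pairing against every $G$, hence equal classes in $\ssp_N(\web)$.
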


For a foam $\foam$ to induce a morphism of \(\Witt\)-modules between state spaces
it is explained in \cite{GR25} that the Witt action on $\foam$ needs to be trivial.
% This action can be computed before applying $\ssp_N$.
This remark leads us to the following definition. 

\begin{dfn}
    Let $\algtop(\bnd)$ be the subcategory of $\catf(\bnd)$ whose objects are the same but whose morphisms are only those $\scalars_N$-linear combinations of foams $\foam$ such that for all $n \in \N_{-1}$:
    \[\wgen_n \cdot \foam = 0.\]
    Such a foam is called \textit{$\Witt$-equivariant}.
\end{dfn}

\begin{rmk}
    In the case of $\bnd = \varnothing$, we can restrict the functor $\ssp_N$ such that: 
    \[\ssp_N : \algtop(\varnothing) \rightarrow \scalars_N \# \Witt\text{-mod}, \]
    where the arrival category is a category of $\Witt$-module describe in \cite{GR25}. In other words we have the following commutative diagram: 
    \begin{equation}
    \NB{
    \begin{tikzpicture}
        \node (a) at (0,0) {$\algtop(\varnothing)$};
        \node (b) at (3,0) {$\catf(\varnothing)$};
        \node (c) at (0,-1.5) {$\scalars_N \# \Witt\text{-mod}$};
        \node (d) at (3,-1.5) {$\scalars_N\text{-mod}$};
        \draw[->] (a) -- (b) node[pos=0.5,above,scale=0.8]{$i$};
        \draw[->] (b) -- (d) node[pos=0.5,right,scale=0.8]{$\ssp_N$};
        \draw[->] (a) -- (c) node[pos=0.5, scale=0.8, left]{$\ssp_N$}; 
        \draw[->] (c) -- (d) node[pos=.5,scale=.8,above] {$\text{For}$};
    \end{tikzpicture}}   
    \end{equation}
    where For is the forgetful functor.
\end{rmk}

\begin{prop}
    \label{cano_algtop}
    The collection of categories $(\algtop(\bnd))_\bnd$ is a canopoly denoted $\algtop$.
\end{prop}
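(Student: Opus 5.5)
We need to check three things: each $\algtop(\bnd)$ is a category; objects form a colored planar algebra; and morphisms are closed under the planar operations inherited from $\catf$, with the compatibility with composition then inherited. Objects coincide with those of $\catf(\bnd)$, so by Example~\ref{ex:foam_can} the objects already form a colored planar algebra. Since $\algtop(\bnd)$ is a subcategory of $\catf(\bnd)$, and the planar operations on morphisms are those of $\catf$ restricted to $\algtop$, the interchange law (item (3) in the definition of a canopoly) holds automatically. It therefore suffices to show two things: $\algtop(\bnd)$ is indeed a subcategory, and the morphism planar operations of $\catf$ preserve $\Witt$-equivariance.

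\textbf{Subcategory.} Identities are traces of the trivial isotopy, on which every $\Wgen_n$ vanishes by definition, so they are $\Witt$-equivariant. For composition, the operators $\Wgen_n$ are extended to gluings by the Leibniz rule, and Proposition~\ref{action} guarantees this is well defined on $\morf(\web,\web')$. Hence for composable foams,
\[ \wgen_n\cdot(\foam'\circ \foam) = (\wgen_n\cdot \foam')\circ \foam + \foam'\circ(\wgen_n\cdot \foam), \]
which vanishes when both factors are equivariant. This extends to $\scalars_N$-linear combinations, provided the $\scalars_N$-action is compatible. Here I must check that $\algtop$ is closed under $\scalars_N$-linear combinations as written in its definition. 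Since $\wgen_n$ acts on $\scalars_N$ nontrivially, I would read ``$\scalars_N$-linear combinations of foams $\foam$ such that $\wgen_n\cdot\foam=0$'' as the subset of all combinations that are killed by every $\wgen_n$. This subset is at least a $\scalars$-submodule, and it is closed under composition by the Leibniz computation above.

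\textbf{Planar operations.} Given a $d$-input colored planar arc diagram $\cpd$ and equivariant foams $\foam_1,\dots,\foam_d$, the foam $\cpd(\foam_1,\dots,\foam_d)$ is obtained by gluing the $\foam_i$ with $(\text{arcs of }\cpd)\times\interval$. These arc pieces are traces of isotopies (identity sheets), and $\Wgen_n$ vanishes on them. Applying the Leibniz rule over the gluing then gives
\[ \wgen_n\cdot\cpd(\foam_1,\dots,\foam_d)=\sum_i \cpd(\foam_1,\dots,\wgen_n\cdot\foam_i,\dots,\foam_d)=0 . \]

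\textbf{Main obstacle.} The delicate point is justifying that the Leibniz rule is compatible with the foam relations under horizontal gluing, not just vertical gluing. The concern is that $\wgen_n\cdot$ might not be well defined on the result after passing to the quotient by the Robert--Wagner relations in a larger disk. I would handle this by noting that horizontal gluing can be realized as a vertical composition: following Example~\ref{ex:foam_can}, one first tensors each $\foam_i$ with identity sheets elsewhere (which is still a foam on which the Leibniz rule is literally additive over facets), and then composes vertically. Since Proposition~\ref{action} ensures that $\Wgen_n$ is well defined on each morphism space, the vertical case reduces to the subcategory argument. The remaining step, placing a single $\foam_i$ into a larger disk alongside identity webs, preserves equivariance because the action is local: it is defined on basic foams and summed.
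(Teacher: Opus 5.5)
Your proposal is correct and follows the paper's argument. The structure is inherited from $\catf$, and the only real check is that the Leibniz rule, together with $\Wgen_n$ vanishing on traces of isotopy, makes vertical and horizontal gluings of $\Witt$-equivariant foams equivariant. You add details the paper leaves implicit: identities, closure under composition, and well-definedness on the quotient by computing on representatives. Your reading of $\algtop$ as the $\scalars$-module of combinations killed by every $\wgen_n$ is the sensible interpretation of the definition.
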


\begin{proof}
    The canopoly structure is inherited by the one on $(\catf(\bnd))_\bnd$. The only thing to check is that the horizontal composition of $\Witt$-equivariant foams produces a $\Witt$-equivariant foam. This is straightforward as the Witt action respects the Leibniz rule for both vertical and horizontal compositions of foams. 
\end{proof}

% The following corollary is a consequence of Proposition \ref{action}. 

% \begin{cor}
%     \label{coro_enriched}
%     The category $\algtop(\bnd)$ is enriched in the category of $\Witt$-modules.
% \end{cor}

%% file: Z_twist_summary.tex
This section reviews the notions of Maurer--Cartan elements and red dots.
See \cite{GR25} for a more extensive discussion.

By the proposition of Section \ref{sec_action}, the categories \(\catf(\bnd)\) are \textit{enriched in $\Witt$-modules}. That is, \(\Witt\) acts on every hom space by derivations with respect to composition.

\begin{dfn}
\label{defMC}
A \textit{Maurer--Cartan element} for an object \(V\) of \(\catf(\bnd)\) is an \(\scalars\)-linear map \(\alpha: \Witt \to \morf(V,V)\) such that, for all \(X,Y \in \Witt\):
\[
\alpha_{[X,Y]} = [\alpha_X, \alpha_Y] + X\alpha_Y - Y\alpha_X,
\]
where \(\alpha_X\) denotes the endomorphism that \(\alpha\) assigns to \(X\) and \([~,~]\) in \( \morf(V) \) is the commutator.
\end{dfn}

A Maurer--Cartan element ``twists'' the \(\Witt\) action in the following sense.

\begin{prop}[{\cite[Lem 2.6]{GR25}}]
    \label{prop_MC}
    Let \((V, \alpha)\) and \((W, \beta)\) be pairs of objects of \(\catf(\bnd)\), and Maurer--Cartan elements.
    The following defines a new \(\Witt\)-action on
    \( f \in \morf(V,W) \):
    \begin{equation}
    \label{act_mc}
    X \cdot f := Xf + \beta_X \circ f - f \circ \alpha_X
    \end{equation}
\end{prop}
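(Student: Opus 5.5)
We must check two things: the new operation is again a Lie algebra action of \(\Witt\) on \(\morf(V,W)\), i.e. it is \(\scalars\)-linear in \(X\) and satisfies \([X,Y]\cdot f = X\cdot(Y\cdot f) - Y\cdot(X\cdot f)\). Linearity is immediate from the linearity of \(\alpha\), \(\beta\) and of the original action \(f\mapsto Xf\). The bracket identity will be checked by expanding both sides and matching terms.

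\textbf{Expansion.} Fix \(X,Y\in\Witt\) and \(f\in\morf(V,W)\). First,
\[
X\cdot(Y\cdot f) = X\bigl(Yf + \beta_Y\circ f - f\circ\alpha_Y\bigr) + \beta_X\circ(Y\cdot f) - (Y\cdot f)\circ \alpha_X .
\]
The untwisted action acts by derivations with respect to composition; this is the enrichment in \(\Witt\)-modules recalled at the start of the section, coming from Proposition~\ref{action} together with the Leibniz rule. Hence \(X(\beta_Y\circ f) = (X\beta_Y)\circ f + \beta_Y\circ Xf\), and similarly for \(f\circ\alpha_Y\).

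Expanding fully and antisymmetrizing in \(X,Y\) sorts the terms into four groups:
\begin{itemize}
\item[(i)] \(XYf - YXf = [X,Y]f\), since the original operators form an action;
\item[(ii)] \((X\beta_Y - Y\beta_X)\circ f\) and \(-f\circ(X\alpha_Y - Y\alpha_X)\);
\item[(iii)] the quadratic terms \(\beta_X\beta_Y f - \beta_Y\beta_X f = [\beta_X,\beta_Y]\circ f\) and \(f\alpha_Y\alpha_X - f\alpha_X\alpha_Y = -f\circ[\alpha_X,\alpha_Y]\);
\item[(iv)] the mixed terms, which must cancel.
\end{itemize}

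\textbf{Mixed terms.} Group (iv) consists of \(\beta_Y\circ Xf\), \(-Xf\circ\alpha_Y\), \(\beta_X\circ Yf\), \(-Yf\circ\alpha_X\), \(-\beta_X f\alpha_Y\) and \(-\beta_Y f\alpha_X\), together with their \(X\leftrightarrow Y\) counterparts. Each such term appears once with each sign after antisymmetrization: for example \(\beta_Y\circ Xf\), coming from the derivation, cancels against the term \(\beta_Y\circ Xf\) inside \(-Y\cdot(X\cdot f)\). This bookkeeping of signs, and making sure every cross term is paired, is the only place needing care. It is the main obstacle, although it is purely mechanical.

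\textbf{Conclusion.} Collecting the surviving terms gives
\[
[X,Y]f + \bigl([\beta_X,\beta_Y] + X\beta_Y - Y\beta_X\bigr)\circ f - f\circ\bigl([\alpha_X,\alpha_Y] + X\alpha_Y - Y\alpha_X\bigr).
\]
By the Maurer--Cartan condition of Definition~\ref{defMC} applied to \(\alpha\) and to \(\beta\), this equals
\[
[X,Y]f + \beta_{[X,Y]}\circ f - f\circ\alpha_{[X,Y]} = [X,Y]\cdot f,
\]
which is exactly the required identity.
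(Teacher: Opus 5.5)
Your proof is correct. In \(X\cdot(Y\cdot f)\), the mixed terms \(\beta_Y\circ Xf+\beta_X\circ Yf-Xf\circ\alpha_Y-Yf\circ\alpha_X-\beta_X\circ f\circ\alpha_Y-\beta_Y\circ f\circ\alpha_X\) are symmetric in \(X,Y\), so they cancel on antisymmetrization, and the surviving terms assemble into \([X,Y]\cdot f\) exactly via the Maurer--Cartan identity of Definition~\ref{defMC} with the paper's sign conventions. The paper gives no proof of its own and only quotes \cite[Lem 2.6]{GR25}; your argument is the direct verification one would expect there, using the derivation property of the enriched action and the fact that the untwisted operators already form an action.
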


For notational convenience we introduce a diagrammatic formalism to track specific Maurer--Cartan elements.

\begin{lem}[{\cite[Lem 4.5]{GR25}}]
\label{lem_reddot}
Suppose \(\web\) is a web with an $a$-labeled edge \(e\).
Let \(p_k^{(e)}\) and \(\widehat{p}_k^{(e)}\) denote the
foams \(\web \times \interval\) decorated by the power sums \(p_k \in Sym_a\) and \(\widehat{p}_k \in Sym_{N-a}\) respectively on the thickness $a$ facet.
The following three maps define Maurer-Cartan elements for \(\Gamma\).
	\begin{itemize}
		\item A hollow red dot 
		\[\rd: L_n \mapsto -\sum_{k=0}^np_k^{(e)}p_{n-k}^{(e)}.\]
		\item A solid red dot
		\[\srd: L_n \mapsto -\sum_{k=0}^np^{(e)}_k\widehat{p}^{(e)}_{n-k}.\]
		\item A triangular red dot
		\[\trd: L_n \mapsto -(n+1) p^{(e)}_n\]
	\end{itemize}
\end{lem}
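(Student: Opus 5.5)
The plan is to check the Maurer--Cartan identity of Definition~\ref{defMC} directly on the basis $(\wgen_n)_{n\ge -1}$ of $\Witt$. Both sides of
\[
\alpha_{[X,Y]} = [\alpha_X,\alpha_Y] + X\alpha_Y - Y\alpha_X
\]
are bilinear and antisymmetric in $(X,Y)$, so it suffices to take $X=\wgen_n$, $Y=\wgen_m$ with $n,m\ge -1$.

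\emph{Simplification.} All three proposed $\alpha$'s take values in identity foams $\web\times\interval$ decorated by a polynomial on the single facet over $e$. Such endomorphisms commute with one another, since composing them just multiplies decorations on the same facet. Hence $[\alpha_X,\alpha_Y]=0$. Moreover $\web\times\interval$ is a trace of isotopy, so by the definition of the operators $\Wgen_n$ and the Leibniz rule, $\wgen_m$ acts on such a foam only through its decoration, via \eqref{polyn}. The identity therefore reduces to a polynomial identity in $Sym_a\otimes Sym_{N-a}$:
\[
(n-m)\,\alpha_{n+m} \;=\; \wgen_n\cdot\alpha_m-\wgen_m\cdot\alpha_n .
\]
The basic input is $\wgen_n\cdot p_k=-k\,p_{n+k}$, obtained from $-\sum_i x_i^{n+1}\partial_{x_i}(x_1^k+\dots)$. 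Since the action on $Sym_a\otimes Sym_{N-a}$ differentiates in all $N$ variables, the same formula gives $\wgen_n\cdot\widehat p_k=-k\,\widehat p_{n+k}$. Throughout, $\wgen_n$ acts as a derivation, and I use the convention that the sum defining $\alpha_{-1}$ is empty, so $\alpha_{-1}=0$ for the two round red dots.

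\emph{Triangular dot.} For $\alpha_n=-(n+1)p_n$ the right-hand side is
\[
(m+1)m\,p_{n+m}-(n+1)n\,p_{n+m}=(m-n)(n+m+1)\,p_{n+m},
\]
which equals $(n-m)\alpha_{n+m}$. This case is immediate.

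\emph{Hollow and solid dots.} For $\alpha_n=-\sum_{k=0}^n p_kp_{n-k}$, the Leibniz rule and the symmetry $k\leftrightarrow n-k$ give
\[
\wgen_m\cdot\alpha_n=2\sum_{k=0}^n k\,p_{k+m}p_{n-k}.
\]
I would then write both $\wgen_n\cdot\alpha_m$ and $\wgen_m\cdot\alpha_n$ as sums over ordered pairs $(i,j)$ with $i+j=n+m$. The coefficient of $p_ip_j$ is $2(i-n)$ on $n\le i\le n+m$ in the first sum, and $2(i-m)$ on $m\le i\le n+m$ in the second. After symmetrizing in $(i,j)$, I will check that the difference has constant coefficient $-(n-m)$ on every pair with $0\le i\le n+m$. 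The case $n=-1$ or $m=-1$ needs separate care, where the sums are shifted and $p_0=a$ appears.

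The solid dot $\alpha_n=-\sum_k p_k\widehat p_{n-k}$ is handled the same way. Here the two tensor factors are not interchangeable, so I will not symmetrize. Instead I compute $\wgen_m\cdot\alpha_n=\sum_k\bigl(k\,p_{k+m}\widehat p_{n-k}+(n-k)\,p_k\widehat p_{n-k+m}\bigr)$ and compare coefficients of $p_i\widehat p_j$ directly; the two terms play the roles of the two symmetric halves above.

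\emph{Main obstacle.} The only real difficulty is this index bookkeeping for the quadratic dots: boundary terms at the ends of the ranges, the contributions of $p_0=a$ and $\widehat p_0=N-a$, and the $-1$ cases. I will first verify the identity for small values such as $(n,m)=(-1,0)$ and $(0,1)$ as a sanity check, then do the general coefficient comparison.
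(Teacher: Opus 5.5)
Your proposal is correct. The paper gives no proof of this lemma; it quotes it from \cite{GR25}. A direct check on the basis $(\wgen_n)_{n\ge -1}$ is the natural argument, and your reduction to $(n-m)\alpha_{n+m}=\wgen_n\cdot\alpha_m-\wgen_m\cdot\alpha_n$ is justified: decorated identity foams commute, and $\Wgen_n$ only sees their decorations.

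The bookkeeping you postpone does close. For the solid dot, the coefficient of $p_i\widehat p_{n+m-i}$, for $0\le i\le n+m$, is $(i-n)+(m-i)=m-n$:
\begin{itemize}
\item terms with coefficient $i-n$ occur for $i\in[n,n+m]$ (from $\wgen_n\cdot\alpha_m$) and for $i\in[0,n]$ (from $-\wgen_m\cdot\alpha_n$);
\item these ranges cover $[0,n+m]$ and overlap only at $i=n$, where $i-n=0$;
\item the terms with coefficient $m-i$ behave symmetrically.
\end{itemize}
For the hollow dot, let $D(i)\in\Z$ be the coefficient of $p_ip_{n+m-i}$. The same reasoning gives $D(i)+D(n+m-i)=2(m-n)$. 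This suffices after pairing $i$ with $n+m-i$; a middle term, if there is one, satisfies $2D=2(m-n)$ in $\Z$. Every term involving $p_{-1}$ or $\widehat p_{-1}$ carries coefficient $0$, so the cases $n=-1$ or $m=-1$ need no separate treatment.

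There is also a structural explanation of the three formulas. Since the values commute, the identity is linear in $\alpha$ and reads $\alpha_{[X,Y]}=X\alpha_Y-Y\alpha_X$. Two classes of solutions are:
\begin{itemize}
\item the divergence cocycle $\wgen_n\mapsto\sum_{i=1}^a\partial_{x_i}(\wgen_n\cdot x_i)$;
\item logarithmic derivatives $X\mapsto(X\cdot u)/u$, computed over $\Z$ and then base-changed.
\end{itemize}
Let $x_1,\dots,x_a$ and $y_1,\dots,y_{N-a}$ be the variables of $Sym_a\otimes Sym_{N-a}$, and set $h_n(x,y)=\sum_{k=0}^n x^ky^{n-k}$. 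Then $\wgen_n\cdot(x-y)=-(x-y)h_n(x,y)$, which gives:
\begin{itemize}
\item the triangular dot is the divergence cocycle above;
\item the solid dot is $(\wgen_n\cdot R)/R$ with $R=\prod_{i,j}(x_i-y_j)$;
\item the hollow dot is the triangular dot plus $(\wgen_n\cdot\Delta^2)/\Delta^2$ with $\Delta=\prod_{i<i'}(x_i-x_{i'})$.
\end{itemize}
This viewpoint removes the index bookkeeping entirely, at the cost of passing through fractions. Your computation stays polynomial and elementary.
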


\begin{dfn}
\label{def_dots}
A \textit{red-dotted} web is a web \(\web\) endowed with a finite collection \(\cold\) of \textit{red dots} labelled by elements of \(\scalars\) on the interiors of the edges of \(\web\) representing linear combinations of the Maurer--Cartan elements defined in Lemma \ref{lem_reddot}.
\end{dfn}

The twists induced by red dots can be expressed schematically:

\begin{align}
    \wgen_n * \, \NB{\tikz[font=\small, scale=1.5]{\input{\imagesfolder/sch/def_dots_circle_hollow}}} \, &=
    \wgen_n \cdot \NB{\tikz[font=\small, scale=1.5]{\input{\imagesfolder/sch/def_dots_blank}}}
    -\omega \sum_{k=0}^n \NB{\tikz[font=\small, scale=1.5]{\input{\imagesfolder/sch/def_dots_hollow_blank}}}
    \\ 
    \wgen_n * \, \NB{\tikz[font=\small, scale=1.5]{\input{\imagesfolder/sch/def_dots_circle_solid}}} \, &=
    \wgen_n \cdot \NB{\tikz[font=\small, scale=1.5]{\input{\imagesfolder/sch/def_dots_blank}}}
    -\omega \sum_{k=0}^n \NB{\tikz[font=\small, scale=1.5]{\input{\imagesfolder/sch/def_dots_solid_blank}}}
    \\
    \wgen_n * \, \NB{\tikz[font=\small, scale=1.5]{\input{\imagesfolder/sch/def_dots_trian_hollow}}} \, &=
    \wgen_n \cdot \NB{\tikz[font=\small, scale=1.5]{\input{\imagesfolder/sch/def_dots_blank}}}
    -\omega(n+1) ~ \NB{\tikz[font=\small, scale=1.5]{\input{\imagesfolder/sch/def_dots_trian_hollow_blank}}} 
    %\\ \wgen_n \cdot \, \ttkz[font=\small, scale=1.5]{sch/def_dots_trian_solid} \, &= \wgen_n \cdot \ttkz[font=\small, scale=1.5]{sch/def_dots_blank} - \lambda(n+1) ~ \ttkz[font=\small, scale=1.5]{sch/def_dots_trian_solid_n}
\end{align} 

\begin{rmk}
Following Proposition \ref{prop_MC}, the twists from red dots on the target web of a foam are subtracted, while the twists on the source web are added. 
By the Leibniz rule, red-dotted twists are additive.
\end{rmk}

\begin{dfn}
To simplify computations, we introduce the following notation: 
\[
\NB{\tikz[font=\small, scale=0.5]{\input{\imagesfolder/sch/notation_reddot2}}} :=
\NB{\tikz[font=\small, scale=0.5]{\input{\imagesfolder/sch/notation_reddot1}}} \hspace{3mm}
\text{and} \hspace{3mm}
\NB{\tikz[font=\small, yscale=-1, scale=0.5]{\input{\imagesfolder/sch/notation_reddot4}}} :=
\NB{\tikz[font=\small, yscale=-1, scale=0.5]{\input{\imagesfolder/sch/notation_reddot3}}}
.\]
\[
\NB{\tikz[font=\small, scale=0.5]{\input{\imagesfolder/sch/notation_reddot2_solid}}} :=
\NB{\tikz[font=\small, scale=0.5]{\input{\imagesfolder/sch/notation_reddot1_solid}}} \hspace{3mm}
\text{and} \hspace{3mm}
\NB{\tikz[font=\small, yscale=-1, scale=0.5]{\input{\imagesfolder/sch/notation_reddot4_solid}}} :=
\NB{\tikz[font=\small, yscale=-1, scale=0.5]{\input{\imagesfolder/sch/notation_reddot3_solid}}}
.\]
\end{dfn}

The following is direct consequence of dot migration \cite[(11)]{RW20a}.
\begin{lem}
    \label{lemm_rd_3}
    The following objects are isomorphic in $\ralgtop((-b,-a,a+b))$: 
    \begin{equation}
        \NB{\tikz[font=\small, scale=0.5]{\input{\imagesfolder/sch/notation_reddot2_solid}}} \simeq \NB{\tikz[font=\small, scale=0.5]{\input{\imagesfolder/sch/notation_reddot2}}}
	\text{and} \hspace{3mm}
        \NB{\tikz[font=\small, yscale=-1, scale=0.5]{\input{\imagesfolder/sch/notation_reddot4_solid}}} \simeq \NB{\tikz[font=\small, yscale=-1, scale=0.5]{\input{\imagesfolder/sch/notation_reddot4}}}.
    \end{equation}
\end{lem}

\begin{lem}
    \label{circ_tri}
    The following objects are isomorphic in $\ralgtop((-1,1))$:  
    \begin{equation}
        \NB{\tikz[font=\small]{\input{\imagesfolder/sch/lemm_circ_tri_1}}} = \NB{\tikz[font=\small]{\input{\imagesfolder/sch/lemm_circ_tri_2}}}.
    \end{equation}
\end{lem}

\begin{prop}[{\cite[Cor 4.2]{GR25}}]
    \label{flatness}
    Let $(\web,\cold)$ and $(\web',\cold')$ be two red dotted webs with the same boundary.
    Let \(\tau\) and \(\tau'\) denote the Maurer--Cartan elements determined by \(\cold\) and \(\cold'\), respectively.
    Then $\foam \in \morf(\web,\web')$
    is $\Witt$-equivariant on state spaces if and only if, for all $n \in \N_{-1}$ 
    \begin{equation}
        \label{eq_action}
        \wgen_n * \foam := \wgen_n \cdot \foam + \tau_{\wgen_n} \circ \foam - \foam \circ \tau^\prime_{\wgen_n} = 0.
    \end{equation}
\end{prop}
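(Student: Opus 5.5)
The plan is to compare the twisted action on $\foam$ with the action on state spaces by precomposing with arbitrary "states" and applying the Leibniz rule. Recall how $\Witt$ acts on state spaces of red-dotted webs. For a closed red-dotted web $(\web,\cold)$, an element of $\ssp_N(\web)$ is represented by a foam $G\colon \varnothing\to\web$. The empty web carries the zero Maurer--Cartan element, so Proposition~\ref{prop_MC} gives the action
\[
X\cdot G = XG + \tau_X\circ G,
\]
where $XG$ is the action of Proposition~\ref{action}. I use the sign convention of~\eqref{eq_action} consistently throughout. The foam $\foam$ acts by $G\mapsto \foam\circ G$. Because $\Witt$ acts on hom spaces by derivations for composition, we get
\[
X\cdot(\foam\circ G)-\foam\circ(X\cdot G)=(X*\foam)\circ G .
\]
Here the twist terms $\tau_X$ and $\tau'_X$ recombine exactly into the definition~\eqref{eq_action} of $X*\foam$. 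This identity is purely formal: it is Proposition~\ref{prop_MC} applied to a composite. It immediately gives the ``if'' direction.

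For the ``only if'' direction I must show that $(X*\foam)\circ G=0$ for all states $G$ forces $X*\foam=0$ in $\morf(\web,\web')$. This follows from how $\catf$ is defined. Morphism spaces are quotients by the kernel of the Robert--Wagner pairing (the universal construction~\cite{BHMV92,RW20a}), so a foam is zero if and only if every closed foam obtained by capping it off evaluates to zero. For closed webs, any closing of $X*\foam$ can be written as $H\circ(X*\foam)\circ G$ with $G\colon\varnothing\to\web$ and $H\colon\web'\to\varnothing$. Since $(X*\foam)\circ G=0$ in $\ssp_N(\web')$, all these evaluations vanish, and hence $X*\foam=0$.

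When $\web,\web'$ have boundary, ``equivariant on state spaces'' means equivariant after every closure by a planar arc diagram. In that case I would first check that the twisted action commutes with horizontal composition by identity foams on the closing webs. This holds because the Witt action satisfies the Leibniz rule horizontally (as in Proposition~\ref{cano_algtop}), and red-dot twists are additive. The same non-degeneracy argument then applies: every closure of $\foam$ in $\Disk\times\interval$ factors as a closed web foam precomposed and postcomposed as above.

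I expect the main obstacle to be the bookkeeping rather than the idea. One point is checking that the signs of the twists on source and target match the convention of~\eqref{eq_action} once the state-space action is written out. The other is justifying carefully that non-degeneracy of the Robert--Wagner pairing detects morphisms in $\catf(\bnd)$ through closures. For the latter I would use the description $\morf(\web,\web')\cong\ssp_N(\overline{\web}\cup\web')$ obtained by bending the boundary, which reduces everything to the closed case.
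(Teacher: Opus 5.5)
Your ``if'' direction is fine. The identity $X\cdot(\foam\circ G)-\foam\circ(X\cdot G)=(X*\foam)\circ G$ follows formally from the Leibniz rule and Proposition~\ref{prop_MC}, up to the source/target typo in \eqref{eq_action}, which you rightly treat as bookkeeping.

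The gap is in the ``only if'' direction. You claim that every closing of a foam $\Phi\colon\web\to\web'$ between closed webs has the form $H\circ\Phi\circ G$, and this is false. A closing is a foam in the complement of $\Disk\times\interval$. That complement is a ball whose boundary sphere contains both $\web\times\{0\}$ and $\web'\times\{1\}$, so a closing may have facets running from $\web'$ around the outside back to $\web$. For instance, closing the identity cylinder on a $1$-labeled circle by an outer annulus produces a torus (the ``trace'' closure). The closing piece is connected, so it cannot be split into a $G$ below and an $H$ above.

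Consequently, ``$\Phi=0$ if and only if all split closings $H\circ\Phi\circ G$ evaluate to zero'' is exactly faithfulness of $\ssp_N$ on closed webs. That is what has to be proved; it does not follow from the universal construction alone. Bending does not remove the problem. It identifies $\morf(\web,\web')$ with $\ssp_N(\web'\sqcup(-\Bar{\web}))$, but you still need to pass from $(X*\foam)\circ G=0$ for all $G\colon\varnothing\to\web$ to $\bend(X*\foam)=(X*\foam\sqcup\Id_{-\Bar{\web}})\circ\mathrm{cup}(\web)=0$. For that, $\mathrm{cup}(\web)$ must be a sum of split states $G_i\sqcup G_i'$.

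The missing ingredient is the monoidality (together with freeness) of Robert--Wagner state spaces \cite{RW20a}: disjoint union induces an isomorphism $\ssp_N(\web)\otimes_{\scalars_N}\ssp_N(-\Bar{\web})\cong\ssp_N(\web\sqcup(-\Bar{\web}))$. This gives exactly the required splitting of $\mathrm{cup}(\web)$. It therefore makes $\morf(\web,\web')\to\Hom_{\scalars_N}(\ssp_N(\web),\ssp_N(\web'))$ injective, and with it your argument is complete.

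Alternatively, you can avoid monoidality if you read ``equivariant on state spaces'' as equivariance after every planar closure, including juxtaposition with identity foams on other webs. This is already how you read it when $\bnd\neq\varnothing$. Apply your identity to $\cpd_\bnd(\foam,\Id_{-\Bar{\web}})$ and the single state $\mathrm{cup}(\web)$. By the horizontal Leibniz rule, the twisted action of $\cpd_\bnd(\foam,\Id_{-\Bar{\web}})$ is $\cpd_\bnd(X*\foam,\Id_{-\Bar{\web}})$. This yields $\bend(X*\foam)=0$, hence $X*\foam=\bend^{-1}(\bend(X*\foam))=0$.

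For comparison, the paper gives no argument for this statement and simply imports it from \cite{GR25}.
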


\begin{exa}
    The two following foams:
    \[\NB{\tikz[font=\tiny, scale=1]{\input{\imagesfolder/sch/foam_zip_a_empty}}} \hspace{5.5mm} \text{and} \hspace{5.5mm} \NB{\tikz[font=\tiny, scale=1]{\input{\imagesfolder/sch/foam_unzip_a_empty}}}\]
    induce two $\Witt$-equivariant morphisms:
    \[ \mapzip: \NB{\tikz[font=\tiny, scale=0.7]{\input{\imagesfolder/sch/web_two_arrows_ab}}} \rightarrow \NB{\tikz[font=\tiny, scale=0.7]{\input{\imagesfolder/sch/web_zip_end}}} \hspace{5.5mm} \text{and} \hspace{5.5mm} \mapunzip: \NB{\tikz[font=\tiny, scale=0.7]{\input{\imagesfolder/sch/web_unzip_beg}}} \rightarrow \NB{\tikz[font=\tiny, scale=0.7]{\input{\imagesfolder/sch/web_two_arrows_ab}}}.\]
    More examples of this type can be found in Section~4.3 of \cite{GR25}. We recall that, compared with the uncolored case, we set $\lambda = \mu$.
\end{exa}

\begin{dfn}
Let \(\mathrm{r}\catf(\bnd)\) denote the \(\Witt\)-enriched category consisting of pairs of objects and Maurer--Cartan elements in \(\catf(\bnd)\) with the \(\Witt\)-actions on morphisms defined by (\ref{act_mc}).
     Let $\ralgtop(\bnd)$ be the subcategory of \(\mathrm{r}\catf(\bnd)\) whose morphisms are only those which are killed by \(\Witt\).
\end{dfn}

The proof of Proposition \ref{prop:canord} is the same as that of Proposition \ref{cano_algtop}.
\begin{prop}
    \label{prop:canord}
    The collection of categories \((\ralgtop(\bnd))_\bnd\) denoted $\ralgtop$ is a colored canopoly. 
\end{prop}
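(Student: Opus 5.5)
The canopoly structure of $\ralgtop$ will be inherited from that of $\catf$ (Example~\ref{ex:foam_can}), following the proof of Proposition~\ref{cano_algtop}. Objects of $\ralgtop(\bnd)$ are pairs $(\web,\alpha)$ of a web and a Maurer--Cartan element. The first step is to define the planar algebra structure on objects. For a $d$-input colored planar arc diagram $\cpd$ and objects $(\web_i,\alpha^{(i)})$, I set
\[
\cpd\big((\web_1,\alpha^{(1)}),\ldots,(\web_d,\alpha^{(d)})\big) := \Big(\cpd(\web_1,\ldots,\web_d),\ \sum_i \cpd(\mathrm{id},\ldots,\alpha^{(i)},\ldots,\mathrm{id})\Big).
\]
Here $\cpd(\mathrm{id},\ldots,\alpha^{(i)}_X,\ldots,\mathrm{id})$ is the horizontal composition of $\alpha^{(i)}_X$ with identity foams (cylinders) on the other inputs and on the arcs. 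For red dots this is simply placing the dots on the glued web, as in Definition~\ref{def_dots}.

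The second step is to check that this sum is again a Maurer--Cartan element. Horizontal composition commutes with vertical composition, so the embedded copies of $\alpha^{(i)}_X$ and $\alpha^{(j)}_Y$ commute for $i\neq j$: they are supported in disjoint input disks. The commutator $[\alpha_X,\alpha_Y]$ of the sum therefore reduces to $\sum_i[\alpha^{(i)}_X,\alpha^{(i)}_Y]$. By the Leibniz rule, $X$ acts on $\cpd(\ldots,\alpha^{(i)}_Y,\ldots)$ only through $\alpha^{(i)}_Y$, because identity foams are traces of isotopies and are killed by $\Wgen_n$. The Maurer--Cartan equation of Definition~\ref{defMC} then holds summand by summand.

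The third step is to treat morphisms. Given foams $f_i\colon(\web_i,\alpha^{(i)})\to(\web'_i,\beta^{(i)})$ killed by the twisted action~\eqref{act_mc}, I will show that $\cpd(f_1,\ldots,f_d)$ is killed as well. By the Leibniz rule for horizontal gluing,
\[
X\cdot\cpd(f_1,\ldots,f_d) = \sum_i \cpd(f_1,\ldots,Xf_i,\ldots,f_d).
\]
The twist terms $\beta_X\circ\cpd(f)-\cpd(f)\circ\alpha_X$ decompose as $\sum_i\cpd(f_1,\ldots,\beta^{(i)}_Xf_i-f_i\alpha^{(i)}_X,\ldots,f_d)$, using the interchange law between vertical and horizontal composition. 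Adding the two expressions gives $\sum_i \cpd(f_1,\ldots,X* f_i,\ldots,f_d)=0$.

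Composition of twist-killed morphisms is twist-killed, since the twisted action is still a derivation: the middle twists cancel. The compatibility axioms of a canopoly (associativity of planar operations and commutation with vertical composition) are inherited from $\catf$ because they hold already at the level of foams. The main obstacle is the bookkeeping in the interchange step, namely making sure that $\alpha^{(i)}_X$ placed in the glued web really commutes with the foams $f_j$ for $j\neq i$. This holds because they are supported in disjoint disks, and it is exactly the canopoly axiom~(3) for $\catf$.
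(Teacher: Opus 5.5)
Your proposal is correct and follows the paper's route: the paper proves this exactly as Proposition~\ref{cano_algtop}, inheriting the canopoly structure from $\catf$ and checking via the Leibniz rule that horizontal composition preserves $\Witt$-equivariance (here, with respect to the twisted action). You additionally make explicit two things the paper leaves implicit: how Maurer--Cartan elements are glued (as the sum of the embedded ones) and the verification that this sum again satisfies the Maurer--Cartan equation.
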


%% file: sch/def_dots_circle_hollow.tex
\begin{scope}
  \draw[thin] (0,0) rectangle (1,0.7);
  \draw[thick] (0,0.7) -- (1,0.7)  coordinate [pos=0.5] (A);
  \filldraw[draw= \crd, fill = white] (A) circle (.7mm)
  node[below, yshift=-0.5mm] {\textcolor{\crd}{$\omega$}};
\end{scope}

%% file: sch/def_dots_blank.tex
\begin{scope}
  \draw[thin] (0,0) rectangle (1,0.7);
  \draw[thick] (0,0.7) -- (1,0.7)  coordinate [pos=0.5] (A);
  % \filldraw[draw= \crd, fill = white] (A) circle (.7mm)
  % node[below, yshift=-0.5mm] {$\omega$};
\end{scope}

%% file: sch/def_dots_hollow_blank.tex
\begin{scope}
  \draw[thin] (0,0) rectangle (1,0.7);
  \draw[thick] (0,0.7) -- (1,0.7)  coordinate [pos=0.5] (A);
  % \filldraw[draw= \crd, fill = white] (A) circle (.7mm)
  % node[below, yshift=-0.5mm] {$\omega$};
  \node (B) at (.52,.3) {$\deco_k \deco_{n-k}$};
\end{scope}

%% file: sch/def_dots_circle_solid.tex
\begin{scope}
  \draw[thin] (0,0) rectangle (1,0.7);
  \draw[thick] (0,0.7) -- (1,0.7)  coordinate [pos=0.5] (A);
  \filldraw[draw= \crd, fill = \csrd] (A) circle (.7mm)
  node[below, yshift=-0.5mm] {\textcolor{\crd}{$\omega$}};
\end{scope}

%% file: sch/def_dots_solid_blank.tex
\begin{scope}
  \draw[thin] (0,0) rectangle (1,0.7);
  \draw[thick] (0,0.7) -- (1,0.7)  coordinate [pos=0.5] (A);
  % \filldraw[draw= \crd, fill = \csrd] (A) circle (.7mm)
  % node[below, yshift=-0.5mm] {$\omega$};
  \node (B) at (.52,.3) {$\deco_k\antideco_{n-k}$};
\end{scope}

%% file: sch/def_dots_trian_hollow.tex
\begin{scope}
  \draw[thin] (0,0) rectangle (1,0.7);
  \draw[thick] (0,0.7) -- (1,0.7)  coordinate [pos=0.5] (A); 
  \node (C) at (0.5,0.7) [draw=\crd, fill=white,regular polygon, regular polygon sides=3,inner sep=1.5pt]{};
  \node (D) at (0.5,0.7) [below,yshift=-0.5mm]{\textcolor{\crd}{$\omega$}};   
\end{scope}

%% file: sch/def_dots_trian_hollow_blank.tex
\begin{scope}
  \draw[thin] (0,0) rectangle (1,0.7);
  \draw[thick] (0,0.7) -- (1,0.7)  coordinate [pos=0.5] (A); 
  % \node (C) at (0.5,0.7) [draw=\crd, fill=white,regular polygon, regular polygon sides=3,inner sep=1.5pt]{};
  % \node (D) at (0.5,0.7) [below,yshift=-0.5mm]{$\omega$}; 
  \node (B) at (.55,.3) {$\deco_n$};
\end{scope}

%% file: sch/lemm_circ_tri_1.tex
\begin{scope}
  \coordinate (m) at (0,0);
  \coordinate (t) at (2,0);
  \draw[->] (m) -- (t) node[pos = 1, right,font=\tiny] {$1$} coordinate[pos=0.5] (ga) node[pos= 0.5 , draw=\crd, fill=white,regular polygon, regular polygon sides=3,inner sep=1.2pt] {} node[pos=0.5, above, yshift=0.5mm]{\color{\crd}$\omega$};
\end{scope}

%% file: sch/lemm_circ_tri_2.tex
\begin{scope}
  \coordinate (m) at (0,0);
  \coordinate (t) at (2,0);
  \draw[->] (m) -- (t) node[pos = 1, right,font=\tiny] {$1$} coordinate[pos=0.5] (ga) coordinate[pos=0.7] (gb);
  \filldraw[draw= \crd, fill = white] (ga) circle (1mm) node[yshift= 3mm] {\color{\crd}$\omega$};
\end{scope}

%% file: sch/foam_zip_a_empty.tex
\begin{scope}
  \begin{scope}
    \coordinate (L1) at (0.2,0.4);
    \coordinate (L2) at (0,0);
    \coordinate (R1) at (2.2,0.4);
    \coordinate (R2) at (2,0);
    \coordinate (ML) at (0.6, 0.2);
    \coordinate (MR) at (1.6, 0.2);
    \draw[->-] (ML) -- (MR) node[above, midway] {$a+b$};
    \draw (MR) .. controls +(0, 0) and +(-0.3,0) .. (R1) ;
    \draw (MR) .. controls +(0, 0) and +(-0.3,0) .. (R2);
    \draw (L1) .. controls +( 0.3, 0) and +(0,0) .. (ML);
    \draw (L2) .. controls +( 0.3, 0) and +(0,0) .. (ML);
  \end{scope}  
 \begin{scope}[yshift = -1cm]
    \coordinate (L1B) at (0.2,0.4);
    \coordinate (L2B) at (0,0);
    \coordinate (R1B) at (2.2,0.4);
    \coordinate (R2B) at (2,0);
    \draw[->-] (L1B) .. controls +( 0, 0) and +(0,0) .. (R1B) node [right, pos
    = 1] {$a$};
    \draw[->-] (L2B) .. controls +( 0, 0) and +(0,0) .. (R2B) node [right, pos
    = 1] {$b$};
 \end{scope}  
  \draw (R1) -- (R1B);
  \draw (R2) -- (R2B);
  \draw (L1) -- (L1B);
  \draw (L2) -- (L2B);
  \draw (ML) .. controls +(0, -0.6) and +(0, -0.6) .. (MR);
\end{scope}

%% file: sch/foam_unzip_a_empty.tex
\begin{scope}
  \begin{scope}
    \coordinate (L1) at (0.2,0.4);
    \coordinate (L2) at (0,0);
    \coordinate (R1) at (2.2,0.4);
    \coordinate (R2) at (2,0);
    \coordinate (ML) at (0.6, 0.2);
    \coordinate (MR) at (1.6, 0.2);
    \draw[->-] (ML) -- (MR) node[below, midway] {$a+b$};
    \draw (MR) .. controls +(0, 0) and +(-0.3,0) .. (R1) ;
    \draw (MR) .. controls +(0, 0) and +(-0.3,0) .. (R2);
    \draw (L1) .. controls +( 0.3, 0) and +(0,0) .. (ML);
    \draw (L2) .. controls +( 0.3, 0) and +(0,0) .. (ML);
  \end{scope}  
 \begin{scope}[yshift = 1cm]
    \coordinate (L1B) at (0.2,0.4);
    \coordinate (L2B) at (0,0);
    \coordinate (R1B) at (2.2,0.4);
    \coordinate (R2B) at (2,0);
    \draw[->-] (L1B) .. controls +( 0, 0) and +(0,0) .. (R1B) node [left, pos
    = 0] {$a$};
    \draw[->-] (L2B) .. controls +( 0, 0) and +(0,0) .. (R2B) node [left, pos
    = 0] {$b$};
 \end{scope}  
  \draw (R1) -- (R1B);
  \draw (R2) -- (R2B);
  \draw (L1) -- (L1B);
  \draw (L2) -- (L2B);
  \draw (ML) .. controls +(0, 0.6) and +(0, 0.6) .. (MR);
\end{scope}

%% file: sch/web_two_arrows_ab.tex
\begin{scope}
    \coordinate (bl) at (-0.5, -1);
    \coordinate (br) at ( 0.5, -1);
    \coordinate (tl) at (-0.5,  1);
    \coordinate (tr) at ( 0.5,  1);
    \draw[->] (bl) -- (tl) node[pos=0,below] {$a$};
    \draw[->] (br) -- (tr) node[pos=0,below] {$b$};
\end{scope}

%% file: Z_RHC_2.tex
% We consider the homotopy category \(\mathcal{C}(\ralgtop(\bnd))\) of chain complexes whose objects and morphisms are those of \(\ralgtop(\bnd)\).

\begin{dfn}
    Let \(\mathrm{For}\) denote the following two functors that forget red dots and the \(\Witt\)-action.
\begin{align*}
    \mathrm{For}:\ralgtop(\bnd) &\longrightarrow \catf(\bnd), %\\
    % (\web,\alpha) &\longmapsto \web \\
    % \foam &\longmapsto \foam
\end{align*}
\[\mathrm{For}: \mathcal{C}(\ralgtop(\bnd)) \longrightarrow \mathcal{C}(\catf(\bnd)).\]
\end{dfn}

\begin{dfn}
    The \textit{relative homotopy category} is the Verdier quotient
    \[\relcat{\bnd}:= \frac{\mathcal{C}(\ralgtop(\bnd))}{\Ker(\mathrm{For})}.\]
\end{dfn}

The universal property of the Verdier quotient provides the following commutative diagram:
\begin{equation}
    \NB{
    \begin{tikzpicture}
        \node (a) at (0,0) {$\mathcal{C}(\ralgtop(\bnd))$};
        \node (b) at (5,0) {$\mathcal{C}(\catf(\bnd))$};
        \node (c) at (2.5,-1.5) {$\relcat{\bnd}$};
        \draw[->] (a) -- (b) node[pos=0.5,above,scale=0.8]{$\mathrm{For}$};
        \draw[dashed,->] (c) -- (b) node[pos=0.5,right,scale=0.8]{$\forg$};
        \draw[->] (a) -- (c) node[pos=0.5, scale=0.8, left, xshift=-1.5mm]{$\Funiv$}; 
    \end{tikzpicture}},  
    \end{equation}
% where $\Funiv$ comes from the definition of the Verdier quotient. 

% % We work here with categories of chain of complexes over additive categories. These categories are equipped with termwise split exact sequences.
% \begin{dfn}
%     Let $\mathcal{A}$ be an additive category, and $\mathcal{C}(\mathcal{A})$ be its homotopy category.
%     Let $A$, $B$ and $C$ be object in $\mathcal{C}(\mathcal{A})$. A \textit{termwise split exact sequence} of $\mathcal{C}(\mathcal{A})$ is a complex 
%     \[0 \rightarrow A \xrightarrow{f} B \xrightarrow{g} C \rightarrow 0,\] isomorphic to 
%     \[0 \rightarrow \Tilde{A} \xrightarrow{i} \Tilde{A} \oplus \Tilde{C} \xrightarrow{p} \Tilde{C} \rightarrow 0,\] 
%     where $i$ and $p$ are the inclusion maps and the projection maps.
% \end{dfn}

% As the homotopy category of an additive category, $\mathcal{C}(\ralgtop(\bnd))$ is a triangulated category whose distinguished triangles are those isomorphic to
% \[
% M \xrightarrow{f} N \rightarrow L \rightarrow t^{-1}M.
% \]
% where $M \rightarrow N \rightarrow L$ is a termwise split exact sequence.  

The proof of Proposition~\ref{prop:RHcanop} relies on the definition of Verdier quotient categories given in \cite[Chap.~2]{neeman2014triangulated}, in particular on the construction of morphisms in these categories. The key point is that the homotopy categories of complexes inherit an action of colored planar arc diagrams.

\begin{prop}
    \label{prop:RHcanop}
    The collection of categories $(\relcat{\bnd})_\bnd$ denoted $\relcatt$, is a colored canopoly. 
\end{prop}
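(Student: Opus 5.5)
The plan is to transport the canopoly structure in three stages: from $\ralgtop$ to the homotopy categories $\mathcal{C}(\ralgtop(\bnd))$, then through the Verdier quotient. By Proposition~\ref{prop:canord}, $\ralgtop$ is a colored canopoly. Proposition~\ref{prop:htpcanopoly} then makes $(\mathcal{C}(\ralgtop(\bnd)))_\bnd$ a colored canopoly. Concretely, a $d$-input colored planar arc diagram $\cpd$ acts on complexes by the total complex of the tensor-type product, with the usual Koszul signs, and on chain maps termwise. Since homotopies are sent to homotopies, this descends to homotopy categories. Likewise, $(\mathcal{C}(\catf(\bnd)))_\bnd$ is a canopoly and $\mathrm{For}$ commutes with the planar operations, because forgetting red dots commutes with horizontal gluing. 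The plan is to show that each operation $\cpd$ descends along the quotient functors $\Funiv$.

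The key step is that for fixed inputs in all slots but one, the functor $\cpd(-, C_2,\dots,C_d)\colon \mathcal{C}(\ralgtop(\bnd_1)) \to \mathcal{C}(\ralgtop(\bnd))$ is triangulated. Distinguished triangles come from mapping cones, and the total-complex construction commutes with cones up to canonical isomorphism. This functor also preserves the thick subcategory $\Ker(\mathrm{For})$. If $\mathrm{For}(C_1)\simeq 0$ in $\mathcal{C}(\catf(\bnd_1))$, then $\mathrm{For}(\cpd(C_1,\dots,C_d)) = \cpd(\mathrm{For} C_1,\dots,\mathrm{For} C_d)\simeq \cpd(0,\dots)=0$, because $\cpd$ is functorial on $\mathcal{C}(\catf)$ and sends a contractible complex to a contractible one (apply $\cpd$ to the contracting homotopy). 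By the universal property of the Verdier quotient in \cite[Chap.~2]{neeman2014triangulated}, $\Funiv\circ \cpd(-,C_2,\dots)$ then factors through $\relcat{\bnd_1}$.

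To treat all inputs at once, I would use the explicit roof description of morphisms in the quotient. A morphism in $\relcat{\bnd_i}$ is a roof $C_i \xleftarrow{s_i} C_i' \xrightarrow{f_i} D_i$ with $\mathrm{cone}(s_i)\in\Ker(\mathrm{For})$. I would define $\cpd$ on such a tuple as the roof $\cpd(C)\xleftarrow{\cpd(s)}\cpd(C')\xrightarrow{\cpd(f)}\cpd(D)$. The map $\cpd(s)$ factors as a composite of maps that change one input at a time, and each of these has cone $\cpd(\dots,\mathrm{cone}(s_i),\dots)\in \Ker(\mathrm{For})$ by the previous step. Hence $\cpd(s)$ is inverted in the quotient.

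The remaining checks are that the definition is independent of the choice of roof representatives and compatible with composition of roofs. Both follow from the functoriality of $\cpd$ on $\mathcal{C}(\ralgtop)$ applied to the equivalence diagrams and to the Ore-type completion squares. Compatibility of the planar operations with composition, and with gluing of arc diagrams, is inherited from $\mathcal{C}(\ralgtop)$, since every roof is a composite of images of genuine morphisms and formal inverses.

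I expect the main obstacle to be the multi-input well-definedness: showing that $\cpd(s_1,\dots,s_d)$ is invertible and that the construction respects roof equivalence in several variables simultaneously. The one-variable-at-a-time factorization together with the interchange law in the canopoly $\mathcal{C}(\ralgtop)$ is what I would rely on there. Sign conventions in the total complex also need care so that these interchange isomorphisms are honest chain maps.
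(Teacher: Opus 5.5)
Your proposal is correct and follows the same route as the paper, which only sketches the argument: the planar action on the homotopy categories $\mathcal{C}(\ralgtop(\bnd))$ from Proposition~\ref{prop:htpcanopoly}, combined with the roof description of morphisms in the Verdier quotient from \cite[Chap.~2]{neeman2014triangulated}. You also make explicit the step the paper leaves implicit: each planar operation preserves $\Ker(\mathrm{For})$ in every slot, so $\cpd(s_1,\dots,s_d)$ is inverted in the quotient, and this is what lets the roof construction descend.
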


\begin{prop}[{\cite[Lem~2.3]{QRSW21}}]
	The Verdier quotient $\relcat{\bnd}$ is a triangulated category
    such that a triangle is distinguished in $\relcat{\bnd}$ if and only if it is isomorphic to
    \[\Funiv(M) \rightarrow \Funiv(N) \rightarrow \Funiv(L) \rightarrow t^{-1}\Funiv(M),\]
    where $\mathrm{For}(M \rightarrow N \rightarrow L)$ is a termwise split exact sequence in $\mathcal{C}(\catf(\bnd))$. 
\end{prop}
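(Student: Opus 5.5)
The plan is to deduce the statement from the general theory of Verdier quotients in \cite[Chap.~2]{neeman2014triangulated}, following \cite[Lem~2.3]{QRSW21}. First, I will check that $\mathrm{For}\colon \mathcal{C}(\ralgtop(\bnd)) \to \mathcal{C}(\catf(\bnd))$ is a triangulated functor. Both homotopy categories are triangulated in the standard way, with distinguished triangles isomorphic to cone triangles $M \xrightarrow{f} N \to \mathrm{Cone}(f) \to t^{-1}M$. The functor $\mathrm{For}$ is additive, commutes with the shift $t$, and sends the cone of an equivariant chain map to the cone of the underlying map, because the cone is built from $f$, identities and signs only. Hence $\mathrm{For}$ preserves distinguished triangles.

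It follows that $\Ker(\mathrm{For})$ is a triangulated subcategory: it is closed under shifts, and by the five lemma or the long exact $\mathrm{Hom}$-sequence, two vertices in the kernel force the third into it. The kernel is also thick. Indeed, if $\mathrm{For}(A\oplus B)\simeq 0$ in $\mathcal{C}(\catf(\bnd))$, then $\mathrm{id}_{\mathrm{For}(A)}$ factors through a zero object, so $\mathrm{For}(A)\simeq 0$. Neeman's construction then makes $\relcat{\bnd}$ triangulated, with distinguished triangles exactly those isomorphic to $\Funiv$ of distinguished triangles of $\mathcal{C}(\ralgtop(\bnd))$.

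It remains to match this class with the one in the statement. One inclusion is immediate: a cone triangle $M \to N \to \mathrm{Cone}(f)$ is termwise split in $\ralgtop$ already, hence after applying $\mathrm{For}$. Conversely, let $M \xrightarrow{f} N \xrightarrow{g} L$ be a sequence of equivariant chain maps with $g\circ f=0$ whose image under $\mathrm{For}$ is termwise split exact. The splittings need not be $\Witt$-equivariant, so this sequence need not be a cone triangle in $\mathcal{C}(\ralgtop(\bnd))$. Instead, I will use the map $\phi=(0,g)\colon \mathrm{Cone}(f)=t^{-1}M\oplus N \to L$. It is an equivariant chain map precisely because $g\circ f=0$, and it is compatible with $N \to \mathrm{Cone}(f)$ and $g$.

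The key step, which I expect to be the main obstacle, is to show that $\mathrm{For}(\phi)$ is a homotopy equivalence, i.e.\ $\mathrm{Cone}(\phi)\in\Ker(\mathrm{For})$. To do this, I will use the termwise splitting in $\catf(\bnd)$ to write $N\cong M\oplus L$ termwise with the differential upper triangular. I will then write down an explicit homotopy inverse $L \to \mathrm{Cone}(f)$ built from the section of $g$ and the correction term coming from the off-diagonal part of the differential, as in the usual proof that termwise split short exact sequences give distinguished triangles. Once this is done, $\Funiv(\phi)$ is an isomorphism in $\relcat{\bnd}$. Transporting the cone triangle along it shows that $\Funiv(M)\to\Funiv(N)\to\Funiv(L)\to t^{-1}\Funiv(M)$ is distinguished, with connecting morphism given by the roof $L \xleftarrow{\phi} \mathrm{Cone}(f) \to t^{-1}M$. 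This proves the characterization.
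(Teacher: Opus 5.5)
Your proof of the substantive direction is the standard argument behind \cite[Lem~2.3]{QRSW21}, which the paper quotes without giving a proof. $\mathrm{For}$ is exact and $\mathrm{Ker}(\mathrm{For})$ is thick. The equivariant comparison map $\phi=(0,g)\colon\mathrm{Cone}(f)\to L$ becomes a homotopy equivalence after forgetting, so $\Funiv(\phi)$ is invertible. This is the same mechanism the paper later uses in Corollary~\ref{lemme_RHC}, and that part of your argument is fine.

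The step you call immediate is false as stated. A cone triangle $M\xrightarrow{f}N\to\mathrm{Cone}(f)$ is not a termwise split exact sequence. First, $f$ need not be termwise a split monomorphism. Second, the composite of $f$ with $N\to\mathrm{Cone}(f)$ is $m\mapsto(0,f(m))$, which is only null-homotopic, not zero. So you have not yet shown that every distinguished triangle of $\relcat{\bnd}$ has the stated form.

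The fix is the mapping cylinder. Let $\mathrm{Cyl}(f)$ have terms $M_i\oplus M_{i+1}\oplus N_i$ and differential $(m,m',n)\mapsto(dm-m',\,-dm',\,dn+f(m'))$.
\begin{itemize}
\item The sequence $0\to M\to\mathrm{Cyl}(f)\to\mathrm{Cone}(f)\to 0$, given by inclusion of the first summand and projection onto the last two, is termwise split exact with equivariant splittings.
\item The map $N\to\mathrm{Cyl}(f)$, $n\mapsto(0,0,n)$, is an equivariant homotopy equivalence. Its inverse is $(m,m',n)\mapsto f(m)+n$, and the homotopy is $(m,m',n)\mapsto(0,-m,0)$.
\item All these maps are built from $f$, differentials, identities and signs, so they lie in $\ralgtop(\bnd)$.
\item For this sequence, your recipe produces the connecting map $\mathrm{Cone}(f)\to t^{-1}M$, $(m',n)\mapsto m'$. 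This is exactly the connecting map of the cone triangle; here the inverse $(m',n)\mapsto\bigl(m',(0,m',n)\bigr)$ of the comparison map is itself equivariant.
\end{itemize}
Hence every cone triangle is isomorphic in $\mathcal{C}(\ralgtop(\bnd))$, not merely after forgetting, to one coming from a termwise split sequence. This supplies the missing inclusion.
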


\begin{cor}
    \label{lemme_RHC}
    Let $M \xrightarrow{f} N \xrightarrow{g} L$ be in $\mathcal{C}(\ralgtop(\bnd))$ such that $\mathrm{For}(M \rightarrow N \rightarrow L)$ is a termwise split exact sequence in $\mathcal{C}(\catf(\bnd))$. If $L$ is contractible, then $\Funiv(f)$ is an isomorphism in $\relcat{\bnd}$. Similarly, if $M$ is contractible then $\Funiv(g)$ is an isomorphism in $\relcat{\bnd}$.
\end{cor}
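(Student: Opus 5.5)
We derive Corollary~\ref{lemme_RHC} from the preceding proposition, using the standard fact that in a triangulated category a morphism is an isomorphism exactly when its cone is zero, or equivalently when the triangle it sits in has a zero third term. The hypotheses are symmetric, so we treat the case of $f$ and then adapt.

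First we produce a distinguished triangle. Since $\mathrm{For}(M \xrightarrow{f} N \xrightarrow{g} L)$ is termwise split exact in $\mathcal{C}(\catf(\bnd))$, the preceding proposition (\cite[Lem~2.3]{QRSW21}) says there is a distinguished triangle in $\relcat{\bnd}$ of the form
\[\Funiv(M) \xrightarrow{\Funiv(f)} \Funiv(N) \xrightarrow{\Funiv(g)} \Funiv(L) \rightarrow t^{-1}\Funiv(M).\]
We must check that the first two maps really are $\Funiv(f)$ and $\Funiv(g)$, and not merely abstract maps up to isomorphism. This holds because the triangles of the proposition are built from the given sequence itself: the connecting morphism is obtained from the termwise splitting, and the first two maps are the images of $f$ and $g$.

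Next we show that contractible objects vanish. If $L$ is contractible in $\mathcal{C}(\ralgtop(\bnd))$, then $L \cong 0$ there, since $\mathcal{C}$ denotes the homotopy category. As $\Funiv$ is a functor, $\Funiv(L)\cong 0$ in $\relcat{\bnd}$. One might worry that contractibility is only visible after applying $\mathrm{For}$, that is, that $L$ lies in $\Ker(\mathrm{For})$ without being contractible in $\mathcal{C}(\ralgtop(\bnd))$. Even in that case $\Funiv(L)\cong 0$, because the Verdier quotient kills exactly the objects of $\Ker(\mathrm{For})$. So the conclusion holds under either reading of ``contractible''.

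Now we conclude for $f$. In a triangulated category, given a distinguished triangle $X \xrightarrow{u} Y \to 0 \to t^{-1}X$, the map $u$ is an isomorphism. To see this, apply the cohomological functor $\Hom(W,-)$ to get a long exact sequence in which every third term vanishes, so $\Hom(W,u)$ is bijective for every $W$, and Yoneda gives that $u$ is an isomorphism. Hence $\Funiv(f)$ is an isomorphism.

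For the case where $M$ is contractible, we rotate the triangle to
\[\Funiv(N) \xrightarrow{\Funiv(g)} \Funiv(L) \rightarrow t^{-1}\Funiv(M) \rightarrow t^{-1}\Funiv(N),\]
possibly with a sign change on the last map. Here $t^{-1}\Funiv(M)\cong 0$, so the same argument shows that $\Funiv(g)$ is an isomorphism.

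The main point needing care is the identification of the maps in the distinguished triangle with $\Funiv(f)$ and $\Funiv(g)$; everything after that is formal.
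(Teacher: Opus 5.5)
Your argument is correct and is essentially the paper's: the corollary is read off directly from the description of distinguished triangles in $\relcat{\bnd}$ (\cite[Lem~2.3]{QRSW21}), together with $\Funiv(L)\cong 0$ (resp.\ $\Funiv(M)\cong 0$), which holds under either reading of ``contractible''. The one imprecision is your justification of the connecting morphism. It cannot be the map built from the splitting in $\catf(\bnd)$, since that splitting need not be $\Witt$-equivariant; it is instead a roof through $\mathrm{Cone}(f)$, whose natural map to $L$ is equivariant and becomes a homotopy equivalence after $\mathrm{For}$. Since you only use the first two arrows, this does not affect your proof.
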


%% file: Z_complex.tex
To define the colored Khovanov--Rozansky homology, we first associate a complex in $\relcatt$ to each crossing.
Let $a, b \in \N^*$ and $m:=\min(a,b)$:

Let
\begin{equation}
    C_i^-:=\NB{\tikz[font=\fontsize{1.6mm}{0mm},scale=1.25]{\input{\imagesfolder/sch/col_com_neg_i}}}, \hspace{10mm}C_{-j}^+:= \NB{\tikz[font=\fontsize{1.6mm}{0mm},scale=1.25]{\input{\imagesfolder/sch/col_com_pos_i}}}
\end{equation}
and
\begin{equation}
    d_i^-:=\mymoviefive[scale=.6]{\NB{\tikz[font=\fontsize{1.6mm}{0mm},scale=.55]{\input{\imagesfolder/sch/diff_neg_1}}}}{\NB{\tikz[font=\fontsize{1.6mm}{0mm},scale=.55]{\input{\imagesfolder/sch/diff_neg_2}}}}{\NB{\tikz[font=\fontsize{1.6mm}{0mm},scale=.55]{\input{\imagesfolder/sch/diff_neg_3}}}}{\NB{\tikz[font=\fontsize{1.6mm}{0mm},scale=.55]{\input{\imagesfolder/sch/diff_neg_4}}}}{\NB{\tikz[font=\fontsize{1.6mm}{0mm},scale=.55]{\input{\imagesfolder/sch/diff_neg_5}}}}
\end{equation}
\begin{equation}
    d_j^+:=\mymoviefive[scale=.6]{\NB{\tikz[font=\fontsize{1.5mm}{0mm},scale=.55]{\input{\imagesfolder/sch/diff_pos_1}}}}{\NB{\tikz[font=\fontsize{1.5mm}{0mm},scale=.55]{\input{\imagesfolder/sch/diff_pos_2}}}}{\NB{\tikz[font=\fontsize{1.5mm}{0mm},scale=.55]{\input{\imagesfolder/sch/diff_pos_3}}}}{\NB{\tikz[font=\fontsize{1.5mm}{0mm},scale=.55]{\input{\imagesfolder/sch/diff_pos_4}}}}{\NB{\tikz[font=\fontsize{1.25mm}{0mm},scale=.55]{\input{\imagesfolder/sch/diff_pos_5}}}}
\end{equation}.

Define the following pair of complexes for the negative and positive crossings.
\begin{equation}
      \NB{\tikz[xscale = -0.6]{\input{\imagesfolder/sch/negative_crossing_col}}} := \uwave{q^{m} C_0^-} \xrightarrow{d_0^-} \dots q^{m-i}C_i^- \xrightarrow{d_i^-} q^{m-i-1}C_{i+1}^-\ldots \xrightarrow{d_{m-1}^-} C_m^- 
\end{equation}
\begin{equation}
      \NB{\tikz[xscale =0.6]{\input{\imagesfolder/sch/positive_crossing_col}}} :=  C_{-m}^+ \xrightarrow{d_{-m}^+} \dots q^{-m-j}C_{-j}^+ \xrightarrow{d_{-j}^+} q^{-m-j+1}C_{-j+1}^+\ldots \xrightarrow{d_{-1}^+} \uwave{q^{-m}C_0^+}
\end{equation}
In general, for a colored web-tangle diagram \(\tangle\) in \(\WT(\bnd)\), we associate a complex by assigning locally to each crossing the corresponding complex, and to each web part the one-term complex consisting of that web. We denote by $\colcom_N(\tangle)$ the complex associated to $\tangle$ in $\relcat{\bnd}$.

\begin{rmk}
    We use the conventions of \cite{ETW17} for the complexes associated to each crossing, while adopting the quantum and homological grading conventions from \cite{GR25}, which are those of the framed version of Khovanov--Rozansky homology equipped with a $\Witt$-action.
\end{rmk}

\begin{rmk}
    \label{parameters} In comparison with \cite{QRSW23} and \cite{GR25}, We set $\lambda = \mu$ in the definition of the $\Witt$–action on foams; otherwise, we would not obtain a recursive expression for the red dots on the complex. 
\end{rmk}

%% file: sch/col_com_neg_i.tex
\begin{scope}
  \def\la{$b-i$}
  \def\cir{$\color{\crd}-s/2$}
  \def\tri{$\color{\crd}-\lambda(b-i-1)+s/2$}
  \def\lab{$a+b-i$}
  \def\circ{$\color{\crd}s/2$}
  \def\tria{$\color{\crd}\lambda(a+b-i-1)-s/2$}
  \def\labe{$a-i$}
  \def\circl{$\color{\crd}-s/2$}
  \def\trian{$\color{\crd}-\lambda(a-i-1)+s/2$}
  \def\label{$i$}
  \def\circle{$\color{\crd}\bar{s}/2$}
  \def\triang{$\color{\crd}-\lambda(i-1)-\bar{s}/2$}
  \coordinate (bl) at (-1, -1.5);
  \coordinate (br) at ( 1, -1.5);
  \coordinate (tl) at (-1,  1.5);
  \coordinate (tr) at ( 1,  1.5);
  \draw[>->] (bl) -- (tl) node[pos = 0, below] {$a$} node[pos = 1, above] {$b$}  node[pos=.5, left] {\lab}
  node[pos= 0.6 , draw=\crd, fill=white,regular polygon, regular polygon sides=3,inner sep=0.9pt] {} node[pos=0.6, right]{\tria} 
  node[pos=.4, draw=\crd, fill= white, circle , inner sep=1.5pt]{} node[pos=.4, left]{\circ}
  coordinate[pos = 0.75] (ml) 
  coordinate[pos = 0.25] (nl);
  \draw[>->] (br) -- (tr) node[pos = 0, below] {$b$} node[pos = 1, above] {$a$}  node[pos=.5,right] {\label}
  node[pos= 0.35 , draw=\crd, fill=white,regular polygon, regular polygon sides=3,inner sep=0.9pt] {} node[pos=0.35, right]{\triang} 
  node[pos=.65, draw=\crd, fill= white, circle , inner sep=1.5pt]{} node[pos=.65, right]{\circle}
  coordinate[pos = 0.85] (mr)  
  coordinate[pos = 0.15] (nr);
  \draw[->-] (ml) -- (mr) node [pos= 0.4, above,yshift=.5mm] {\labe} node[pos=.8, draw=\crd, fill= white, circle , inner sep=1.5pt]{} node[pos=.82, above]{\circl} 
  node[pos= 0.25 , draw=\crd, fill=white,regular polygon, regular polygon sides=3,inner sep=0.9pt] {} 
  node[pos=.45, below,yshift=-.5mm] {\trian};
  \draw[->-] (nr) -- (nl) node [pos= 0.55, below,yshift=-.5mm] {\la} node[pos=.2, draw=\crd, fill= white, circle , inner sep=1.5pt]{} node[pos=.15, below]{\cir} 
  node[pos= 0.75 , draw=\crd, fill=white,regular polygon, regular polygon sides=3,inner sep=0.9pt] {} 
  node[pos=.55, above,yshift=.2mm] {\tri};
\end{scope}

%% file: sch/col_com_pos_i.tex
\begin{scope}
  \def\la{$b-j$}
  \def\cir{$\color{\crd}\bar{s}/2$}
  \def\tri{$\color{\crd}-\lambda(b-j-1)-\bar{s}/2$}
  \def\lab{$a+b-j$}
  \def\circ{$\color{\crd}-\bar{s}/2$}
  \def\tria{$\color{\crd}\lambda(a+b-j-1)+\bar{s}/2$}
  \def\labe{$a-j$}
  \def\circl{$\color{\crd}\bar{s}/2$}
  \def\trian{$\color{\crd}-\lambda(a-j-1)-\bar{s}/2$}
  \def\label{$j$}
  \def\circle{$\color{\crd}-s/2$}
  \def\triang{$\color{\crd}-\lambda(j-1)+s/2$}
  \coordinate (bl) at (-1, -1.5);
  \coordinate (br) at ( 1, -1.5);
  \coordinate (tl) at (-1,  1.5);
  \coordinate (tr) at ( 1,  1.5);
  \draw[>->] (bl) -- (tl) node[pos = 0, below] {$a$} node[pos = 1, above] {$b$}  node[pos=.5, left] {\lab}
  node[pos= 0.6 , draw=\crd, fill=white,regular polygon, regular polygon sides=3,inner sep=0.9pt] {} node[pos=0.6, right]{\tria}
  node[pos=.4, draw=\crd, fill= white, circle , inner sep=1.5pt]{} node[pos=.4, left]{\circ}
  coordinate[pos = 0.75] (ml) 
  coordinate[pos = 0.25] (nl);
  \draw[>->] (br) -- (tr) node[pos = 0, below] {$b$} node[pos = 1, above] {$a$}  node[pos=.5,right] {\label}
  node[pos= 0.35 , draw=\crd, fill=white,regular polygon, regular polygon sides=3,inner sep=0.9pt] {} node[pos=0.35, right]{\triang} 
  node[pos=.65, draw=\crd, fill= white, circle , inner sep=1.5pt]{} node[pos=.65, right]{\circle}
  coordinate[pos = 0.85] (mr)  
  coordinate[pos = 0.15] (nr);
  \draw[->-] (ml) -- (mr) node [pos= 0.4, above,yshift=.5mm] {\labe} node[pos=.8, draw=\crd, fill= white, circle , inner sep=1.5pt]{} node[pos=.82, above]{\circl} 
  node[pos= 0.25 , draw=\crd, fill=white,regular polygon, regular polygon sides=3,inner sep=0.9pt] {} 
  node[pos=.5, below,yshift=-.7mm] {\trian};
  \draw[->-] (nr) -- (nl) node [pos= 0.55, below,yshift=-.5mm] {\la} node[pos=.2, draw=\crd, fill= white, circle , inner sep=1.5pt]{} node[pos=.15, below]{\cir} 
  node[pos= 0.75 , draw=\crd, fill=white,regular polygon, regular polygon sides=3,inner sep=0.9pt] {} 
  node[pos=.53, above,yshift=.4mm] {\tri};
\end{scope}

%% file: sch/diff_neg_1.tex
\begin{scope}
  \def\la{$b-i$}
  \def\lab{$a+b-i$}
  \def\labe{$a-i$}
  \def\label{$i$}
  \coordinate (bl) at (-1, -1.5);
  \coordinate (br) at ( 1, -1.5);
  \coordinate (tl) at (-1,  1.5);
  \coordinate (tr) at ( 1,  1.5);
  \draw[>->] (bl) -- (tl) node[pos = 0, below] {$a$} node[pos = 1, above] {$b$}  node[pos=.5, right] {}
  coordinate[pos = 0.75] (ml)
  coordinate[pos = 0.25] (nl);
  \draw[>->] (br) -- (tr) node[pos = 0, below] {$b$} node[pos = 1, above] {$a$}  node[pos=.5,right] {\label}
  coordinate[pos = 0.85] (mr)  
  coordinate[pos = 0.15] (nr);
  \draw[->-] (nr) -- (mr);
  \draw[->-] (nl) -- (ml);
  \draw[->-] (ml) -- (mr) node [pos= 0.5, above,yshift=.5mm] {\labe};
  \draw[->-] (nr) -- (nl) node [pos= 0.5, below,yshift=-.5mm] {\la} ;
\end{scope}

%% file: sch/diff_neg_3.tex
\begin{scope}
  \def\la{$b-i-1$}
  \def\lab{$a+b-i$}
  \def\labe{$a-i-1$}
  \def\label{$i$}
  \coordinate (bl) at (-1, -1.5);
  \coordinate (br) at ( 1, -1.5);
  \coordinate (tl) at (-1,  1.5);
  \coordinate (tr) at ( 1,  1.5);
  \draw[>->] (bl) -- (tl) node[pos = 0, below] {$a$} node[pos = 1, above] {$b$}  node[pos=.5, right] {}
  coordinate[pos = 0.75] (ml)
  coordinate[pos = 0.25] (nl)
  coordinate[pos=0.4] (ld)
  coordinate[pos=0.6] (lu);
  \draw[>->] (br) -- (tr) node[pos = 0, below] {$b$} node[pos = 1, above] {$a$}  node[pos=.5,right] {\label}
  coordinate[pos = 0.85] (mr)  
  coordinate[pos = 0.15] (nr)
  coordinate[pos=0.7] (ru)
  coordinate[pos=0.3] (rd);
  \draw[->-] (nr) -- (mr);
  \draw[->-] (nl) -- (ml);
  \draw[->-] (ml) -- (mr) node [pos= 0.5, above,yshift=.5mm] {\labe};
  \draw[->-] (nr) -- (nl) node [pos= 0.5, below,yshift=-.5mm] {\la} ;
  \draw[->-] (lu) -- (ru) node[pos=.3, below]{$1$};
  \draw[->-] (rd) -- (ld) node[pos=.3, above]{$1$};
\end{scope}

%% file: sch/diff_neg_5.tex
\begin{scope}
  \def\la{$b-i-1$}
  \def\lab{$a+b-i-1$}
  \def\labe{$a-i-1$}
  \def\label{$i+1$}
  \coordinate (bl) at (-1, -1.5);
  \coordinate (br) at ( 1, -1.5);
  \coordinate (tl) at (-1,  1.5);
  \coordinate (tr) at ( 1,  1.5);
  \draw[>->] (bl) -- (tl) node[pos = 0, below] {$a$} node[pos = 1, above] {$b$}  node[pos=.5, right] {}
  coordinate[pos = 0.75] (ml)
  coordinate[pos = 0.25] (nl);
  \draw[>->] (br) -- (tr) node[pos = 0, below] {$b$} node[pos = 1, above] {$a$}  node[pos=.5,left] {\label}
  coordinate[pos = 0.85] (mr)  
  coordinate[pos = 0.15] (nr);
  \draw[->-] (nr) -- (mr);
  \draw[->-] (nl) -- (ml);
  \draw[->-] (ml) -- (mr) node [pos= 0.5, above,yshift=.5mm] {\labe};
  \draw[->-] (nr) -- (nl) node [pos= 0.5, below,yshift=-.5mm] {\la} ;
\end{scope}

%% file: sch/diff_pos_1.tex
\begin{scope}
  \def\la{$b-j$}
  \def\lab{$a+j$}
  \def\labe{$a-j$}
  \def\label{$j$}
  \coordinate (bl) at (-1, -1.5);
  \coordinate (br) at ( 1, -1.5);
  \coordinate (tl) at (-1,  1.5);
  \coordinate (tr) at ( 1,  1.5);
  \draw[>->] (bl) -- (tl) node[pos = 0, below] {$a$} node[pos = 1, above] {$b$}  node[pos=.5, right] {}
  coordinate[pos = 0.75] (ml)
  coordinate[pos = 0.25] (nl);
  \draw[>->] (br) -- (tr) node[pos = 0, below] {$b$} node[pos = 1, above] {$a$}  node[pos=.5,left] {\label}
  coordinate[pos = 0.85] (mr)  
  coordinate[pos = 0.15] (nr);
  \draw[->-] (nr) -- (mr);
  \draw[->-] (nl) -- (ml);
  \draw[->-] (ml) -- (mr) node [pos= 0.5, above,yshift=.5mm] {\labe};
  \draw[->-] (nr) -- (nl) node [pos= 0.5, below,yshift=-.5mm] {\la} ;
\end{scope}

%% file: sch/diff_pos_3.tex
\begin{scope}
  \def\la{$b-j$}
  \def\lab{$a+j$}
  \def\labe{$a-j$}
  \def\label{$j$}
  \coordinate (bl) at (-1, -1.5);
  \coordinate (br) at ( 1, -1.5);
  \coordinate (tl) at (-1,  1.5);
  \coordinate (tr) at ( 1,  1.5);
  \draw[>->] (bl) -- (tl) node[pos = 0, below] {$a$} node[pos = 1, above] {$b$}  node[pos=.5, right] {}
  coordinate[pos = 0.75] (ml)
  coordinate[pos = 0.25] (nl)
  coordinate[pos=0.4] (ld)
  coordinate[pos=0.6] (lu);
  \draw[>->] (br) -- (tr) node[pos = 0, below] {$b$} node[pos = 1, above] {$a$}  node[pos=.5,right] {}
  coordinate[pos = 0.85] (mr)  
  coordinate[pos = 0.15] (nr)
  coordinate[pos=0.7] (ru)
  coordinate[pos=0.3] (rd);
  \draw[->-] (nr) -- (mr);
  \draw[->-] (nl) -- (ml);
  \draw[->-] (ml) -- (mr) node [pos= 0.5, above,yshift=.5mm] {\labe};
  \draw[->-] (nr) -- (nl) node [pos= 0.5, below,yshift=-.5mm] {\la} ;
  \draw[->-] (lu) -- (ru) node[pos=.3, below]{$1$};
  \draw[->-] (rd) -- (ld) node[pos=.3, above]{$1$};
\end{scope}

%% file: sch/diff_pos_5.tex
\begin{scope}
  \def\la{$b-j+1$}
  \def\labe{$a-j+1$}
  \def\label{$j-1$}
  \coordinate (bl) at (-1, -1.5);
  \coordinate (br) at ( 1, -1.5);
  \coordinate (tl) at (-1,  1.5);
  \coordinate (tr) at ( 1,  1.5);
  \draw[>->] (bl) -- (tl) node[pos = 0, below] {$a$} node[pos = 1, above] {$b$}  node[pos=.5, right] {}
  coordinate[pos = 0.75] (ml)
  coordinate[pos = 0.25] (nl);
  \draw[>->] (br) -- (tr) node[pos = 0, below] {$b$} node[pos = 1, above] {$a$}  node[pos=.5,left] {\label}
  coordinate[pos = 0.85] (mr)  
  coordinate[pos = 0.15] (nr);
  \draw[->-] (nr) -- (mr);
  \draw[->-] (nl) -- (ml);
  \draw[->-] (ml) -- (mr) node [pos= 0.5, above] {\labe};
  \draw[->-] (nr) -- (nl) node [pos= 0.5, below] {\la} ;
\end{scope}

%% file: sch/negative_crossing_col.tex
\begin{scope}[font=\tiny]
  \draw[->] (0.5, -0.5) ..controls +(0,0.3) and +(0,-0.3) .. (-0.5, 0.5) node[pos=0,below,yshift=-.55mm] {$a$};
  \fill[white] (0,0) circle (2mm);
  \draw[->] (-0.5, -0.5) ..controls +(0,0.3) and +(0,-0.3) .. (0.5,0.5) node[pos=0,below] {$b$};
\end{scope}

%% file: sch/positive_crossing_col.tex
\begin{scope}[font=\tiny]
  \draw[->] (0.5, -0.5) ..controls +(0,0.3) and +(0,-0.3) .. (-0.5, 0.5) node[pos=0,below] {$b$};
  \fill[white] (0,0) circle (2mm);
  \draw[->] (-0.5, -0.5) ..controls +(0,0.3) and +(0,-0.3) .. (0.5,0.5) node[pos=0,below,yshift=-.55mm] {$a$};
\end{scope}

%% file: Z_simplicity.tex
We recall that in Section \ref{RHC}, the construction of the relative homotopy canopoly induces the following forgetful canopoly morphism:
\begin{equation}
    \forg : \relcatt \rightarrow \mathcal{C}(\catf).
\end{equation}
The next definition is analogous to the notion of $Kh$-simplicity introduced by Bar-Natan \cite[Def.~8.5]{Bar05}. 

\begin{dfn}
    A web-tangle diagram $\wtd$ in $\WT(\bnd)$ is \textit{$\simple$} if the $\scalars$-module of endomorphisms in $\mathcal{C}(\catf)$ of its colored Khovanov--Rozansky $\gl_N$-homology, denoted  $\en(\forg(\colcom(\wtd))$, is free, positively graded and $\en(\forg(\colcom(\wtd))_0$ the endomorphisms of degree 0, has rank $1$.
\end{dfn}

We now develop tools to determine which web-tangle diagrams are $\simple$. To this end, we follow the same strategy as in Section~8.3 of \cite{Bar05}.

\begin{rmk}
    \label{BN}
    Observe that $\forg(\colcom_N(\wtd))$ is the usual colored equivariant Khovanov--Rozansky homology of $\wtd$. In particular, its invariance under tangle isotopies is ensured by \cite{ETW17}.
\end{rmk}

\begin{lem}[{\cite[Lem~8.7]{Bar05}}]
    If two diagrams $\wtd$ and $\wtd'$ of web-tangle are isotopic and $\wtd$ is $\simple$ then $\wtd'$ is also $\simple$.    
\end{lem}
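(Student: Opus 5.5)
The plan is to transport the endomorphism algebra along a homotopy equivalence, following \cite[Lem~8.7]{Bar05}. Since $\wtd$ and $\wtd'$ are isotopic, Remark~\ref{BN} (the invariance result of \cite{ETW17}) gives an isomorphism $\phi\colon \forg(\colcom_N(\wtd)) \to \forg(\colcom_N(\wtd'))$ in $\mathcal{C}(\catf(\bnd))$, with inverse $\psi$. The isotopy is a sequence of Reidemeister moves and planar isotopies, and each elementary step yields a homotopy equivalence of complexes over $\catf(\bnd)$. We compose these to get $\phi$.

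Next we define conjugation
\[
c\colon \en(\forg(\colcom_N(\wtd))) \to \en(\forg(\colcom_N(\wtd'))), \qquad f \mapsto \phi\circ f\circ \psi ,
\]
with inverse $g\mapsto \psi\circ g\circ\phi$. This is an isomorphism of $\scalars$-modules, indeed of $\scalars_N$-algebras.

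The key check is that $c$ preserves the grading. This needs $\phi$ and $\psi$ to be homogeneous of degree $0$. The \cite{ETW17} equivalences are built from grading-preserving maps once the quantum shifts $q^{m-i}$ in the definition of the crossing complexes are included. So I would check that, with the framed conventions of \cite{GR25} adopted here, each Reidemeister equivalence is homogeneous of degree $0$.

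The Reidemeister~1 move is the one place where this can fail. In the framed normalization, R1 may only hold up to an overall shift $q^{k}t^{\ell}$. Such a shift is harmless: for any object $C$ and shifts $k,\ell$, we have $\en(q^{k}t^{\ell}C)=\en(C)$ as graded modules, since shifting the source and the target of a morphism by the same amount leaves its degree unchanged. Hence $\phi$ can always be taken homogeneous of degree $0$ after absorbing the shift into $\wtd'$'s complex, and $c$ is then an isomorphism of graded $\scalars$-modules.

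With this in place the conclusion is immediate. Freeness is preserved under isomorphism. Positive grading is preserved because $c$ respects degrees. The degree-$0$ parts are isomorphic, so $\en(\forg(\colcom_N(\wtd')))_0$ has rank $1$. Therefore $\wtd'$ is $\simple$. The main point of care is the degree-$0$ and shift bookkeeping for R1; the rest is formal.
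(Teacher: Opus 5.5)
Your proposal is correct and is essentially the argument the paper relies on, namely Bar-Natan's proof of Lemma~8.7 via Remark~\ref{BN}: conjugating by the homotopy equivalence supplied by \cite{ETW17} gives a degree-preserving isomorphism of graded endomorphism modules, so freeness, positivity of the grading and the rank of the degree-$0$ part all transfer. Your remark that an overall shift $q^{k}t^{\ell}$, as in Reidemeister~1 with the framed conventions, leaves the graded endomorphism module unchanged is useful bookkeeping that the paper leaves implicit.
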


% \begin{proof}
%     By using the Remark~\ref{BN}, the complexes $\forg(\colcom_N(\wtd))$ and $\forg(\colcom_N(\wtd'))$ are homotopy equivalent. Let $H$ be a homotopy equivalence between them and $G$ its inverse.
%     Let $\alpha'$ be an endomorphism of $\forg(\colcom_N(\wtd'))$ that is not zero then, $\alpha:= G \circ \alpha' \circ H \in \forg(\colcom_N(\wtd))$ is an endomorphism of $\forg(\colcom_N(\wtd))$ that is not zero too and has the same degree as $\alpha'$.
%     \begin{equation}
%     \NB{
%     \begin{tikzpicture}
%         \node (a) at (0,0) {$\forg(\colcom_N(\wtd))$};
%         \node (b) at (3.5,0) {$\forg(\colcom_N(\wtd))$};
%         \node (c) at (0,-1.5) {$\forg(\colcom_N(\wtd'))$};
%         \node (d) at (3.5,-1.5) {$\forg(\colcom_N(\wtd'))$};
%         \draw[->] (a) -- (b) node[pos=0.5,above,scale=0.8]{$\alpha$};
%         \draw[->] (-.1,-.3) -- (-.1,-1.2) node[pos=0.5, scale=0.8, left]{$H$}; 
%         \draw[<-] (.1,-.3) -- (.1,-1.2) node[pos=0.5, scale=0.8, right]{$G$};
%         \draw[->] (3.4,-.3) -- (3.4,-1.2) node[pos=0.5, scale=0.8, left]{$H$}; 
%         \draw[<-] (3.6,-.3) -- (3.6,-1.2) node[pos=0.5, scale=0.8, right]{$G$};
%         \draw[->] (c) -- (d) node[pos=.5,scale=.8,above] {$\alpha'$};
%     \end{tikzpicture}} 
%     \end{equation}
%     Therefore, $\en(\forg(\colcom_N(\wtd')))$ is positively graded. If $\alpha'$ is a degree 0 endormorphism, $\alpha$ too, so by simplicity of $\wtd$, $\alpha' \sim \lambda \Id$, with $\lambda \in \scalars\setminus \{0\}$. Thus, $\en(\forg(\colcom_N(\wtd')))$ is free of rank 1. 
%     \end{proof}

\begin{dfn}
    A \textit{pairing} is a web-tangle diagram that has no crossings, trivalent vertices, or closed components.
\end{dfn}

\begin{lem}
    \label{pairing}
    If $\wtd$ is a pairing then it is $\simple$.
\end{lem}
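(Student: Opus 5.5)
A pairing $\wtd$ has no crossings, so $\colcom_N(\wtd)$ is the one-term complex consisting of the web $\wtd$ itself, a disjoint union of colored arcs in $\Disk$ (closed components are excluded by definition). Hence $\en(\forg(\colcom_N(\wtd)))$ is simply the graded $\scalars$-module $\morf(\wtd,\wtd)$ of endomorphisms in $\catf(\bnd)$. Homotopies between one-term complexes vanish, so nothing is quotiented. The plan is to compute this hom space explicitly and read off freeness, positivity and the rank in degree $0$.

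\textbf{Reduction to a closed evaluation.} Since $\catf$ is a canopoly, the standard adjunction (bending boundary arcs around the disk, i.e.\ the duality available in the foam category of \cite{ETW17,RW20a}) gives an isomorphism $\morf(\wtd,\wtd)\cong q^{\delta}\morf(\varnothing, \overline{\wtd}\cup \wtd)$. Here $\overline{\wtd}\cup\wtd$ is the closed web obtained by gluing $\wtd$ to its mirror along the boundary, and $\delta$ is a computable shift. For a pairing this closure is a disjoint union of planar circles, one per arc, with colors $a_1,\dots,a_r$. The closure is computed via the functor $\ssp_N$, so the hom space becomes $q^{\delta}\ssp_N(\bigsqcup_i \bigcirc_{a_i})\cong q^{\delta}\bigotimes_i \ssp_N(\bigcirc_{a_i})$.

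\textbf{Computing the graded rank.} The state space of an $a$-colored circle is the equivariant cohomology of the Grassmannian $\mathrm{Gr}(a,N)$ over $\scalars_N$, namely $Sym_a\otimes Sym_{N-a}$ as a module over $Sym_N=\scalars_N$. By the classical result this is free over $\scalars_N$ with basis given by Schur polynomials indexed by partitions in an $a\times(N-a)$ box, shifted so that the lowest element sits in degree $-a(N-a)$. This matches the degree of the cup foam in Figure~\ref{localmod}.

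It is cleaner to argue directly: $\morf(\wtd,\wtd)$ is spanned by identity foams on the arcs decorated by polynomials in $Sym_{a_i}\otimes Sym_{N-a_i}$. Cups and caps on an arc can be removed by the neck-cutting and sphere relations. Therefore
\[\morf(\wtd,\wtd)\cong \bigotimes_i Sym_{a_i}\otimes Sym_{N-a_i},\]
with the identity foam in degree $0$ and all other basis decorations in strictly positive degree.

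To see that this is free over $\scalars$, note that $\scalars_N$ is free over $\scalars$ with $1$ the only basis element in degree $0$, and the tensor product is free over $\scalars_N$. So the degree-$0$ part has rank $1$, spanned by the identity, and everything else is positively graded. Nonnegative degrees are guaranteed because all generators $x_i$ have degree $2$.

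\textbf{Main obstacle.} The main obstacle is the rigorous identification of the endomorphism ring of an arc with the decorated identity foams, and in particular the claim that these decorations are linearly independent over $\scalars$. The spanning part follows from the local relations of \cite{RW20a}. For independence, I would pair each decoration against the cap/cup closure and use the Robert--Wagner evaluation: the closed foams obtained are spheres with decorations, and their evaluations realize the nondegenerate Grassmannian Poincaré pairing over $\scalars_N$. This nondegeneracy yields the required freeness and lets us conclude that $\wtd$ is $\simple$.
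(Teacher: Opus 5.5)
Your proposal is correct and follows essentially the same route as the paper. The paper bends $\morf(\wtd,\wtd)$ onto $\morf(\varnothing,\cpd_\bnd(\wtd,-\Bar{\wtd}))$, the state space of a disjoint union of circles colored by the $a_i$; its graded rank $\prod_i\croch{N}{a_i}$ over $\scalars_N$ forces the lowest-degree part to be spanned by the undecorated cups, which become the identity foam after bending back. Your identification $\morf(\wtd,\wtd)\cong\bigotimes_i (Sym_{a_i}\otimes Sym_{N-a_i})$ is a more complete form of the same computation, and it usefully makes freeness over $\scalars$ explicit. Two corrections to that step: the tensor product must be taken over $\scalars_N$, and spanning by decorated identity foams should come from the bending isomorphism plus the spanning of circle state spaces by decorated cups, not from ``removing cups and caps on an arc'', since a general endomorphism foam can have singular seams.
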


To prove this lemma, we first need to introduce some canopoly operations that will be used in the proof.
For $\bnd = (\bnd_1, \ldots, \bnd_{n})$ a finite sequence of elements of $\Z^*$, we denote by $-\Bar{\bnd} := (-\bnd_{n},\ldots,-\bnd_{1})$ the reversed sequence with each term multiplied by~$-1$. Let $\cpd_\bnd$ be the two-input planar diagram whose two input disks have the boundary $\bnd$ and $-\Bar{\bnd}$ and which connects, by colored arcs with the appropriate colors, the corresponding boundary points.

\begin{figure}[h]
\begin{subfigure}{0.4\textwidth}
    \centering
    \NB{\tikz[scale=1.5]{\input{\imagesfolder/sch/ex_pairing}}}
    \caption{Pairing in $\WT((2,-3,3,2))$.}
\end{subfigure}
\begin{subfigure}{0.4\textwidth}
    \centering
    \NB{\tikz[scale=1.5]{\input{\imagesfolder/sch/ex_cylinder}}}
    \caption{$\cpd_\bnd$ with $\bnd=(-1,4,-4,1)$.}
\end{subfigure}
\begin{subfigure}{0.4\textwidth}
    \centering
    \NB{\tikz[scale=1.5]{\input{\imagesfolder/sch/D_bnd_empty}}}
    \caption{$\cpd_\bnd^\varnothing$ with $\bnd = (-2,4,-2,-3,3)$.}
\end{subfigure}
\caption{Some examples}
\end{figure}

For a given web $\web$ in $\WT(\bnd)$, let $-\Bar{\web}$ in $\WT(-\Bar{\bnd})$ denote its mirror image with reverse orientation. We denote by $\mathrm{cup}(\web)$ the foam in $\morf(\varnothing, \cpd_\bnd(\web, -\Bar{\web})),$ obtained by rotating $\web$ by a half–turn.

For $\web, \web' \in \catf(\bnd)$, define the following canopoly operation:
\begin{equation}
    \label{eq:bend}
    \function{\bend}{\morf(\web,\web')}{\morf(\varnothing,\cpd_\bnd(\web',-\Bar{\web}))}{F}{\cpd_\bnd(F,\Id_{-\Bar{\web}}) \circ \mathrm{cup}(\web)} 
\end{equation}  
Informally, this operation is obtained by bending the entire boundary of the foam to the top of the cylinder.  
The inverse operation is obtained by bending this part of the boundary back down.  
This inverse operation can also be described using canopoly operations:
\begin{equation}
    \label{eq:bend-1}
    \function{\bend^{-1}}{\morf(\varnothing,\cpd_\bnd(\web',-\Bar{\web})) }{\morf(\web,\web')}{F}{\cpd_{\bnd}^{\varnothing}(\Id_{\web'},\mathrm{cup}(-\Bar{\web})) \circ \cpd_{\bnd}^{\varnothing}(F, \Id_\web)}
\end{equation}  
where $\cpd_\bnd^\varnothing$ is a colored planar arc diagram with boundary $\bnd$ and two input disks. One input disk has boundary $\bnd$, and the arcs are the identity between this disk and the boundary of the output disk. The second input disk has empty boundary.

\begin{rmk}
    % Using the previous notation and specifying that $\bnd = (\bnd_1, \ldots, \bnd_{n_\bnd})$, 
    % as a morphism of graded spaces,
    % \[\bend : \morf(\web,\web') \overset{\sim}{\longrightarrow} q^{\gamma}\morf(\varnothing,\cpd_\bnd(\web',-\Bar{\web})),\] 
    The morphism \(\bend\) has quantum degree $\gamma := \sum_i \frac{|s_i|(N-|s_i|)}{2}$. 
\end{rmk}

We can now present the proof of Lemma \ref{pairing}.

\begin{proof}
    If $\wtd$ is a pairing in $\WT(\bnd)$, then $\forg(\colcom_N(\wtd))$ is the one-term complex consisting of $\wtd$ in homological degree $0$.
    Thus, a degree $0$ endomorphism of this complex is a $\scalars$-linear combination of foams of degree $0$.
    Let $\foam$ be one of these foams. Then $\bend(\foam) \in \morf(\varnothing,\cpd_{\bnd}(T,-\Bar{T}))$ and has degree:
    \[-\sum_{i} \frac{|\bnd_i| (N-|\bnd_i|)}{2} = -\sum_{\bnd_i>0} \bnd_i(N-\bnd_i).\] 
    Because $\cpd_\bnd(T,-\Bar{T})$ is a collection of disjoint circles labeled by $\{\bnd_i \in \bnd \mid \bnd_i > 0\}$, as a graded module, the quantum dimension of $\morf(\varnothing,\cpd_\bnd(T,-\Bar{T}))$ is $\prod_{\bnd_i >0}\croch{N}{\bnd_i}.$
    In particular, because 
    \[ q^{k(N-k)}\croch{N}{k} \underset{q \rightarrow 0}{\longrightarrow} 1,\] 
    we know that, up to a scalar in $\scalars\setminus \{0\}$, there is a unique foam in the graded module $\morf(\varnothing,\cpd(T,-\Bar{T}))$ of lowest degree $-\sum_{\bnd_i>0} \bnd_i (N-\bnd_i)$, namely the collection of cups with the correct labeling. By applying $\bend^{-1}$, which is a degree $\sum_{\bnd_i>0} \bnd_i (N-\bnd_i)$ morphism, to this foam and to $\bend(\foam)$, we find that $\foam$ is the identity foam multiplied by a scalar in $\scalars\setminus \{0\}$. So $\wtd$ is $\simple$. 
\end{proof}

We do not provide a proof of Lemma \ref{lem_TX} because, using Remark~\ref{BN}, it is identical to the proofs of Lemmas~8.8 and~8.9 in \cite{Bar05}.

\begin{lem}[{\cite[Lem~8.8,~8.9]{Bar05}}]
    \label{lem_TX}
    Let $\wtd$ be a web-tangle diagram, and denote by $\wtd X$ the web-tangle diagram obtained from $\wtd$ by applying the canopoly operation that crosses two adjacent boundary components of $\wtd$ (cf Figure \ref{TX}). Then $\wtd$ is $\simple$ if and only if $\wtd X$ is $\simple$.
\end{lem}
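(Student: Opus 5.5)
The plan is to follow Bar-Natan's argument for Lemmas~8.8 and~8.9 of \cite{Bar05}, using Remark~\ref{BN} to work entirely in $\mathcal{C}(\catf)$. There the complexes $\forg(\colcom_N(\wtd))$ are the usual colored Khovanov--Rozansky complexes of \cite{ETW17}, which are invariant under tangle isotopy up to homotopy equivalence. Write $X_\pm$ for the canopoly operation that glues a positive or negative colored crossing onto two adjacent boundary points. Then $\forg(\colcom_N(\wtd X))$ is obtained from $\forg(\colcom_N(\wtd))$ by the planar operation attaching the complex of that crossing. This holds because $\forg$ is a canopoly morphism and the complex of a web-tangle diagram is built locally from the crossing complexes.

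\textbf{Step 1: the two directions reduce to one.} Attaching the opposite crossing to $\wtd X$ produces a diagram that differs from $\wtd$ by a Reidemeister~2 move on boundary strands. By \cite{ETW17}, its complex is homotopy equivalent to $\forg(\colcom_N(\wtd))$, with grading shifts that cancel in the framed conventions. By the isotopy invariance of $\simple$-ness (the lemma citing \cite[Lem~8.7]{Bar05}), it then suffices to prove a single implication, for both crossing signs: if $\wtd$ is $\simple$, then so is $\wtd X$.

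\textbf{Step 2: the operation $X$ is invertible on endomorphisms.} Define maps
\[
\en(\forg\colcom_N(\wtd)) \longrightarrow \en(\forg\colcom_N(\wtd X)) \longrightarrow \en(\forg\colcom_N(\wtd X X^{-1})) \cong \en(\forg\colcom_N(\wtd)).
\]
The first map applies the canopoly operation $X$ to a morphism, i.e.\ tensors with the identity of the crossing complex; the second does the same with $X^{-1}$. The final isomorphism is conjugation by the Reidemeister~2 homotopy equivalence $\phi\colon \colcom(\wtd XX^{-1})\to\colcom(\wtd)$ and its inverse $\psi$. Both planar operations preserve quantum degree: tensoring with an identity adds no degree, and conjugation by a degree-$0$ homotopy equivalence adds none either. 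Each composite is therefore a graded $\scalars$-linear map, and each is an isomorphism.

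The key claim is that the composite $f\mapsto \phi\circ (fXX^{-1})\circ\psi$ equals the identity on homotopy classes. This is the naturality of the Reidemeister~2 equivalence with respect to morphisms supported away from the move. It holds because the equivalence is constructed locally on the two boundary strands, while $f$ acts on the other input disk of the planar diagram. Canopoly axiom (3) (planar operations commute with composition) then lets these two pieces be commuted past each other. By symmetry the same argument shows the other composite is the identity too. Hence $\en(\forg\colcom_N(\wtd))\cong \en(\forg\colcom_N(\wtd X))$ as graded $\scalars$-modules.

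\textbf{Step 3: conclusion.} Since $\simple$-ness depends only on the graded isomorphism type of the endomorphism module (free, positively graded, rank one in degree $0$), it transfers across this isomorphism. I expect the main obstacle to be justifying the naturality in Step~2 in the colored setting. This requires that the Reidemeister~2 homotopy equivalences of \cite{ETW17} are given by planar-algebraic (local) formulas. I would check this by inspecting their construction via Gaussian elimination, which is local in the canopoly $\mathcal{C}(\catf)$ by Proposition~\ref{prop:htpcanopoly}.
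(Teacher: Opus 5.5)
Your proposal is correct and follows the paper's route: the paper only defers, via Remark~\ref{BN}, to Bar-Natan's proofs, and your Step~2 gives that argument in its standard form (gluing $X$ is an equivalence whose quasi-inverse is ``glue $X^{-1}$''). The naturality you flag as the main obstacle is just the interchange law once you \emph{define} the global Reidemeister~2 equivalence as the image, under the planar diagram gluing onto $\wtd$, of the local tangle equivalence from \cite{ETW17}; so there is no need to inspect their Gaussian elimination, and Step~1 becomes redundant. One small correction: the composite $\en(\forg\colcom_N(\wtd X))\to\en(\forg\colcom_N(\wtd))\to\en(\forg\colcom_N(\wtd X))$ through your $\phi$ is not literally the identity, but conjugation by the automorphism of $\forg\colcom_N(\wtd X)$ that compares the two different Reidemeister~2 cancellations in $\wtd XX^{-1}X$; it is still bijective, which together with your first composite gives the graded isomorphism.
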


\begin{lem}
    \label{lem_TY}
    Let $\wtd$ be a web-tangle diagram, and denote by $\wtd Y$ a web-tangle diagram obtained from $\wtd$ after splitting an external string of $\wtd$ (cf Figure \ref{TX}). If $\wtd$ is $\simple$ then $\wtd Y$ is $\simple$ too.
\end{lem}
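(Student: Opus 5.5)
The approach is to adapt Bar-Natan's argument for Lemma~8.9 of \cite{Bar05}, using the ``bending'' operations \eqref{eq:bend} and \eqref{eq:bend-1} and the fact that splitting an external strand is undone, up to a direct sum decomposition, by merging it back. Let $\wtd Y$ be obtained from $\wtd$ by attaching a split vertex to an external string of color $a+b$, producing two strings of colors $a$ and $b$. Let $\wtd Y \bar Y$ be the diagram obtained by re-merging these two strings with the opposite vertex. The canopoly operation ``merge'' sends $\wtd Y$ to $\wtd Y\bar Y$.

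By digon removal in $\catf$, the object $\wtd Y\bar Y$ decomposes as
\[
\forg(\colcom_N(\wtd Y\bar Y)) \simeq \bigoplus_{k} q^{k}\,\forg(\colcom_N(\wtd)),
\]
a direct sum of grading shifts of $\forg(\colcom_N(\wtd))$ with total multiplicity $\croch{a+b}{a}$. This is because the complexes are built locally and the planar operations commute with the direct sum decomposition.

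The key step is to relate endomorphisms of $\wtd Y$ to morphisms involving $\wtd$. There is an adjunction between ``split the external strand'' and ``merge it back''. Concretely, the operation $\bend$ (or a partial version that bends only the two new strands) together with the zip/unzip foams identifies
\[
\en(\forg(\colcom_N(\wtd Y))) \cong q^{\delta}\,\mathrm{Hom}\bigl(\forg(\colcom_N(\wtd)),\,\forg(\colcom_N(\wtd Y\bar Y))\bigr)
\]
for an explicit shift $\delta$. Composing with the digon decomposition, the right-hand side becomes a direct sum of shifted copies of $\en(\forg(\colcom_N(\wtd)))$, with shifts $q^{\delta + k}$ where $k$ runs over the degrees appearing in the symmetric Laurent polynomial $\croch{a+b}{a}$. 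That polynomial has lowest term $q^{-ab}$ with coefficient $1$. The shift $\delta$ comes from the degrees of the zip and unzip foams, which are $ab$ in Figure~\ref{localmod}, so $\delta = ab$.

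Granting this identification, the conclusion follows from the simplicity of $\wtd$. Since $\en(\forg(\colcom_N(\wtd)))$ is free and positively graded with a rank-one degree-$0$ part, each summand $q^{ab+k}\en(\forg(\colcom_N(\wtd)))$ with $k>-ab$ contributes only in strictly positive degree. The unique summand with $k=-ab$ contributes exactly rank one in degree $0$. Hence $\en(\forg(\colcom_N(\wtd Y)))$ is free, positively graded, and of rank one in degree $0$, so $\wtd Y$ is $\simple$.

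The main obstacle is making the adjunction and grading bookkeeping rigorous at the level of complexes in $\mathcal{C}(\catf)$, not merely on objects. One must check three things:
\begin{itemize}
\item the split/merge foams give inverse isomorphisms on Hom spaces of complexes, which follows from the neck-cutting and digon relations in $\catf$ applied termwise and compatibly with differentials;
\item the digon decomposition is natural with respect to the differentials of $\colcom_N(\wtd)$;
\item the signs of the shifts are correct.
\end{itemize}
I would first verify the whole argument on the trivial pairing, where Lemma~\ref{pairing}'s quantum-dimension computation gives a consistency check on the value $\delta=ab$. I would then run the general argument, since it only uses locality of the canopoly operations.
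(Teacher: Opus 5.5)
Your proposal is correct, and your shift $\delta=+ab$ has the right sign, but your route differs from the paper's. The paper never forms $\wtd Y\bar Y$. It invokes Lemma~\ref{lemma:blist1}, which writes every foam between blistered webs as $\sum_\alpha(-1)^{|\hat\alpha|}\cpd_{(a,b)}(\foam_\alpha,m(s_\alpha))$. From this it obtains the base-change description $\morf(\wtd Y)\simeq\morf(\wtd)\otimes_{\scalars_{a+b}}\scalars_{(a,b)}$. Simplicity then follows because $\scalars_{(a,b)}$ is free over $\scalars_{a+b}$ on the Schur polynomials $s_\alpha$, $\alpha\in T(a,b)$, with only $s_\varnothing=1$ in degree $0$.

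You instead use the biadjunction of split and merge, with digon cup and unzip as unit and counit (the zigzag identities hold by isotopy, not neck-cutting). Combined with digon removal, this gives $\en(\wtd Y)\cong q^{ab}\croch{a+b}{a}\,\en(\wtd)$, abbreviating $\en(\forg(\colcom_N(\cdot)))$ to $\en(\cdot)$. Since $q^{ab}\croch{a+b}{a}=1+q^{2}+\cdots+q^{2ab}$ is exactly the graded rank of $\scalars_{(a,b)}$ over $\scalars_{a+b}$, the two computations are the same count. Both ultimately rest on bending plus the digon relation \eqref{eq:12}.

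Your version has two advantages. Every identification is an honest isomorphism, implemented by a planar operation with a fixed foam and the identity on $\wtd$. The canopoly axiom therefore makes it commute with differentials and homotopies immediately, so the obstacle you flag is in fact automatic. Injectivity also comes for free. By contrast, Lemma~\ref{lemma:blist1} as stated only gives a spanning statement, and the paper leaves uniqueness of the $\foam_\alpha$ and the passage to $\mathcal{C}(\catf)$ implicit.

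The paper's version, in turn, produces the explicit Schur-decorated decomposition. That decomposition is reused in the proof of Proposition~\ref{blisthyp}, where one must isolate the component $\foam_{i,0}$ and track the degrees of the $\foam_{i,j}$. A graded-rank count alone would not supply this.
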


\begin{proof}
    For every web $\web$ in the complex associated with $\wtd Y$ or $\wtd$, the edge of thickness $a+b$ induces an $\scalars_{a+b}$-module structure on $\morf(\web)$, where $\scalars_{a+b}$ is identified with $Sym_{a+b}$. Lemma~\ref{lemma:blist1} induces the following isomorphism of graded $\scalars_{a+b}$-modules: 
    \begin{equation}
    \morf(\wtd Y) \simeq \morf(\wtd) \otimes_{\scalars_{a+b}} \scalars_{(a,b)},
    \end{equation}
    where $\scalars_{(a,b)} := \scalars_a \otimes_\scalars \scalars_b$. This concludes the proof.
\end{proof}

\begin{figure}[h]
\begin{subfigure}{0.4\textwidth}
    \centering
    \NB{\tikz[scale=1.5]{\input{\imagesfolder/sch/TX}}}
    \caption{$TX$.}
\end{subfigure}
\begin{subfigure}{0.4\textwidth}
    \centering
    \NB{\tikz[scale=1.5]{\input{\imagesfolder/sch/TY}}}
    \caption{$TY$.}
\end{subfigure}
\caption{Some examples of webs that appearing in Lemmas \ref{lem_TX} and \ref{lem_TY}.}
\label{TX}
\end{figure}

\begin{rmk}
    The operation of splitting an external string colored by $a+b$ of $T$ into a string colored by $a$ and another colored by $b$ can be described by canopoly operations. Indeed, this operation consists in gluing the web-tangle 
    \begin{equation}
        \web_{(a,b)} := \NB{\tikz[scale=.75,font=\tiny]{\input{\imagesfolder/sch/splitting_ab}}}
    \end{equation} 
    to the string in question via the appropriate colored planar diagram denoted $\cpd_{(a,b)}$.
\end{rmk}

\begin{lem}
    \label{lemma:blist1}
    Let $\web$ and $\web'$ be webs in $\catf(\bnd)$. Let $\web Y$ and $\web' Y$ be the webs obtained from $\web$ and $\web'$ by splitting the same external edge labeled $a+b$ into an edge labeled $a$ and another labeled $b$. Let $\foam$ be a foam in $\morf(\web Y,\web' Y)$ then, 
    \begin{equation}
        \foam = \sum_{\alpha \in T(a,b)}(-1)^{|\hat{\alpha}|} \cpd_{(a,b)}(\foam_\alpha,m(s_\alpha)),
    \end{equation}
    where $\foam_\alpha$ is a foam in $\morf(\web,\web')$, $s_\alpha$ is the Schur polynomial associated to the partition $\alpha$, $m(s_\alpha)$ is the identity foam of $\web_{(a,b)}$ decorated by $s_\alpha \in Sym_{a}$ on its facet of thickness $a$.
\end{lem}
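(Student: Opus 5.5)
The plan is to work locally, near the split vertex. Every foam $\foam \in \morf(\web Y, \web' Y)$ agrees with $\web Y\times\interval$ and $\web'Y\times\interval$ near the boundary. In a collar of the split external edge, it therefore looks like the identity foam on $\web_{(a,b)}$: a facet of thickness $a+b$ meeting facets of thickness $a$ and $b$ along a vertical binding. The idea is to isolate this collar with a colored planar arc diagram and then simplify it using the foam relations of \cite{RW20a}.

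\textbf{Step 1: cut and re-glue.} Insert the identity on $\web_{(a,b)}$ and factor it through the $(a+b)$-edge. The relevant relations are the digon/bigon relations of \cite{RW20a}:
\begin{itemize}
\item the zip followed by the unzip, both between two parallel strands $a,b$ and an $(a+b)$-strand, equals the identity of the $a+b$ edge up to a scalar;
\item dually, the identity on two parallel strands $a$, $b$ decomposes as a sum over pairs of partitions $\alpha \in T(a,b)$ (those fitting in an $a\times b$ box), with complementary partition $\hat\alpha$.
\end{itemize}
Concretely, the second relation reads
\[
\Id_{\web_{(a,b)}} = \sum_{\alpha\in T(a,b)} (-1)^{|\hat\alpha|}\, m(s_\alpha)\circ(\text{unzip}\circ\text{zip decorated by } s_{\hat\alpha} \text{ on the } b\text{-facet}).
\]
This is the colored neck-cutting / digon-removal formula, coming from the decomposition of $Sym_a\otimes Sym_b$ as a free $Sym_{a+b}$-module with Schur basis $\{s_\alpha\}_{\alpha\in T(a,b)}$ and dual basis $\{(-1)^{|\hat\alpha|} s_{\hat\alpha}\}$.

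\textbf{Step 2: apply the identity inside $\foam$.} Insert this identity in a thin slab of the collar near the top boundary, that is, postcompose $\foam$ with $\Id_{\web' Y}$ written as above via $\cpd_{(a,b)}$. In each summand, the bottom part consists of $\foam$ glued to the zip, decorated by $s_{\hat\alpha}$. All of it lies in the region of $\web$ where the edge is the unsplit $a+b$ edge, so after an isotopy sliding the zip through the collar down to the source boundary, it defines a foam $\foam_\alpha \in \morf(\web,\web')$. What remains on top is $m(s_\alpha)$ on the $a$-facet of $\web_{(a,b)}$, which gives
\[
\foam = \sum_\alpha (-1)^{|\hat\alpha|}\cpd_{(a,b)}(\foam_\alpha, m(s_\alpha)).
\]
Here one must check that the bottom unzip, near the source $\web Y$, is absorbed correctly. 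It is, because the source also contains $\web_{(a,b)}$, and the unzip composed with the identity on $\web_{(a,b)}$ is just the identity with an $(a+b)$ collar. So $\foam_\alpha$ is $\foam$ with its boundary collar re-merged.

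\textbf{Main obstacle.} The key step is the decomposition formula in Step 1, with the exact sign $(-1)^{|\hat\alpha|}$ and the normalization under the Robert--Wagner evaluation. I would derive it by evaluating closed theta-foams. The pairing $\langle s_\alpha, s_{\hat\beta}\rangle$ given by the $(a,b,a+b)$ theta-foam equals $\pm\delta_{\alpha\beta}$; this is the standard Schur orthogonality for the pushforward $Sym_a\otimes Sym_b\to Sym_{a+b}$. By the universal construction, both sides of the identity then agree after pairing with any foam, which proves the relation.
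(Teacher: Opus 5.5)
\textbf{There is a genuine gap, and it is in the relation you insert in Step~1.}

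As written, that relation cannot hold. Zip and unzip each have degree $ab$. So every summand $m(s_\alpha)\circ(\mathrm{unzip}\circ\mathrm{zip})\cdot s_{\hat\alpha}$ has degree $2ab+2(|\alpha|+|\hat\alpha|)=4ab>0$. The identity has degree $0$, and the Robert--Wagner relations are homogeneous, so the two sides cannot agree. Moreover, $\mathrm{unzip}\circ\mathrm{zip}$ is an endomorphism of two parallel strands, not of the Y-web $\web_{(a,b)}$.

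You may have meant digon-cap and digon-cup instead; these have degree $-ab$ each, which fixes the degree count. In that case, the relation your Schur-basis and theta-pairing heuristic actually justifies is the digon relation of \cite{RW20a}:
\[
\mathrm{Id}_{d_{(a,b)}}=\sum_{\alpha\in T(a,b)}(-1)^{|\hat\alpha|}\,(\text{digon-cup with } s_\alpha)\circ(\text{digon-cap with } s_{\hat\alpha}).
\]
Here $d_{(a,b)}$ is the digon web, in which an $(a+b)$-edge splits into $a$ and $b$ and merges back. This relation needs both trivalent vertices of the digon. It has no analogue for two parallel strands, nor for $\web_{(a,b)}$, which contains only one vertex.

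\textbf{This is also why Step~2 breaks down.} The foam $\foam$ is a product only in three places: near $\Disk\times\{0\}$, near $\Disk\times\{1\}$, and along $\partial\Disk\times\interval$. Along $\partial\Disk\times\interval$ it consists only of two vertical strips of thickness $a$ and $b$. The binding that starts at the split vertex of $\web Y$ can run anywhere inside $\foam$. After inserting something in a top slab, you still have a foam with source $\web Y$, and no isotopy turns its source into $\web$. Your ``absorption of the bottom unzip'' is exactly the step that needs an argument.

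The missing idea is to connect the source vertex and the target vertex through the side strips, so that together they form a digon on which $\foam$ really is the identity. The paper does this by bending:
\begin{itemize}
\item $\bend(\foam)$ lies in $\morf(\varnothing,\cpd_{\bnd'}(\web'Y,-\overline{\web Y}))$. Its boundary web contains the digon $d_{(a,b)}$ obtained by gluing the two blisted legs.
\item In a neighborhood of that digon, $\bend(\foam)$ is $\mathrm{Id}_{d_{(a,b)}}$, so the digon relation applies there.
\item Applying $\bend^{-1}$ afterwards, the piece containing the digon-cap with $s_{\hat\alpha}$ becomes $\foam_\alpha\in\morf(\web,\web')$. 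The digon-cup with $s_\alpha$ unbends to $m(s_\alpha)$.
\end{itemize}
Once this replaces your Steps~1--2, your theta-foam pairing argument is a reasonable way to justify the relation that is actually used.
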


\begin{rmk}
    Schematically, the Lemma \ref{lemma:blist1} says that we can always decompose a foam $\foam$ in $\morf(\web Y,\web'Y)$ as a sum of foams of the following type: 
    \begin{equation}
        \NB{\tikz[scale=.9]{\input{\imagesfolder/sch/blist_exp}}}
    \end{equation}
    The definition of the $s_\alpha$ polynomials is not relevant to the proof of Lemma~\ref{lem_TY}. The curious reader can find details about these polynomials in \cite{RW20a}.
\end{rmk}

\begin{proof}
    We use the canopoly operation~\eqref{eq:bend} to define a foam $\bend(\foam)$ in $\morf(\varnothing,\cpd_{u'}(\web Y,-\Bar{\web' Y}))$. The web on its boundary, $\cpd_{u'}(\web Y,-\Bar{\web'Y})$, contains the following digon, formed by gluing the two blisted edges:
    \begin{equation}
        d_{(a,b)} := \NB{\tikz[scale=.6,font=\tiny]{\input{\imagesfolder/sch/web_digon_k}}}.
    \end{equation}
    There is a neighborhood of $d_{(a,b)}$ in $\foam$ such that the restriction of $\foam$ to this neighborhood is the identity of $d_{(a,b)}$. Using a local relation %relation (12)
    from the Robert--Wagner evaluation~\cite{RW20a}, which we recall here:
    \begin{equation}
    \label{eq:12}
    \NB{\tikz[scale=.7,font=\tiny]{\input{\imagesfolder/sch/digon_id}}} = \sum_{\alpha \in T(a,b)} (-1)^{|\Hat{\alpha}|} \NB{\tikz[scale=.7,font=\tiny]{\input{\imagesfolder/sch/digon_rel}}},
    \end{equation}
    we obtain that $\foam = \sum (-1)^{|\Hat{\alpha}|} G_\alpha$ where $G_\alpha$ is the same foam as $F$, except in the neighborhood of the digon, where it is modified according to the $\alpha$th term in the sum of relation~\eqref{eq:12}. Therefore, 
    \begin{equation}
        \foam = \bend^{-1} (\bend(\foam)) = \sum_{\alpha \in T(a,b)} (-1)^{|\Hat{\alpha}|} \bend^{-1}(G_\alpha),
    \end{equation}
    and as suggested in the Equation \eqref{eq:12}, we can express $\bend^{-1}(G_\alpha) = \cpd_{(a,b)}(F_\alpha,m(s_\alpha))$ where $F_\alpha$ is a foam in $\morf(\web,\web')$, $m(s_\alpha)$ is the identity foam of $\web_{(a,b)}$ decorated by $s_\alpha \in Sym_{a}$ on its facet of thickness $a$. \qedhere
\end{proof}

\begin{rmk}
In the case where $a=k-1$ and $b=1$, the identity of Lemma \ref{lemma:blist1} becomes:
 \begin{equation}
        \foam = \sum_{i=0}^{k-1}(-1)^{k-1-i} \cpd_{(k-1,1)}(\foam_i,m(e_i)). 
    \end{equation}
\end{rmk}

%% file: sch/splitting_ab.tex
\begin{scope}
    \draw[dashed, very thin](0,0) circle (1);
    \draw[->] (0,0) -- (0,1) node[pos=.25,left]{$a+b$};
    \draw[] (-.35,-.94) -- (0,0) node[pos=.5,left]{$a$};
    \draw[] (.35,-.94) -- (0,0) node[pos=.5,right]{$b$};
\end{scope}

%% file: sch/blist_exp.tex
\begin{scope}[font=\tiny]
\fill[gray!30] (0.2,-.01) rectangle (1.8,3.01);
\draw[dashed,fill=gray!30] 
  (0.2,3) arc[start angle=180,end angle=360,x radius=.8,y radius=0.18];
\draw[dashed,fill=gray!30] 
  (0.2,0) arc[start angle=180,end angle=360,x radius=.8,y radius=0.18];
\draw[dashed,fill=gray!30] 
  (0.2,3) arc[start angle=180,end angle=0,x radius=.8,y radius=0.18];
\draw[dashed,fill=gray!30] 
  (0.2,0) arc[start angle=180,end angle=0,x radius=.8,y radius=0.18];
\draw[dashed] (0.2,0)--(0.2,3);
\draw[dashed] (1.8,0)--(1.8,3);

\draw[dotted] 
  (0,3) arc[start angle=180,end angle=360,x radius=1.6,y radius=0.33];
\draw[dotted] 
  (0,0) arc[start angle=180,end angle=360,x radius=1.6,y radius=0.33];
\draw[dotted] 
  (0,3) arc[start angle=180,end angle=0,x radius=1.6,y radius=0.33];
\draw[dotted] 
  (0,0) arc[start angle=180,end angle=0,x radius=1.6,y radius=0.33];
\draw[dotted] (0,0)--(0,3);
\draw[dotted] (3.2,0)--(3.2,3);

\node at (1,0) {$\web$};
\node at (1,3) {$\web'$};
\node at (2.75,2.9) {$\bullet s_\alpha$};
\node at (3,3.4) {$a$};
\node at (2,3.2) {$a+b$};
\node at (2.75,-.4) {$b$};
\node at (1,1.5) {$\foam_\alpha$};

\draw[] (1.8,0) -- (2.2,0);
\draw[] (2.2,0) -- (3,0.16);
\draw[] (2.2,0) -- (2.7,-0.23);

\draw[] (1.8,3) -- (2.2,3);
\draw[] (2.2,3) -- (3,3.16);
\draw[] (2.2,3) -- (2.7,2.77);

\draw[] (2.7,-0.23) -- (2.7,2.77);
\draw[] (2.2,0) -- (2.2,3);
\draw[] (3,0.16) -- (3,3.16);

\end{scope}

%% file: sch/web_digon_k.tex
\begin{scope}
  \coordinate (bm) at ( 0, -1);
  \coordinate (cm) at ( 0, -0.6);
  \coordinate (sm) at ( 0,  0.6);
  \coordinate (tm) at ( 0, 1);
  \draw[dashed] (0,0) circle (1);
  \draw[->] (sm) -- (tm) node[above, pos =1] {$a+b$};
  \draw[->-] (cm) .. controls + ( 0.4, 0.4) and + ( 0.4, -0.4) .. (sm)
  node [pos = 0.5, right] {$b$}  ;
  \draw[->-] (cm) .. controls + (-0.4, 0.4) and + (-0.4, -0.4) .. (sm)
  node [pos = 0.5,  left] {$a$};
  \draw[] (bm) -- (cm) node[pos=0, below] {$a+b$};
\end{scope}

%% file: sch/digon_id.tex
\tdplotsetmaincoords{60}{140}
\begin{scope}[tdplot_main_coords]
  \coordinate (aT) at (-1, 0, 2);
  \coordinate (bT) at (+1, 0, 2);
  \coordinate (AT) at (-2.5, 0, 2);
  \coordinate (BT) at (+2.5, 0, 2);
  \coordinate (cT) at (0, +1, 2);
  \coordinate (dT) at (0, -1, 2);
  \coordinate (aB) at (-1, 0, -2);
  \coordinate (bB) at (+1, 0, -2);
  \coordinate (AB) at (-2.5, 0, -2);
  \coordinate (BB) at (+2.5, 0, -2);
  \coordinate (cB) at (0, +1, -2);
  \coordinate (dB) at (0, -1, -2);
  \coordinate (aM) at (-1, 0, 0);
  \coordinate (bM) at (+1, 0, 0);
\draw[thick ] (AT) -- (aT);
\draw[thick ] (bT) -- (BT);
\draw[thick ] (aT) .. controls (dT) .. (bT);
\draw[thick ] (aT) .. controls (cT) .. (bT);
\draw[thick ] (AB) -- (aB);
\draw[thick ] (bB) -- (BB);
\draw[thick ] (aB) .. controls (dB) .. (bB);
\draw[thick ] (aB) .. controls (cB) .. (bB);
\draw[thick] (aB) -- (aT);
\draw[thick] (bT) -- (bB);
  \filldraw[draw = black, rounded corners=1pt, thick, fill opacity = 0.3, fill = white]    (AT) -- (aT) node[midway, opacity =1, below, sloped] {$a+b$} -- (aB) -- (AB) -- (AT);
  \filldraw[draw = black, rounded corners=1pt, thick, fill opacity = 0.3, fill = white]    (bT) -- (BT) node[midway, opacity =1, below, sloped] {$a$} -- (BB) -- (bB) -- (bT);
  \filldraw[draw = black, rounded corners=1pt, thick, fill opacity = 0.3, fill = white]   (bT) .. controls (dT) .. (aT) node[near end, below, opacity=1] {$b$} -- (aB) .. controls (dB) .. (bB)-- (bT);
  \filldraw[draw = black, rounded corners=1pt, thick, fill opacity = 0.3, fill = white]   (bT) .. controls (cT) .. (aT) -- (aB)  .. controls (cB) .. (bB) node[near end, above, opacity=1] {$b$} -- (bT);
\end{scope}

%% file: sch/digon_rel.tex
\tdplotsetmaincoords{60}{140}
\begin{scope}[tdplot_main_coords]
  \coordinate (aT) at (-1, 0, 2);
  \coordinate (bT) at (+1, 0, 2);
  \coordinate (AT) at (-2.5, 0, 2);
  \coordinate (BT) at (+2.5, 0, 2);
  \coordinate (cT) at (0, +1, 2);
  \coordinate (dT) at (0, -1, 2);
  \coordinate (aB) at (-1, 0, -2);
  \coordinate (bB) at (+1, 0, -2);
  \coordinate (AB) at (-2.5, 0, -2);
  \coordinate (BB) at (+2.5, 0, -2);
  \coordinate (cB) at (0, +1, -2);
  \coordinate (dB) at (0, -1, -2);
  \coordinate (aM) at (-1, 0, 0);
  \coordinate (bM) at (+1, 0, 0);
\draw[thick] (AT) -- (aT);
\draw[thick] (bT) -- (BT);
\draw[thick] (aT) .. controls (dT) .. (bT);
\draw[thick] (aT) .. controls (cT) .. (bT);
\draw[thick] (AB) -- (aB);
\draw[thick] (bB) -- (BB);
\draw[thick] (aB) .. controls (dB) .. (bB);
\draw[thick] (aB) .. controls (cB) .. (bB);
\draw[thick] (bT) .. controls (bM) and (aM) .. (aT);
\draw[thick] (aB) .. controls (aM) and (bM) .. (bB);
  \filldraw[draw = black, rounded corners=1pt, thick, fill opacity = 0.3, fill = white]    (AT) -- (aT)  .. controls (aM) and (bM) .. (bT) -- (BT) -- (BB) node[midway, opacity =1, right, sloped, rotate= 112 ] {$a+b$} -- (bB) .. controls (bM) and (aM) .. (aB) -- (AB) -- (AT);
  \filldraw[draw = black, rounded corners=1pt, thick, fill opacity = 0.3, fill = white]    (aT) .. controls (aM) and (bM) .. (bT) .. controls (dT) .. (aT) node [opacity=1, near end, below,yshift=-2pt] {$\bullet s_\alpha$} coordinate[pos=0.3] (alpha); 
  \filldraw[draw = black, rounded corners=1pt, thick, fill opacity = 0.3, fill = white]   (aT) .. controls (aM) and (bM) .. (bT) .. controls (cT) .. (aT) node [opacity=1, near start, below] {$b$} ;
  \filldraw[draw = black, rounded corners=1pt, thick, fill opacity = 0.3, fill = white]    (aB) .. controls (aM) and (bM) .. (bB) .. controls (dB) .. (aB) node [opacity=1, near end, above] {$a$};
  \filldraw[draw = black, rounded corners=1pt, thick, fill opacity = 0.3, fill = white]   (aB) .. controls (aM) and (bM) .. (bB) .. controls (cB) .. (aB) node [opacity=1, pos=.08, above]  {$b$} coordinate[pos=0.7] (alphah) node [opacity=1, pos=.3,yshift=2pt, above] {$\bullet s_{\Hat{\alpha}}$} coordinate[pos=0.3] (alpha); ;
\end{scope}

%% file: Z_blist.tex
The purpose of this section is to reduce the proof of the properties of $\colcom_N$ to the uncolored case.  
The idea is to “separate” a colored $k$–strand of a diagram into a colored $(k-1)$–strand and a $1$–colored strand via the blisting procedure:

\begin{figure}[h!]
    \centering
    \begin{tikzpicture}
    \node (a) at (0,0) {$\web_k$ := \NB{\tikz[scale=.8, font=\tiny]{\input{\imagesfolder/sch/blist_1}}} }; 
    \node (b) at (4.5,0) {\NB{\tikz[scale=.8, font=\tiny]{\input{\imagesfolder/sch/blist_2}}} =: $\web_{(k-1,1)}$}; 
    \draw[->,line join=round,decorate, decoration={zigzag,segment length=7,amplitude=.9,post=lineto,post length=2pt}] (a) -- (b) node[pos=.5, above]{blist}; 
    \end{tikzpicture}
    \caption{Example of a blisting.}
    \label{ex:blisting}
\end{figure}

This procedure can be described by canopoly operations. Indeed, blistering a colored $k$-strand of a diagram consists in gluing the web-tangle $\web_{(k-1,1)}$ to the strand in question via the appropriate colored planar diagram denoted $\cpd_{(k-1,1)}$.

\begin{prop}[Blist hypothesis]
    \label{blisthyp}
    Let $\wtd, \wtd' \in \WT(\bnd)$. Let $\wtd_b$ and $\wtd_b' \in \WT(\bnd')$ be the diagrams obtained from $\wtd$ and $\wtd'$ by blisting the same boundary component.
    \begin{enumerate}
        \item $\wtd$ is $\simple$
        \item $\colcom_N(\wtd_b) \simeq \colcom_N(\wtd_b')$ in $\relcat{\bnd'}$.
    \end{enumerate}
    Then, 
    \begin{enumerate}
        \item $\wtd_b$ is $\simple$
        \item $\colcom_N(\wtd) \simeq \colcom_N(\wtd')$ in $\relcat{\bnd}$.  
    \end{enumerate}
\end{prop}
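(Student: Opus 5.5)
Part (1) is immediate from what precedes. Since $\wtd_b$ is obtained from $\wtd$ by splitting an external string of color $k$ into strings of colors $k-1$ and $1$, Lemma~\ref{lem_TY} applies directly. Note that blisting a boundary component equals splitting followed by re-merging at the boundary. Up to the planar arc diagram $\cpd_{(k-1,1)}$, the relevant statement is the splitting one, with endomorphism spaces computed via Lemma~\ref{lemma:blist1}. If the blist is literally the merge--split web $\web_{(k-1,1)}$, I will instead argue with $\bend$ that the endomorphisms of $\forg(\colcom_N(\wtd_b))$ are $\en(\forg(\colcom_N(\wtd)))\otimes_{\scalars_k}\scalars_{(k-1,1)}$. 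This uses the digon decomposition \eqref{eq:12}, and $\scalars_{(k-1,1)}$ is free over $\scalars_k$ with a basis whose single element of degree $0$ is $1$. Hence the endomorphism algebra stays free and positively graded, with a rank-one degree-$0$ part.

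For (2), I follow Bar-Natan's argument for $Kh$-simple tangles, adapted to the relative homotopy canopoly as in \cite{QRSW21}. Let $f_b\colon\colcom_N(\wtd_b)\to\colcom_N(\wtd_b')$ be the given isomorphism in $\relcat{\bnd'}$, with inverse $g_b$. The digon removal isomorphism $\web_k\simeq\bigoplus\web$, indexed by the graded basis $[k]$ of $\scalars_{(k-1,1)}$ over $\scalars_k$, splits the blisted complex. In $\mathcal{C}(\catf)$ we get $\colcom_N(\wtd_b)\simeq\bigoplus_{i=0}^{k-1}q^{k-1-2i}\colcom_N(\wtd)$, and the inclusion and projection onto the lowest summand are the zip/unzip foams, i.e.\ the maps $\mapzip$ and $\mapunzip$. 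These are $\Witt$-equivariant once the red dots of Lemma~\ref{lemm_rd_3} are placed, so they define morphisms in $\relcatt$ via canopoly operations. I then set $f:=\mapunzip\circ f_b\circ\mapzip$ and $g:=\mapunzip\circ g_b\circ\mapzip$, both morphisms in $\relcat{\bnd}$ between $\colcom_N(\wtd)$ and $\colcom_N(\wtd')$, with the zip and unzip taken with respect to the appropriate digon decoration so that the degrees match.

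The crux is to show that $g\circ f$ and $f\circ g$ are isomorphisms. Applying $\forg$, the composite $\forg(g\circ f)$ is a degree-$0$ endomorphism of $\forg(\colcom_N(\wtd))$. By simplicity of $\wtd$ it equals $c\cdot\Id$ for some scalar $c$. To see that $c$ is a unit, I expand $\forg(g_b\circ f_b)=\Id$ using the summand decomposition. The diagonal entry at the lowest summand of $\Id$ is $\Id$, and by positivity of the grading only the degree-$0$ component contributes, which forces $c=1$, up to the normalization of the digon relation \eqref{eq:12}. The same argument applies to $f\circ g$ using the simplicity of $\wtd'$, which follows from isotopy invariance or from the symmetric argument.

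The main obstacle is lifting ``$\forg(g\circ f)$ is invertible'' to invertibility in $\relcat{\bnd}$. Here I would use Corollary~\ref{lemme_RHC}: the cone of $g\circ f$ in $\mathcal{C}(\ralgtop)$ has contractible image under $\mathrm{For}$, hence lies in $\Ker(\mathrm{For})$ and is zero in the Verdier quotient. Therefore $\Funiv(g\circ f)$ is an isomorphism. A further subtlety is that the morphisms in $\relcatt$ are roofs, not honest chain maps, so the whole computation should be carried out after composing with $\forg$, which is well defined on roofs. Checking that the red-dot twists on the zip and unzip match those of the crossing complexes is the technical point I expect to need the most care.
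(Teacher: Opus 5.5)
Part (1) is fine: it is Lemma~\ref{lem_TY}. Part (2) has a genuine gap, and it sits exactly where you treat things as routine: the $\Witt$-equivariance of $f:=\mapunzip\circ f_b\circ\mapzip$.

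\textbf{The equivariance step fails.} You get equivariance by asserting that the inclusion and projection of the lowest summand become $\Witt$-equivariant ``once the red dots of Lemma~\ref{lemm_rd_3} are placed''. That lemma only exchanges solid and hollow dots at a trivalent vertex, so it gives nothing of the kind. The maps you need are digon cups and caps $\mapdcup,\mapdcap$, not $\mapzip,\mapunzip$.
\begin{itemize}
\item Undecorated, each has degree $-(k-1)$, and their composite is the undecorated blister, which vanishes for $k\ge 2$.
\item So, as you concede, one of them must carry a decoration of degree $2(k-1)$. The $\Witt$-action on that decoration produces terms that nothing you invoke cancels against the twists $\colcom_N(\wtd_b)$ and $\colcom_N(\wtd_b')$ actually carry.
\item Equivalently, projecting onto the $m(e_0)$-component of Lemma~\ref{lemma:blist1} does not commute with $\Witt$. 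For $k=2$, with $y$ the variable of the decorated thickness-$1$ facet, $\wgen_1\cdot y=-y^2=-e_1y+e_2$ with $e_1,e_2\in Sym_2$, which has nonzero $m(1)$-coefficient.
\end{itemize}
This is why the paper never claims such a splitting is equivariant. Compare Lemma~\ref{forktwist}, where the complementary summand only gets a $\Witt$-structure as a cokernel.

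The missing idea is to get equivariance indirectly from part (1); your proof of (2) never uses (1). Let $\psi$ be the $m(e_0)$-component of the given isomorphism $\phi$, and show $\psi$ is invertible after $\forg$. Then $\phi^{-1}\circ\cpd_{(k-1,1)}(\psi,m(e_0))$ is a degree-$0$ endomorphism of $\colcom_N(\wtd_b)$. Simplicity of $\wtd_b$ gives $\cpd_{(k-1,1)}(\psi,m(e_0))=\alpha\phi$, hence $\cpd_{(k-1,1)}(\wgen_n\cdot\psi,m(e_0))=0$. Injectivity of $\cpd_{(k-1,1)}(\,\cdot\,,m(e_0))$ then gives $\wgen_n\cdot\psi=0$.

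\textbf{Your set-up does not match the statement.} Since $\wtd_b\in\WT(\bnd')$, blisting glues the $Y$-web $\web_{(k-1,1)}$ at the boundary; it does not create a digon. So $\colcom_N(\wtd_b)$ and $\colcom_N(\wtd)$ live in different categories. There is no splitting $\colcom_N(\wtd_b)\simeq\bigoplus_i q^{k-1-2i}\colcom_N(\wtd)$, and no cup or cap between them unless you first re-merge by a planar operation.

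The right tool is Lemma~\ref{lemma:blist1} on hom spaces, and it also repairs your invertibility step. The components $F_{i,j}$ and $G_{\ell,r}$ of $\phi$ and $\phi^{-1}$ have degrees $-2j,-2r\le 0$. Their composites are therefore endomorphisms of $\colcom_N(\wtd)$ of degree $-2(j+r)$, which vanish by positivity unless $j=r=0$. In your digon picture the matrix entries have degrees of both signs and live in $\mathrm{Hom}(\colcom_N(\wtd),\colcom_N(\wtd'))$. Simplicity of $\wtd$ alone does not control that space, so ``only the degree-$0$ component contributes'' is unjustified there.

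Your lifting step via Corollary~\ref{lemme_RHC} is sound, since $\forg$ reflects isomorphisms. But it only applies once you have an actual morphism of $\relcatt$, and that is exactly what is missing.
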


\begin{proof}
    The first statement is a special case of Lemma~\ref{lem_TY}. 
    For the second one, let us denote by $\phi$ the isomorphism between $\colcom_N(\wtd_b)$ and $\colcom_N(\wtd_b')$ in $\relcat{\bnd}$. This isomorphism descends to an isomorphism between $\forg(\colcom_N(\wtd_b))$ and $\forg(\colcom_N(\wtd_b'))$ in $\mathcal{C}(\catf(\bnd))$, which we will also denote by $\phi$. Because of the construction of $\colcom_N$, each foams involved in the definition of $\phi$, denoted $\foam_i$, is a foam between webs that have the $\web_{(k-1,1)}$ part. Each of these foams can be decomposed according Lemma~\ref{lemma:blist1}:
    \begin{equation}
        \foam_{i} = \sum_{j=0}^{k-1} (-1)^{k-1-j}\cpd_{(k-1,1)}(\foam_{i,j},m(e_j)).
    \end{equation}
    Note that the degree of $\foam_{i,j}$ is $-2j$. Thank to this decomposition, we can define:
    \begin{equation}
        \psi: \forg(\colcom_N(\wtd)) \xrightarrow{\sim} \forg(\colcom_N(\wtd')) \quad \text{by} \quad \psi := \sum_i (-1)^{k-1}\foam_{i,0}.
    \end{equation} 
    We denote by $G_\ell$ the foams defining the inverse of $\phi$. Then, using the decomposition from Lemma~\ref{lemma:blist1}, we have
    \begin{equation}
        G_{\ell} = \sum_{r=0}^{k-1} (-1)^{k-1-r}\cpd_{(k-1,1)}(G_{\ell,r},m(e_r)).
    \end{equation} 
    The composition of $G_{\ell,r}$ with $F_{i,j}$, whenever it is defined, gives an endomorphism of $\colcom_N(T)$ of degree $-2(j+r)$. By the simplicity of $T$, this composition is nonzero if and only if $j=r=0$. Therefore, the map $\psi$ is an isomorphism. We need to check that this map is $\Witt$-equivariant.\newline
    Thanks to the previous considerations, $\cpd_{(k-1,1)}(\psi,m(e_0))$ and $\phi$ are both isomorphisms between $\colcom_N(\wtd_b)$ and
    $\colcom_N(\wtd_b')$. So $\phi^{-1} \circ (\cpd_{(k-1,1)}(\psi,m(e_0)))$ is an endomorphism of $\colcom_N(\wtd_b)$ of degree 0, therefore thanks to the first statement, $\cpd_{(k-1,1)}(\psi,m(e_0)) = \alpha \phi$ with $\alpha \in \scalars \setminus \{0\}$. Hence, 
    \[\cpd_{(k-1,1)} ( (\wgen_n \cdot \psi) , m(e_0) )= \wgen_n \cdot (\cpd_{(k-1,1)}(\psi,m(e_0)))= \wgen_n \cdot (\alpha \phi)=0,\]
    and because the canopoly operation $\cpd_{(k-1,1)} ( ~\cdot~ , m(e_0) )$ has a right inverse, we find that $\psi$ is indeed $\Witt$-equivariant.
\end{proof}

\begin{rmk}
    The blist procedure can be applied inductively to split a $k$–colored strand into $k$ $1$–colored strands.  
    We refer to this procedure as the \textit{full blist}.

\[
\begin{tikzpicture}
    \node (a) at (0,0) {\NB{\tikz[scale=1, font=\tiny]{\input{\imagesfolder/sch/blist_1}}} }; 
    \node (b) at (4.5,0) {\NB{\tikz[scale=1, font=\tiny]{\input{\imagesfolder/sch/blist_3}}}}; 
    \draw[->,line join=round,decorate, decoration={zigzag,segment length=7,amplitude=.9,post=lineto,post length=2pt}] (a) -- (b) node[pos=.5, above]{full blist}; 
\end{tikzpicture}
\]
\end{rmk}

\begin{rmk}
    Using the notation and the hypothesis of \ref{blisthyp}, we prove that there is the following commutative square:
    \begin{equation*}
    \NB{
    \begin{tikzpicture}
        \node (a) at (0,0) {$\colcom_N(\wtd)$};
        \node (b) at (4,0) {$\colcom_N(\wtd')$};
        \node (c) at (0,-1.5) {$\colcom_N(\wtd_b)$};
        \node (d) at (4,-1.5) {$\colcom_N(\wtd_b')$};
        \draw[->,dashed] (a) -- (b) node[pos=0.5,above]{$\sim$};
        \draw[->] (b) -- (d) node[pos=0.5,right]{blisting};
        \draw[->] (a) -- (c) node[pos=.5,left]{blisting};
        \draw[->] (c) -- (d) node[pos=.5,above]{$\sim$};
    \end{tikzpicture}}
    \end{equation*}
    where the dotted arrow is the isomorphism given by the Proposition \ref{blisthyp}.
\end{rmk}

%% file: sch/blist_1.tex
\begin{scope}
    \draw[dashed, very thin](0,0) circle (1);
    \draw[->] (0,-1) -- (0,1) node[pos=.5,left]{$k$};
\end{scope}

%% file: Z_forkslide.tex
In \cite{QRSW21}, the authors define the forkslide moves. 
The goal of the following two sections is to prove that $\colcom_N$ is invariant under these forkslide moves:

\begin{prop}
\label{forkslide}    
Let $a,b,c \in \N^*$. We have isomorphisms, in the relative homotopy canopoly, between the colored Khovanov--Rozansky homology of the following colored web-tangles: 
\begin{align}
     \NB{\tikz[font=\tiny,xscale=1,yscale=.7]{\input{\imagesfolder/sch/forkslide_1}}}  \simeq \NB{\tikz[font=\tiny,scale=1,yscale=.7]{\input{\imagesfolder/sch/forkslide_2}}} & \hspace{2cm}\NB{\tikz[font=\tiny,xscale=1,yscale=.7]{\input{\imagesfolder/sch/forkslide_3}}}  \simeq \NB{\tikz[font=\tiny,scale=1,yscale=.7]{\input{\imagesfolder/sch/forkslide_4}}} \\ \nonumber
     ~ \\ 
     \NB{\tikz[font=\tiny,xscale=1,yscale=.7]{\input{\imagesfolder/sch/forkslide_9}}}  \simeq \NB{\tikz[font=\tiny,scale=1,yscale=.7]{\input{\imagesfolder/sch/forkslide_10}}} & \hspace{2cm} \NB{\tikz[font=\tiny,xscale=1,yscale=.7]{\input{\imagesfolder/sch/forkslide_11}}} \simeq\NB{\tikz[font=\tiny,scale=1,yscale=.7]{\input{\imagesfolder/sch/forkslide_12}}} \\ \nonumber
     ~ \\
     \NB{\tikz[font=\tiny,xscale=1,yscale=.7]{\input{\imagesfolder/sch/forkslide_5}}}  \simeq \NB{\tikz[font=\tiny,scale=1,yscale=.7]{\input{\imagesfolder/sch/forkslide_6}}} & \hspace{2cm}\NB{\tikz[font=\tiny,xscale=1,yscale=.7]{\input{\imagesfolder/sch/forkslide_7}}} \simeq\NB{\tikz[font=\tiny,scale=1,yscale=.7]{\input{\imagesfolder/sch/forkslide_8}}} \\ \nonumber
     ~ \\
     \NB{\tikz[font=\tiny,xscale=1,yscale=.7]{\input{\imagesfolder/sch/forkslide_13}}} \simeq\NB{\tikz[font=\tiny,scale=1,yscale=.7]{\input{\imagesfolder/sch/forkslide_14}}} & \hspace{2cm}\NB{\tikz[font=\tiny,xscale=1,yscale=.7]{\input{\imagesfolder/sch/forkslide_15}}} \simeq\NB{\tikz[font=\tiny,scale=1,yscale=.7]{\input{\imagesfolder/sch/forkslide_16}}}.
\end{align}
\end{prop}

\begin{lem}
    \label{redforkslide}
   Let $b, c \in \N^*$. To check the invariance of $\colcom_N$ under the forkslide moves, it suffices to prove that we have isomorphisms, in the relative homotopy canopoly, between the colored Khovanov--Rozansky homology of the following colored web-tangles:
\begin{align}
    \label{line1a}
     \NB{\tikz[font=\tiny,xscale=1,yscale=.7]{\input{\imagesfolder/sch/forkslide_1a}}}  \simeq \NB{\tikz[font=\tiny,scale=1,yscale=.7]{\input{\imagesfolder/sch/forkslide_2a}}} & \hspace{2cm}\NB{\tikz[font=\tiny,xscale=1,yscale=.7]{\input{\imagesfolder/sch/forkslide_3a}}}  \simeq \NB{\tikz[font=\tiny,scale=1,yscale=.7]{\input{\imagesfolder/sch/forkslide_4a}}} \\ \nonumber
     ~ \\ 
     \label{line2a}
     \NB{\tikz[font=\tiny,xscale=1,yscale=.7]{\input{\imagesfolder/sch/forkslide_9a}}}  \simeq \NB{\tikz[font=\tiny,scale=1,yscale=.7]{\input{\imagesfolder/sch/forkslide_10a}}} & \hspace{2cm} \NB{\tikz[font=\tiny,xscale=1,yscale=.7]{\input{\imagesfolder/sch/forkslide_11a}}} \simeq\NB{\tikz[font=\tiny,scale=1,yscale=.7]{\input{\imagesfolder/sch/forkslide_12a}}} \\ \nonumber
     ~ \\
     \label{line3a}
     \NB{\tikz[font=\tiny,xscale=1,yscale=.7]{\input{\imagesfolder/sch/forkslide_5a}}}  \simeq\NB{\tikz[font=\tiny,scale=1,yscale=.7]{\input{\imagesfolder/sch/forkslide_6a}}} & \hspace{2cm}\NB{\tikz[font=\tiny,xscale=1,yscale=.7]{\input{\imagesfolder/sch/forkslide_7a}}} \simeq\NB{\tikz[font=\tiny,scale=1,yscale=.7]{\input{\imagesfolder/sch/forkslide_8a}}} \\ \nonumber
     ~ \\
     \label{line4a}
     \NB{\tikz[font=\tiny,xscale=1,yscale=.7]{\input{\imagesfolder/sch/forkslide_13a}}} \simeq\NB{\tikz[font=\tiny,scale=1,yscale=.7]{\input{\imagesfolder/sch/forkslide_14a}}} & \hspace{2cm}\NB{\tikz[font=\tiny,xscale=1,yscale=.7]{\input{\imagesfolder/sch/forkslide_15a}}} \simeq\NB{\tikz[font=\tiny,scale=1,yscale=.7]{\input{\imagesfolder/sch/forkslide_16a}}}.
\end{align}

\end{lem}  

\begin{proof}
    The proof of this lemma does not rely on the explicit definition of $\colcom_N$, but only on the fact that it satisfies the blist hypothesis of Proposition~\ref{blisthyp}.
    
    To prove that it suffices to consider the case $a = 1$ for all moves we verify the inductive step in the induction on $a$. We only detail the proof for the first move of (\ref{line1a}). We start by blisting the $a$-edge in the two sides of the move:
    \[ \NB{\tikz[font=\tiny,xscale=1,yscale=.7]{\input{\imagesfolder/sch/forkslide_1}}} \rightsquigarrow \NB{\tikz[font=\tiny,xscale=1,yscale=.7]{\input{\imagesfolder/sch/forkslide_red_1}}} \hspace{2mm} \text{and} \hspace{2mm} \NB{\tikz[font=\tiny,xscale=1,yscale=.7]{\input{\imagesfolder/sch/forkslide_2}}} \rightsquigarrow \NB{\tikz[font=\tiny,xscale=1,yscale=.7]{\input{\imagesfolder/sch/forkslide_red_2}}} .\]
    According to the blist hypothesis, it suffices to prove that $\colcom_N$ is invariant under the "blisted move". Thus, using the fact that associative foams induce isomorphisms and by the induction hypothesis, we obtain:
    \[\NB{\tikz[font=\tiny,xscale=.9,yscale=.6]{\input{\imagesfolder/sch/forkslide_red_1}}} \simeq\NB{\tikz[font=\tiny,xscale=.9,yscale=.7]{\input{\imagesfolder/sch/forkslide_red_3}}} \simeq\NB{\tikz[font=\tiny,xscale=.9,yscale=.7]{\input{\imagesfolder/sch/forkslide_red_4}}} \simeq\NB{\tikz[font=\tiny,xscale=.9,yscale=.7]{\input{\imagesfolder/sch/forkslide_red_5}}} \simeq\NB{\tikz[font=\tiny,xscale=.9,yscale=.7]{\input{\imagesfolder/sch/forkslide_red_6}}} \simeq\NB{\tikz[font=\tiny,xscale=.9,yscale=.7]{\input{\imagesfolder/sch/forkslide_red_2}}}.\]
    We can now stick to the case where $a=1$. 
    
    To prove that we can stick to the case where $c=1$ for the moves in (\ref{line2a}), (\ref{line3a}), and (\ref{line4a}), we use the same strategy and proceed by induction. Let us prove the reduction for the first move of (\ref{line2a}). To prove it, we need to use that $\colcom_N$ is invariant under the moves (\ref{line1a}): 
    \[\NB{\tikz[font=\tiny,xscale=.9,yscale=.7]{\input{\imagesfolder/sch/forkslide_red_7}}} \simeq\NB{\tikz[font=\tiny,xscale=.9,yscale=.7]{\input{\imagesfolder/sch/forkslide_red_8}}} \simeq\NB{\tikz[font=\tiny,xscale=.9,yscale=.7]{\input{\imagesfolder/sch/forkslide_red_9}}} \simeq\NB{\tikz[font=\tiny,xscale=.9,yscale=.7]{\input{\imagesfolder/sch/forkslide_red_10}}} \simeq\NB{\tikz[font=\tiny,xscale=.9,yscale=.7]{\input{\imagesfolder/sch/forkslide_red_11}}}. \]
    The idea used to prove the reduction of the remaining moves is the same. The only subtlety arises in the proof of (\ref{line3a}), where moves (\ref{line2a}) are required.
\end{proof}

%% file: sch/forkslide_2.tex
\begin{scope}[xscale=1, font=\tiny]
    \coordinate (a) at (0,0);
    \coordinate (b) at (1,0);
    \coordinate (m) at (.5,1);
    \coordinate (ab) at (.5,2);
    \coordinate (cb) at (1.5,0);
    \coordinate (ct) at (0,2);
    \draw [->] (cb) .. controls +(0,1.5) and + (0,-1) .. (ct) node[pos = 0, below] {$c$} node[pos = 1, above] {};
    \fill[white] (0.5, 1.3) circle (.7mm);
    \draw [] (a) .. controls +(0,0.5) and + (0, 0) .. (m) node[pos = 0, below] {$a$} ;
    \draw [] (b) .. controls +(0,0.5) and + (0, 0) .. (m) node[pos = 0, below] {$b$} ;
    \draw [->] (m) -- (ab) node[pos=1, above] {$a+b$};
\end{scope}

%% file: sch/forkslide_4.tex
\begin{scope}[xscale=1, font=\tiny]
    \coordinate (a) at (0,0);
    \coordinate (b) at (1,0);
    \coordinate (m) at (.5,1);
    \coordinate (ab) at (.5,2);
    \coordinate (cb) at (1.5,0);
    \coordinate (ct) at (0,2);
    \draw [] (a) .. controls +(0,0.5) and + (0, 0) .. (m) node[pos = 0, below] {$a$} ;
    \draw [] (b) .. controls +(0,0.5) and + (0, 0) .. (m) node[pos = 0, below] {$b$} ;
    \draw [->] (m) -- (ab) node[pos=1, above] {$a+b$};
    \fill[white] (0.5, 1.3) circle (.7mm);
    \draw [->] (cb) .. controls +(0,1.5) and + (0,-1) .. (ct) node[pos = 0, below] {$c$} node[pos = 1, above] {};
\end{scope}

%% file: sch/forkslide_9.tex
\begin{scope}[xscale=-1,font=\tiny]
    \coordinate (a) at (0,0);
    \coordinate (b) at (1,0);
    \coordinate (m) at (.5,1.3);
    \coordinate (ab) at (.5,2);
    \coordinate (cb) at (1.5,0);
    \coordinate (ct) at (0,2);
    \draw [->] (cb) .. controls +(0,1) and + (0,-2) .. (ct) node[pos = 0, below] {$c$} node[pos = 1, above] {};
    \fill[white] (0.85, 0.6) circle (.7mm);
    \fill[white] (0.28, 0.9) circle (.7mm);
    \draw [] (a) .. controls +(0,0.5) and + (0, 0) .. (m) node[pos = 0, below] {$a$} ;
    \draw [] (b) .. controls +(0,0.5) and + (0, 0) .. (m) node[pos = 0, below] {$b$} ;
    \draw [->] (m) -- (ab) node[pos=1,above] {$a+b$};
\end{scope}

%% file: sch/forkslide_10.tex
\begin{scope}[xscale=-1, font=\tiny]
    \coordinate (a) at (0,0);
    \coordinate (b) at (1,0);
    \coordinate (m) at (.5,1);
    \coordinate (ab) at (.5,2);
    \coordinate (cb) at (1.5,0);
    \coordinate (ct) at (0,2);
    \draw [->] (cb) .. controls +(0,1.5) and + (0,-1) .. (ct) node[pos = 0, below] {$c$} node[pos = 1, above] {};
    \fill[white] (0.5, 1.3) circle (.7mm);
    \draw [] (a) .. controls +(0,0.5) and + (0, 0) .. (m) node[pos = 0, below] {$a$} ;
    \draw [] (b) .. controls +(0,0.5) and + (0, 0) .. (m) node[pos = 0, below] {$b$} ;
    \draw [->] (m) -- (ab) node[pos=1, above] {$a+b$};
\end{scope}

%% file: sch/forkslide_11.tex
\begin{scope}[xscale=-1, font=\tiny]
    \coordinate (a) at (0,0);
    \coordinate (b) at (1,0);
    \coordinate (m) at (.5,1.3);
    \coordinate (ab) at (.5,2);
    \coordinate (cb) at (1.5,0);
    \coordinate (ct) at (0,2);
    \draw [] (a) .. controls +(0,0.5) and + (0, 0) .. (m) node[pos = 0, below] {$a$} ;
    \draw [] (b) .. controls +(0,0.5) and + (0, 0) .. (m) node[pos = 0, below] {$b$} ;
    \draw [->] (m) -- (ab) node[pos=1,above] {$a+b$};
    \fill[white] (0.85, 0.6) circle (.7mm);
    \fill[white] (0.28, 0.9) circle (.7mm);
    \draw [->] (cb) .. controls +(0,1) and + (0,-2) .. (ct) node[pos = 0, below] {$c$} node[pos = 1, above] {};
\end{scope}

%% file: sch/forkslide_12.tex
\begin{scope}[xscale=-1, font=\tiny]
    \coordinate (a) at (0,0);
    \coordinate (b) at (1,0);
    \coordinate (m) at (.5,1);
    \coordinate (ab) at (.5,2);
    \coordinate (cb) at (1.5,0);
    \coordinate (ct) at (0,2);
    \draw [] (a) .. controls +(0,0.5) and + (0, 0) .. (m) node[pos = 0, below] {$a$} ;
    \draw [] (b) .. controls +(0,0.5) and + (0, 0) .. (m) node[pos = 0, below] {$b$} ;
    \draw [->] (m) -- (ab) node[pos=1, above] {$a+b$};
    \fill[white] (0.5, 1.3) circle (.7mm);
    \draw [->] (cb) .. controls +(0,1.5) and + (0,-1) .. (ct) node[pos = 0, below] {$c$} node[pos = 1, above] {};
\end{scope}

%% file: sch/forkslide_5.tex
\begin{scope}[xscale=-1, yscale=-1, font=\tiny]
    \coordinate (a) at (0,0);
    \coordinate (b) at (1,0);
    \coordinate (m) at (.5,1.3);
    \coordinate (ab) at (.5,2);
    \coordinate (cb) at (1.5,0);
    \coordinate (ct) at (0,2);
    \draw [<-] (cb) .. controls +(0,1) and + (0,-2) .. (ct) node[pos = 0, above] {$c$} node[pos = 1, below] {};
    \fill[white] (0.85, 0.6) circle (.7mm);
    \fill[white] (0.28, 0.9) circle (.7mm);
    \draw [<-] (a) .. controls +(0,0.5) and + (0, 0) .. (m) node[pos = 0, above] {$a$} ;
    \draw [<-] (b) .. controls +(0,0.5) and + (0, 0) .. (m) node[pos = 0, above] {$b$} ;
    \draw [] (m) -- (ab) node[pos=1,below] {$a+b$};
\end{scope}

%% file: sch/forkslide_6.tex
\begin{scope}[xscale=-1, yscale=-1, font=\tiny]
    \coordinate (a) at (0,0);
    \coordinate (b) at (1,0);
    \coordinate (m) at (.5,1);
    \coordinate (ab) at (.5,2);
    \coordinate (cb) at (1.5,0);
    \coordinate (ct) at (0,2);
    \draw [<-] (cb) .. controls +(0,1.5) and + (0,-1) .. (ct) node[pos = 0, above] {$c$} node[pos = 1, below] {};
    \fill[white] (0.5, 1.3) circle (.7mm);
    \draw [<-] (a) .. controls +(0,0.5) and + (0, 0) .. (m) node[pos = 0, above] {$a$} ;
    \draw [<-] (b) .. controls +(0,0.5) and + (0, 0) .. (m) node[pos = 0, above] {$b$} ;
    \draw [] (m) -- (ab) node[pos=1,below] {$a+b$};
\end{scope}

%% file: sch/forkslide_7.tex
\begin{scope}[xscale=-1, yscale=-1, font=\tiny]
    \coordinate (a) at (0,0);
    \coordinate (b) at (1,0);
    \coordinate (m) at (.5,1.3);
    \coordinate (ab) at (.5,2);
    \coordinate (cb) at (1.5,0);
    \coordinate (ct) at (0,2);
    \draw [<-] (a) .. controls +(0,0.5) and + (0, 0) .. (m) node[pos = 0, above] {$a$} ;
    \draw [<-] (b) .. controls +(0,0.5) and + (0, 0) .. (m) node[pos = 0, above] {$b$} ;
    \draw [] (m) -- (ab) node[pos=1,below] {$a+b$};
    \fill[white] (0.85, 0.6) circle (.7mm);
    \fill[white] (0.28, 0.9) circle (.7mm);
    \draw [<-] (cb) .. controls +(0,1) and + (0,-2) .. (ct) node[pos = 0, above] {$c$} node[pos = 1, below] {};
\end{scope}

%% file: sch/forkslide_8.tex
\begin{scope}[xscale=-1, yscale=-1, font=\tiny]
    \coordinate (a) at (0,0);
    \coordinate (b) at (1,0);
    \coordinate (m) at (.5,1);
    \coordinate (ab) at (.5,2);
    \coordinate (cb) at (1.5,0);
    \coordinate (ct) at (0,2);
    \draw [<-] (a) .. controls +(0,0.5) and + (0, 0) .. (m) node[pos = 0, above] {$a$} ;
    \draw [<-] (b) .. controls +(0,0.5) and + (0, 0) .. (m) node[pos = 0, above] {$b$} ;
    \draw [] (m) -- (ab) node[pos=1,below] {$a+b$};
    \fill[white] (0.5, 1.3) circle (.7mm);
    \draw [<-] (cb) .. controls +(0,1.5) and + (0,-1) .. (ct) node[pos = 0, above] {$c$} node[pos = 1, below] {};
\end{scope}

%% file: sch/forkslide_13.tex
\begin{scope}[xscale=1, yscale=-1, font=\tiny]
    \coordinate (a) at (0,0);
    \coordinate (b) at (1,0);
    \coordinate (m) at (.5,1.3);
    \coordinate (ab) at (.5,2);
    \coordinate (cb) at (1.5,0);
    \coordinate (ct) at (0,2);
    \draw [<-] (cb) .. controls +(0,1) and + (0,-2) .. (ct) node[pos = 0, above] {$c$} node[pos = 1, below] {};
    \fill[white] (0.85, 0.6) circle (.7mm);
    \fill[white] (0.28, 0.9) circle (.7mm);
    \draw [<-] (a) .. controls +(0,0.5) and + (0, 0) .. (m) node[pos = 0, above] {$a$} ;
    \draw [<-] (b) .. controls +(0,0.5) and + (0, 0) .. (m) node[pos = 0, above] {$b$} ;
    \draw [] (m) -- (ab) node[pos=1,below] {$a+b$};
\end{scope}

%% file: sch/forkslide_14.tex
\begin{scope}[xscale=1, yscale=-1, font=\tiny]
    \coordinate (a) at (0,0);
    \coordinate (b) at (1,0);
    \coordinate (m) at (.5,1);
    \coordinate (ab) at (.5,2);
    \coordinate (cb) at (1.5,0);
    \coordinate (ct) at (0,2);
    \draw [<-] (cb) .. controls +(0,1.5) and + (0,-1) .. (ct) node[pos = 0, above] {$c$} node[pos = 1, below] {};
    \fill[white] (0.5, 1.3) circle (.7mm);
    \draw [<-] (a) .. controls +(0,0.5) and + (0, 0) .. (m) node[pos = 0, above] {$a$} ;
    \draw [<-] (b) .. controls +(0,0.5) and + (0, 0) .. (m) node[pos = 0, above] {$b$} ;
    \draw [] (m) -- (ab) node[pos=1,below] {$a+b$};
\end{scope}

%% file: sch/forkslide_15.tex
\begin{scope}[yscale=-1, font=\tiny]
    \coordinate (a) at (0,0);
    \coordinate (b) at (1,0);
    \coordinate (m) at (.5,1.3);
    \coordinate (ab) at (.5,2);
    \coordinate (cb) at (1.5,0);
    \coordinate (ct) at (0,2);
    \draw [<-] (a) .. controls +(0,0.5) and + (0, 0) .. (m) node[pos = 0, above] {$a$} ;
    \draw [<-] (b) .. controls +(0,0.5) and + (0, 0) .. (m) node[pos = 0, above] {$b$} ;
    \draw [] (m) -- (ab) node[pos=1,below] {$a+b$};
    \fill[white] (0.85, 0.6) circle (.7mm);
    \fill[white] (0.28, 0.9) circle (.7mm);
    \draw [<-] (cb) .. controls +(0,1) and + (0,-2) .. (ct) node[pos = 0, above] {$c$} node[pos = 1, below] {};
\end{scope}

%% file: sch/forkslide_16.tex
\begin{scope}[xscale=1, yscale=-1, font=\tiny]
    \coordinate (a) at (0,0);
    \coordinate (b) at (1,0);
    \coordinate (m) at (.5,1);
    \coordinate (ab) at (.5,2);
    \coordinate (cb) at (1.5,0);
    \coordinate (ct) at (0,2);
    \draw [<-] (a) .. controls +(0,0.5) and + (0, 0) .. (m) node[pos = 0, above] {$a$} ;
    \draw [<-] (b) .. controls +(0,0.5) and + (0, 0) .. (m) node[pos = 0, above] {$b$} ;
    \draw [] (m) -- (ab) node[pos=1,below] {$a+b$};
    \fill[white] (0.5, 1.3) circle (.7mm);
    \draw [<-] (cb) .. controls +(0,1.5) and + (0,-1) .. (ct) node[pos = 0, above] {$c$} node[pos = 1, below] {};
\end{scope}

%% file: sch/forkslide_2a.tex
\begin{scope}[xscale=1, font=\tiny]
    \coordinate (a) at (0,0);
    \coordinate (b) at (1,0);
    \coordinate (m) at (.5,1);
    \coordinate (ab) at (.5,2);
    \coordinate (cb) at (1.5,0);
    \coordinate (ct) at (0,2);
    \draw [->] (cb) .. controls +(0,1.5) and + (0,-1) .. (ct) node[pos = 0, below] {$c$} node[pos = 1, above] {};
    \fill[white] (0.5, 1.3) circle (.7mm);
    \draw [] (a) .. controls +(0,0.5) and + (0, 0) .. (m) node[pos = 0, below] {$1$} ;
    \draw [] (b) .. controls +(0,0.5) and + (0, 0) .. (m) node[pos = 0, below] {$b$} ;
    \draw [->] (m) -- (ab) node[pos=1, above] {$b+1$};
\end{scope}

%% file: sch/forkslide_4a.tex
\begin{scope}[xscale=1, font=\tiny]
    \coordinate (a) at (0,0);
    \coordinate (b) at (1,0);
    \coordinate (m) at (.5,1);
    \coordinate (ab) at (.5,2);
    \coordinate (cb) at (1.5,0);
    \coordinate (ct) at (0,2);
    \draw [] (a) .. controls +(0,0.5) and + (0, 0) .. (m) node[pos = 0, below] {$1$} ;
    \draw [] (b) .. controls +(0,0.5) and + (0, 0) .. (m) node[pos = 0, below] {$b$} ;
    \draw [->] (m) -- (ab) node[pos=1, above] {$b+1$};
    \fill[white] (0.5, 1.3) circle (.7mm);
    \draw [->] (cb) .. controls +(0,1.5) and + (0,-1) .. (ct) node[pos = 0, below] {$c$} node[pos = 1, above] {};
\end{scope}

%% file: sch/forkslide_9a.tex
\begin{scope}[xscale=-1,font=\tiny]
    \coordinate (a) at (0,0);
    \coordinate (b) at (1,0);
    \coordinate (m) at (.5,1.3);
    \coordinate (ab) at (.5,2);
    \coordinate (cb) at (1.5,0);
    \coordinate (ct) at (0,2);
    \draw [->] (cb) .. controls +(0,1) and + (0,-2) .. (ct) node[pos = 0, below] {$1$} node[pos = 1, above] {};
    \fill[white] (0.85, 0.6) circle (.7mm);
    \fill[white] (0.28, 0.9) circle (.7mm);
    \draw [] (a) .. controls +(0,0.5) and + (0, 0) .. (m) node[pos = 0, below] {$1$} ;
    \draw [] (b) .. controls +(0,0.5) and + (0, 0) .. (m) node[pos = 0, below] {$b$} ;
    \draw [->] (m) -- (ab) node[pos=1,above] {$b+1$};
\end{scope}

%% file: sch/forkslide_10a.tex
\begin{scope}[xscale=-1, font=\tiny]
    \coordinate (a) at (0,0);
    \coordinate (b) at (1,0);
    \coordinate (m) at (.5,1);
    \coordinate (ab) at (.5,2);
    \coordinate (cb) at (1.5,0);
    \coordinate (ct) at (0,2);
    \draw [->] (cb) .. controls +(0,1.5) and + (0,-1) .. (ct) node[pos = 0, below] {$1$} node[pos = 1, above] {};
    \fill[white] (0.5, 1.3) circle (.7mm);
    \draw [] (a) .. controls +(0,0.5) and + (0, 0) .. (m) node[pos = 0, below] {$1$} ;
    \draw [] (b) .. controls +(0,0.5) and + (0, 0) .. (m) node[pos = 0, below] {$b$} ;
    \draw [->] (m) -- (ab) node[pos=1, above] {$b+1$};
\end{scope}

%% file: sch/forkslide_11a.tex
\begin{scope}[xscale=-1, font=\tiny]
    \coordinate (a) at (0,0);
    \coordinate (b) at (1,0);
    \coordinate (m) at (.5,1.3);
    \coordinate (ab) at (.5,2);
    \coordinate (cb) at (1.5,0);
    \coordinate (ct) at (0,2);
    \draw [] (a) .. controls +(0,0.5) and + (0, 0) .. (m) node[pos = 0, below] {$1$} ;
    \draw [] (b) .. controls +(0,0.5) and + (0, 0) .. (m) node[pos = 0, below] {$b$} ;
    \draw [->] (m) -- (ab) node[pos=1,above] {$b+1$};
    \fill[white] (0.85, 0.6) circle (.7mm);
    \fill[white] (0.28, 0.9) circle (.7mm);
    \draw [->] (cb) .. controls +(0,1) and + (0,-2) .. (ct) node[pos = 0, below] {$1$} node[pos = 1, above] {};
\end{scope}

%% file: sch/forkslide_12a.tex
\begin{scope}[xscale=-1, font=\tiny]
    \coordinate (a) at (0,0);
    \coordinate (b) at (1,0);
    \coordinate (m) at (.5,1);
    \coordinate (ab) at (.5,2);
    \coordinate (cb) at (1.5,0);
    \coordinate (ct) at (0,2);
    \draw [] (a) .. controls +(0,0.5) and + (0, 0) .. (m) node[pos = 0, below] {$1$} ;
    \draw [] (b) .. controls +(0,0.5) and + (0, 0) .. (m) node[pos = 0, below] {$b$} ;
    \draw [->] (m) -- (ab) node[pos=1, above] {$b+1$};
    \fill[white] (0.5, 1.3) circle (.7mm);
    \draw [->] (cb) .. controls +(0,1.5) and + (0,-1) .. (ct) node[pos = 0, below] {$1$} node[pos = 1, above] {};
\end{scope}

%% file: sch/forkslide_5a.tex
\begin{scope}[xscale=-1, yscale=-1, font=\tiny]
    \coordinate (a) at (0,0);
    \coordinate (b) at (1,0);
    \coordinate (m) at (.5,1.3);
    \coordinate (ab) at (.5,2);
    \coordinate (cb) at (1.5,0);
    \coordinate (ct) at (0,2);
    \draw [<-] (cb) .. controls +(0,1) and + (0,-2) .. (ct) node[pos = 0, above] {$1$} node[pos = 1, below] {};
    \fill[white] (0.85, 0.6) circle (.7mm);
    \fill[white] (0.28, 0.9) circle (.7mm);
    \draw [<-] (a) .. controls +(0,0.5) and + (0, 0) .. (m) node[pos = 0, above] {$1$} ;
    \draw [<-] (b) .. controls +(0,0.5) and + (0, 0) .. (m) node[pos = 0, above] {$b$} ;
    \draw [] (m) -- (ab) node[pos=1,below] {$b+1$};
\end{scope}

%% file: sch/forkslide_6a.tex
\begin{scope}[xscale=-1, yscale=-1, font=\tiny]
    \coordinate (a) at (0,0);
    \coordinate (b) at (1,0);
    \coordinate (m) at (.5,1);
    \coordinate (ab) at (.5,2);
    \coordinate (cb) at (1.5,0);
    \coordinate (ct) at (0,2);
    \draw [<-] (cb) .. controls +(0,1.5) and + (0,-1) .. (ct) node[pos = 0, above] {$1$} node[pos = 1, below] {};
    \fill[white] (0.5, 1.3) circle (.7mm);
    \draw [<-] (a) .. controls +(0,0.5) and + (0, 0) .. (m) node[pos = 0, above] {$1$} ;
    \draw [<-] (b) .. controls +(0,0.5) and + (0, 0) .. (m) node[pos = 0, above] {$b$} ;
    \draw [] (m) -- (ab) node[pos=1,below] {$b+1$};
\end{scope}

%% file: sch/forkslide_7a.tex
\begin{scope}[xscale=-1, yscale=-1, font=\tiny]
    \coordinate (a) at (0,0);
    \coordinate (b) at (1,0);
    \coordinate (m) at (.5,1.3);
    \coordinate (ab) at (.5,2);
    \coordinate (cb) at (1.5,0);
    \coordinate (ct) at (0,2);
    \draw [<-] (a) .. controls +(0,0.5) and + (0, 0) .. (m) node[pos = 0, above] {$1$} ;
    \draw [<-] (b) .. controls +(0,0.5) and + (0, 0) .. (m) node[pos = 0, above] {$b$} ;
    \draw [] (m) -- (ab) node[pos=1,below] {$b+1$};
    \fill[white] (0.85, 0.6) circle (.7mm);
    \fill[white] (0.28, 0.9) circle (.7mm);
    \draw [<-] (cb) .. controls +(0,1) and + (0,-2) .. (ct) node[pos = 0, above] {$1$} node[pos = 1, below] {};
\end{scope}

%% file: sch/forkslide_8a.tex
\begin{scope}[xscale=-1, yscale=-1, font=\tiny]
    \coordinate (a) at (0,0);
    \coordinate (b) at (1,0);
    \coordinate (m) at (.5,1);
    \coordinate (ab) at (.5,2);
    \coordinate (cb) at (1.5,0);
    \coordinate (ct) at (0,2);
    \draw [<-] (a) .. controls +(0,0.5) and + (0, 0) .. (m) node[pos = 0, above] {$1$} ;
    \draw [<-] (b) .. controls +(0,0.5) and + (0, 0) .. (m) node[pos = 0, above] {$b$} ;
    \draw [] (m) -- (ab) node[pos=1,below] {$b+1$};
    \fill[white] (0.5, 1.3) circle (.7mm);
    \draw [<-] (cb) .. controls +(0,1.5) and + (0,-1) .. (ct) node[pos = 0, above] {$1$} node[pos = 1, below] {};
\end{scope}

%% file: sch/forkslide_13a.tex
\begin{scope}[xscale=1, yscale=-1, font=\tiny]
    \coordinate (a) at (0,0);
    \coordinate (b) at (1,0);
    \coordinate (m) at (.5,1.3);
    \coordinate (ab) at (.5,2);
    \coordinate (cb) at (1.5,0);
    \coordinate (ct) at (0,2);
    \draw [<-] (cb) .. controls +(0,1) and + (0,-2) .. (ct) node[pos = 0, above] {$1$} node[pos = 1, below] {};
    \fill[white] (0.85, 0.6) circle (.7mm);
    \fill[white] (0.28, 0.9) circle (.7mm);
    \draw [<-] (a) .. controls +(0,0.5) and + (0, 0) .. (m) node[pos = 0, above] {$1$} ;
    \draw [<-] (b) .. controls +(0,0.5) and + (0, 0) .. (m) node[pos = 0, above] {$b$} ;
    \draw [] (m) -- (ab) node[pos=1,below] {$b+1$};
\end{scope}

%% file: sch/forkslide_14a.tex
\begin{scope}[xscale=1, yscale=-1, font=\tiny]
    \coordinate (a) at (0,0);
    \coordinate (b) at (1,0);
    \coordinate (m) at (.5,1);
    \coordinate (ab) at (.5,2);
    \coordinate (cb) at (1.5,0);
    \coordinate (ct) at (0,2);
    \draw [<-] (cb) .. controls +(0,1.5) and + (0,-1) .. (ct) node[pos = 0, above] {$1$} node[pos = 1, below] {};
    \fill[white] (0.5, 1.3) circle (.7mm);
    \draw [<-] (a) .. controls +(0,0.5) and + (0, 0) .. (m) node[pos = 0, above] {$1$} ;
    \draw [<-] (b) .. controls +(0,0.5) and + (0, 0) .. (m) node[pos = 0, above] {$b$} ;
    \draw [] (m) -- (ab) node[pos=1,below] {$b+1$};
\end{scope}

%% file: sch/forkslide_15a.tex
\begin{scope}[yscale=-1, font=\tiny]
    \coordinate (a) at (0,0);
    \coordinate (b) at (1,0);
    \coordinate (m) at (.5,1.3);
    \coordinate (ab) at (.5,2);
    \coordinate (cb) at (1.5,0);
    \coordinate (ct) at (0,2);
    \draw [<-] (a) .. controls +(0,0.5) and + (0, 0) .. (m) node[pos = 0, above] {$1$} ;
    \draw [<-] (b) .. controls +(0,0.5) and + (0, 0) .. (m) node[pos = 0, above] {$b$} ;
    \draw [] (m) -- (ab) node[pos=1,below] {$b+1$};
    \fill[white] (0.85, 0.6) circle (.7mm);
    \fill[white] (0.28, 0.9) circle (.7mm);
    \draw [<-] (cb) .. controls +(0,1) and + (0,-2) .. (ct) node[pos = 0, above] {$1$} node[pos = 1, below] {};
\end{scope}

%% file: sch/forkslide_16a.tex
\begin{scope}[xscale=1, yscale=-1, font=\tiny]
    \coordinate (a) at (0,0);
    \coordinate (b) at (1,0);
    \coordinate (m) at (.5,1);
    \coordinate (ab) at (.5,2);
    \coordinate (cb) at (1.5,0);
    \coordinate (ct) at (0,2);
    \draw [<-] (a) .. controls +(0,0.5) and + (0, 0) .. (m) node[pos = 0, above] {$1$} ;
    \draw [<-] (b) .. controls +(0,0.5) and + (0, 0) .. (m) node[pos = 0, above] {$b$} ;
    \draw [] (m) -- (ab) node[pos=1,below] {$b+1$};
    \fill[white] (0.5, 1.3) circle (.7mm);
    \draw [<-] (cb) .. controls +(0,1.5) and + (0,-1) .. (ct) node[pos = 0, above] {$1$} node[pos = 1, below] {};
\end{scope}

%% file: Z_forkiso.tex
Let us now prove that $\colcom_N$ is invariant under the reduced forkslide moves from Lemma \ref{redforkslide}.

\begin{lem}[{\cite[Lem~3.26]{QRSW21}}] 
    \label{nulhom}
    Let $a, b \in \mathbb{N}^*$, and set $m = \min(a,b)$. The two following complexes are exact in the relative homotopy canopoly:
        \[\cdots \rightarrow  H_{i-1}^- \rightarrow H_{i}^- \xrightarrow{d_i^-} H_{i+1}^- \rightarrow \cdots\]
        \[\cdots \rightarrow  H_{j-1}^+ \rightarrow H_{j}^+ \xrightarrow{d_j^+} H_{j+1}^+ \rightarrow \cdots\]
    where 
        \[H_i^- := \NB{\tikz[font=\fontsize{1.6mm}{0mm},scale=1.25]{\input{\imagesfolder/sch/nul_hom_1}}} \quad \text{and} \quad H_j^+ := \NB{\tikz[font=\fontsize{1.6mm}{0mm},scale=1.25]{\input{\imagesfolder/sch/nul_hom_2}}}.\]
    and the differentials are given by: 
\[
    d_i^-:=\mymoviefive[scale=.6]{\NB{\tikz[font=\fontsize{1.6mm}{0mm},scale=.55]{\input{\imagesfolder/sch/diff_neg_1_p}}}}{\NB{\tikz[font=\fontsize{1.6mm}{0mm},scale=.55]{\input{\imagesfolder/sch/diff_neg_2_p}}}}{\NB{\tikz[font=\fontsize{1.6mm}{0mm},scale=.55]{\input{\imagesfolder/sch/diff_neg_3_p}}}}{\NB{\tikz[font=\fontsize{1.6mm}{0mm},scale=.55]{\input{\imagesfolder/sch/diff_neg_4_p}}}}{\NB{\tikz[font=\fontsize{1.6mm}{0mm},scale=.55]{\input{\imagesfolder/sch/diff_neg_5_p}}}}
\]
\[
    d_j^+:=\mymoviefive[scale=.6]{\NB{\tikz[font=\fontsize{1.25mm}{0mm},scale=.55]{\input{\imagesfolder/sch/diff_pos_1_p}}}}{\NB{\tikz[font=\fontsize{1.25mm}{0mm},scale=.55]{\input{\imagesfolder/sch/diff_pos_2_p}}}}{\NB{\tikz[font=\fontsize{1.25mm}{0mm},scale=.55]{\input{\imagesfolder/sch/diff_pos_3_p}}}}{\NB{\tikz[font=\fontsize{1.25mm}{0mm},scale=.55]{\input{\imagesfolder/sch/diff_pos_4_p}}}}{\NB{\tikz[font=\fontsize{1.25mm}{0mm},scale=.55]{\input{\imagesfolder/sch/diff_pos_5_p}}}}
\]
\end{lem}

\begin{prop}
    There is an isormorphism in the relative homotopy canopoly,
    \[ \colcom_N\left ( \NB{\tikz[font=\tiny,xscale=1,yscale=.7]{\input{\imagesfolder/sch/forkslide_3a}}}  \right ) \cong \colcom_N \left ( \NB{\tikz[font=\tiny,scale=1,yscale=.7]{\input{\imagesfolder/sch/forkslide_4a}}} \right ). \]
\end{prop}

\begin{proof}
    We omit the $q$-shifts for the first part of the proof and we use the following notations:
    \[C:= \colcom_N\left ( \NB{\tikz[font=\tiny,xscale=1,yscale=.7]{\input{\imagesfolder/sch/forkslide_3a}}}  \right ) \hspace{2mm} \text{and} \hspace{2mm} D:= \colcom_N \left ( \NB{\tikz[font=\tiny,scale=1,yscale=.7]{\input{\imagesfolder/sch/forkslide_4a}}} \right ). \]
    The complex $C$ is the flattening of the bicomplex $C' = (C'_{i,j},d^-,d_V)$ with $0 \leq i \leq \min(b,c)$ and $0 \leq j \leq 1$: 
   \[
\NB{\tikz[xscale = 3, yscale = 2]{
	\node (i0) at (-.5, 0) {$C':=$}; 
    \node (i1) at (1,2) {\NB{\tikz[scale=1.1]{\input{\imagesfolder/sch/forkslide_iso_1}}}};
    \node (i2) at (3.5,2) {\NB{\tikz[scale=1.1]{\input{\imagesfolder/sch/forkslide_iso_2}}}};
    \node (i3) at (1,-2) {\NB{\tikz[scale=1.1]{\input{\imagesfolder/sch/forkslide_iso_3}}}};
    \node (i4) at (3.5,-2) {\NB{\tikz[scale=1.1]{\input{\imagesfolder/sch/forkslide_iso_4}}}};
    \draw[->] (i1) -- (i2) node[pos=.5,above] {$d^{-}$};
    \draw[->] (i3) -- (i4) node[pos=.5,above] {$d^{-}$};
    \draw[->] (i1) -- (i3) node[pos=.5,left] {$d_V$};
    \draw[->] (i2) -- (i4) node[pos=.5,left] {$d_V$};
    \node at (4.4,2) {$\rightarrow \cdots$};
    \node at (4.4,-2) {$\rightarrow \cdots $};
    \node at (0,2) {$\cdots \rightarrow $};
    \node at (0,-2) {$\cdots  \rightarrow $};
}}.
\]
The complex $D=(D_i,d^-,0)$, which can be considered as a bicomplex with trivial vertical differentials, is the following:

\[
\NB{\tikz[xscale = 3, yscale = 2]{
	\node (i0) at (-.5, 0) {$D=$}; 
    \node (i1) at (1,0) {\NB{\tikz[scale=1.1]{\input{\imagesfolder/sch/forkslide_iso_5}}}};
    \node (i2) at (3,0) {\NB{\tikz[scale=1.1]{\input{\imagesfolder/sch/forkslide_iso_6}}}};
    \draw[->] (i1) -- (i2) node[pos=.5,above] {$d^{-}$};
    \node at (0.1,0) {$\cdots \rightarrow$};
    \node at (3.7,0) {$\rightarrow \cdots$};
}}.
\]

Consider the bicomplex $H=(H_i,d_H,0)$:
\[
\NB{\tikz[xscale = 3, yscale = 2]{
	\node (i0) at (-.5, 0) {$H=$}; 
    \node (i1) at (1,0) {\NB{\tikz[scale=1.1]{\input{\imagesfolder/sch/forkslide_iso_7}}}};
    \node (i2) at (3,0) {\NB{\tikz[scale=1.1]{\input{\imagesfolder/sch/forkslide_iso_8}}}};
    \draw[->] (i1) -- (i2) node[pos=.5,above] {$d_H$};
    \node at (0,0) {$\cdots \rightarrow$};
    \node at (4,0) {$\rightarrow \cdots$};
}}.
\]
The horizontal differentials are those from Lemma~\ref{nulhom}, which ensures that the bicomplex $H$ is null-homotopic (forgetting the $\Witt$ structure).

Let us now define three $\Witt$-equivariant maps of bicomplexes, $\iota = H \rightarrow C'$, $\phi : D \rightarrow H$ and $\psi : D \rightarrow C'$ define by the movies: 

\begin{equation}
    \iota := \mymoviethree[yscale=1.45]{\NB{\tikz[scale=.6,font=\tiny]{\input{\imagesfolder/sch/forkslide_movie_2}}}}{\NB{\tikz[scale=.6,font=\tiny]{\input{\imagesfolder/sch/forkslide_movie_3}}}}{\NB{\tikz[scale=.6,font=\tiny]{\input{\imagesfolder/sch/forkslide_movie_1}}}}{\mapasso \circ \mapdcup}{\mapasso} 
\end{equation}
\begin{equation}
    \phi := \mymoviethree[yscale=1.45]{\NB{\tikz[scale=.75,font=\tiny]{\input{\imagesfolder/sch/forkslide_movie_4}}}}{\NB{\tikz[scale=.75,font=\tiny]{\input{\imagesfolder/sch/forkslide_movie_5}}}}{\NB{\tikz[scale=.7,font=\tiny]{\input{\imagesfolder/sch/forkslide_movie_2}}}}{\mapasso}{\mapsingsdle} 
\end{equation}
\begin{equation}
    \psi := \mymoviefour[,xscale=.85,yscale=1.45]{\NB{\tikz[scale=.6,font=\tiny]{\input{\imagesfolder/sch/forkslide_movie_4}}}}{\NB{\tikz[scale=.6,font=\tiny]{\input{\imagesfolder/sch/forkslide_movie_5}}}}{\NB{\tikz[scale=.6,font=\tiny]{\input{\imagesfolder/sch/forkslide_movie_6}}}}{\NB{\tikz[scale=.55,font=\tiny]{\input{\imagesfolder/sch/forkslide_movie_7}}}}{\mapasso}{\mapasso^2 \circ \mapdcup}{\mapsingsdle}
\end{equation}
where we omit the red dots for the sake of readability. These three morphisms are $\Witt$-equivariant, which implies in particular that $\text{cone}(\iota) \simeq C' \oplus tH$ is isomorphic to $C'$ in the relative homotopy canopoly. 

We consider the following complex: 

\[
\begin{tikzpicture}[xscale=1.5]
    \begin{scope}{}
        \node (a) at (0,0) {0};
        \node (b) at (.8,0) {$qD_i$};
        \node (c) at (2,1) {$qC'_{i,0}$};
        \node (d) at (2,-1) {$q^{i-b}H_i$};
        \node (e) at (2,0) {$\oplus$};
        \node (f) at (3.2,0) {$C'_{i,1}$};
        \node (g) at (4,0) {0.};
        \draw[->] (a) -- (b);
        \draw[->] (b) -- (c) node[pos=.7,left,yshift=3pt] {$-\psi$};
        \draw[->] (b) -- (d) node[pos=.7,left,yshift=-3pt] {$\phi$};
        \draw[->] (c) -- (f) node[pos=.3,right,yshift=3pt] {$d_V$};
        \draw[->] (d) -- (f) node[pos=.3,right,yshift=-3pt] {$\iota$};
        \draw[->] (f) -- (g);
    \end{scope}
\end{tikzpicture}
\]

Forgetting the $\Witt$-structure, we have that $C'_{i,1} \simeq [b+1-i]H_i$ and $C'_{i,0} \simeq D_i \oplus [b-i] H_i$. Using these isomorphisms we obtain: 

\[
\begin{tikzpicture}[xscale=2]
    \begin{scope}{}
        \node (a) at (.2,0) {0};
        \node (b) at (.8,0) {$qD_i$};
        \node (c1) at (2,1.5) {$q[b-i]H_i$};
        \node (c2) at (2,.5) {$qD_i$};
        \node (c0) at (2,1) {$\oplus$};
        \node (d) at (2,-1) {$q^{i-b}H_i$};
        \node (e) at (2,-.25) {$\oplus$};
        \node (f1) at (3.2,0.5) {$q[b-i]H_i$};
        \node (f0) at (3.2,0) {$\oplus$};
        \node (f2) at (3.2,-0.5) {$q^{i-b}H_i$};
        \node (g) at (4.2,0) {0,};
        \draw[->] (a) -- (b);
        \draw[->] (b) -- (c1) node[pos=.7,left,yshift=3pt] {$-\widetilde{\psi}$};
        \draw[->] (b) -- (c2) node[pos=.5,below] {$-\Id$};
        \draw[->] (b) -- (d) node[pos=.7,left,yshift=-3pt] {$\phi$};
        \draw[->] (c1) -- (f1) node[pos=.5,above] {$\Id$};
        \draw[->] (d) -- (f2) node[pos=.5,below] {$\Id$};
        \draw[->,blue] (c1) -- (f2) ;
        \draw[->,blue] (c2) -- (f2) node[pos=.5,below] {$\widetilde{d_V}$};
        \draw[->,blue] (c2) -- (f1) ;
        \draw[->] (f1) -- (g);
        \draw[->] (f2) -- (g);
    \end{scope}
\end{tikzpicture}
\]
which is null-homotopic. This chain complex constitutes the homogeneous terms of the mapping cone of 
\[D \xrightarrow{(-\psi,\phi)} \text{cone}(\iota).\]
This implies that $C$ and $D$ are isomorphic in the relative homotopy canopoly.  
\end{proof}

%% file: sch/nul_hom_1.tex
\begin{scope}
  \def\la{$b-i$}
  \def\cir{$\color{\crd}-s/2$}
  \def\tri{$\color{\crd}-\lambda(b-i-1)+s/2$}
  \def\lab{$a+b-i$}
  \def\circ{$\color{\crd}s/2$}
  \def\tria{$\color{\crd}\lambda(a+b-i-1)-s/2$}
  \def\labe{$a-i+1$}
  \def\circl{$\color{\crd}-s/2$}
  \def\trian{$\color{\crd}-\lambda(a-i)+s/2$}
  \def\label{$i$}
  \def\circle{$\color{\crd}\bar{s}/2$}
  \def\triang{$\color{\crd}-\lambda(i-1)-\bar{s}/2$}
  \coordinate (bl) at (-1, -1.5);
  \coordinate (br) at ( 1, -1.5);
  \coordinate (tl) at (-1,  1.5);
  \coordinate (tr) at ( 1,  1.5);
  \draw[>->] (bl) -- (tl) node[pos = 0, below] {$a$} node[pos = 1, above] {$b-1$}  node[pos=.5, left] {\lab}
  node[pos= 0.6 , draw=\crd, fill=white,regular polygon, regular polygon sides=3,inner sep=0.9pt] {} node[pos=0.6, right]{\tria} 
  node[pos=.4, draw=\crd, fill= white, circle , inner sep=1.5pt]{} node[pos=.4, left]{\circ}
  coordinate[pos = 0.75] (ml) 
  coordinate[pos = 0.25] (nl);
  \draw[>->] (br) -- (tr) node[pos = 0, below] {$b$} node[pos = 1, above] {$a+1$}  node[pos=.5,right] {\label}
  node[pos= 0.4 , draw=\crd, fill=white,regular polygon, regular polygon sides=3,inner sep=0.9pt] {} node[pos=0.4, left]{\triang} 
  node[pos=.65, draw=\crd, fill= white, circle , inner sep=1.5pt]{} node[pos=.65, right]{\circle}
  coordinate[pos = 0.85] (mr)  
  coordinate[pos = 0.15] (nr);
  \draw[->-] (ml) -- (mr) node [pos= 0.4, above,yshift=.5mm] {\labe} node[pos=.8, draw=\crd, fill= white, circle , inner sep=1.5pt]{} node[pos=.82, above]{\circl} 
  node[pos= 0.25 , draw=\crd, fill=white,regular polygon, regular polygon sides=3,inner sep=0.9pt] {} 
  node[pos=.45, below,yshift=-.5mm] {\trian};
  \draw[->-] (nr) -- (nl) node [pos= 0.55, below,yshift=-.5mm] {\la} node[pos=.2, draw=\crd, fill= white, circle , inner sep=1.5pt]{} node[pos=.15, below]{\cir} 
  node[pos= 0.75 , draw=\crd, fill=white,regular polygon, regular polygon sides=3,inner sep=0.9pt] {} 
  node[pos=.55, above,yshift=.2mm] {\tri};
\end{scope}

%% file: sch/nul_hom_2.tex
\begin{scope}
  \def\la{$b-m+j$}
  \def\cir{$\color{\crd}\bar{s}/2$}
  \def\tri{$\color{\crd}-\lambda(b-m+j-1)+\bar{s}/2$}
  \def\lab{$a+b-m+j$}
  \def\circ{$\color{\crd}-\bar{s}/2$}
  \def\tria{$\color{\crd}\lambda(a+b-m+j)+\bar{s}/2$}
  \def\labe{$a-m+j+1$}
  \def\circl{$\color{\crd}\bar{s}/2$}
  \def\trian{$\color{\crd}-\lambda(a-m+j)-\bar{s}/2$}
  \def\label{$m-j$}
  \def\circle{$\color{\crd}-s/2$}
  \def\triang{$\color{\crd}-\lambda(m-j-1)+s/2$}
  \coordinate (bl) at (-1, -1.5);
  \coordinate (br) at ( 1, -1.5);
  \coordinate (tl) at (-1,  1.5);
  \coordinate (tr) at ( 1,  1.5);
  \draw[>->] (bl) -- (tl) node[pos = 0, below] {$a$} node[pos = 1, above] {$b-1$}  node[pos=.5, left] {\lab}
  node[pos= 0.6 , draw=\crd, fill=white,regular polygon, regular polygon sides=3,inner sep=0.9pt] {} node[pos=0.6, right]{\tria} 
  node[pos=.4, draw=\crd, fill= white, circle , inner sep=1.5pt]{} node[pos=.4, left]{\circ}
  coordinate[pos = 0.75] (ml) 
  coordinate[pos = 0.25] (nl);
  \draw[>->] (br) -- (tr) node[pos = 0, below] {$b$} node[pos = 1, above] {$a+1$}  node[pos=.5,right] {\label}
  node[pos= 0.4 , draw=\crd, fill=white,regular polygon, regular polygon sides=3,inner sep=0.9pt] {} node[pos=0.4, left]{\triang} 
  node[pos=.65, draw=\crd, fill= white, circle , inner sep=1.5pt]{} node[pos=.65, right]{\circle}
  coordinate[pos = 0.85] (mr)  
  coordinate[pos = 0.15] (nr);
  \draw[->-] (ml) -- (mr) node [pos= 0.4, above,yshift=.5mm] {\labe} node[pos=.8, draw=\crd, fill= white, circle , inner sep=1.5pt]{} node[pos=.82, above]{\circl} 
  node[pos= 0.25 , draw=\crd, fill=white,regular polygon, regular polygon sides=3,inner sep=0.9pt] {} 
  node[pos=.5, below,yshift=-.7mm] {\trian};
  \draw[->-] (nr) -- (nl) node [pos= 0.55, below,yshift=-.5mm] {\la} node[pos=.2, draw=\crd, fill= white, circle , inner sep=1.5pt]{} node[pos=.15, below]{\cir} 
  node[pos= 0.75 , draw=\crd, fill=white,regular polygon, regular polygon sides=3,inner sep=0.9pt] {} 
  node[pos=.53, above,yshift=.4mm] {\tri};
\end{scope}

%% file: sch/diff_neg_1_p.tex
\begin{scope}
  \def\la{$b-i$}
  \def\lab{$a+b-i$}
  \def\labe{$a-i+1$}
  \def\label{$i$}
  \coordinate (bl) at (-1, -1.5);
  \coordinate (br) at ( 1, -1.5);
  \coordinate (tl) at (-1,  1.5);
  \coordinate (tr) at ( 1,  1.5);
  \draw[>->] (bl) -- (tl) node[pos = 0, below] {$a$} node[pos = 1, above] {$b-1$}  node[pos=.5, right] {}
  coordinate[pos = 0.75] (ml)
  coordinate[pos = 0.25] (nl);
  \draw[>->] (br) -- (tr) node[pos = 0, below] {$b$} node[pos = 1, above] {$a+1$}  node[pos=.5,right] {\label}
  coordinate[pos = 0.85] (mr)  
  coordinate[pos = 0.15] (nr);
  \draw[->-] (nr) -- (mr);
  \draw[->-] (nl) -- (ml);
  \draw[->-] (ml) -- (mr) node [pos= 0.5, above,yshift=.5mm] {\labe};
  \draw[->-] (nr) -- (nl) node [pos= 0.5, below,yshift=-.5mm] {\la} ;
\end{scope}

%% file: sch/diff_neg_3_p.tex
\begin{scope}
  \def\la{$b-i-1$}
  \def\lab{$a+b-i$}
  \def\labe{$a-i$}
  \def\label{$i$}
  \coordinate (bl) at (-1, -1.5);
  \coordinate (br) at ( 1, -1.5);
  \coordinate (tl) at (-1,  1.5);
  \coordinate (tr) at ( 1,  1.5);
  \draw[>->] (bl) -- (tl) node[pos = 0, below] {$a$} node[pos = 1, above] {$b-1$}  node[pos=.5, right] {}
  coordinate[pos = 0.75] (ml)
  coordinate[pos = 0.25] (nl)
  coordinate[pos=0.4] (ld)
  coordinate[pos=0.6] (lu);
  \draw[>->] (br) -- (tr) node[pos = 0, below] {$b$} node[pos = 1, above] {$a+1$}  node[pos=.5,right] {\label}
  coordinate[pos = 0.85] (mr)  
  coordinate[pos = 0.15] (nr)
  coordinate[pos=0.7] (ru)
  coordinate[pos=0.3] (rd);
  \draw[->-] (nr) -- (mr);
  \draw[->-] (nl) -- (ml);
  \draw[->-] (ml) -- (mr) node [pos= 0.5, above,yshift=.5mm] {\labe};
  \draw[->-] (nr) -- (nl) node [pos= 0.5, below,yshift=-.5mm] {\la} ;
  \draw[->-] (lu) -- (ru) node[pos=.3, below]{$1$};
  \draw[->-] (rd) -- (ld) node[pos=.3, above]{$1$};
\end{scope}

%% file: sch/diff_neg_5_p.tex
\begin{scope}
  \def\la{$b-i-1$}
  \def\lab{$a+b-i-1$}
  \def\labe{$a-i-1$}
  \def\label{$i+1$}
  \coordinate (bl) at (-1, -1.5);
  \coordinate (br) at ( 1, -1.5);
  \coordinate (tl) at (-1,  1.5);
  \coordinate (tr) at ( 1,  1.5);
  \draw[>->] (bl) -- (tl) node[pos = 0, below] {$a$} node[pos = 1, above] {$b-1$}  node[pos=.5, right] {}
  coordinate[pos = 0.75] (ml)
  coordinate[pos = 0.25] (nl);
  \draw[>->] (br) -- (tr) node[pos = 0, below] {$b$} node[pos = 1, above] {$a+1$}  node[pos=.5,left] {\label}
  coordinate[pos = 0.85] (mr)  
  coordinate[pos = 0.15] (nr);
  \draw[->-] (nr) -- (mr);
  \draw[->-] (nl) -- (ml);
  \draw[->-] (ml) -- (mr) node [pos= 0.5, above,yshift=.5mm] {\labe};
  \draw[->-] (nr) -- (nl) node [pos= 0.5, below,yshift=-.5mm] {\la} ;
\end{scope}

%% file: sch/diff_pos_1_p.tex
\begin{scope}
  \def\la{$b-m+j$}
  \def\lab{$a+b-m+j+1$}
  \def\labe{$a-m+j$}
  \def\label{$m-j$}
  \coordinate (bl) at (-1, -1.5);
  \coordinate (br) at ( 1, -1.5);
  \coordinate (tl) at (-1,  1.5);
  \coordinate (tr) at ( 1,  1.5);
  \draw[>->] (bl) -- (tl) node[pos = 0, below] {$a$} node[pos = 1, above] {$b-1$}  node[pos=.5, right] {}
  coordinate[pos = 0.75] (ml)
  coordinate[pos = 0.25] (nl);
  \draw[>->] (br) -- (tr) node[pos = 0, below] {$b$} node[pos = 1, above] {$a+1$}  node[pos=.5,left] {\label}
  coordinate[pos = 0.85] (mr)  
  coordinate[pos = 0.15] (nr);
  \draw[->-] (nr) -- (mr);
  \draw[->-] (nl) -- (ml);
  \draw[->-] (ml) -- (mr) node [pos= 0.5, above,yshift=.5mm] {\labe};
  \draw[->-] (nr) -- (nl) node [pos= 0.5, below,yshift=-.5mm] {\la} ;
\end{scope}

%% file: sch/diff_pos_2_p.tex
\begin{scope}
  \def\la{$b-m+j$}
  \def\lab{$a+b-m+j$}
  \def\labe{$a-m+j+1$}
  \def\label{$m-j-1$}
  \coordinate (bl) at (-1, -1.5);
  \coordinate (br) at ( 1, -1.5);
  \coordinate (tl) at (-1,  1.5);
  \coordinate (tr) at ( 1,  1.5);
  \draw[>->] (bl) -- (tl) node[pos = 0, below] {$a$} node[pos = 1, above] {$b-1$}  node[pos=.5, right] {}
  coordinate[pos = 0.75] (ml)
  coordinate[pos = 0.25] (nl);
  \draw[>->] (br) -- (tr) node[pos = 0, below] {$b$} node[pos = 1, above] {$a+1$}  node[pos=.5,right] {}
  coordinate[pos = 0.85] (mr)  
  coordinate[pos = 0.15] (nr);
  \draw[->-] (nr) -- (mr) coordinate[pos=0.2] (lu) coordinate[pos=0.8] (ru);
  \draw[->-] (nl) -- (ml)  coordinate[pos=0.8] (ld) coordinate[pos=0.2] (rd);
  \draw[->-] (ml) -- (mr) node [pos= 0.5, above,yshift=.5mm] {\labe};
  \draw[->-] (nr) -- (nl) node [pos= 0.5, below,yshift=-.5mm] {\la} ;
  \draw[->-] (lu) .. controls + (-0.6, 0.6) and + (-0.6, -0.6) .. (ru) node[pos=.5, left]{$1$};
\end{scope}

%% file: sch/diff_pos_3_p.tex
\begin{scope}
  \def\la{$b-m+j$}
  \def\lab{$a+b-m+j+1$}
  \def\labe{$a-m+j+1$}
  \def\label{$m-j-1$}
  \coordinate (bl) at (-1, -1.5);
  \coordinate (br) at ( 1, -1.5);
  \coordinate (tl) at (-1,  1.5);
  \coordinate (tr) at ( 1,  1.5);
  \draw[>->] (bl) -- (tl) node[pos = 0, below] {$a$} node[pos = 1, above] {$b-1$}  node[pos=.5, right] {}
  coordinate[pos = 0.75] (ml)
  coordinate[pos = 0.25] (nl)
  coordinate[pos=0.4] (ld)
  coordinate[pos=0.6] (lu);
  \draw[>->] (br) -- (tr) node[pos = 0, below] {$b$} node[pos = 1, above] {$a+1$}  node[pos=.5,right] {}
  coordinate[pos = 0.85] (mr)  
  coordinate[pos = 0.15] (nr)
  coordinate[pos=0.7] (ru)
  coordinate[pos=0.3] (rd);
  \draw[->-] (nr) -- (mr);
  \draw[->-] (nl) -- (ml);
  \draw[->-] (ml) -- (mr) node [pos= 0.5, above,yshift=.5mm] {\labe};
  \draw[->-] (nr) -- (nl) node [pos= 0.5, below,yshift=-.5mm] {\la} ;
  \draw[->-] (lu) -- (ru) node[pos=.3, below]{$1$};
  \draw[->-] (rd) -- (ld) node[pos=.3, above]{$1$};
\end{scope}

%% file: sch/diff_pos_5_p.tex
\begin{scope}
  \def\la{$b-m+j+1$}
  \def\lab{$a+b-m+j+1$}
  \def\labe{$a-m+j+2$}
  \def\label{$m-j-1$}
  \coordinate (bl) at (-1, -1.5);
  \coordinate (br) at ( 1, -1.5);
  \coordinate (tl) at (-1,  1.5);
  \coordinate (tr) at ( 1,  1.5);
  \draw[>->] (bl) -- (tl) node[pos = 0, below] {$a$} node[pos = 1, above] {$b-1$}  node[pos=.5, right] {}
  coordinate[pos = 0.75] (ml)
  coordinate[pos = 0.25] (nl);
  \draw[>->] (br) -- (tr) node[pos = 0, below] {$b$} node[pos = 1, above] {$a+1$}  node[pos=.5,left] {\label}
  coordinate[pos = 0.85] (mr)  
  coordinate[pos = 0.15] (nr);
  \draw[->-] (nr) -- (mr);
  \draw[->-] (nl) -- (ml);
  \draw[->-] (ml) -- (mr) node [pos= 0.5, above] {\labe};
  \draw[->-] (nr) -- (nl) node [pos= 0.5, below] {\la} ;
\end{scope}

%% file: sch/forkslide_iso_1.tex
\begin{scope}[font=\tiny]
  \def\la{$c-i$}
  \def\cir{\color{\labrd}$-\frac{s}{2}$}
  \def\tri{\color{\labrd}$-\lambda(c-i-1)+\frac{s}{2}$}
  \def\lab{}
  \def\circ{\color{\labrd}$\frac{s}{2}$}
  \def\tria{\color{\labrd}$\lambda(b+c-i-1)-\frac{s}{2}$}
  \def\labe{$b-i$}
  \def\circl{\color{\labrd}$-\frac{s}{2}$}
  \def\trian{\color{\labrd}$-\lambda(b-i-1)+\frac{s}{2}$}
  \def\label{$i$}
  \def\circle{\color{\labrd}$\frac{\bar{s}}{2}$}
  \def\triang{\color{\labrd}$-\lambda(i-1)-\frac{\bar{s}}{2}$}
  \coordinate (bl) at (-1, -1.5);
  \coordinate (br) at ( 1, -1.5);
  \coordinate (ml) at (-1, .8);
  \coordinate (nl) at (-1,-.8);
  \coordinate (mr) at (1,1);
  \coordinate (nr) at (1,-1);
  \coordinate (tl) at (-2,  1.5);
  \coordinate (tr) at ( -1,  3.5);
  \coordinate (ttl) at (-2,2.5);
  \coordinate (ttr) at (-1,4);
  \coordinate (Tl) at  (-3,3.5);
  \coordinate (Bl) at (-3,0);  
  \draw[>-] (bl) -- (ml) node[pos = 0, below] {$b$}   node[pos=.5, left] {\lab}
  node[pos= 0.5 , draw=\crd, fill=white,regular polygon, regular polygon sides=3,inner sep=0.9pt] {} node[pos=0.5, left]{\tria} 
  node[pos=.75, draw=\crd, fill= white, circle , inner sep=1.5pt]{} node[pos=.75, left]{\circ};
  \draw[>-] (br) -- (mr) node[pos = 0, below] {$c$} node[pos=.55,right] {\label}
  node[pos= 0.5 , draw=\crd, fill=white,regular polygon, regular polygon sides=3,inner sep=0.9pt] {} node[pos=0.5, left]{\triang} 
  node[pos=.7, draw=\crd, fill= white, circle , inner sep=1.5pt]{} node[pos=.7, right]{\circle};
  \draw[->-] (ml) -- (mr) node [pos= 0.4, above,yshift=.5mm] {\labe} node[pos=.8, draw=\crd, fill= white, circle , inner sep=1.5pt]{} node[pos=.82, above]{\circl} 
  node[pos= 0.25 , draw=\crd, fill=white,regular polygon, regular polygon sides=3,inner sep=0.9pt] {} 
  node[pos=.45, below,yshift=-.5mm] {\trian};
  \draw[->-] (nr) -- (nl) node [pos= 0.55, above,yshift=-.5mm] {\la} node[pos=.85, draw=\crd, fill= white, circle , inner sep=1.5pt]{} node[pos=.8, above]{\cir} 
  node[pos= 0.2 , draw=\crd, fill=white,regular polygon, regular polygon sides=3,inner sep=0.9pt] {} 
  node[pos=.5, below] {\tri};
  \draw[->-] (mr) .. controls +(0,2) and + (0, 0) .. (tr) node[pos = .5, right] {$b$};
  \draw[->-] (ml) -- (tl) node[pos=.7,below] {$c$} node[pos=.3, draw=\crd, fill= white, circle , inner sep=1.5pt]{} node[pos=.3, below,xshift=-3pt]{\color{\labrd}$-\frac{s}{2}$} node[pos= 0.8 , draw=\crd, fill=white,regular polygon, regular polygon sides=3,inner sep=0.9pt] {} node[pos=0.8, right,yshift=3pt]{\color{\labrd}$-\lambda(c-1) +\frac{s}{2}$} ;
  \draw[->-] (tl) -- (ttl) node[pos=.5,left] {$c+1$} node[pos=.3, draw=\crd, fill= white, circle , inner sep=1.5pt]{} node[pos=.3, right]{\color{\labrd}$\frac{s}{2}$} node[pos= 0.8 , draw=\crd, fill=white,regular polygon, regular polygon sides=3,inner sep=0.9pt] {} node[pos=0.8, right]{\color{\labrd}$\lambda c -\frac{s}{2}$};
  \draw[->-] (ttl) -- (tr) node[pos=.5,above] {$1$};
  \draw[->] (ttl) .. controls +(-.8,.4) and +(0,0) .. (Tl) node[pos=1,above] {$c$};
  \draw[->] (tr) -- (ttr) node[pos=1,above] {$b+1$};
  \draw[->-] (Bl) .. controls +(0,1) and +(0,0) .. (tl) node[pos=0.5,left] {$1$};
\end{scope}

%% file: sch/forkslide_iso_2.tex
\begin{scope}[font=\tiny]
  \def\la{$c-i-1$}
  \def\cir{\color{\labrd}$-\frac{s}{2}$}
  \def\tri{\color{\labrd}$-\lambda(c-i-2)+\frac{s}{2}$}
  \def\lab{}
  \def\circ{\color{\labrd}$\frac{s}{2}$}
  \def\tria{\color{\labrd}$\lambda(b+c-i-2)-\frac{s}{2}$}
  \def\labe{$b-i-1$}
  \def\circl{\color{\labrd}$-\frac{s}{2}$}
  \def\trian{\color{\labrd}$-\lambda(b-i-2)+\frac{s}{2}$}
  \def\label{$i+1$}
  \def\circle{\color{\labrd}$\frac{\bar{s}}{2}$}
  \def\triang{\color{\labrd}$-\lambda i-\frac{\bar{s}}{2}$}
  \coordinate (bl) at (-1, -1.5);
  \coordinate (br) at ( 1, -1.5);
  \coordinate (ml) at (-1, .8);
  \coordinate (nl) at (-1,-.8);
  \coordinate (mr) at (1,1);
  \coordinate (nr) at (1,-1);
  \coordinate (tl) at (-2,  1.5);
  \coordinate (tr) at ( -1,  3.5);
  \coordinate (ttl) at (-2,2.5);
  \coordinate (ttr) at (-1,4);
  \coordinate (Tl) at  (-3,3.5);
  \coordinate (Bl) at (-3,0);  
  \draw[>-] (bl) -- (ml) node[pos = 0, below] {$b$}   node[pos=.5, left] {\lab}
  node[pos= 0.5 , draw=\crd, fill=white,regular polygon, regular polygon sides=3,inner sep=0.9pt] {} node[pos=0.5, left]{\tria} 
  node[pos=.75, draw=\crd, fill= white, circle , inner sep=1.5pt]{} node[pos=.75, left]{\circ};
  \draw[>-] (br) -- (mr) node[pos = 0, below] {$c$} node[pos=.55,right] {\label}
  node[pos= 0.5 , draw=\crd, fill=white,regular polygon, regular polygon sides=3,inner sep=0.9pt] {} node[pos=0.5, left]{\triang} 
  node[pos=.7, draw=\crd, fill= white, circle , inner sep=1.5pt]{} node[pos=.7, right]{\circle};
  \draw[->-] (ml) -- (mr) node [pos= 0.4, above,yshift=.5mm] {\labe} node[pos=.8, draw=\crd, fill= white, circle , inner sep=1.5pt]{} node[pos=.82, above]{\circl} 
  node[pos= 0.25 , draw=\crd, fill=white,regular polygon, regular polygon sides=3,inner sep=0.9pt] {} 
  node[pos=.45, below,yshift=-.5mm] {\trian};
  \draw[->-] (nr) -- (nl) node [pos= 0.4, above] {\la} node[pos=.85, draw=\crd, fill= white, circle , inner sep=1.5pt]{} node[pos=.8, above]{\cir} 
  node[pos= 0.2 , draw=\crd, fill=white,regular polygon, regular polygon sides=3,inner sep=0.9pt] {} 
  node[pos=.5, below] {\tri};
  \draw[->-] (mr) .. controls +(0,2) and + (0, 0) .. (tr) node[pos = .5, right] {$b$};
  \draw[->-] (ml) -- (tl) node[pos=.7,below] {$c$} node[pos=.3, draw=\crd, fill= white, circle , inner sep=1.5pt]{} node[pos=.3, below,xshift=-3pt]{\color{\labrd}$-\frac{s}{2}$} node[pos= 0.8 , draw=\crd, fill=white,regular polygon, regular polygon sides=3,inner sep=0.9pt] {} node[pos=0.8, right,yshift=3pt]{\color{\labrd}$-\lambda(c-1) +\frac{s}{2}$} ;
  \draw[->-] (tl) -- (ttl) node[pos=.5,left] {$c+1$} node[pos=.3, draw=\crd, fill= white, circle , inner sep=1.5pt]{} node[pos=.3, right]{\color{\labrd}$\frac{s}{2}$} node[pos= 0.8 , draw=\crd, fill=white,regular polygon, regular polygon sides=3,inner sep=0.9pt] {} node[pos=0.8, right]{\color{\labrd}$\lambda c -\frac{s}{2}$};
  \draw[->-] (ttl) -- (tr) node[pos=.5,above] {$1$};
  \draw[->] (ttl) .. controls +(-.8,.4) and +(0,0) .. (Tl) node[pos=1,above] {$c$};
  \draw[->] (tr) -- (ttr) node[pos=1,above] {$b+1$};
  \draw[->-] (Bl) .. controls +(0,1) and +(0,0) .. (tl) node[pos=0.5,left] {$1$};
\end{scope}

%% file: sch/forkslide_iso_3.tex
\begin{scope}[font=\tiny]
  \def\la{$c-i$}
  \def\cir{\color{\labrd}$-\frac{s}{2}$}
  \def\tri{\color{\labrd}$-\lambda(c-i-1)+\frac{s}{2}$}
  \def\lab{}
  \def\circ{\color{\labrd}$\frac{s}{2}$}
  \def\tria{\color{\labrd}$\lambda(b+c-i-1)-\frac{s}{2}$}
  \def\labe{$b-i$}
  \def\circl{\color{\labrd}$-\frac{s}{2}$}
  \def\trian{\color{\labrd}$-\lambda(b-i-1)+\frac{s}{2}$}
  \def\label{$i$}
  \def\circle{\color{\labrd}$\frac{\bar{s}}{2}$}
  \def\triang{\color{\labrd}$-\lambda(i-1)-\frac{\bar{s}}{2}$}
  \coordinate (bl) at (-1, -1.5);
  \coordinate (br) at ( 1, -1.5);
  \coordinate (ml) at (-1, .8);
  \coordinate (nl) at (-1,-.8);
  \coordinate (mr) at (1,1);
  \coordinate (nr) at (1,-1);
  \coordinate (tl) at (-1.5,  1.75);
  \coordinate (tr) at ( -1,  3);
  \coordinate (ttl) at (-2.8,2);
  \coordinate (ttr) at (-1,3.5);
  \coordinate (Tl) at  (-2.8,3.5);
  \coordinate (Bl) at (-3,0);  
  \draw[>-] (bl) -- (ml) node[pos = 0, below] {$b$}   node[pos=.5, left] {\lab}
  node[pos= 0.5 , draw=\crd, fill=white,regular polygon, regular polygon sides=3,inner sep=0.9pt] {} node[pos=0.5, left]{\tria} 
  node[pos=.75, draw=\crd, fill= white, circle , inner sep=1.5pt]{} node[pos=.75, left]{\circ};
  \draw[>-] (br) -- (mr) node[pos = 0, below] {$c$} node[pos=.55,right] {\label}
  node[pos= 0.5 , draw=\crd, fill=white,regular polygon, regular polygon sides=3,inner sep=0.9pt] {} node[pos=0.5, left]{\triang} 
  node[pos=.7, draw=\crd, fill= white, circle , inner sep=1.5pt]{} node[pos=.7, right]{\circle};
  \draw[->-] (ml) -- (mr) node [pos= 0.4, above,yshift=.5mm] {\labe} node[pos=.8, draw=\crd, fill= white, circle , inner sep=1.5pt]{} node[pos=.82, above]{\circl} 
  node[pos= 0.25 , draw=\crd, fill=white,regular polygon, regular polygon sides=3,inner sep=0.9pt] {} 
  node[pos=.45, below,yshift=-.5mm] {\trian};
  \draw[->-] (nr) -- (nl) node [pos= 0.55, above,yshift=-.5mm] {\la} node[pos=.85, draw=\crd, fill= white, circle , inner sep=1.5pt]{} node[pos=.8, above]{\cir} 
  node[pos= 0.2 , draw=\crd, fill=white,regular polygon, regular polygon sides=3,inner sep=0.9pt] {} 
  node[pos=.5, below] {\tri};
  \draw[->-] (mr) .. controls +(0,1) and + (0, 0) .. (tr) node[pos = .5, right] {$b$};
  \draw[->-] (ml) -- (tl) node[pos=.5,left] {$c$};
  \draw[->-] (tl) -- (ttl) node[pos=.7,below,yshift=-2pt] {$c-1$} node[pos=.3, draw=\crd, fill= white, circle , inner sep=1.5pt]{} node[pos=.3, below]{\color{\labrd}$-\frac{s}{2}$} node[pos= 0.8 , draw=\crd, fill=white,regular polygon, regular polygon sides=3,inner sep=0.9pt] {} node[pos=0.5, above]{\color{\labrd}$-\lambda (c-2) +\frac{s}{2}$};
  \draw[->-] (tl) -- (tr) node[pos=.5,left] {$1$};
  \draw[->] (ttl) .. controls +(-.1,.1) and +(0,0) .. (Tl) node[pos=1,above] {$c$} node[pos=.4, draw=\crd, fill= white, circle , inner sep=1.5pt]{} node[pos=.4, right]{\color{\labrd}$\frac{s}{2}$} node[pos= 0.6 , draw=\crd, fill=white,regular polygon, regular polygon sides=3,inner sep=0.9pt] {} node[pos=0.6, right]{\color{\labrd}$\lambda (c-1) -\frac{s}{2}$};
  \draw[->] (tr) -- (ttr) node[pos=1,above] {$b+1$};
  \draw[->-] (Bl) .. controls +(0,1) and +(0,0) .. (ttl) node[pos=0.5,left] {$1$};
\end{scope}

%% file: sch/forkslide_iso_4.tex
\begin{scope}[font=\tiny]
  \def\la{$c-i-1$}
  \def\cir{\color{\labrd}$-\frac{s}{2}$}
  \def\tri{\color{\labrd}$-\lambda(c-i-2)+\frac{s}{2}$}
  \def\lab{}
  \def\circ{\color{\labrd}$\frac{s}{2}$}
  \def\tria{\color{\labrd}$\lambda(b+c-i-2)-\frac{s}{2}$}
  \def\labe{$b-i-1$}
  \def\circl{\color{\labrd}$-\frac{s}{2}$}
  \def\trian{\color{\labrd}$-\lambda(b-i-2)+\frac{s}{2}$}
  \def\label{$i+1$}
  \def\circle{\color{\labrd}$\frac{\bar{s}}{2}$}
  \def\triang{\color{\labrd}$-\lambda i-\frac{\bar{s}}{2}$}
  \coordinate (bl) at (-1, -1.5);
  \coordinate (br) at ( 1, -1.5);
  \coordinate (ml) at (-1, .8);
  \coordinate (nl) at (-1,-.8);
  \coordinate (mr) at (1,1);
  \coordinate (nr) at (1,-1);
  \coordinate (tl) at (-1.5,  1.75);
  \coordinate (tr) at ( -1,  3);
  \coordinate (ttl) at (-2.8,2);
  \coordinate (ttr) at (-1,3.5);
  \coordinate (Tl) at  (-2.8,3.5);
  \coordinate (Bl) at (-3,0);  
  \draw[>-] (bl) -- (ml) node[pos = 0, below] {$b$}   node[pos=.5, left] {\lab}
  node[pos= 0.5 , draw=\crd, fill=white,regular polygon, regular polygon sides=3,inner sep=0.9pt] {} node[pos=0.5, left]{\tria} 
  node[pos=.75, draw=\crd, fill= white, circle , inner sep=1.5pt]{} node[pos=.75, left]{\circ};
  \draw[>-] (br) -- (mr) node[pos = 0, below] {$c$} node[pos=.55,right] {\label}
  node[pos= 0.5 , draw=\crd, fill=white,regular polygon, regular polygon sides=3,inner sep=0.9pt] {} node[pos=0.5, left]{\triang} 
  node[pos=.7, draw=\crd, fill= white, circle , inner sep=1.5pt]{} node[pos=.7, right]{\circle};
  \draw[->-] (ml) -- (mr) node [pos= 0.4, above,yshift=.5mm] {\labe} node[pos=.8, draw=\crd, fill= white, circle , inner sep=1.5pt]{} node[pos=.82, above]{\circl} 
  node[pos= 0.25 , draw=\crd, fill=white,regular polygon, regular polygon sides=3,inner sep=0.9pt] {} 
  node[pos=.45, below,yshift=-.5mm] {\trian};
  \draw[->-] (nr) -- (nl) node [pos= 0.55, above,yshift=-.5mm] {\la} node[pos=.85, draw=\crd, fill= white, circle , inner sep=1.5pt]{} node[pos=.8, above]{\cir} 
  node[pos= 0.2 , draw=\crd, fill=white,regular polygon, regular polygon sides=3,inner sep=0.9pt] {} 
  node[pos=.5, below] {\tri};
  \draw[->-] (mr) .. controls +(0,1) and + (0, 0) .. (tr) node[pos = .5, right] {$b$};
  \draw[->-] (ml) -- (tl) node[pos=.5,left] {$c$};
  \draw[->-] (tl) -- (ttl) node[pos=.7,below,yshift=-2pt] {$c-1$} node[pos=.3, draw=\crd, fill= white, circle , inner sep=1.5pt]{} node[pos=.3, below]{\color{\labrd}$-\frac{s}{2}$} node[pos= 0.8 , draw=\crd, fill=white,regular polygon, regular polygon sides=3,inner sep=0.9pt] {} node[pos=0.5, above]{\color{\labrd}$-\lambda (c-2) +\frac{s}{2}$};
  \draw[->-] (tl) -- (tr) node[pos=.5,left] {$1$};
  \draw[->] (ttl) .. controls +(-.1,.1) and +(0,0) .. (Tl) node[pos=1,above] {$c$} node[pos=.4, draw=\crd, fill= white, circle , inner sep=1.5pt]{} node[pos=.4, right]{\color{\labrd}$\frac{s}{2}$} node[pos= 0.6 , draw=\crd, fill=white,regular polygon, regular polygon sides=3,inner sep=0.9pt] {} node[pos=0.6, right]{\color{\labrd}$\lambda (c-1) -\frac{s}{2}$};
  \draw[->] (tr) -- (ttr) node[pos=1,above] {$b+1$};
  \draw[->-] (Bl) .. controls +(0,1) and +(0,0) .. (ttl) node[pos=0.5,left] {$1$};
\end{scope}

%% file: sch/forkslide_iso_5.tex
\begin{scope}[font=\tiny]
  \def\la{$c-i$}
  \def\cir{\color{\labrd}$-\frac{s}{2}$}
  \def\tri{\color{\labrd}$-\lambda(c-i-1)+\frac{s}{2}$}
  \def\lab{$b+c+1-i$}
  \def\circ{\color{\labrd}$\frac{s}{2}$}
  \def\tria{\color{\labrd}$\lambda(b+c-i)-\frac{s}{2}$}
  \def\labe{$b+1-i$}
  \def\circl{\color{\labrd}$-\frac{s}{2}$}
  \def\trian{\color{\labrd}$-\lambda(b-i)+\frac{s}{2}$}
  \def\label{$i$}
  \def\circle{\color{\labrd}$\frac{\bar{s}}{2}$}
  \def\triang{\color{\labrd}$-\lambda(i-1)-\frac{\bar{s}}{2}$}
  \coordinate (bl) at (-1, -1.5);
  \coordinate (br) at ( 1.3, -1.5);
  \coordinate (tl) at (-1,  1.5);
  \coordinate (tr) at ( 1.3,  1.5);
  \coordinate (Bll) at (-1.75,-3);
  \coordinate (Bl) at (-.25,-3);
  \coordinate (Br) at (1.3,-3);
  \draw[->] (bl) -- (tl) node[pos = 0, below] {} node[pos = 1, above] {$c$}  node[pos=.5, right] {\lab}
  node[pos= 0.6 , draw=\crd, fill=white,regular polygon, regular polygon sides=3,inner sep=0.9pt] {} node[pos=0.6, right]{\tria} 
  node[pos=.4, draw=\crd, fill= white, circle , inner sep=1.5pt]{} node[pos=.4, left]{\circ}
  coordinate[pos = 0.75] (ml) 
  coordinate[pos = 0.25] (nl);
  \draw[->] (br) -- (tr) node[pos = 0, below] {} node[pos = 1, above] {$b+1$}  node[pos=.5,right] {\label}
  node[pos= 0.4 , draw=\crd, fill=white,regular polygon, regular polygon sides=3,inner sep=0.9pt] {} node[pos=0.4, left]{\triang} 
  node[pos=.65, draw=\crd, fill= white, circle , inner sep=1.5pt]{} node[pos=.65, right]{\circle}
  coordinate[pos = 0.85] (mr)  
  coordinate[pos = 0.15] (nr);
  \draw[->-] (ml) -- (mr) node [pos= 0.4, above,yshift=.5mm] {\labe} node[pos=.8, draw=\crd, fill= white, circle , inner sep=1.5pt]{} node[pos=.82, above]{\circl} 
  node[pos= 0.25 , draw=\crd, fill=white,regular polygon, regular polygon sides=3,inner sep=0.9pt] {} 
  node[pos=.45, below,yshift=-.5mm] {\trian};
  \draw[->-] (nr) -- (nl) node [pos= 0.4, above] {\la} node[pos=.8, draw=\crd, fill= white, circle , inner sep=1.5pt]{} node[pos=.75, above]{\cir} 
  node[pos= 0.25 , draw=\crd, fill=white,regular polygon, regular polygon sides=3,inner sep=0.9pt] {} 
  node[pos=.55, below,yshift=-2pt] {\tri};
  \draw[>-] (Br) -- (br) node[below,pos=0] {$c$}; 
  \draw[>-] (Bll) .. controls +(0,1) and + (0, 0) .. (bl) node[below,pos=0] {$1$};
  \draw[>-] (Bl) .. controls +(0,1) and + (0, 0) .. (bl)
  node[below,pos=0] {$b$};
\end{scope}

%% file: sch/forkslide_iso_6.tex
\begin{scope}[font=\tiny]
  \def\la{$c-i-1$}
  \def\cir{\color{\labrd}$-\frac{s}{2}$}
  \def\tri{\color{\labrd}$-\lambda(c-i-2)+\frac{s}{2}$}
  \def\lab{$b+c-i$}
  \def\circ{\color{\labrd}$\frac{s}{2}$}
  \def\tria{\color{\labrd}$\lambda(b+c-i-1)-\frac{s}{2}$}
  \def\labe{$b-i$}
  \def\circl{\color{\labrd}$-\frac{s}{2}$}
  \def\trian{\color{\labrd}$-\lambda(b-i-1)+\frac{s}{2}$}
  \def\label{$i+1$}
  \def\circle{\color{\labrd}$\frac{\bar{s}}{2}$}
  \def\triang{\color{\labrd}$-\lambda i-\frac{\bar{s}}{2}$}
  \coordinate (bl) at (-1, -1.5);
  \coordinate (br) at ( 1.3, -1.5);
  \coordinate (tl) at (-1,  1.5);
  \coordinate (tr) at ( 1.3,  1.5);
  \coordinate (Bll) at (-1.75,-3);
  \coordinate (Bl) at (-.25,-3);
  \coordinate (Br) at (1.3,-3);
  \draw[->] (bl) -- (tl) node[pos = 0, below] {} node[pos = 1, above] {$c$}  node[pos=.5, right] {\lab}
  node[pos= 0.57 , draw=\crd, fill=white,regular polygon, regular polygon sides=3,inner sep=0.9pt] {} node[pos=0.57, right]{\tria} 
  node[pos=.4, draw=\crd, fill= white, circle , inner sep=1.5pt]{} node[pos=.4, left]{\circ}
  coordinate[pos = 0.75] (ml) 
  coordinate[pos = 0.25] (nl);
  \draw[->] (br) -- (tr) node[pos = 0, below] {} node[pos = 1, above] {$b+1$}  node[pos=.5,right] {\label}
  node[pos= 0.4 , draw=\crd, fill=white,regular polygon, regular polygon sides=3,inner sep=0.9pt] {} node[pos=0.4, left]{\triang} 
  node[pos=.7, draw=\crd, fill= white, circle , inner sep=1.5pt]{} node[pos=.7, right]{\circle}
  coordinate[pos = 0.85] (mr)  
  coordinate[pos = 0.15] (nr);
  \draw[->-] (ml) -- (mr) node [pos= 0.4, above,yshift=.5mm] {\labe} node[pos=.8, draw=\crd, fill= white, circle , inner sep=1.5pt]{} node[pos=.82, above]{\circl} 
  node[pos= 0.25 , draw=\crd, fill=white,regular polygon, regular polygon sides=3,inner sep=0.9pt] {} 
  node[pos=.45, below,yshift=-.5mm] {\trian};
  \draw[->-] (nr) -- (nl) node [pos= 0.4, above] {\la} node[pos=.8, draw=\crd, fill= white, circle , inner sep=1.5pt]{} node[pos=.75, above]{\cir} 
  node[pos= 0.25 , draw=\crd, fill=white,regular polygon, regular polygon sides=3,inner sep=0.9pt] {} 
  node[pos=.55, below,yshift=-2pt] {\tri};
  \draw[>-] (Br) -- (br) node[below,pos=0] {$c$}; 
  \draw[>-] (Bll) .. controls +(0,1) and + (0, 0) .. (bl) node[below,pos=0] {$1$};
  \draw[>-] (Bl) .. controls +(0,1) and + (0, 0) .. (bl)
  node[below,pos=0] {$b$};
\end{scope}

%% file: sch/forkslide_iso_7.tex
\begin{scope}[font=\tiny]
  \def\la{$c-i$}
  \def\cir{\color{\labrd}$-\frac{s}{2}$}
  \def\tri{\color{\labrd}$-\lambda(c-i-1)+\frac{s}{2}$}
  \def\lab{}
  \def\circ{\color{\labrd}$\frac{s}{2}$}
  \def\tria{\color{\labrd}$\lambda(b+c-i)-\frac{s}{2}$}
  \def\labe{$b-i$}
  \def\circl{\color{\labrd}$-\frac{s}{2}$}
  \def\trian{\color{\labrd}$-\lambda(b-i-1)+\frac{s}{2}$}
  \def\label{$i$}
  \def\circle{\color{\labrd}$\frac{\bar{s}}{2}$}
  \def\triang{\color{\labrd}$-\lambda(i-1)-\frac{\bar{s}}{2}$}
  \coordinate (bl) at (-1, -1.5);
  \coordinate (br) at ( 1, -1.5);
  \coordinate (ml) at (-1, .8);
  \coordinate (nl) at (-1,-.8);
  \coordinate (mr) at (1,1);
  \coordinate (nr) at (1,-1);
  \coordinate (tl) at (-1,  1.75);
  \coordinate (tr) at (0,  3);
  \coordinate (ttl) at (-2.8,2);
  \coordinate (ttr) at (0,3.5);
  \coordinate (Tl) at  (-2.8,3.5);
  \coordinate (Bl) at (-2.8,0);  
  \draw[>-] (bl) -- (ml) node[pos = 0, below] {$b$}   node[pos=.5, left] {\lab}
  node[pos= 0.45 , draw=\crd, fill=white,regular polygon, regular polygon sides=3,inner sep=0.9pt] {} node[pos=0.45, left]{\tria} 
  node[pos=.9, draw=\crd, fill= white, circle , inner sep=1.5pt]{} node[pos=.9, left]{\circ};
  \draw[->-] (mr) .. controls +(0,1) and + (0, 0) .. (tr) node[pos = .5, right] {\label};
  \draw[>-] (br) -- (mr) node[pos = 0, below] {$c$} node[pos=.55,right] {}
  node[pos= 0.7 , draw=\crd, fill=white,regular polygon, regular polygon sides=3,inner sep=0.9pt] {} node[pos=0.7, left]{\triang} 
  node[pos=1, draw=\crd, fill= white, circle , inner sep=1.5pt]{} node[pos=1, left]{\circle};
  \draw[->-] (nr) -- (nl) node [pos= 0.45, above] {\la} node[pos=.8, draw=\crd, fill= white, circle , inner sep=1.5pt]{} node[pos=.75, above]{\cir} 
  node[pos= 0.2 , draw=\crd, fill=white,regular polygon, regular polygon sides=3,inner sep=0.9pt] {} 
  node[pos=.5, below] {\tri};
  \draw[->-] (ml) -- (tl) node[pos=0,right] {$b+c-i$};
  \draw[->-] (tl) -- (ttl) node[pos=.7,below,yshift=-2pt] {$c-1$} node[pos=.3, draw=\crd, fill= white, circle , inner sep=1.5pt]{} node[pos=.3, below]{\color{\labrd}$-\frac{s}{2}$};
  \draw[->-] (tl) .. controls +(0,.5) and + (0, 0) .. (tr) node[pos=.5,left] {$b+1-i$} 
  node[pos=.25, draw=\crd, fill= white, circle , inner sep=1.5pt]{} node[pos=.25, left]{\color{\labrd}$-\frac{s}{2}$}
  node[pos= 0.7 , draw=\crd, fill=white,regular polygon, regular polygon sides=3,inner sep=0.9pt] {} node[pos=0.7, left]{\color{\labrd}$-\lambda(b-i-1)+\frac{s}{2}$};
  \draw[->] (ttl) -- (Tl) node[pos=1,above] {$c$} node[pos=.4, draw=\crd, fill= white, circle , inner sep=1.5pt]{} node[pos=.4, right]{\color{\labrd}$\frac{s}{2}$};
  \draw[->] (tr) -- (ttr) node[pos=1,above] {$b+1$};
  \draw[->-] (Bl) .. controls +(0,1) and +(0,0) .. (ttl) node[pos=0,below] {$1$} node[pos= 0.4 , draw=\crd, fill=white,regular polygon, regular polygon sides=3,inner sep=0.9pt] {} node[pos=0.4, right]{\color{\labrd}$\lambda (c-1) -\frac{s}{2}$};
\end{scope}

%% file: sch/forkslide_iso_8.tex
\begin{scope}[font=\tiny]
  \def\la{$c-i-1$}
  \def\cir{\color{\labrd}$-\frac{s}{2}$}
  \def\tri{\color{\labrd}$-\lambda(c-i-2)+\frac{s}{2}$}
  \def\lab{}
  \def\circ{\color{\labrd}$\frac{s}{2}$}
  \def\tria{\color{\labrd}$\lambda(b+c-i-1)-\frac{s}{2}$}
  \def\labe{$b-i-1$}
  \def\circl{\color{\labrd}$-\frac{s}{2}$}
  \def\trian{\color{\labrd}$-\lambda(b-i-2)+\frac{s}{2}$}
  \def\label{$i+1$}
  \def\circle{\color{\labrd}$\frac{\bar{s}}{2}$}
  \def\triang{\color{\labrd}$-\lambda i-\frac{\bar{s}}{2}$}
  \coordinate (bl) at (-1, -1.5);
  \coordinate (br) at ( 1, -1.5);
  \coordinate (ml) at (-1, .8);
  \coordinate (nl) at (-1,-.8);
  \coordinate (mr) at (1,1);
  \coordinate (nr) at (1,-1);
  \coordinate (tl) at (-1,  1.75);
  \coordinate (tr) at (0,  3);
  \coordinate (ttl) at (-2.8,2);
  \coordinate (ttr) at (0,3.5);
  \coordinate (Tl) at  (-2.8,3.5);
  \coordinate (Bl) at (-2.8,0);  
  \draw[>-] (bl) -- (ml) node[pos = 0, below] {$b$}   node[pos=.5, left] {\lab}
  node[pos= 0.45 , draw=\crd, fill=white,regular polygon, regular polygon sides=3,inner sep=0.9pt] {} node[pos=0.45, left]{\tria} 
  node[pos=.9, draw=\crd, fill= white, circle , inner sep=1.5pt]{} node[pos=.9, left]{\circ};
  \draw[->-] (mr) .. controls +(0,1) and + (0, 0) .. (tr) node[pos = .5, right] {\label};
  \draw[>-] (br) -- (mr) node[pos = 0, below] {$c$} node[pos=.55,right] {}
  node[pos= 0.7 , draw=\crd, fill=white,regular polygon, regular polygon sides=3,inner sep=0.9pt] {} node[pos=0.7, left]{\triang} 
  node[pos=1, draw=\crd, fill= white, circle , inner sep=1.5pt]{} node[pos=1, left]{\circle};
  \draw[->-] (nr) -- (nl) node [pos= 0.4, above] {\la} node[pos=.8, draw=\crd, fill= white, circle , inner sep=1.5pt]{} node[pos=.75, above]{\cir} 
  node[pos= 0.2 , draw=\crd, fill=white,regular polygon, regular polygon sides=3,inner sep=0.9pt] {} 
  node[pos=.5, below] {\tri};
  \draw[->-] (ml) -- (tl) node[pos=0,right] {$b+c-i-1$};
  \draw[->-] (tl) -- (ttl) node[pos=.7,below,yshift=-2pt] {$c-1$} node[pos=.3, draw=\crd, fill= white, circle , inner sep=1.5pt]{} node[pos=.3, below]{\color{\labrd}$-\frac{s}{2}$};
  \draw[->-] (tl) .. controls +(0,.5) and + (0, 0) .. (tr) node[pos=.5,left] {$b+1-i-1$} 
  node[pos=.25, draw=\crd, fill= white, circle , inner sep=1.5pt]{} node[pos=.25, left]{\color{\labrd}$-\frac{s}{2}$}
  node[pos= 0.7 , draw=\crd, fill=white,regular polygon, regular polygon sides=3,inner sep=0.9pt] {} node[pos=0.7, left]{\color{\labrd}$-\lambda(b-i-2)+\frac{s}{2}$};
  \draw[->] (ttl) -- (Tl) node[pos=1,above] {$c$} node[pos=.4, draw=\crd, fill= white, circle , inner sep=1.5pt]{} node[pos=.4, right]{\color{\labrd}$\frac{s}{2}$};
  \draw[->] (tr) -- (ttr) node[pos=1,above] {$b+1$};
  \draw[->-] (Bl) .. controls +(0,1) and +(0,0) .. (ttl) node[pos=0,below] {$1$} node[pos= 0.4 , draw=\crd, fill=white,regular polygon, regular polygon sides=3,inner sep=0.9pt] {} node[pos=0.4, right]{\color{\labrd}$\lambda (c-1) -\frac{s}{2}$};
\end{scope}

%% file: sch/forkslide_movie_2.tex
\begin{scope}[font=\tiny]
  \def\la{$c-i$}
  \def\lab{}
  \def\labe{$b-i$}
  \def\label{$i$}
  \coordinate (bl) at (-1, -1.5);
  \coordinate (br) at ( 1, -1.5);
  \coordinate (ml) at (-1, .8);
  \coordinate (nl) at (-1,-.8);
  \coordinate (mr) at (1,1);
  \coordinate (nr) at (1,-1);
  \coordinate (tl) at (-1,  1.75);
  \coordinate (tr) at (0,  3);
  \coordinate (ttl) at (-2.8,2);
  \coordinate (ttr) at (0,3.5);
  \coordinate (Tl) at  (-2.8,3.5);
  \coordinate (Bl) at (-2.8,0);  
  \draw[>-] (bl) -- (ml) node[pos = 0, below] {$b$}   node[pos=.5, left] {\lab};
  \draw[->-] (mr) .. controls +(0,1) and + (0, 0) .. (tr) node[pos = .5, right] {\label};
  \draw[>-] (br) -- (mr) node[pos = 0, below] {$c$} node[pos=.55,right] {};
  \draw[->-] (nr) -- (nl) node [pos= 0.45, above] {\la};
  \draw[->-] (ml) -- (tl) node[pos=0,right] {$b+c-i$};
  \draw[->-] (tl) -- (ttl) node[pos=.7,below,yshift=-2pt] {$c-1$};
  \draw[->-] (tl) .. controls +(0,.5) and + (0, 0) .. (tr) node[pos=.6,left] {$b+1-i$};
  \draw[->] (ttl) -- (Tl) node[pos=1,above] {$c$};
  \draw[->] (tr) -- (ttr) node[pos=1,above] {$b+1$};
  \draw[->-] (Bl) .. controls +(0,1) and +(0,0) .. (ttl) node[pos=0,below]{$1$};
\end{scope}

%% file: sch/forkslide_movie_3.tex
\begin{scope}[font=\tiny]
  \def\la{$c-i$}
  \def\lab{}
  \def\labe{$b-i$}
  \def\label{$i$}
  \coordinate (bl) at (-1, -1.5);
  \coordinate (br) at ( 1, -1.5);
  \coordinate (ml) at (-1, .8);
  \coordinate (nl) at (-1,-.8);
  \coordinate (mr) at (1,1);
  \coordinate (nr) at (1,-1);
  \coordinate (tl) at (-1,  1.75);
  \coordinate (tr) at (0,  3);
  \coordinate (ttl) at (-2.8,2);
  \coordinate (ttr) at (0,3.5);
  \coordinate (Tl) at  (-2.8,3.5);
  \coordinate (Bl) at (-2.8,0);  
  \draw[>-] (bl) -- (ml) node[pos = 0, below] {$b$}   node[pos=.5, left] {\lab};
  \draw[->-] (mr) .. controls +(0,1) and + (0, 0) .. (tr) node[pos = .5, right] {\label};
  \draw[>-] (br) -- (mr) node[pos = 0, below] {$c$} node[pos=.55,right] {};
  \draw[->-] (nr) -- (nl) node [pos= 0.45, above] {\la};
  \draw[->-] (ml) -- (tl) node[pos=0,right] {$b+c-i$};
  \draw[->-] (tl) -- (ttl) node[pos=.7,below,yshift=-2pt] {$c-1$};
  \draw[->-] (tl) .. controls +(0,.5) and + (0, 0) .. (tr) node[pos=.6,left] {$1$};
  \draw[->-] (-.93,2) .. controls +(.5,-.3) and + (0, 0) .. (.34,2.6) node[pos=.4,below] {$b-i$};
  \draw[->] (ttl) -- (Tl) node[pos=1,above] {$c$};
  \draw[->] (tr) -- (ttr) node[pos=1,above] {$b+1$};
  \draw[->-] (Bl) .. controls +(0,1) and +(0,0) .. (ttl) node[pos=0,below]{$1$};
\end{scope}

%% file: sch/forkslide_movie_1.tex
\begin{scope}[font=\tiny]
  \def\la{$c-i$}
  \def\lab{}
  \def\labe{$b-i$}
  \def\label{$i$}
  \coordinate (bl) at (-1, -1.5);
  \coordinate (br) at ( 1, -1.5);
  \coordinate (ml) at (-1, .8);
  \coordinate (nl) at (-1,-.8);
  \coordinate (mr) at (1,1);
  \coordinate (nr) at (1,-1);
  \coordinate (tl) at (-1.5,  1.75);
  \coordinate (tr) at ( -1,  3);
  \coordinate (ttl) at (-2.8,2);
  \coordinate (ttr) at (-1,3.5);
  \coordinate (Tl) at  (-2.8,3.5);
  \coordinate (Bl) at (-2.8,0);  
  \draw[>-] (bl) -- (ml) node[pos = 0, below] {$b$}   node[pos=.5, left] {\lab};
  \draw[>-] (br) -- (mr) node[pos = 0, below] {$c$} node[pos=.55,right] {\label};
  \draw[->-] (ml) -- (mr) node [pos= 0.5, below] {\labe};
  \draw[->-] (nr) -- (nl) node [pos= 0.55, above,yshift=-.5mm] {\la};
  \draw[->-] (mr) .. controls +(0,1) and + (0, 0) .. (tr) node[pos = .6, right] {$b$};
  \draw[->-] (ml) -- (tl) node[pos=.5,right] {$c$};
  \draw[->-] (tl) -- (ttl) node[pos=.5,below] {$c-1$};
  \draw[->-] (tl) -- (tr) node[pos=.5,left] {$1$};
  \draw[->] (ttl) -- (Tl);
  \draw[->] (tr) -- (ttr) node[pos=1,above] {$b+1$};
  \draw[->-] (Bl) -- (ttl) node[pos=0,below] {$1$};
\end{scope}

%% file: sch/forkslide_movie_4.tex
\begin{scope}[font=\tiny]
  \def\la{$c-i$}
  \def\cir{}
  \def\tri{}
  \def\lab{}
  \def\circ{}
  \def\tria{}
  \def\labe{$b+1-i$}
  \def\circl{}
  \def\trian{}
  \def\label{$i$}
  \def\circle{}
  \def\triang{}
  \coordinate (bl) at (-1, -1.5);
  \coordinate (br) at ( 1.3, -1.5);
  \coordinate (tl) at (-1,  1.5);
  \coordinate (tr) at ( 1.3,  1.5);
  \coordinate (Bll) at (-1.75,-3);
  \coordinate (Bl) at (-.25,-3);
  \coordinate (Br) at (1.3,-3);
  \draw[->] (bl) -- (tl) node[pos = 0, below] {} node[pos = 1, above] {$c$}  node[pos=.6, right] {$b+c$} node[pos=.45,right] {$+1-i$}
  coordinate[pos = 0.75] (ml) 
  coordinate[pos = 0.25] (nl);
  \draw[->] (br) -- (tr) node[pos = 0, below] {} node[pos = 1, above] {$b+1$}  node[pos=.5,right] {\label}
  coordinate[pos = 0.85] (mr)  
  coordinate[pos = 0.15] (nr);
  \draw[->-] (ml) -- (mr) node [pos= 0.4, above,yshift=.5mm] {\labe};
  \draw[->-] (nr) -- (nl) node [pos= 0.4, above] {\la};
  \draw[>-] (Br) -- (br) node[below,pos=0] {$c$}; 
  \draw[>-] (Bll) .. controls +(0,1) and + (0, 0) .. (bl) node[below,pos=0] {$1$};
  \draw[>-] (Bl) .. controls +(0,1) and + (0, 0) .. (bl)
  node[below,pos=0] {$b$};
\end{scope}

%% file: sch/forkslide_movie_5.tex
\begin{scope}[font=\tiny]
  \def\la{$c-i$}
  \def\cir{}
  \def\tri{}
  \def\lab{}
  \def\circ{}
  \def\tria{}
  \def\labe{$b+1-i$}
  \def\circl{}
  \def\trian{}
  \def\label{$i$}
  \def\circle{}
  \def\triang{}
  \coordinate (bl) at (-1, -1.5);
  \coordinate (br) at ( 1.3, -1.5);
  \coordinate (tl) at (-1,  1.5);
  \coordinate (tr) at ( 1.3,  1.5);
  \coordinate (Bll) at (-1.75,-3);
  \coordinate (Bl) at (-.25,-3);
  \coordinate (Br) at (1.3,-3);
  \draw[->] (bl) -- (tl) node[pos = 0, below] {} node[pos = 1, above] {$c$}  node[pos=.55, right] {$b+c$} node[pos=.4,right] {$+1-i$}
  coordinate[pos = 0.75] (ml);
  \draw[->] (br) -- (tr) node[pos = 0, below] {} node[pos = 1, above] {$b+1$}  node[pos=.5,right] {\label}
  coordinate[pos = 0.85] (mr)  
  coordinate[pos = 0.15] (nr);
  \draw[->-] (ml) -- (mr) node [pos= 0.4, above,yshift=.5mm] {\labe};
  \draw[>-] (Br) -- (br) node[below,pos=0] {$c$}; 
  \draw[>-] (Bll) .. controls +(0,1) and + (0, 0) .. (bl) node[below,pos=0] {$1$};
  \draw[>-] (Bl) .. controls +(0,1) and + (0, 0) .. (bl) coordinate[pos = 0.5] (nl)
  node[below,pos=0] {$b$};
  \draw[->-] (nr) -- (nl) node [pos= 0.5, below] {\la};
\end{scope}

%% file: sch/forkslide_movie_6.tex
\begin{scope}[font=\tiny]
  \def\la{$c-i$}
  \def\cir{}
  \def\tri{}
  \def\lab{}
  \def\circ{}
  \def\tria{}
  \def\labe{$1$}
  \def\circl{}
  \def\trian{}
  \def\label{$i$}
  \def\circle{}
  \def\triang{}
  \coordinate (bl) at (-1, -1.5);
  \coordinate (br) at ( 1.3, -1.5);
  \coordinate (tl) at (-1,  1.5);
  \coordinate (tr) at ( 1.3,  1.5);
  \coordinate (Bll) at (-1.75,-3);
  \coordinate (Bl) at (-.25,-3);
  \coordinate (Br) at (1.3,-3);
  \draw[->] (bl) -- (tl) node[pos = 0, below] {} node[pos = 1, above] {$c$}  node[pos=.35, right] {$b+c$} node[pos=.2,right] {$+1-i$}
  coordinate[pos = 0.75] (ml) coordinate[pos = 0.5] (mll);
  \draw[->] (br) -- (tr) node[pos = 0, below] {} node[pos = 1, above] {$b+1$}  node[pos=.5,right] {\label}
  coordinate[pos = 0.85] (mr) coordinate[pos = 0.6] (mrr) coordinate[pos = 0.15] (nr);
  \draw[->-] (ml) -- (mr) node [pos= 0.4, above] {\labe};
  \draw[->-] (mll) -- (mrr) node [pos= 0.4, above] {$b-i$};
  \draw[>-] (Br) -- (br) node[below,pos=0] {$c$}; 
  \draw[>-] (Bll) .. controls +(0,1) and + (0, 0) .. (bl) node[below,pos=0] {$1$};
  \draw[>-] (Bl) .. controls +(0,1) and + (0, 0) .. (bl) coordinate[pos = 0.5] (nl)
  node[below,pos=0] {$b$};
  \draw[->-] (nr) -- (nl) node [pos= 0.5, below] {\la};
\end{scope}

%% file: sch/forkslide_movie_7.tex
\begin{scope}[font=\tiny]
  \def\la{$c-i$}
  \def\cir{\color{\labrd}$-\frac{s}{2}$}
  \def\tri{\color{\labrd}$-\lambda(c-i-1)+\frac{s}{2}$}
  \def\lab{}
  \def\circ{\color{\labrd}$\frac{s}{2}$}
  \def\tria{\color{\labrd}$\lambda(b+c-i-1)-\frac{s}{2}$}
  \def\labe{$b-i$}
  \def\circl{\color{\labrd}$-\frac{s}{2}$}
  \def\trian{\color{\labrd}$-\lambda(b-i-1)+\frac{s}{2}$}
  \def\label{$i$}
  \def\circle{\color{\labrd}$\frac{\bar{s}}{2}$}
  \def\triang{\color{\labrd}$-\lambda(i-1)-\frac{\bar{s}}{2}$}
  \coordinate (bl) at (-1, -1.5);
  \coordinate (br) at ( 1, -1.5);
  \coordinate (ml) at (-1, .8);
  \coordinate (nl) at (-1,-.8);
  \coordinate (mr) at (1,1);
  \coordinate (nr) at (1,-1);
  \coordinate (tl) at (-2,  1.5);
  \coordinate (tr) at ( -1,  3.5);
  \coordinate (ttl) at (-2,2.5);
  \coordinate (ttr) at (-1,4);
  \coordinate (Tl) at  (-3,3.5);
  \coordinate (Bl) at (-3,0);  
  \draw[>-] (bl) -- (ml) node[pos = 0, below] {$b$}   node[pos=.5, left] {\lab};
  \draw[>-] (br) -- (mr) node[pos = 0, below] {$c$} node[pos=.55,right] {\label};
  \draw[->-] (ml) -- (mr) node [pos= 0.4, above,yshift=.5mm] {\labe};
  \draw[->-] (nr) -- (nl) node [pos= 0.55, above,yshift=-.5mm] {\la};
  \draw[->-] (mr) .. controls +(0,2) and + (0, 0) .. (tr) node[pos = .5, right] {$b$};
  \draw[->-] (ml) -- (tl) node[pos=.7,below] {$c$};
  \draw[->-] (tl) -- (ttl) node[pos=.5,left] {$c+1$} ;
  \draw[->-] (ttl) -- (tr) node[pos=.5,above] {$1$};
  \draw[->] (ttl) .. controls +(-.8,.4) and +(0,0) .. (Tl) node[pos=1,above] {$c$};
  \draw[->] (tr) -- (ttr) node[pos=1,above] {$b+1$};
  \draw[->-] (Bl) .. controls +(0,1) and +(0,0) .. (tl) node[pos=0.5,left] {$1$};
\end{scope}

%% file: Z_uncolored.tex
In \cite{GR25}, the $\Witt$-equivariant (uncolored) $\gl_N$-Khovanov--Rozansky homology is defined. We briefly recall in this subsection the definition and useful properties of it. 

We construct a hypercube complex for any link diagrams by specifying locally a length two complex at each crossing according to its sign:

\begin{equation}
  \NB{\tikz[xscale = -0.6]{\input{\imagesfolder/sch/positive_crossing}}} :=
\NB{\tikz[xscale = 3.5, yscale = 3]
{\node (i0) at (0.2, 0){$\NB{\tikz[font= \tiny,scale=0.6]{\input{\imagesfolder/sch/reidem_lemm_14}}}$};
\node (i1) at (-1, 0) {$q\NB{\tikz[font= \tiny,scale=0.6]{\input{\imagesfolder/sch/def_homo_cross_pos}}} $};
\draw[->] (i1) -- (i0) coordinate[pos=0.45] (a);
\node[above] at (a) {\NB{\tikz[font=\tiny, scale=.5]{\input{\imagesfolder/sch/foam_unzip_empty}}}};
  }}
\end{equation}

\begin{equation}
   \NB{\tikz[xscale =0.6]{\input{\imagesfolder/sch/positive_crossing}}}:=
    \NB{ \tikz[xscale = 3.5, yscale = 3]{
    \node (i0) at (-1, 0) {$ \, \NB{\tikz[font= \tiny, scale=0.6]{\input{\imagesfolder/sch/reidem_lemm_14}}}$};
    \node (i1) at (0.2, 0) {$q^{-1}\NB{\tikz[font= \tiny, scale=0.6]{\input{\imagesfolder/sch/def_homo_cross_neg}}}$ };
    \draw[->] (i0) -- (i1) coordinate[pos=0.6] (b);
    \node[above] at (b) {\NB{\tikz[font=\tiny, scale=.5]{\input{\imagesfolder/sch/foam_zip_empty}}}}; }} \,
\end{equation}.
In both complexes, the term marked by a zigzag lies in cohomological degree $0$. For clarity, $\ssp_N(\cdot)$ has been omitted in these diagrams. 

For a link $L$, we define
$
\KRW_N(L,\scalars) := \KRW_{N,\lambda,s}(L,\scalars),
$
the (uncolored) Khovanov--Rozansky $\mathfrak{gl}_N$-homology of $L$ with coefficients in a ring $\scalars$, equipped with an action of $\Witt$.
To avoid cluttered diagrams, we will omit $\KRW_N(\cdot)$.

\begin{rmk}
    The attentive reader will notice that the conventions have been reversed between negative and positive crossings compared to \cite{GR25}. The purpose is to align the conventions of the colored case with those of \cite{ETW17}. 
\end{rmk}

\begin{prop}
    \label{uncol_reidem}
    There are isomorphisms in the relative homotopy canopoly:
    \begin{align} 
    \label{uncol_reidem_1}
    \NB{\tikz[font=\tiny, scale=0.6]{\input{\imagesfolder/sch/reidem1_1}}} \cong t^{-1}q^{N-1}\NB{\tikz[font=\tiny, scale=0.6]{\input{\imagesfolder/sch/reidem1_4}}} \qquad &\text{and} \qquad   \NB{\tikz[font=\tiny, scale=0.6, yscale=-1]{\input{\imagesfolder/sch/reidem1_3}}} \cong t q^{1-N} \NB{\tikz[font=\tiny, scale=0.6]{\input{\imagesfolder/sch/reidem1_2}}}. \\ 
    \label{uncol_reidem_2}
    \NB{\tikz[font=\tiny,scale=.7]{\input{\imagesfolder/sch/reidem2_1}}} \cong \NB{\tikz[font=\tiny,scale=.7]{\input{\imagesfolder/sch/reidem2_2}}} \cong \NB{\tikz[font=\tiny, xscale=-1,scale=.7]{\input{\imagesfolder/sch/reidem2_1}}} \qquad &\text{and} \qquad \NB{\tikz[font=\tiny,xscale=.8]{\input{\imagesfolder/sch/reidem2b_1}}} \cong \NB{\tikz[font=\tiny,xscale=.8]{\input{\imagesfolder/sch/reidem2b_2}}} \\
    \label{uncol_reidem_3}
    \NB{\tikz[scale=0.5]{\input{\imagesfolder/sch/reidem3_1}}} &\cong \NB{\tikz[ xscale=-.5,yscale=.5]{\input{\imagesfolder/sch/reidem3_1_1}}}.
    \end{align}
\end{prop}

\begin{lem}[{Forktwist lemma}]
\label{forktwist_u}
There are the following isomorphisms in the relative homotopy canopoly:
\begin{equation} 
\NB{\tikz[font=\tiny, scale=0.6, yscale=1]{\input{\imagesfolder/sch/forktwist_1_u}}} \cong q^2\NB{\tikz[font=\tiny, scale=0.6]{\input{\imagesfolder/sch/forktwist_2c_u}}} \hspace{3mm}\text{and}\hspace{3mm} \NB{\tikz[font=\tiny, scale=0.6, xscale=-1]{\input{\imagesfolder/sch/forktwist_1_u}}} \cong q^{-2}\NB{\tikz[font=\tiny, scale=0.6]{\input{\imagesfolder/sch/forktwist_2d_u}}}.
\end{equation}
\end{lem}

%% file: sch/positive_crossing.tex
\begin{scope}[font=\tiny]
  \draw[->] (0.5, -0.5) ..controls +(0,0.3) and +(0,-0.3) .. (-0.5, 0.5) node[pos=0,below] {$1$};
  \fill[white] (0,0) circle (2mm);
  \draw[->] (-0.5, -0.5) ..controls +(0,0.3) and +(0,-0.3) .. (0.5,0.5) node[pos=0,below] {$1$};
\end{scope}

%% file: sch/reidem_lemm_14.tex
\begin{scope}
    \coordinate (bl) at (-0.45, -1);
    \coordinate (br) at ( 0.4, -1);
    \coordinate (tl) at (-0.45,  1);
    \coordinate (tr) at ( 0.4,  1);
    \draw[->] (bl) -- (tl) node[pos=0,below]{$1$};
    \draw[->] (br) -- (tr) node[pos=0,below]{$1$};
\end{scope}

%% file: sch/foam_unzip_empty.tex
\begin{scope}
  \begin{scope}
    \coordinate (L1) at (0.2,0.4);
    \coordinate (L2) at (0,0);
    \coordinate (R1) at (2.2,0.4);
    \coordinate (R2) at (2,0);
    \coordinate (ML) at (0.6, 0.2);
    \coordinate (MR) at (1.6, 0.2);
    \draw[double] (ML) -- (MR) node[below, midway] {}; %[->-]
    \draw (MR) .. controls +(0, 0) and +(-0.3,0) .. (R1) ;
    \draw (MR) .. controls +(0, 0) and +(-0.3,0) .. (R2);
    \draw (L1) .. controls +( 0.3, 0) and +(0,0) .. (ML);
    \draw (L2) .. controls +( 0.3, 0) and +(0,0) .. (ML);
  \end{scope}  
 \begin{scope}[yshift = 1cm]
    \coordinate (L1B) at (0.2,0.4);
    \coordinate (L2B) at (0,0);
    \coordinate (R1B) at (2.2,0.4);
    \coordinate (R2B) at (2,0);
    \draw[->-] (L1B) .. controls +( 0, 0) and +(0,0) .. (R1B) node [left, pos
    = 0] {};
    \draw[->-] (L2B) .. controls +( 0, 0) and +(0,0) .. (R2B) node [left, pos
    = 0] {};
 \end{scope}  
  \draw (R1) -- (R1B);
  \draw (R2) -- (R2B);
  \draw (L1) -- (L1B);
  \draw (L2) -- (L2B);
  \draw (ML) .. controls +(0, 0.6) and +(0, 0.6) .. (MR);
\end{scope}

%% file: sch/foam_zip_empty.tex
\begin{scope}
  \begin{scope}
    \coordinate (L1) at (0.2,0.4);
    \coordinate (L2) at (0,0);
    \coordinate (R1) at (2.2,0.4);
    \coordinate (R2) at (2,0);
    \coordinate (ML) at (0.6, 0.2);
    \coordinate (MR) at (1.6, 0.2);
    \draw[double] (ML) -- (MR) node[above, midway] {};
    \draw (MR) .. controls +(0, 0) and +(-0.3,0) .. (R1) ;
    \draw (MR) .. controls +(0, 0) and +(-0.3,0) .. (R2);
    \draw (L1) .. controls +( 0.3, 0) and +(0,0) .. (ML);
    \draw (L2) .. controls +( 0.3, 0) and +(0,0) .. (ML);
  \end{scope}  
 \begin{scope}[yshift = -1cm]
    \coordinate (L1B) at (0.2,0.4);
    \coordinate (L2B) at (0,0);
    \coordinate (R1B) at (2.2,0.4);
    \coordinate (R2B) at (2,0);
    \draw[->-] (L1B) .. controls +( 0, 0) and +(0,0) .. (R1B) node [right, pos
    = 1] {};
    \draw[->-] (L2B) .. controls +( 0, 0) and +(0,0) .. (R2B) node [right, pos
    = 1] {};
 \end{scope}  
  \draw (R1) -- (R1B);
  \draw (R2) -- (R2B);
  \draw (L1) -- (L1B);
  \draw (L2) -- (L2B);
  \draw (ML) .. controls +(0, -0.6) and +(0, -0.6) .. (MR);
\end{scope}

%% file: sch/reidem1_1.tex
\begin{scope}
  \draw [<-] (0, 1) -- (0, 0.5) .. controls +(0,-0.5) and +(0, 0.5) .. (1, -0.5) arc (180:360:0.5) -- (2,0) ;  
    \fill[white] (0.5, 0) circle (1mm);
  \draw (0, -1) -- (0, -0.5) .. controls +(0,0.5) and +(0, -0.5) .. (1, 0.5) arc (180:0:0.5) -- (2,0);
  \node at (0,-1.3) {$1$};
\end{scope}

%% file: sch/reidem1_4.tex
\begin{scope}
  \draw [->] (0, -1) -- +(0,2) node[pos=0,below] {$1$};
       \coordinate (B) at (0, 0);
        \filldraw[draw= \crd, fill = \csrd] (B) circle (1mm)
  node[right]{\color{\crd}$\frac{1}{2}$};
\end{scope}

%% file: sch/reidem1_3.tex
\begin{scope}[xshift = 2.5cm]
  \draw  (0, 1) -- (0, 0.5) .. controls +(0,-0.5) and +(0, 0.5)
  .. (1, -0.5) arc (180:360:0.5) -- (2,0);  
    \fill[white] (0.5, 0) circle (1mm);
  \draw[<-] (0, -1) -- (0, -0.5) .. controls +(0,0.5) and +(0, -0.5)
  .. (1, 0.5) arc (180:0:0.5) -- (2,0);
  \node at (0,1.3) {$1$};
\end{scope}

%% file: sch/reidem1_2.tex
\begin{scope}
  \draw [->] (0, -1) -- +(0,2) node[pos=0,below]{$1$};
       \coordinate (B) at (0, 0);
        \filldraw[draw= \crd, fill = \csrd] (B) circle (1mm)
  node[right]{\color{\crd}$-\frac{1}{2}$};
\end{scope}

%% file: sch/reidem2_2.tex
\begin{scope}
    \coordinate (bl) at (-0.5, -1);
    \coordinate (br) at ( 0.5, -1);
    \coordinate (tl) at (-0.5,  1);
    \coordinate (tr) at ( 0.5,  1);
    \draw[->] (bl) -- (tl) node[pos=0, below] {$1$};
    \draw[->] (br) -- (tr) node[pos=0, below] {$1$};
\end{scope}

%% file: sch/reidem2b_1.tex
\begin{scope}[xshift = 2.5cm]
  \draw [<-](0, 0) .. controls +(1,.5) 
  .. (2, 0) node[pos=1, right] {$1$};
    \draw [->](0, 1) .. controls +(1,-.5) 
  .. (2, 1) node[pos=0, left] {$1$};
\end{scope}

%% file: sch/forktwist_1_u.tex
\begin{scope}
	\draw[<-] (0.5,2) -- (0.5,1) node[pos=0,above] {$2$};
	\draw[-<] (1, 0.5) arc (0:180:0.5) -- (0,0.5) .. controls +(0,-0.5) and +(0, 0.5)
		.. (1, -0.5) -- (1,-1) node[pos=1,below]{$1$};
		\fill[white] (0.5, 0) circle (1mm);
	\draw[>-] (0, -1) -- (0, -0.5) node[pos=0,below]{$1$} .. controls +(0,0.5) and +(0, -0.5)
		.. (1, 0.5) ;
\end{scope}

%% file: sch/forktwist_2c_u.tex
\begin{scope}[]
	\draw[>-<] (1,-1) -- (1, 0.5) node[pos=0,below] {$1$} arc (0:180:0.5) -- (0,0.5) -- ( 0,-1) node[pos=1,below] {$1$};
	\draw[<-] (0.5,2) -- (0.5,1) node[pos=1, draw=\crd, fill= white, circle , inner sep=1.5pt]{} node[pos=1, yshift=5pt, right]{\textcolor{\crd}{$1$}} node [pos=0,above] {$2$};
\end{scope}

%% file: sch/forktwist_2d_u.tex
\begin{scope}[]
	\draw[>-<] (1,-1) -- (1, 0.5) node[pos=0,below] {$1$} arc (0:180:0.5) -- (0,0.5) -- ( 0,-1) node[pos=1,below] {$1$};
	\draw[<-] (0.5,2) -- (0.5,1) node[pos=1, draw=\crd, fill= white, circle , inner sep=1.5pt]{} node[pos=1, yshift=5pt, right]{\textcolor{\crd}{$-1$}} node [pos=0,above] {$2$};
\end{scope}

%% file: Z_R23.tex
In this section we prove the invariance of $\colcom_N$ under colored Reidemeister moves.

\begin{prop}
    \label{R2_col}
    Let $a, b \in \N^*$, there are isomorphisms in the relative homotopy canopoly:
    \begin{equation}
        \NB{\tikz[font=\tiny,scale=.7]{\input{\imagesfolder/sch/reidem2_1_col}}} \cong \NB{\tikz[font=\tiny,scale=.7]{\input{\imagesfolder/sch/reidem2_2_col}}} \cong \NB{\tikz[font=\tiny, xscale=-1,scale=.7]{\input{\imagesfolder/sch/reidem2_1_col}}}.
    \end{equation}
\end{prop}

\begin{proof}
    We carry out the proof for the first isomorphism:
    \[ \NB{\tikz[font=\tiny,scale=.7]{\input{\imagesfolder/sch/reidem2_1_col}}} \cong \NB{\tikz[font=\tiny,scale=.7]{\input{\imagesfolder/sch/reidem2_2_col}}}.\]
    For this, we proceed by induction on $n = a + b$. The base case is $a = b = 1$, which is the uncolored case (Proposition \ref{uncol_reidem}). We now assume that $a + b = n + 1 > 2$, and we blist the leg with label greater than 2. In what follows, we assume that $a \geq 2$. Using the forkslide isomorphism together with the induction hypothesis, we obtain:
    \[\NB{\tikz[font=\tiny,scale=.7]{\input{\imagesfolder/sch/reidem2_pf_1}}} \cong \NB{\tikz[font=\tiny,scale=.7]{\input{\imagesfolder/sch/reidem2_pf_2}}} \cong \NB{\tikz[font=\tiny,scale=.7]{\input{\imagesfolder/sch/reidem2_pf_3}}} \cong \NB{\tikz[font=\tiny,scale=.7]{\input{\imagesfolder/sch/reidem2_pf_4}}}  \cong \NB{\tikz[font=\tiny,scale=.7]{\input{\imagesfolder/sch/reidem2_pf_5}}}. \]
    Finally, by using the blist hypothesis \ref{blisthyp}, we obtain the desired isomorphism. The construction of the second isomorphism is proved in the same way.
\end{proof}

The proof of the invariance of $\colcom$ under the second Reidemeister 2 move of Proposition \ref{R2_col_2}, proceeds in the same way as for the Proposition \ref{R2_col}.
\begin{prop}
    \label{R2_col_2}
    Let $a, b \in \N^*$, there are isomorphisms in the relative homotopy canopoly:
    \begin{equation}
         \NB{\tikz[font=\tiny,xscale=.8]{\input{\imagesfolder/sch/reidem2b_2_col}}} \cong \NB{\tikz[font=\tiny,xscale=.8]{\input{\imagesfolder/sch/reidem2b_1_col}}}.
    \end{equation}
\end{prop}

\begin{prop}
    \label{reidem_3}
    Let $a, b, c \in \N^*$, there are isomorphisms in the relative homotopy canopoly:
    \begin{equation}
        \NB{\tikz[font=\tiny,scale=.6]{\input{\imagesfolder/sch/reidem3_col_1}}} \cong \NB{\tikz[font=\tiny,scale=.6]{\input{\imagesfolder/sch/reidem3_col_2}}}.
    \end{equation}
\end{prop}

\begin{proof}
    As in the case of Reidemeister 2, we proceed by induction on $a+b+c = n$, using the blist hypothesis (Proposition~\ref{blisthyp}) and the following sequence of isomorphisms, in the case where $a\geq2$ for the inductive step:
    \[ \NB{\tikz[font=\tiny,scale=.6]{\input{\imagesfolder/sch/reidem3_pf_1}}} \cong  \NB{\tikz[font=\tiny,scale=.6]{\input{\imagesfolder/sch/reidem3_pf_2}}} \cong \NB{\tikz[font=\tiny,scale=.6]{\input{\imagesfolder/sch/reidem3_pf_3}}}  \cong \NB{\tikz[font=\tiny,scale=.6]{\input{\imagesfolder/sch/reidem3_pf_4}}} \cong \NB{\tikz[font=\tiny,scale=.6]{\input{\imagesfolder/sch/reidem3_pf_5}}} \cong \NB{\tikz[font=\tiny,scale=.6]{\input{\imagesfolder/sch/reidem3_pf_6}}}.\]
\end{proof}

%% file: sch/reidem2_2_col.tex
\begin{scope}
    \coordinate (bl) at (-0.5, -1);
    \coordinate (br) at ( 0.5, -1);
    \coordinate (tl) at (-0.5,  1);
    \coordinate (tr) at ( 0.5,  1);
    \draw[->] (bl) -- (tl) node[pos=0, below] {$a$};
    \draw[->] (br) -- (tr) node[pos=0, below] {$b$};
\end{scope}

%% file: sch/reidem2_pf_5.tex
\begin{scope}[font=\tiny]
  \draw[->] (0.5, -0.5) ..controls +(-.3,1) and +(-.3,-1) ..   (0.5, 1.55) node[pos=0,below] {$b$};
  \draw[->] (-0.5, -0.5)  ..controls +(.3,1) and +(.3,-1) .. (-0.5,1.55) node[pos=0,below,xshift=5pt] {$a-1$};
  \draw (-.9,-.5) ..controls+(0,.5) and +(-.3,-.3).. (-.43,1.31) node[pos=0,below] {$1$};
\end{scope}

%% file: sch/reidem2b_1_col.tex
\begin{scope}[xshift = 2.5cm]
  \draw [<-](0, 0) .. controls +(1,.5) 
  .. (2, 0) node[pos=1, right] {$b$};
    \draw [->](0, 1) .. controls +(1,-.5) 
  .. (2, 1) node[pos=0, left] {$a$};
\end{scope}

%% file: Z_R1.tex
To prove the invariance under Reidemeister 1, the following forktwist lemma is needed.

\begin{lem}
\label{forktwist}
For any $a,b \in \N^*$, there are the following isomorphisms in the relative homotopy canopoly:
\begin{equation} 
\NB{\tikz[font=\tiny, scale=0.6, xscale=1]{\input{\imagesfolder/sch/forktwist_1}}} \cong q^{-a-1}\NB{\tikz[font=\tiny, scale=0.6]{\input{\imagesfolder/sch/forktwist_2d}}} \hspace{3mm}\text{and}\hspace{3mm} \NB{\tikz[font=\tiny, scale=0.6,xscale=-1]{\input{\imagesfolder/sch/forktwist_1a}}} \cong q^{a+1}\NB{\tikz[font=\tiny, scale=0.6]{\input{\imagesfolder/sch/forktwist_2c}}},
\end{equation}

\end{lem}

\begin{proof}
    We construct the first isomorphism:
\begin{equation} \label{forktwist1}
\NB{\tikz[font=\tiny, scale=0.6, yscale=1]{\input{\imagesfolder/sch/forktwist_1b}}} \cong q^{-a-1}\NB{\tikz[font=\tiny, scale=0.6]{\input{\imagesfolder/sch/forktwist_2b}}}.
\end{equation}

The first complex is defined to be:
\[
\NB{\tikz[font=\tiny, scale=0.6]{\input{\imagesfolder/sch/forktwist_1b}}} =
\NB{
	\tikz[xscale = 3, yscale = 3]{
		\node (AT) at (0,0) {$t^{-1}\NB{\tikz[font=\tiny, scale=0.7]{\input{\imagesfolder/sch/forktwist_2}}}$};
		\node (CT) at (1.5,0) {$q^{-1}\NB{\tikz[font=\tiny, scale=0.7]{\input{\imagesfolder/sch/forktwist_3}}}$};
		\draw[->] (AT) -- (CT) node[pos=0.5, above]{$d^+_{-1}$};
	}
}.
\]
We do not specify the labels of the red dots on the leftmost element of the complex, since they are easy to recover from the definition of the complex and are not important for our purposes here.

Using red dots isomorphisms we have: 
\[\NB{\tikz[font=\tiny, scale=0.7]{\input{\imagesfolder/sch/forktwist_3}}} \cong \NB{\tikz[font=\tiny, scale=0.7]{\input{\imagesfolder/sch/forktwist_3b}}}\]

We embed these two complexes in a short exact sequence of $\Witt$-equivariant complexes that splits when one forgets the $\Witt$ action:

\[
\NB{\tikz[xscale = 3, yscale = 4]{
	\node (i0) at (0, 0) {$t^{-1}\NB{\tikz[font= \tiny, scale=.7]{\input{\imagesfolder/sch/forktwist_2}}}$};
	\node (i1) at (1.5, 0) { $q^{-1}\NB{\tikz[font= \tiny,scale=0.7]{\input{\imagesfolder/sch/forktwist_3b}}}$ };
	\node (i2) at (1.5, 1.1) {$q^{-a-1}\NB{\tikz[font= \tiny, scale=0.7]{\input{\imagesfolder/sch/forktwist_2b}}}$};
	\node (i3) at (1.5, -1.2) {$[a]\NB{\tikz[font= \tiny, scale=0.7]{\input{\imagesfolder/sch/forktwist_2b}}}$};
	\node (i4) at (0, -1.2) {$[a]\NB{\tikz[font= \tiny, scale=0.7]{\input{\imagesfolder/sch/forktwist_2b}}}$};
	\draw[->] (i0) -- (i1)  node[pos=0.5, above] {$d^+_{-1}$};
	\draw[<-] (i1) -- (i2) node[pos=0.5, left] {$\mapdcup$} ;
	\draw[<-] (i4) -- (i0)  node[pos=0.5, left] {$\begin{pmatrix}
	    \mapdcap \circ d_{-1}^+ \\ \mapdcap_{\deco_1} \circ d_{-1}^+\\ \vdots \\ \mapdcap_{\deco_{a-1}} \circ d_{-1}^+
	\end{pmatrix}$} ;
	\draw[<-] (i3) -- (i1)  node[pos=0.5, right] {$\begin{pmatrix}
	    \mapdcap \\ \mapdcap_{\deco_1} \vdots \\ \mapdcap_{\deco_{a-1}}
	\end{pmatrix}$} ;
	\draw[->] (i4) -- (i3)  node[pos=0.5, above] {$\Id$};
}},
\]
where $\mapdcap_{\deco_i}$ denotes a digon cap decorated by $p_i$ on the facet of thickness $a+1$.  
There is a $\Witt$-structure on the bottom complex, which can be recovered by viewing all the complexes in the category $\mathcal{C}^{\Witt}(\scalars_N\text{-mod})$ via the functor $\ssp_N$ and interpreting the bottom complex as the cokernel of the $\Witt$-equivariant map between the other two complexes. 
Since the bottom complex is contractible, and thanks to Corollary~\ref{lemme_RHC}, we obtain the desired isomorphism.
\end{proof}

\begin{prop}
    Let $a,b \in \N^*$, there are the following isomorphisms in the relative homotopy canopoly:
    \begin{equation} 
    \NB{\tikz[font=\tiny, scale=0.6, xscale=1]{\input{\imagesfolder/sch/forktwist_1_p}}} \cong q^{-ab-1}\NB{\tikz[font=\tiny, scale=0.6]{\input{\imagesfolder/sch/forktwist_2d_p}}} \hspace{3mm}\text{and}\hspace{3mm} \NB{\tikz[font=\tiny, scale=0.6,xscale=-1]{\input{\imagesfolder/sch/forktwist_1a_p}}} \cong q^{ab+1}\NB{\tikz[font=\tiny, scale=0.6]{\input{\imagesfolder/sch/forktwist_2c_p}}},
    \end{equation}
\end{prop}

\begin{proof}
    We focus on the proof of the first isomorphism, the proof of the second one is left as an exercise.  
    The argument proceeds by induction on $b$.  
   We present the inductive step, the base case can be obtained by setting $b=1$. We first use the forkslide isomorphisms of Proposition \ref{forkslide}, the forktwist lemma for a positive crossing of Proposition \ref{forktwist} and the inductive hypothesis:
    \[\NB{\tikz[font=\tiny, scale=0.6, xscale=1]{\input{\imagesfolder/sch/forktwist_ab_1}}} \cong
    \NB{\tikz[font=\tiny, scale=0.6, xscale=1]{\input{\imagesfolder/sch/forktwist_ab_2}}} \cong
    \NB{\tikz[font=\tiny, scale=0.6, xscale=1]{\input{\imagesfolder/sch/forktwist_ab_3}}} \cong
    q^{-a-1-(a+1)b-1}\NB{\tikz[font=\tiny, scale=0.6, xscale=1]{\input{\imagesfolder/sch/forktwist_ab_4}}}.\]
    We conclude by using the forktwist lemma for a negative crossing of Proposition \ref{forktwist}:
    \[\NB{\tikz[font=\tiny, scale=0.6, xscale=1]{\input{\imagesfolder/sch/forktwist_ab_4}}} \cong 
    \NB{\tikz[font=\tiny, scale=0.6, xscale=1]{\input{\imagesfolder/sch/forktwist_ab_5}}} \cong
    q^{b+1} \NB{\tikz[font=\tiny, scale=0.6, xscale=1]{\input{\imagesfolder/sch/forktwist_ab_6}}}.\]
\end{proof}

\begin{prop}
    Let $a \in \N^*$, there are isomorphisms in the relative homotopy canopoly:
    \begin{equation} 
        \NB{\tikz[font=\tiny, scale=0.6,yscale=-1]{\input{\imagesfolder/sch/R1_col_1}}}  \cong t^a q^{a(2+a-N)-2} \NB{\tikz[font=\tiny, scale=0.6]{\input{\imagesfolder/sch/R1_col_4}}}  \qquad \text{and} \qquad \NB{\tikz[font=\tiny, scale=0.6, yscale=1]{\input{\imagesfolder/sch/R1_col_2}}} \cong t^{-a}q^{a(N-a-2)+2}\NB{\tikz[font=\tiny, scale=0.6]{\input{\imagesfolder/sch/R1_col_3}}}.
    \end{equation}
\end{prop}

\begin{proof}
    Let us construct the first isomorphism. We present the inductive step of the proof, the base case is given by Equation~\ref{uncol_reidem_1} of Proposition~\ref{uncol_reidem}.
    The first step consists of a full blist followed by a sequence of forkslide isomorphisms, and ends with a Reidemeister 1 move in the uncolored case on the innermost leg:
    \[ \NB{\tikz[font=\tiny, scale=0.6]{\input{\imagesfolder/sch/R1_blist_1}}} \cong \NB{\tikz[font=\tiny, scale=0.6]{\input{\imagesfolder/sch/R1_blist_3}}} \cong \NB{\tikz[font=\tiny, scale=0.6]{\input{\imagesfolder/sch/R1_blist_4}}}  \cong \NB{\tikz[font=\tiny, scale=0.6]{\input{\imagesfolder/sch/R1_blist_4p}}} \cong t^1q^{1-N}\NB{\tikz[font=\tiny, scale=0.6]{\input{\imagesfolder/sch/R1_blist_5}}}.\]
    The next step is to use the red dot slide isomorphisms in the uncolored case recalled in the Proposition \ref{uncol_sliding} and the Reidemeister 3 isomorphism \ref{reidem_3} to isolate the "unknotted leg":
    \[\NB{\tikz[font=\tiny, scale=0.6]{\input{\imagesfolder/sch/R1_blist_5}}} \cong \NB{\tikz[font=\tiny, scale=0.6]{\input{\imagesfolder/sch/R1_blist_2p}}} \cong \NB{\tikz[font=\tiny, scale=0.6]{\input{\imagesfolder/sch/R1_blist_2}}}.\]
    After that, we can resolve the innermost leg and move the red dot:
    \[ \NB{\tikz[font=\tiny, scale=0.6]{\input{\imagesfolder/sch/R1_blist_2}}} \cong t^aq^{a(2+a-N)-2} \NB{\tikz[font=\tiny, scale=0.6]{\input{\imagesfolder/sch/R1_blist_6}}} \cong t^aq^{a(2+a-N)-2} \NB{\tikz[font=\tiny, scale=0.6]{\input{\imagesfolder/sch/R1_blist_7}}}.\]
    Now we need to use the forktwists isomorphim \ref{forktwist} two times:
    \[\NB{\tikz[font=\tiny, scale=0.6]{\input{\imagesfolder/sch/R1_blist_7}}} \cong q^{2(a+1)} \NB{\tikz[font=\tiny, scale=0.5]{\input{\imagesfolder/sch/R1_blist_8}}}. \]
    To complete the proof, we need to use the red dot isomorphisms \ref{lemm_rd_3} and \ref{blist_lemm}: 
    \[\NB{\tikz[font=\tiny, scale=0.5]{\input{\imagesfolder/sch/R1_blist_8}}} \cong \NB{\tikz[font=\tiny, scale=0.5]{\input{\imagesfolder/sch/R1_blist_9}}} \cong \NB{\tikz[font=\tiny, scale=0.5]{\input{\imagesfolder/sch/R1_blist_10}}} \cong \NB{\tikz[font=\tiny, scale=0.5]{\input{\imagesfolder/sch/R1_blist_12}}}.\]
    After keeping track of the shifts, we obtain the following isomorphism: 
    \[\NB{\tikz[font=\tiny, scale=0.6]{\input{\imagesfolder/sch/R1_blist_1}}} \cong t^{a+1} q^{(a+1)(3+a-N)-2}\NB{\tikz[font=\tiny, scale=0.5]{\input{\imagesfolder/sch/R1_blist_12}}}.\]
\end{proof}

%% file: sch/forktwist_1.tex
\begin{scope}
	\draw[<-] (0.5,2) -- (0.5,1) node[pos=0,above] {$a+1$};
	\draw[-<] (1, 0.5) arc (0:180:0.5) -- (0,0.5) .. controls +(0,-0.5) and +(0, 0.5)
		.. (1, -0.5) -- (1,-1) node[pos=1,below]{$1$};
		\fill[white] (0.5, 0) circle (1mm);
	\draw[>-] (0, -1) -- (0, -0.5) node[pos=0,below]{$a$} .. controls +(0,0.5) and +(0, -0.5)
		.. (1, 0.5) ;
\end{scope}

%% file: sch/forktwist_2d.tex
\begin{scope}[]
	\draw[>-<] (1,-1) -- (1, 0.5) node[pos=0,below] {$1$} arc (0:180:0.5) -- (0,0.5) -- ( 0,-1) node[pos=1,below] {$a$};
	\draw[<-] (0.5,2) -- (0.5,1) node[pos=1, draw=\crd, fill= white, circle , inner sep=1.5pt]{} node[pos=1, yshift=5pt, right]{\textcolor{\crd}{$-1$}} node [pos=0,above] {$a+1$};
\end{scope}

%% file: sch/forktwist_1a.tex
\begin{scope}
	\draw[<-] (0.5,2) -- (0.5,1) node[pos=0,above] {$a+1$};
	\draw[-<] (1, 0.5) arc (0:180:0.5) -- (0,0.5) .. controls +(0,-0.5) and +(0, 0.5)
		.. (1, -0.5) -- (1,-1) node[pos=1,below]{$a$};
		\fill[white] (0.5, 0) circle (1mm);
	\draw[>-] (0, -1) -- (0, -0.5) node[pos=0,below]{$1$} .. controls +(0,0.5) and +(0, -0.5)
		.. (1, 0.5) ;
\end{scope}

%% file: sch/forktwist_2c.tex
\begin{scope}[]
	\draw[>-<] (1,-1) -- (1, 0.5) node[pos=0,below] {$1$} arc (0:180:0.5) -- (0,0.5) -- ( 0,-1) node[pos=1,below] {$a$};
	\draw[<-] (0.5,2) -- (0.5,1) node[pos=1, draw=\crd, fill= white, circle , inner sep=1.5pt]{} node[pos=1, yshift=5pt, right]{\textcolor{\crd}{$1$}} node [pos=0,above] {$a+1$};
\end{scope}

%% file: sch/forktwist_2b.tex
\begin{scope}[]
	\draw[>-<] (1,-1) -- (1, 0.5) node[pos=0,below] {$1$} arc (0:180:0.5) -- (0,0.5) -- ( 0,-1) node[pos=1,below] {$a$};
	\draw[<-] (0.5,2) -- (0.5,1) node [pos=0,above] {$a+1$};
\end{scope}

%% file: sch/forktwist_3.tex
\begin{scope}[font=\tiny]
	\coordinate (bl) at (0,  -1.5);
	\coordinate (br) at ( 1,  -1.5);
	\coordinate (bm) at (   0.5,-0.8);
	\coordinate (tl) at (0,   0.5);
	\coordinate (tr) at ( 1,   0.5);
	\coordinate (tm) at (   0.5, -0.2);
	\draw[>-]  (bl) .. controls +( 0, 0.5) and +(0,0) .. (bm) node[pos=0,below] {$a$};
	\draw[>-]  (br) .. controls +( 0, 0.5) and +(0,0) .. (bm)node[pos=0,below] {$1$};
	\draw[-<-]  (0.5,1) .. controls +( -0.5, 0) and +(0,0) ..  (tl) node[pos=1,left] {$1$} .. controls +( 0, -0.5) and +(0,0) .. (tm);
	\draw[-<-]  (0.5,1) .. controls +(0.5,0) and +(0,0) .. (tr) node[pos=0.3, draw=\crd, fill= white, circle , inner sep=1.5pt]{} node[pos=0.3, right]{\color{\crd}$\frac{\Bar{s}}{2}$} node[pos=1,right] {$a$} .. controls +( 0, -0.5) and +(0,0) .. (tm)
		node[pos= 0.35 , draw=\crd, fill=white,regular polygon, regular polygon sides=3,inner sep=0.9pt] {} node[pos=0.3, right]{\color{\crd}$-\lambda(a-1)-\frac{\Bar{s}}{2}$};
	\draw[] (bm) -- (tm)
		node[pos=0.25, draw=\crd, fill= white, circle , inner sep=1.5pt]{} node[pos=0.25, left]{\color{\crd}$-\frac{\Bar{s}}{2}$} 
        node[pos= 0.7 , draw=\crd, fill=white,regular polygon, regular polygon sides=3,inner sep=0.9pt] {} node[pos=0.7, right]{\color{\crd}$\lambda a+\frac{\Bar{s}}{2}$};
	\draw[<-] (0.5,2) -- (0.5,1) 
        node[pos=0,above] {$a+1$}
		node[pos=1, draw=\crd, fill= white, circle , inner sep=1.5pt]{} node[pos=1,yshift=5pt, right]{\color{\crd}$1$};
\end{scope}

%% file: sch/forktwist_1_p.tex
\begin{scope}
	\draw[<-] (0.5,2) -- (0.5,1) node[pos=0,above] {$a+b$};
	\draw[-<] (1, 0.5) arc (0:180:0.5) -- (0,0.5) .. controls +(0,-0.5) and +(0, 0.5)
		.. (1, -0.5) -- (1,-1) node[pos=1,below]{$b$};
		\fill[white] (0.5, 0) circle (1mm);
	\draw[>-] (0, -1) -- (0, -0.5) node[pos=0,below]{$a$} .. controls +(0,0.5) and +(0, -0.5)
		.. (1, 0.5) ;
\end{scope}

%% file: sch/forktwist_2d_p.tex
\begin{scope}[]
	\draw[>-<] (1,-1) -- (1, 0.5) node[pos=0,below] {$b$} arc (0:180:0.5) -- (0,0.5) -- ( 0,-1) node[pos=1,below] {$a$};
	\draw[<-] (0.5,2) -- (0.5,1) node[pos=1, draw=\crd, fill= white, circle , inner sep=1.5pt]{} node[pos=1, yshift=5pt, right]{\textcolor{\crd}{$-1$}} node [pos=0,above] {$a+b$};
\end{scope}

%% file: sch/forktwist_1a_p.tex
\begin{scope}
	\draw[<-] (0.5,2) -- (0.5,1) node[pos=0,above] {$a+b$};
	\draw[-<] (1, 0.5) arc (0:180:0.5) -- (0,0.5) .. controls +(0,-0.5) and +(0, 0.5)
		.. (1, -0.5) -- (1,-1) node[pos=1,below]{$a$};
		\fill[white] (0.5, 0) circle (1mm);
	\draw[>-] (0, -1) -- (0, -0.5) node[pos=0,below]{$b$} .. controls +(0,0.5) and +(0, -0.5)
		.. (1, 0.5) ;
\end{scope}

%% file: sch/forktwist_2c_p.tex
\begin{scope}[]
	\draw[>-<] (1,-1) -- (1, 0.5) node[pos=0,below] {$b$} arc (0:180:0.5) -- (0,0.5) -- ( 0,-1) node[pos=1,below] {$a$};
	\draw[<-] (0.5,2) -- (0.5,1) node[pos=1, draw=\crd, fill= white, circle , inner sep=1.5pt]{} node[pos=1, yshift=5pt, right]{\textcolor{\crd}{$1$}} node [pos=0,above] {$a+b$};
\end{scope}

%% file: sch/R1_col_1.tex
\begin{scope}[xshift = 2.5cm]
  \draw  (0, 1) -- (0, 0.5) .. controls +(0,-0.5) and +(0, 0.5)
  .. (1, -0.5) arc (180:360:0.5) -- (2,0);  
    \fill[white] (0.5, 0) circle (1mm);
  \draw[<-] (0, -1) -- (0, -0.5) .. controls +(0,0.5) and +(0, -0.5)
  .. (1, 0.5) arc (180:0:0.5) -- (2,0);
  \node at (0,1.3) {$a$};
\end{scope}

%% file: sch/R1_col_4.tex
\begin{scope}
  \draw [->] (0, -1) -- +(0,2) node[pos=0,below] {$a$} node[pos=0.8, draw=\crd, fill= white, circle , inner sep=1.5pt]{} node[pos=0.8, right]{\color{\crd}$\frac{1}{2}$}
  node[pos=0.5, draw= \crd, fill = \csrd, circle , inner sep=1.5pt]{} node[pos=0.5, right]{\color{\crd}$-\frac{1}{2}$}
  node[pos= 0.2 , draw=\crd, fill=white,regular polygon, regular polygon sides=3,inner sep=0.9pt] {} node[pos=0.2, right]{\color{\crd}$-\frac{1}{2}$};
\end{scope}

%% file: sch/R1_col_2.tex
\begin{scope}
  \draw [<-] (0, 1) -- (0, 0.5) .. controls +(0,-0.5) and +(0, 0.5) .. (1, -0.5) arc (180:360:0.5) -- (2,0) ;  
    \fill[white] (0.5, 0) circle (1mm);
  \draw (0, -1) -- (0, -0.5) .. controls +(0,0.5) and +(0, -0.5) .. (1, 0.5) arc (180:0:0.5) -- (2,0);
  \node at (0,-1.3) {$a$};
\end{scope}

%% file: sch/R1_col_3.tex
\begin{scope}
  \draw [->] (0, -1) -- +(0,2) node[pos=0,below] {$a$} node[pos=0.8, draw=\crd, fill= white, circle , inner sep=1.5pt]{} node[pos=0.8, right]{\color{\crd}$-\frac{1}{2}$}
  node[pos=0.5, draw= \crd, fill = \csrd, circle , inner sep=1.5pt]{} node[pos=0.5, right]{\color{\crd}$\frac{1}{2}$}
  node[pos= 0.2 , draw=\crd, fill=white,regular polygon, regular polygon sides=3,inner sep=0.9pt] {} node[pos=0.2, right]{\color{\crd}$\frac{1}{2}$};
\end{scope}

%% file: Z_sliding_g.tex
In \cite{GR25}, it is shown that all types of dots can slide along any crossing involving only $1$-colored strands.

\begin{prop}[{\cite[Cor~5.8, Lem~5.10]{GR25}}]
\label{uncol_sliding}
    Let $\omega$ and $\epsilon$ be elements of $\scalars$, there are isomorphisms in the relative homotopy canopoly:
    \begin{align}
        \label{line1}
        \NB{\tikz[font=\tiny,scale=1.2]{\input{\imagesfolder/sch/reidem_lemm_1}}} \cong \NB{\tikz[font=\tiny,scale=1.2]{\input{\imagesfolder/sch/reidem_lemm_2}}} \qquad \text{and} \qquad \NB{\tikz[font=\tiny,scale=1.2]{\input{\imagesfolder/sch/reidem_lemm_3}}} \cong \NB{\tikz[font=\tiny,scale=1.2]{\input{\imagesfolder/sch/reidem_lemm_4}}}, 
    \end{align}    
    \begin{align} 
        \NB{\tikz[font=\tiny,scale=1.2]{\input{\imagesfolder/sch/reidem_lemm_5}}} \cong \NB{\tikz[font=\tiny,scale=1.2]{\input{\imagesfolder/sch/reidem_lemm_6}}} \qquad \text{and} \qquad \NB{\tikz[font=\tiny,scale=1.2]{\input{\imagesfolder/sch/reidem_lemm_7}}} \cong \NB{\tikz[font=\tiny,scale=1.2]{\input{\imagesfolder/sch/reidem_lemm_8}}}.
    \end{align}
    % \begin{align}
    %     \label{line3}
    %     \ttkz[font=\tiny,scale=1.2]{sch/reidem_lemm_17} \cong \ttkz[font=\tiny,scale=1.2]{sch/reidem_lemm_18} \qquad \text{and} \qquad \ttkz[font=\tiny,scale=1.2]{sch/reidem_lemm_31} \cong \ttkz[font=\tiny,scale=1.2]{sch/reidem_lemm_32}.
    % \end{align}
\end{prop}

\begin{rmk}
    In line~(\ref{line1}), we can replace the circular hollow dot with the triangular one, since for a \(1\)-colored strand the circular hollow dot and the triangular dot coincide.
\end{rmk}

Let us now turn to the colored case. In the following, every proposition and lemma is stated for negative crossings, but the statements also hold for positive ones.

\begin{lem}
    \label{col_slid_2}
    Let $a \in \N^*$ and $\omega \in \scalars$, the following objects are isomorphic in the relative homotopy canopoly:
    \begin{align}
        \NB{\tikz[font=\tiny,xscale=-1,scale=1.2]{\input{\imagesfolder/sch/slid_lemm_19}}} \cong \NB{\tikz[font=\tiny,xscale=-1,scale=1.2]{\input{\imagesfolder/sch/slid_lemm_20}}} \qquad \text{and} \qquad \NB{\tikz[font=\tiny,xscale=-1,scale=1.2]{\input{\imagesfolder/sch/slid_lemm_21}}} \cong \NB{\tikz[font=\tiny,xscale=-1,scale=1.2]{\input{\imagesfolder/sch/slid_lemm_22}}}
        % \ttkz[font=\tiny,xscale=-1,scale=1.2]{sch/slid_lemm_13} \cong \ttkz[font=\tiny,xscale=-1,scale=1.2]{sch/slid_lemm_14} \qquad \text{and} \qquad \ttkz[font=\tiny,xscale=-1,scale=1.2]{sch/slid_lemm_15} \cong \ttkz[font=\tiny,xscale=-1,scale=1.2]{sch/slid_lemm_16}.
    \end{align}
\end{lem}

\begin{proof}
    The idea is to show that the dot can be slid along when the \(a\)-colored leg is full-blisted by applying the forkslide isomorphisms~(\ref{forkslide}) and the sliding lemma~(\ref{uncol_sliding}) in the uncolored case:
    \[
        \NB{\tikz[font=\tiny,xscale=-1,scale=1.2]{\input{\imagesfolder/sch/slid_pf1}}} \cong \NB{\tikz[font=\tiny,xscale=-1,scale=1.2]{\input{\imagesfolder/sch/slid_pf2}}} \cong \NB{\tikz[font=\tiny,xscale=-1,scale=1.2]{\input{\imagesfolder/sch/slid_pf3}}} \cong \NB{\tikz[font=\tiny,xscale=-1,scale=1.2]{\input{\imagesfolder/sch/slid_pf4}}}.
    \]
    Using this isomorphism, together with the blist hypothesis, we obtain the desired isomorphism.
\end{proof}

The proof of the following proposition is the purpose of Sections \ref{ext_can}, \ref{ext_f_w} and \ref{slid_or}. We assume that this proposition holds for the remainder of this section.

\begin{prop}
    \label{slid_rd}
    Let $a,b \in \N^*$ and $\omega, \varepsilon \in \scalars$, the following objects are isomorphic in the relative homotopy canopoly:
    \begin{equation}
         \NB{\tikz[font=\tiny,xscale=-1,scale=1.2]{\input{\imagesfolder/sch/slid_lemm_21_b}}} \cong \NB{\tikz[font=\tiny,xscale=-1,scale=1.2]{\input{\imagesfolder/sch/slid_lemm_22_b}}}. 
    \end{equation}
\end{prop}

\begin{lem}
    \label{blist_lemm}
    Let $a \in \N^*$ and $\omega \in \scalars$, the following objects are isomorphic in the relative homotopy canopoly: 
    \begin{align}
        \NB{\tikz[scale=1.2, font=\tiny]{\input{\imagesfolder/sch/slid_lemm_1}}} \cong \NB{\tikz[scale=1.2, font=\tiny]{\input{\imagesfolder/sch/slid_lemm_2}}} \qquad \text{and} \qquad \NB{\tikz[scale=1.2,font=\tiny]{\input{\imagesfolder/sch/slid_lemm_3}}} \cong \NB{\tikz[font=\tiny,scale=1.2]{\input{\imagesfolder/sch/slid_lemm_4}}}.
    \end{align}
\end{lem}

\begin{proof}
    We present the proof of the existence of the first isomorphism, the second one is similar. We assume that $\omega = 1$, without loss of generality. The proof of this lemma reduces to studying the decorations involved in the twists induced by the red dots. Recall that $p_k^{(a)}$ denotes the $k$th power sum symmetric polynomial in $Sym_a$. For all $1 \leq i \leq a$, we denote by $\epsilon_i$ the edges of thickness $1$. In particular, $p_k^{(\epsilon_i)} \in Sym_1$ is the $k$th power of the variable associated with $\epsilon_i$. For all $n \in \N$, 
    \begin{equation*}
        -\sum_{k=0}^n p_k^{(a)}p_{n-k}^{(a)} - \sum_{k=0}^n p_k^{(a)}\hat{p}_{n-k}^{(a)} = - \sum_{k=0}^n p^{(a)}_kP_{n-k} = - \sum_{i=1}^a \left(  \sum_{k=0}^np^{(\epsilon_i)}_k(P_{n-k}-p_{n-k}^{(\epsilon_i)})  + (n+1)p_n^{(\epsilon_i)}\right ).
    \end{equation*}
    We conclude by identifying each polynomial with its associated dot.
\end{proof}

The following lemma is a consequence of Lemma \ref{blist_lemm} and the blist hypothesis.

\begin{lem}
    \label{col_slid_1}
    Let $a,b \in \N^*$ and $\omega, \epsilon \in \scalars$, the following objects are isomorphic in the relative homotopy canopoly:
    \begin{align}
        \NB{\tikz[scale=1.2,font=\tiny]{\input{\imagesfolder/sch/slid_lemm_7}}} \cong \NB{\tikz[font=\tiny,scale=1.2]{\input{\imagesfolder/sch/slid_lemm_8}}} \qquad \text{and} \qquad \NB{\tikz[scale=1.2,font=\tiny]{\input{\imagesfolder/sch/slid_lemm_11}}} \cong \NB{\tikz[font=\tiny,scale=1.2]{\input{\imagesfolder/sch/slid_lemm_12}}}.
    \end{align}
    % \begin{align}
    %     \ttkz[scale=1.2, font=\tiny]{sch/slid_lemm_9} \cong \ttkz[scale=1.2, font=\tiny]{sch/slid_lemm_10} \qquad \text{and} \qquad \ttkz[scale=1.2, font=\tiny]{sch/slid_lemm_5} \cong \ttkz[scale=1.2, font=\tiny]{sch/slid_lemm_6}
    % \end{align}
\end{lem} 

The proof of the following proposition is a direct consequence of Proposition \ref{slid_rd} and the previous Lemma \ref{col_slid_1}.
\begin{prop}
    \label{col_slid_3}
    Let $a,b \in \N^*$ and $\omega, \epsilon \in \scalars$, the following objects are isomorphic in the relative homotopy canopoly:
    \begin{equation}
        \NB{\tikz[scale=1.2, font=\tiny]{\input{\imagesfolder/sch/slid_lemm_5}}} \cong \NB{\tikz[scale=1.2, font=\tiny]{\input{\imagesfolder/sch/slid_lemm_6}}}. 
    \end{equation}
\end{prop}

%% file: Z_ext_can.tex
In the proof of the sliding lemma \cite[Lemma~5.3]{GR25} in the uncolored case, 
an additional variable $z$, besides the variables of $\scalars_N$, is required to let a dot slide through a crossing. The purpose of this section is to construct an analogue of this additional variable by extending the canopoly $\catf$.

\begin{dfn}
    An \textit{extended web} in $\Disk$ is a web in $\Disk$ with dots in the interior of $\Disk$ that are disjoint from the web. This data is considered up to ambient isotopy that preserve the boundary of $\Disk$. The boundary of an extended web is the boundary of its web in $\Disk$.
\end{dfn}

\begin{figure}[h!]
    \centering
    \NB{\tikz[scale=.8]{\input{\imagesfolder/sch/ex_ext_web}}}
    \caption{Example of an extended web.}
\end{figure}

\begin{dfn}
    A \textit{basic trajectory} in $\Disk \times \interval$ is one of the oriented $2$-dimensional CW-complexes shown in Figure~\ref{trajectory}, considered up to ambient isotopy that preserves the boundary $\partial \Disk \times \interval$.
    An \textit{extended foam} in $\Disk \times [0,1]$, is a finite collection of basic foams, traces of isotopy and basic trajectories that are glued together along their boundary points with respect to their labels. This data is considered up to ambient isotopy that preserves the boundary of $\Disk \times \interval$.
\end{dfn}

\begin{rmk}
    As for the foams, an extended foam has a degree which is additive under gluing. 
\end{rmk}

\begin{figure}[h!]
    \centering
    \begin{tikzpicture}[xscale=2.4, yscale =-3.2]
     \node (a) at (0,1) {\NB{\tikz[scale=1]{\input{\imagesfolder/sch/meteorite_3}}}};
     \node[yshift = -1.64cm] at (a) {Polynomial};
     \node[yshift = -2.1cm] at (a) {$\deg(P)$};
     \node (b) at (2,1) {\NB{\tikz[scale=1]{\input{\imagesfolder/sch/meteorite_2}}}};
     \node[yshift = -1.64cm] at (b) {Crash};
     \node[yshift = -2.1cm] at (b) {$0$};
     \node (c) at (4,1) {\NB{\tikz[scale=1]{\input{\imagesfolder/sch/meteorite_4}}}};
     \node[yshift = -1.64cm] at (c) {Cross};
     \node[yshift = -2.1cm] at (c) {$0$};
   \end{tikzpicture}
    \caption{The three basic trajectories with their degrees, where $P$ is a polynomial in $\scalars_N[z]$, where $z$ is a new variable distinct from all the variables of $\scalars_N$, and $a \in \N^*$ is the thickness of the considered facet.}
    \label{trajectory}
\end{figure}

To understand the relation between these extended webs and foams, as well as the additional variables needed in the proof of the sliding lemma, we need to describe how the functor \(\ssp_N\) extends to these new objects and morphisms. This functor uses the Robert--Wagner evaluation and the universal construction to associate a \(\scalars_N\)-module to each closed web and a \(\scalars_N\)-linear map to each foam as in \cite{RW20a}.

Let $\widetilde{\web}$ be a closed extended web with $n$ dots, then 
\begin{equation}
    \ssp_N(\widetilde{\web}) := \ssp_N(\web) \otimes_{\scalars_N} \scalars_N[z_1,\ldots,z_n],
\end{equation}
where $\web$ is the closed web without the dots. For the morphisms, we only describe how \(\ssp_N\) behaves locally on the extra algebra $\scalars_N[z]$.
\begin{align}
    \ssp_N\left (\NB{\tikz[scale=.7]{\input{\imagesfolder/sch/meteorite_3}}} \right ) &:= \scalars_N[z] \xrightarrow{\times P(z)} \scalars_N[z], \\
    \label{eq:crash}
    \ssp_N\left (\NB{\tikz[scale=.7]{\input{\imagesfolder/sch/meteorite_2}}} \right ) &:= \ssp_N(\web) \otimes \scalars_N[z] \xrightarrow{``z \mapsto x''} \ssp_N(\web), \\
    \ssp_N\left (\NB{\tikz[scale=.7]{\input{\imagesfolder/sch/meteorite_4}}} \right ) &:= \ssp_N(\web) \otimes \scalars_N[z] \xrightarrow{\Id} \ssp_N(\web) \otimes \scalars_N[z],
\end{align}
where $x$ denotes the variable used for the decorations on the $1$-labeled facet where the trajectory crashed.The map~\eqref{eq:crash} sends all decorations along the trajectory to the $1$-labeled facet. This property is illustrated by the second equivalence in~\eqref{eq:ex_rel}.

This extension of the definition of $\ssp_N$ leads to an equivalence relation on extended foams. Let $\foam$ and $\foam'$ be two extended foams, we say that these two foams are equivalent, denoted $\foam \approx \foam'$, if they bound the same extended webs and their images under $\ssp_N$ coincide. For example this extended foams are equivalent: 

\begin{equation}
    \label{eq:ex_rel}
    \NB{\tikz[scale=1]{\input{\imagesfolder/sch/meteorite_5}}} \quad \approx \quad \NB{\tikz[scale=1]{\input{\imagesfolder/sch/meteorite_6}}} \hspace{2cm} \NB{\tikz[scale=1]{\input{\imagesfolder/sch/meteorite_7}}} \quad \approx \quad \NB{\tikz[scale=1]{\input{\imagesfolder/sch/meteorite_8}}}.  
\end{equation}

\begin{dfn}
    For a given $\bnd \in \Z_{\neq 0}^{n_\bnd}$, the category $\widehat{\extcatf}(\bnd)$ has objects that are extended webs in $\Disk$ with boundary describe by $\bnd$ and morphisms that are foams defined as follows: 
    \[
    \text{Hom}_{\extcatf}(\web_0, \web_1) =  \{\text{extended foams}~\foam \text{ in } \Disk \times \interval \text{ with } \foam_0 = -\web_0 \text{ and } \foam_1 = \web_1 \} / \approx ,
    \]
    where $-\web_0$ is the same extended web as $\web_0$ but with all edges oriented in the opposite direction. 
\end{dfn}

\begin{rmk}
    As $\widehat{\catf}(\bnd)$, we extend the category $\widehat{\extcatf}(\bnd)$ by adding formal grading shifts, direct sums, and $\scalars_N$-linear combinations of morphisms to form the graded, additive, $\scalars_N$-linear category $\extcatf(\bnd)$.
\end{rmk}

\begin{rmk}
    For a given extended web $\widetilde{\web}$ with $n$ dots, the algebra of decorations is \[D_{\tilde{\web}} := D_{\web} \otimes \scalars_N[z_1,\ldots,z_n],\] where $\web$ denotes the web obtained from $\widetilde{\web}$ by removing all dots.
    % In other words by using the same notations as Section \ref{sec:MCfoam}, if $e$ is a dot of $\widetilde{\web}$, then $D_e = D_{e}' := \scalars_N[z_i]$.
\end{rmk}

The proof of Proposition~\ref{prop:ext_canop} is straightforward, following the ideas in Example~\ref{ex:foam_can}.

\begin{prop}
    \label{prop:ext_canop}    
    The collection $(\extcatf(\bnd))_\bnd$ is a colored canopoly denoted $\extcatf$. 
\end{prop}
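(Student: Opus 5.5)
We prove Proposition~\ref{prop:ext_canop} by transporting the canopoly structure of $\catf$ from Example~\ref{ex:foam_can} to extended webs and foams, and then checking that the relation $\approx$ is compatible with all the planar operations.

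\textbf{Objects.} Extended webs form a colored planar algebra. A colored planar arc diagram $\cpd$ glues webs exactly as in Example~\ref{ex_pa_2}. The dots lie in the interiors of the input disks, so after gluing they are still disjoint from the web and from $\partial\Disk$. The result is again an extended web, and associativity and unitality are inherited from $\cweb$.

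\textbf{Morphisms.} For extended foams, $\cpd$ acts by placing the input foams in $\cpd\times\interval$ and filling the complement with vertical identity sheets (traces of isotopy) on the arcs of $\cpd$. Basic trajectories are local and stay inside their input cylinders, so the glued object is again a finite union of basic foams, traces of isotopy and basic trajectories. Both vertical stacking and horizontal gluing are well defined up to boundary-preserving ambient isotopy. The interchange law holds at the level of representatives, exactly as for $\catf$: stacking then gluing, and gluing then stacking, produce isotopic objects in $\Disk\times\interval$.

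\textbf{Compatibility with $\approx$ (main step).} We must show that if $\foam\approx\foam'$, then $\cpd(\foam,G_2,\dots)\approx\cpd(\foam',G_2,\dots)$, and similarly for vertical composition.

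\emph{Vertical composition.} This case is immediate because $\ssp_N$ is functorial for stacking.

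\emph{Horizontal gluing.} The intended approach is to use the locality of the universal construction. Close up the glued foam by an arbitrary extended foam $K$, so that $\cpd(\foam,\dots)\cup K = \foam\cup K'$ for a complementary closed extended foam $K'$. By definition, $\ssp_N$ on extended objects is
\[
\ssp_N(\web)\otimes_{\scalars_N}\scalars_N[z_1,\dots,z_n],
\]
with the trajectories acting locally: multiplication by $P(z)$, the substitution $z\mapsto x$, and the identity. Hence evaluations factor through the universal pairing, and equality of the induced maps for $\foam,\foam'$ forces equality after pairing with any $K'$. The equivalence $\approx$ is thus a congruence for the planar operations.

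The hardest part, and the one needing a careful argument, is this step for a crash trajectory whose target facet lies outside the glued region. To handle it, I would reduce via \eqref{eq:ex_rel} and isotopy: every trajectory can be pushed until its crash happens inside the same cylinder as its start, or it can be kept as a cross. Once trajectories are localized in this way, the standard argument for foams applies, namely that the Robert--Wagner universal construction is monoidal with respect to planar gluing.

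Finally, the additive, graded, $\scalars_N$-linear closure is compatible with these operations by bilinearity, exactly as for $\catf$. This completes the verification that $(\extcatf(\bnd))_\bnd$ is a colored canopoly.
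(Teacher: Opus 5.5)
Your overall route matches the paper's, which only says the structure is inherited as in Example~\ref{ex:foam_can}. Your treatment of objects, morphisms and the interchange law is fine. The one substantive point is that $\approx$ is a congruence, and your first observation gives the right reason: a closing context of $\cpd(\foam,G_2,\dots)$ is again a closing context of $\foam$. That argument, however, has to be run with $\ssp_N$-maps, not with a universal pairing, and for dotted webs no such pairing exists.

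\emph{Why no pairing exists.} Every basic trajectory (polynomial, crash, cross) starts at a dot of the source web. Hence there is no extended foam $\varnothing\to\widetilde{\web}$ when $\widetilde{\web}$ carries dots. Any foam containing a trajectory has a dotted source, so it can never be completed to a closed extended foam, and closed extended foams contain no trajectories at all. The module $\ssp_N(\widetilde{\web})=\ssp_N(\web)\otimes_{\scalars_N}\scalars_N[z_1,\dots,z_n]$ is imposed by definition, not recovered from evaluations. So ``evaluations factor through the universal pairing'' says nothing about the new morphisms.

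\emph{The correct formulation.} For $\bnd\neq\varnothing$, read $\foam\approx\foam'$ as equality of the $\ssp_N$-maps after every planar completion to a foam between closed extended webs; the paper's definition is literally stated via $\ssp_N$, which only exists on closed extended webs. Congruence for horizontal gluing is then tautological, and vertical stacking follows from functoriality of $\ssp_N$. For $\bnd=\varnothing$ with the literal definition, you need that $\ssp_N$ sends disjoint unions to tensor products over $\scalars_N$ and that the trajectory maps are local. This is the monoidality you invoke at the end.

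\emph{The ``hardest part'' does not arise.} A morphism of $\extcatf(\bnd_i)$, trajectories included, lies in its own input cylinder. The rest of $\cpd\times\interval$ consists only of identity sheets over the arcs, so no input trajectory can crash outside its cylinder. In a closing context a trajectory may pass through several pieces, but functoriality of $\ssp_N$ handles this without localizing anything. Your proposed fix is also unjustified: the relations \eqref{eq:ex_rel} only simplify a trajectory near a facet it meets (for instance, moving a polynomial along a crash onto the $1$-facet). They do not relocate crash points into another cylinder. You should drop that paragraph.
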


%% file: sch/meteorite_3.tex
\begin{scope}[font=\tiny]
\draw[dotted] 
  (0,2) arc[start angle=180,end angle=360,x radius=.8,y radius=0.18];
\draw[dotted] 
  (0,0) arc[start angle=180,end angle=360,x radius=.8,y radius=0.18];
\draw[dotted] 
  (0,2) arc[start angle=180,end angle=0,x radius=.8,y radius=0.18];
\draw[dotted] 
  (0,0) arc[start angle=180,end angle=0,x radius=.8,y radius=0.18];
\draw[dotted] (0,0)--(0,2);
\draw[dotted] (1.6,0)--(1.6,2);
\draw[->-,thick,blue] (0.8,0) -- (.8,2) node[pos=.7]{\color{black}$\bullet$}node[pos=.7,left]{\color{black}$P$};
\fill (.8,0) circle (0.05);
\fill (.8,2) circle (0.05);
\end{scope}

%% file: sch/meteorite_2.tex
\begin{scope}[xscale=-1,font=\tiny]
\draw[dotted] 
  (0,2) arc[start angle=180,end angle=360,x radius=.8,y radius=0.18];
\draw[dotted] 
  (0,0) arc[start angle=180,end angle=360,x radius=.8,y radius=0.18];
\draw[dotted] 
  (0,2) arc[start angle=180,end angle=0,x radius=.8,y radius=0.18];
\draw[dotted] 
  (0,0) arc[start angle=180,end angle=0,x radius=.8,y radius=0.18];
\draw[dotted] (0,0)--(0,2);
\draw[dotted] (1.6,0)--(1.6,2);
\draw[thick] (0,2.01) -- (0,0) -- (.8,0.18) -- (.8,1.8) node[pos=.85,right]{$1$} -- (0,2);
\draw[-<-,thick,blue] (0.45,1).. controls +(0.55,0) and +(0,1.0).. (1.15,0);
\fill (1.15,0) circle (0.05);
\fill (0.45,1) circle (0.05);
\end{scope}

%% file: sch/meteorite_4.tex
\begin{scope}[xscale=-1,font=\tiny]
\draw[dotted] 
  (0,2) arc[start angle=180,end angle=360,x radius=.8,y radius=0.18];
\draw[dotted] 
  (0,0) arc[start angle=180,end angle=360,x radius=.8,y radius=0.18];
\draw[dotted] 
  (0,2) arc[start angle=180,end angle=0,x radius=.8,y radius=0.18];
\draw[dotted] 
  (0,0) arc[start angle=180,end angle=0,x radius=.8,y radius=0.18];
\draw[dotted] (0,0)--(0,2);
\draw[dotted] (1.6,0)--(1.6,2);
\draw[->-,thick,blue] (0.8,1).. controls +(-0.55,.1) and +(0,-.3).. (0.35,2);
\fill[white,opacity=.6] (.45,0.17) rectangle (1.15,1.83);
\draw[thick] (.45,2.16) -- (.45,-.15) -- (1.15,0.17) -- (1.15,1.83) node[pos=.85,right]{$a$} -- (.45,2.15);
\draw[-<-,thick,blue] (0.8,1).. controls +(0.55,-.1) and +(0,.3).. (1.25,0);
\fill (1.25,0) circle (0.05);
\fill (0.35,2) circle (0.05);
\end{scope}

%% file: sch/meteorite_6.tex
\begin{scope}[xscale=-1,font=\tiny]
\draw[dotted] 
  (0,2) arc[start angle=180,end angle=360,x radius=.8,y radius=0.18];
\draw[dotted] 
  (0,0) arc[start angle=180,end angle=360,x radius=.8,y radius=0.18];
\draw[dotted] 
  (0,2) arc[start angle=180,end angle=0,x radius=.8,y radius=0.18];
\draw[dotted] 
  (0,0) arc[start angle=180,end angle=0,x radius=.8,y radius=0.18];
\draw[dotted] (0,0)--(0,2);
\draw[dotted] (1.6,0)--(1.6,2);
\fill[white,opacity=.6] (.45,0.17) rectangle (1.15,1.83);
\draw[thick] (.45,2.16) -- (.45,-.15) -- (1.15,0.17) -- (1.15,1.83) node[pos=.85,right]{$a$} -- (.45,2.15);
\draw[-<-,thick,blue] (1.25,2) -- (1.25,0);
\fill (1.25,0) circle (0.05);
\fill (1.25,2) circle (0.05);
\end{scope}

%% file: sch/meteorite_7.tex
\begin{scope}[xscale=-1,font=\tiny]
\draw[dotted] 
  (0,2) arc[start angle=180,end angle=360,x radius=.8,y radius=0.18];
\draw[dotted] 
  (0,0) arc[start angle=180,end angle=360,x radius=.8,y radius=0.18];
\draw[dotted] 
  (0,2) arc[start angle=180,end angle=0,x radius=.8,y radius=0.18];
\draw[dotted] 
  (0,0) arc[start angle=180,end angle=0,x radius=.8,y radius=0.18];
\draw[dotted] (0,0)--(0,2);
\draw[dotted] (1.6,0)--(1.6,2);
\draw[thick] (0,2.01) -- (0,0) -- (.8,0.18) -- (.8,1.8) node[pos=.85,right]{$1$} -- (0,2);
\draw[-<-,thick,blue] (0.45,1).. controls +(0.55,0) and +(0,1.0).. (1.15,0);
\fill (1.15,0) circle (0.05);
\fill (0.45,1) circle (0.05);
\fill (1.13,.5) circle (.05) node[left]{$P$}; 
\end{scope}

%% file: sch/meteorite_8.tex
\begin{scope}[xscale=-1,font=\tiny]
\draw[dotted] 
  (0,2) arc[start angle=180,end angle=360,x radius=.8,y radius=0.18];
\draw[dotted] 
  (0,0) arc[start angle=180,end angle=360,x radius=.8,y radius=0.18];
\draw[dotted] 
  (0,2) arc[start angle=180,end angle=0,x radius=.8,y radius=0.18];
\draw[dotted] 
  (0,0) arc[start angle=180,end angle=0,x radius=.8,y radius=0.18];
\draw[dotted] (0,0)--(0,2);
\draw[dotted] (1.6,0)--(1.6,2);
\draw[thick] (0,2.01) -- (0,0) -- (.8,0.18) -- (.8,1.8) node[pos=.85,right]{} -- (0,2);
\draw[-<-,thick,blue] (0.45,1).. controls +(0.55,0) and +(0,1.0).. (1.15,0);
\fill (1.15,0) circle (0.05);
\fill (0.45,1) circle (0.05);
\fill (.45,1.4) circle (.05) node[right]{$P$}; 
\end{scope}

%% file: Z_witt_ext.tex
We can now extend the operators defined on basic foams in section \ref{sec_action} denoted $\Wgen_n$: 

\begin{align}
    \Wgen_n\left (  \NB{\tikz[scale=1]{\input{\imagesfolder/sch/meteorite_3}}} \right ) &= \NB{\tikz[scale=1]{\input{\imagesfolder/sch/meteorite_9}}}, \\
    \Wgen_n\left (  \NB{\tikz[scale=1]{\input{\imagesfolder/sch/meteorite_2}}} \right ) = 0,  \hspace{1cm}&\hspace{1cm} \Wgen_n\left (  \NB{\tikz[scale=1]{\input{\imagesfolder/sch/meteorite_4}}} \right ) = 0. \\
\end{align}

This proposition is a direct corollary of Proposition $\ref{action}$. 
\begin{prop}
    Let $\web, \web'$ be two extended webs with the same boundary. Mapping $\wgen_n$ to $\Wgen_n$ for all $n \in \N_{-1}$ defines a $\Witt$-action on the $\scalars_N$-module of $\Hom_{\extcatf}(\web,\web')$. Moreover, for any extended foam $\foam$:
    \[\deg_N(\wgen_n \cdot \foam) = 2n + \deg_N(\foam).\]
\end{prop}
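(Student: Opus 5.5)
The statement is the extension to extended foams of the $\Witt$-action of Proposition~\ref{action}, together with the degree formula. I will reduce it to Proposition~\ref{action} by checking the new ingredients locally, namely the three basic trajectories.

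The first task is to check that the operators $\Wgen_n$ are well defined on $\Hom_{\extcatf}(\web,\web')$. An extended foam is a gluing of basic foams, traces of isotopy and basic trajectories. On each piece $\Wgen_n$ is defined by the formulas of Section~\ref{sec_action} together with the three new rules; on the polynomial trajectory the rule acts on the $z$-decoration by $P\mapsto \wgen_n\cdot P$, with $z$ treated as one more variable. The operator is then extended to gluings by the Leibniz rule.

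Next, I must show that $\Wgen_n$ respects the relation $\approx$, which is equality after applying $\ssp_N$. My approach is to prove a compatibility $\ssp_N(\Wgen_n\cdot \foam)=\wgen_n\cdot\ssp_N(\foam)$ on closed extended foams. Here $\wgen_n$ acts on $\ssp_N(\web)\otimes\scalars_N[z_1,\dots,z_k]$ by the action of \cite[Theorem 4.4]{QRSW24} on the first factor and by $-z_i^{n+1}\partial_{z_i}$ on each $z_i$. I then check that each basic trajectory map intertwines these actions:
\begin{itemize}
\item For the polynomial trajectory, multiplication by $P$ satisfies the Leibniz rule, and this is exactly the definition $\Wgen_n(P)=\wgen_n\cdot P$.
\item The cross trajectory is the identity map, and $\Wgen_n$ of it is $0$, so it is compatible.
\item The crash is the substitution $z\mapsto x$ onto a $1$-labeled facet. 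Since the Witt action is by derivations given by the same formula $-x^{n+1}\partial_x$ in every variable, the substitution commutes with the action. Hence $\Wgen_n(\text{crash})=0$ is consistent.
\end{itemize}
Combining these local checks with the closed-foam statement of \cite[Prop 4.3]{QRSW23} gives the compatibility for closed extended foams.

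With compatibility in hand, well-definedness follows. If $\foam\approx\foam'$, then closing them up by any extended foams $G$ and using Leibniz yields $\ssp_N(\Wgen_n\foam)=\ssp_N(\Wgen_n\foam')$, because the Leibniz terms involving $G$ coincide.

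The bracket relation $[\Wgen_n,\Wgen_m]=(n-m)\Wgen_{n+m}$ is checked on generators, since both sides are derivations for gluing. On basic foams it is Proposition~\ref{action}. On the polynomial trajectory it is the Witt relation for the vector fields $-z^{k+1}\partial_z$. On crash and cross trajectories both sides vanish.

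For the degree formula, each basic trajectory term changes degree by $2n$. On the polynomial trajectory, $\deg(\wgen_n\cdot P)=\deg P+2n$, because $\deg z=2$. The crash and cross terms are $0$, so they impose no condition. Additivity of degree under gluing, combined with Proposition~\ref{action}, then gives the formula.

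The main obstacle is the well-definedness modulo $\approx$. It hinges on the compatibility of $\Wgen_n$ with $\ssp_N$ for extended foams, and in particular on the crash map. I would first verify the crash case on generators $z^k$, where both sides give $-k\,x^{n+k}$.
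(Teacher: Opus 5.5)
Your proposal is correct and takes the same route as the paper, which simply records the statement as a direct corollary of Proposition~\ref{action}. Your local checks on the three basic trajectories (the crash $z\mapsto x$ being the only nontrivial one), the closure argument showing that $\Wgen_n$ respects $\approx$, and the bracket and degree checks on generators are exactly the details that this one-line justification leaves implicit.
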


Let $\extalgtop(\bnd)$ be the subcategory of $\extcatf(\bnd)$ whose objects are the same and whose morphisms are $\scalars_N$-linear combinations of extended foams $\foam$ such that for all $n \in \N_{-1}$:
    \[\wgen_n \cdot \foam = 0.\]
Such an extended foam is called \textit{$\Witt$-equivariant}.

As for webs, we can define red-dotted extended webs. Here is a diagrammatic example of a Maurer--Cartan element:
\begin{equation}
    \wgen_n * \, \NB{\tikz[font=\small, scale=1]{\input{\imagesfolder/sch/meteorite_12}}} \, =
    \wgen_n \cdot \NB{\tikz[font=\small, scale=1]{\input{\imagesfolder/sch/meteorite_11}}}
    -\omega (n+1) \, \NB{\tikz[font=\small, scale=1]{\input{\imagesfolder/sch/meteorite_10}}}, 
\end{equation}
associated to the Maurer--Cartan element: 
\begin{equation}
    \wgen_n \mapsto -\omega (n+1)z^n.
\end{equation}
The proof that this is indeed a Maurer--Cartan element is left to the reader.

Let $\extralgtop(\bnd)$ be the category whose objects are pairs \((\web, \alpha)\) with \(\web\) in \( \extcatf(\bnd)\) and \(\alpha\) an associated Maurer--Cartan element, and whose morphisms are $\scalars_N$-linear combinations of extended foams $\foam$ such that for all $n \in \N_{-1}$:
    \[\wgen_n * \foam = 0.\]

As in Section~\ref{RHC}, we consider the relative homotopy category associated with $\extralgtop(\bnd)$, denoted by $\extrelcat{\bnd}$.

\begin{prop}
    The collection $(\extrelcat{\bnd})_\bnd$ is a colored canopoly denoted $\extrelcatt$. 
\end{prop}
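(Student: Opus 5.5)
The plan is to follow the same three-layer argument that gives Proposition~\ref{prop:RHcanop}, now in the extended setting: first $\extralgtop$, then its homotopy category, then the Verdier quotient.

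\textbf{Step 1: $(\extralgtop(\bnd))_\bnd$ is a colored canopoly.} Start from Proposition~\ref{prop:ext_canop}, which says $\extcatf$ is a colored canopoly. Given red-dotted extended webs $(\web_i,\alpha_i)$ and a planar arc diagram $\cpd$, equip $\cpd(\web_1,\dots,\web_d)$ with the Maurer--Cartan element $\sum_i \cpd(\dots,\alpha_i,\dots)$. Here each $\alpha_i$ is extended by identities on the other inputs.
\begin{itemize}
\item This sum is again Maurer--Cartan. Endomorphisms supported on distinct inputs commute, so the cross terms in the commutators vanish, and the Maurer--Cartan equation holds input by input.
\item The twisted action is additive and satisfies the Leibniz rule for horizontal gluing. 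This uses the extended operators of Section~\ref{ext_f_w}: $\Wgen_n$ vanishes on crash and cross trajectories and acts by derivation on the $z$-decorations.
\item Consequently $\wgen_n * \cpd(\foam_1,\dots,\foam_d)=\sum_i \cpd(\foam_1,\dots,\wgen_n*\foam_i,\dots,\foam_d)$. This vanishes whenever every $\foam_i$ is equivariant, which gives closure under planar operations exactly as in Proposition~\ref{cano_algtop}.
\end{itemize}

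\textbf{Step 2: passing to homotopy categories.} Proposition~\ref{prop:htpcanopoly} applied to Step~1 makes $\mathcal{C}(\extralgtop(\bnd))$ a colored canopoly. Planar operations act on complexes by tensor-product totalization, with the usual Koszul signs. The same construction on $\extcatf$ shows that $\mathrm{For}\colon \mathcal{C}(\extralgtop)\to\mathcal{C}(\extcatf)$ is a morphism of canopolies.

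\textbf{Step 3: descending to the Verdier quotient.} Here I would use the roof description of morphisms from \cite[Chap.~2]{neeman2014triangulated}.
\begin{itemize}
\item \emph{The kernel is stable.} If $\mathrm{For}(M)$ is contractible via a homotopy $h$, then $\cpd(\dots,h,\dots)$, with identities on the other inputs, contracts $\mathrm{For}(\cpd(\dots,M,\dots))$. Hence $\Ker(\mathrm{For})$ is stable under planar operations with arbitrary other inputs.
\item \emph{Roofs are preserved.} The planar operation in one variable is a triangulated functor, since it commutes with shifts and preserves termwise split sequences. Therefore $\mathrm{cone}(\cpd(\dots,s,\dots))\cong\cpd(\dots,\mathrm{cone}(s),\dots)$. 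So it sends a roof $M\xleftarrow{s}M'\to N$, with $\mathrm{cone}(s)\in\Ker(\mathrm{For})$, to a roof of the same type.
\item \emph{Induced functors.} It also sends equivalent roofs to equivalent roofs. By the universal property of the Verdier quotient, each planar operation in one variable therefore induces a functor on $\extrelcat{\bnd}$.
\item \emph{Several inputs.} Build the multi-input operations one input at a time.
\end{itemize}

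\textbf{Main obstacle.} The delicate point is the interchange law and compatibility with composition in several variables at once. One must check two things:
\begin{itemize}
\item Applying $\cpd$ to roofs $s_1^{-1}f_1$ and $s_2^{-1}f_2$ in different inputs gives the same class in either order.
\item The planar algebra axioms (gluing of arc diagrams) hold on classes of roofs.
\end{itemize}
I would handle this by writing $\cpd(s_1^{-1}f_1,s_2^{-1}f_2)$ as $\cpd(s_1,s_2)^{-1}\cpd(f_1,f_2)$. Then verify that $\cpd(s_1,s_2)=\cpd(s_1,\Id)\circ\cpd(\Id,s_2)$ has cone in the kernel, using the octahedral axiom and Step~3. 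The remaining identities then follow from their validity in $\mathcal{C}(\extralgtop)$ together with the uniqueness in the universal property.
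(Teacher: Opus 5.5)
Your proposal is correct and follows the route the paper indicates for Proposition~\ref{prop:RHcanop}, and implicitly here. First, equivariant extended foams, with the Maurer--Cartan elements of the inputs summed, form a canopoly. Its homotopy category then inherits the planar operations, and these descend to the Verdier quotient via Neeman's description of morphisms, because $\Ker(\mathrm{For})$ is stable under planar operations. Your treatment of the multi-input interchange through $\cpd(s_1,s_2)=\cpd(s_1,\Id)\circ\cpd(\Id,s_2)$ supplies a detail the paper leaves implicit; no octahedral argument is needed there, since a composite of morphisms that become invertible in the quotient is invertible and $\Ker(\mathrm{For})$ is thick.
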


%% file: sch/meteorite_9.tex
\begin{scope}[font=\tiny]
\draw[dotted] 
  (0,2) arc[start angle=180,end angle=360,x radius=.8,y radius=0.18];
\draw[dotted] 
  (0,0) arc[start angle=180,end angle=360,x radius=.8,y radius=0.18];
\draw[dotted] 
  (0,2) arc[start angle=180,end angle=0,x radius=.8,y radius=0.18];
\draw[dotted] 
  (0,0) arc[start angle=180,end angle=0,x radius=.8,y radius=0.18];
\draw[dotted] (0,0)--(0,2);
\draw[dotted] (1.6,0)--(1.6,2);
\draw[->-,thick,blue] (0.85,0) -- (.85,2) node[pos=.7]{\color{black}$\bullet$}node[pos=.7,left,xshift=.4mm]{\color{black}$\wgen_n\cdot P$};
\fill (.85,0) circle (0.05);
\fill (.85,2) circle (0.05);
\end{scope}

%% file: sch/meteorite_12.tex
\begin{scope}
\draw[dotted] 
  (0,2) arc[start angle=180,end angle=360,x radius=.8,y radius=0.18];
\draw[dotted] 
  (0,0) arc[start angle=180,end angle=360,x radius=.8,y radius=0.18];
\draw[dotted] 
  (0,2) arc[start angle=180,end angle=0,x radius=.8,y radius=0.18];
\draw[dotted] 
  (0,0) arc[start angle=180,end angle=0,x radius=.8,y radius=0.18];
\draw[dotted] (0,0)--(0,2);
\draw[dotted] (1.6,0)--(1.6,2);
\draw[->-,thick,blue] (0.8,0) -- (.8,2);
\fill (.8,0) circle (0.05);
\filldraw[draw= \crd, fill = white] (.8,2) circle (1mm) node[left,xshift=-.5mm] {$\color{\crd}\omega$};
\fill (.8,2) circle (0.05);
\end{scope}

%% file: sch/meteorite_11.tex
\begin{scope}[font=\tiny]
\draw[dotted] 
  (0,2) arc[start angle=180,end angle=360,x radius=.8,y radius=0.18];
\draw[dotted] 
  (0,0) arc[start angle=180,end angle=360,x radius=.8,y radius=0.18];
\draw[dotted] 
  (0,2) arc[start angle=180,end angle=0,x radius=.8,y radius=0.18];
\draw[dotted] 
  (0,0) arc[start angle=180,end angle=0,x radius=.8,y radius=0.18];
\draw[dotted] (0,0)--(0,2);
\draw[dotted] (1.6,0)--(1.6,2);
\draw[->-,thick,blue] (0.8,0) -- (.8,2);
\fill (.8,0) circle (0.05);
\fill (.8,2) circle (0.05);
\end{scope}

%% file: sch/meteorite_10.tex
\begin{scope}
\draw[dotted] 
  (0,2) arc[start angle=180,end angle=360,x radius=.8,y radius=0.18];
\draw[dotted] 
  (0,0) arc[start angle=180,end angle=360,x radius=.8,y radius=0.18];
\draw[dotted] 
  (0,2) arc[start angle=180,end angle=0,x radius=.8,y radius=0.18];
\draw[dotted] 
  (0,0) arc[start angle=180,end angle=0,x radius=.8,y radius=0.18];
\draw[dotted] (0,0)--(0,2);
\draw[dotted] (1.6,0)--(1.6,2);
\draw[->-,thick,blue] (0.8,0) -- (.8,2) node[pos=.7]{\color{black}$\bullet$}node[pos=.7,left]{\color{black}$z^{n}$};
\fill (.8,0) circle (0.05);
\fill (.8,2) circle (0.05);
\end{scope}

%% file: Z_sliding_orange.tex
To prove that the dots are sliding through any crossings we need to introduce a new Maurer--Cartan element.

\begin{rmk}
    If $e$ is an $a$-labeled edge or a dot of an extended web, then $p_k^{(e)}$ denotes the extended foam $\widetilde{\web} \times \interval$ decorated by $p_k$ on $e \times \interval$. Moreover, $p_k \in Sym_a$ if $e$ is an $a$-labeled edge, while $p_k = z_i^k \in \scalars[z_i]$ if $e$ is a dot.
\end{rmk}

\begin{dfn}
    Let $e_1$ and $e_2$ be edges or dots of an extended web $\widetilde{\web}$.
    A double orange dot $\dod$ between $e$ and $\widetilde{e}$, labeled by $\omega \in \scalars$, is associated with the map
    \begin{equation} 
    \label{orange_tw}
    \wgen_n \mapsto - \omega \sum_{i=0}^n p_i^{(e_1)} p_{n-i}^{(e_2)}.
    \end{equation}
\end{dfn}

The proof of the following lemma proceeds in the same way as the proof of Lemma~4.5 in \cite{GR25}.

\begin{lem}
    The map \eqref{orange_tw} defines a Maurer--Cartan element. 
\end{lem}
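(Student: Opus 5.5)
We have to show that $\alpha:\wgen_n\mapsto -\omega\sum_{i=0}^n p_i^{(e_1)}p_{n-i}^{(e_2)}$ satisfies the Maurer--Cartan identity of Definition~\ref{defMC}:
\[
\alpha_{[\wgen_n,\wgen_m]}=[\alpha_{\wgen_n},\alpha_{\wgen_m}]+\wgen_n\alpha_{\wgen_m}-\wgen_m\alpha_{\wgen_n}.
\]
By linearity it suffices to check this on the generators $\wgen_n,\wgen_m$ with $n,m\ge -1$. We use the convention $p_0^{(e)}=\ell(e)$ for an edge of thickness $\ell(e)$ and $p_0=1$ for a dot, so that $\alpha_{\wgen_{-1}}=0$ (empty sum).

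The first observation is that every $\alpha_{\wgen_n}$ is an identity extended foam decorated by a polynomial on the facets $e_1\times\interval$ and $e_2\times\interval$. Decorations on identity foams commute with each other, hence $[\alpha_{\wgen_n},\alpha_{\wgen_m}]=0$. The identity therefore reduces to a polynomial identity in the decoration algebra $D_{\widetilde\web}$:
\[
(n-m)\,\alpha_{\wgen_{n+m}}=\wgen_n\cdot\alpha_{\wgen_m}-\wgen_m\cdot\alpha_{\wgen_n}.
\]

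Next we compute the action on decorations. By \eqref{polyn} and the extended definition in Section~\ref{ext_f_w}, $\wgen_n$ acts on an identity foam carrying a decoration $Q$ by acting on $Q$ through $-\sum x_i^{n+1}\partial_{x_i}$ (respectively $-z^{n+1}\partial_z$ on a dot). For power sums this gives $\wgen_n\cdot p_k=-k\,p_{n+k}$. The Leibniz rule then yields
\[
\wgen_n\cdot\alpha_{\wgen_m}=\omega\sum_{i=0}^m\bigl(i\,p^{(e_1)}_{n+i}p^{(e_2)}_{m-i}+(m-i)\,p^{(e_1)}_i p^{(e_2)}_{n+m-i}\bigr).
\]

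It remains to compare coefficients of the monomial $p^{(e_1)}_j p^{(e_2)}_{n+m-j}$ for $0\le j\le n+m$.
\begin{itemize}
\item In $\wgen_n\cdot\alpha_{\wgen_m}$ this coefficient is $\omega\bigl((j-n)\mathbf 1_{n\le j\le n+m}+(m-j)\mathbf 1_{0\le j\le m}\bigr)$.
\item Subtracting the same expression with $n$ and $m$ swapped, and splitting into the ranges of $j$ relative to $\min(n,m)$ and $\max(n,m)$, each coefficient should come out as $-\omega(n-m)$. This matches $(n-m)\alpha_{\wgen_{n+m}}$.
\end{itemize}
This is exactly the bookkeeping of \cite[Lem~4.5]{GR25} for the hollow red dot $-\sum p_kp_{n-k}$, with the single variable set replaced by two independent ones. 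Since the computation never uses that the two factors come from the same facet, it transfers verbatim.

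The main obstacle is the boundary cases. These are $n=-1$ or $m=-1$, where $\wgen_{-1}$ lowers power sums and produces $p_0$ terms, together with the terms where indices fall out of range. For these we check directly that $\wgen_{-1}\cdot\alpha_{\wgen_m}=-m\,\alpha_{\wgen_{m-1}}$, using $\wgen_{-1}\cdot p_k=-k\,p_{k-1}$.
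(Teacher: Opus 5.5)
Your argument is the direct verification the paper has in mind; the paper simply refers to \cite[Lem~4.5]{GR25}. The $\alpha_{\wgen_n}$ are commuting decorated identities, so the Maurer--Cartan equation reduces to $(n-m)\alpha_{\wgen_{n+m}}=\wgen_n\cdot\alpha_{\wgen_m}-\wgen_m\cdot\alpha_{\wgen_n}$. Your coefficient comparison via $\wgen_n\cdot p_k=-k\,p_{n+k}$ establishes this. The closest template in \cite{GR25} is actually the solid red dot $-\sum_k p_k\widehat{p}_{n-k}$, which already pairs two independent sets of variables.

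One correction to your last paragraph. Since $\alpha_{\wgen_{-1}}=0$ and $[\wgen_{-1},\wgen_m]=-(m+1)\wgen_{m-1}$, what is required (and true) is $\wgen_{-1}\cdot\alpha_{\wgen_m}=-(m+1)\,\alpha_{\wgen_{m-1}}$. Indeed, the coefficient of $p^{(e_1)}_j p^{(e_2)}_{m-1-j}$ is $\omega\bigl((j+1)+(m-j)\bigr)$. The identity with $-m$ that you propose to check is false in general; for $m=1$ it predicts $\omega p_0^{(e_1)}p_0^{(e_2)}$, whereas the actual value is $2\omega p_0^{(e_1)}p_0^{(e_2)}$.

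These boundary cases also need no separate treatment. Your indicator bookkeeping is valid verbatim for all $n,m\geq -1$, because every out-of-range term (an index $-1$) comes with coefficient zero.
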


% \begin{exa}
%     We provide an example of an orange dotted web:
%     \[\ttkz[font=\tiny,scale=1]{sch/ex_orange_dot}\]
% \end{exa}

\begin{rmk}
    It is important to note that a double orange dot may relate two edges that are not necessarily close to each other.
\end{rmk}

The purpose of this double dot is that it allows us to decompose the hollow dot:

\begin{lem}
    \label{decomp_hollow}
    Let $a,b,c\in \N^*$ and $\omega \in \scalars$, the following objects are isomorphic in the relative homotopy canopoly:
    \begin{equation}
        \NB{\tikz[font=\tiny,yscale=.7]{\input{\imagesfolder/sch/split_hollow}}} \cong \NB{\tikz[font=\tiny,yscale=.7]{\input{\imagesfolder/sch/split_hollow_2}}} \quad \text{and} \quad \NB{\tikz[font=\tiny,yscale=.7]{\input{\imagesfolder/sch/split_orange_1}}} \cong \NB{\tikz[font=\tiny,yscale=.7]{\input{\imagesfolder/sch/split_orange_2}}}.
    \end{equation}
    The orientation of the $c$-colored part of the tangle is not specified because the statement holds for both orientations.
\end{lem}

\begin{proof}
     Without loss of generality, we assume that $\omega = 1$. The proof of this lemma reduces to studying the decorations involved in the twists induced by the red dots. We denote by $p_k^{(a)}$ the $k$th power sum symmetric polynomials in $Sym_a$.  For all $n \in \N$, 
    \begin{equation*}
        -\sum_{k=0}^n p_k^{(a+b)}p_{n-k}^{(a+b)} = - \sum_{k=0}^n (p_k^{(a)}+p_k^{(b)})(p_{n-k}^{(a)}+ p_{n-k}^{(b)}) =  -\sum_{k=0}^n p_k^{(a)}p_{n-k}^{(a)} - 2 \sum_{k=0}^n p_k^{(a)}p_{n-k}^{(b)}  -\sum_{k=0}^n p_k^{(b)}p_{n-k}^{(b)} .
    \end{equation*}
    We conclude by identifying each polynomial with its associated dot. The proof of the existence of the second isomorphism is left to the reader.
\end{proof}

\begin{lem}
    \label{slid_bigproof}
    Let $a \in \N^*$, the following objects are isomorphic in the relative homotopy canopoly: 
    \[\NB{\tikz[font=\tiny,scale=1.2]{\input{\imagesfolder/sch/orange_slid_1}}} \cong \NB{\tikz[font=\tiny,scale=1.2]{\input{\imagesfolder/sch/orange_slid_2}}} \quad \text{and} \quad \NB{\tikz[font=\tiny,scale=1.2]{\input{\imagesfolder/sch/orange_slid_3}}} \cong \NB{\tikz[font=\tiny,scale=1.2]{\input{\imagesfolder/sch/orange_slid_4}}}. \]
    The orientation of the $a$-colored part of the tangle is not specified because the statement remains the same for both orientations.
\end{lem}

The idea of the proof of \ref{slid_bigproof} is originally from \cite{KR16}. See also \cite{QRSW21,QRSW23,GR25,ES25} for other adaptations of this lemma.

\begin{proof}
Let \(x\) and \(y\) be the variables in the polynomial ring of decorations associated with the two labeled $1$-facets of \(\mapunzip\). We only need extended webs with one dot, we denote by \(z\) the variable associated with this dot.

Let \(\web\) be an extended web, and let \(\web^{\star_x^z}\) denote the extended web equipped with the associated Maurer--Cartan element given by
\[
\wgen_n \mapsto -h_n(x,z)\Id_\web .
\]
We use the following notations: 
\begin{equation}
    \mathrm{X}_z^x = \NB{\tikz[font=\tiny,scale=1.5]{\input{\imagesfolder/sch/X_sliding_lemma}}} \hspace{2cm} \mathrm{H}_z^x = \NB{\tikz[font=\tiny,scale=1.5]{\input{\imagesfolder/sch/H_sliding_lemma}}}
\end{equation}
We first prove, using the following short exact sequence of complexes, that $\Coker(x-z) \cong \Cone(x-z)$ in the relative homotopy canopoly:
\begin{equation}
\NB{
  \begin{tikzpicture}[xscale =4, yscale=2]
    \node (CT) at (1.65,3) {\(\Coker(x-z)\)};
    \node (Ct) at (1.65,1.9) {\(\Cone(x-z)\)};
    \node (Cb) at (1.65,0.8) {\(\Cone(\Id)\)};
    \node (zeroT) at (1.65,3.5) {$0$};
    \node (zerob) at (1.65,0.3) {$0$};
    \draw[densely dotted] (CT) -- (1.34, 3);
    \draw[densely dotted] (Ct) -- (1.34, 1.9);
    \draw[densely dotted] (Cb) -- (1.34, 0.8);
    \draw[densely dotted, rounded corners] (-0.22, 3.4) rectangle (1.34, 2.59);
    \draw[densely dotted, rounded corners] (-1.45, 2.3) rectangle (1.34, 1.49);
    \draw[densely dotted, rounded corners] (-1.45, 1.2) rectangle (1.34,0.35);
    \draw [-to] (CT) -- (zeroT);
    \draw [-to] (Ct) -- (CT);
    \draw [-to] (Cb) -- (Ct);
    \draw [-to] (zerob) -- (Cb);
    \node (BT) at (0,3) {$\NB{\tikz[font=\tiny,scale=0.7]{\input{\imagesfolder/sch/reidem_lemm_11}}}$};
    \node (AT) at (1.1,3) {$\NB{\tikz[font=\tiny,scale=0.7]{\input{\imagesfolder/sch/reidem_lemm_13}}}$};
    \node (Zt) at (-1.1,1.9) {$\NB{\tikz[font=\tiny,scale=0.7]{\input{\imagesfolder/sch/reidem_lemm_12_xz}}}$};
    \node (Bt) at (0,1.9) {$\NB{\tikz[font=\tiny,scale=0.7]{\input{\imagesfolder/sch/reidem_lemm_12}}}\oplus \NB{\tikz[font=\tiny,scale=0.7]{\input{\imagesfolder/sch/reidem_lemm_14_xz}}}$};
    \node (At) at (1.1,1.9) {$\NB{\tikz[font=\tiny,scale=0.7]{\input{\imagesfolder/sch/reidem_lemm_14_z}}}$};
    \node (Zb) at (-1.1,0.8) {$\NB{\tikz[font=\tiny,scale=0.7]{\input{\imagesfolder/sch/reidem_lemm_12_xz}}}$};
    \node (Bb) at (0,0.8) {$\NB{\tikz[font=\tiny,scale=0.7]{\input{\imagesfolder/sch/reidem_lemm_12_xz}}}\oplus \NB{\tikz[font=\tiny,scale=0.7]{\input{\imagesfolder/sch/reidem_lemm_14_xz}}}$};
    \node (Ab) at (1.1,0.8) {$\NB{\tikz[font=\tiny,scale=0.7]{\input{\imagesfolder/sch/reidem_lemm_14_xz}}}$};
    \draw[-to] (BT) -- (AT) node[pos=0.5, above, scale = 0.7] {$\mapunzip$};
    \draw[-to] (Bt) -- (At) node[pos=0.5, above, scale = 0.7] {$\left(\mapunzip \,\, x-z\right)$};
    \draw[-to] (Bb) -- (Ab) node[pos=0.5, above, scale = 0.7] {$\left(\mapunzip \,\, 1 \right)$};
    \draw[-to] (Zt) -- (Bt) node[pos=0.5, above, scale = 0.7]
    {$\left( \begin{array}{c}z-x \\ \mapunzip\end{array}\right)$};
    \draw[-to] (Zb) -- (Bb) node[pos=0.5, above, scale = 0.7]
    {$\left( \begin{array}{c} 1 \\ -\mapunzip\end{array}\right)$};
    \draw[-to] (Zb) -- (Zt) node[pos=0.5, left, scale=0.7] {$-\Id$};
    \draw[-to] (Ab) -- (At) node[pos=0.5, left, scale=0.7] {$x-z$};
    \draw[-to] (Bb) -- (Bt) node[pos=0.5, left, scale=0.6]
    {$\begin{pmatrix}  x-z & 0 \\ 0 & 1 \end{pmatrix}$};
    \draw[-to] (Bt) -- (BT) node[pos=0.5, left, scale=0.7]
    {$\left(\mathrm{X}_z^x \,\,\, 0 \right)$};
    \draw[-to] (At) -- (AT) node[pos=0.5, left, scale=0.7]
    {$\mathrm{H}_z^x$};
    \end{tikzpicture}}
\ ,
\end{equation}

We proceed similarly for $\Coker(y-z) \cong \Cone(y-z)$:

\begin{equation}
\NB{
  \begin{tikzpicture}[xscale =4, yscale=2]
    \node (CT) at (1.65,3)   {\(\Coker(y-z)\)};
    \node (Ct) at (1.65,1.9) {\(\Cone(y-z)\)};
    \node (Cb) at (1.65,0.8) {\(\Cone(\Id)\)};
    \node (zeroT) at (1.65,3.5) {$0$};
    \node (zerob) at (1.65,0.3) {$0$};
    \draw[densely dotted] (CT) -- (1.34, 3);
    \draw[densely dotted] (Ct) -- (1.34, 1.9);
    \draw[densely dotted] (Cb) -- (1.34, 0.8);
    \draw[densely dotted, rounded corners] (-0.22, 3.4) rectangle (1.34, 2.59);
    \draw[densely dotted, rounded corners] (-1.45, 2.36) rectangle (1.34, 1.49);
    \draw[densely dotted, rounded corners] (-1.45, 1.2) rectangle (1.34,0.35);
    \draw [-to] (CT) -- (zeroT);
    \draw [-to] (Ct) -- (CT);
    \draw [-to] (Cb) -- (Ct);
    \draw [-to] (zerob) -- (Cb);
    \node (BT) at (0,3) {$\NB{\tikz[font=\tiny,scale=0.7]{\input{\imagesfolder/sch/reidem_lemm_15}}}$};
    \node (AT) at (1.1,3) {$\NB{\tikz[font=\tiny,scale=0.7]{\input{\imagesfolder/sch/reidem_lemm_16}}}$};
    \node (Zt) at (-1.1,1.9) {$\NB{\tikz[font=\tiny,scale=0.7]{\input{\imagesfolder/sch/reidem_lemm_12_xz}}}$};
    \node (Bt) at (0,1.9) {$\NB{\tikz[font=\tiny,scale=0.7]{\input{\imagesfolder/sch/reidem_lemm_12}}}\oplus \NB{\tikz[font=\tiny,scale=0.7]{\input{\imagesfolder/sch/reidem_lemm_14_xz}}}$};
    \node (At) at (1.1,1.9) {$\NB{\tikz[font=\tiny,scale=0.7]{\input{\imagesfolder/sch/reidem_lemm_14_z}}}$};
    \node (Zb) at (-1.1,0.8) {$\NB{\tikz[font=\tiny,scale=0.7]{\input{\imagesfolder/sch/reidem_lemm_12_xz}}}$};
    \node (Bb) at (0,0.8) {$\NB{\tikz[font=\tiny,scale=0.7]{\input{\imagesfolder/sch/reidem_lemm_12_yz}}}\oplus \NB{\tikz[font=\tiny,scale=0.7]{\input{\imagesfolder/sch/reidem_lemm_14_xz}}}$};
    \node (Ab) at (1.1,0.8) {$\NB{\tikz[font=\tiny,scale=0.7]{\input{\imagesfolder/sch/reidem_lemm_14_yz}}}$};
    \draw[-to] (BT) -- (AT) node[pos=0.5, above, scale = 0.7] {$\mapunzip$};
    \draw[-to] (Bt) -- (At) node[pos=0.5, above, scale = 0.7] {$\left(\mapunzip \,\, y -z\right)$};
    \draw[-to] (Bb) -- (Ab) node[pos=0.5, above, scale = 0.7] {$\left(\mapunzip \,\, 1 \right)$};
    \draw[-to] (Zt) -- (Bt) node[pos=0.5, above, scale = 0.7]
    {$\left( \begin{array}{c}z-y \\ \mapunzip\end{array}\right)$};
    \draw[-to] (Zb) -- (Bb) node[pos=0.5, above, scale = 0.7]
    {$\left( \begin{array}{c} 1 \\ -\mapunzip\end{array}\right)$};
    \draw[-to] (Zb) -- (Zt) node[pos=0.5, left, scale=0.7] {$-\Id$};
    \draw[-to] (Ab) -- (At) node[pos=0.5, left, scale=0.7] {$y-z$};
    \draw[-to] (Bb) -- (Bt) node[pos=0.5, left, scale=0.6]
    {$\begin{pmatrix}  y-z & 0 \\ 0 & 1 \end{pmatrix}$};
    \draw[-to] (Bt) -- (BT) node[pos=0.5, left, scale=0.7]
    {$\left(z\mapsto y \,\, 0 \right)$};
    \draw[-to] (At) -- (AT) node[pos=0.5, left, scale=0.7]
    {$z\mapsto y$};
    \draw[-to, \crd] ($(Bt)+(0.04,0.25)$) .. controls +(-0.05,0.05) and +(0.05, 0.05) .. +(-0.15, 0) node[scale=0.7,\crd, pos=0.8, above] {(1)};
    \draw[-to, \crd] ($(Bb)+(0.08, -0.22)$) .. controls +(-0.07,-0.07) and +(0.07, -0.07) .. +(-0.2, 0) node[scale=0.7,\crd, pos=0.2, below] {(2)};
    \end{tikzpicture}}
\ .
\end{equation}

The red arrows indicate additional Maurer--Cartan elements on the direct sum objects defined by:
\begin{align}
&\textcolor{\crd}{(1)} : 
L_n \mapsto
(h_n(x,z) - h_n(x,y))
\left(
\begin{array}{cc}
    0 & 0 \\
    0 & \mapzip
\end{array}
\right)\\
&\textcolor{\crd}{(2)} :
L_n \mapsto
\frac{h_n(y,z) - h_n(x,z)}{x-y}
\left(
\begin{array}{cc}
    0 & 0 \\
    0 & \mapzip
\end{array}
\right)
\end{align}

The additional twists on $\Cone(y-z)$ make the following isomorphism to $\Cone(x-y)$ equivariant:
\begin{equation}
\NB{
  \begin{tikzpicture}[xscale=4, yscale=2]
    \node (Zb) at (-1.1,0.8) {$\NB{\tikz[font=\tiny,scale=0.7]{\input{\imagesfolder/sch/reidem_lemm_12_xz}}}$};
    \node (Bb) at (0,0.8) {$\NB{\tikz[font=\tiny,scale=0.7]{\input{\imagesfolder/sch/reidem_lemm_12}}}\oplus \NB{\tikz[font=\tiny,scale=0.7]{\input{\imagesfolder/sch/reidem_lemm_14_xz}}}$};
    \node (Ab) at (1.1,0.8) {$\NB{\tikz[font=\tiny,scale=0.7]{\input{\imagesfolder/sch/reidem_lemm_14_z}}}$};
    \node (Zt) at (-1.1,1.9) {$\NB{\tikz[font=\tiny,scale=0.7]{\input{\imagesfolder/sch/reidem_lemm_12_xz}}}$};
    \node (Bt) at (0,1.9) {$\NB{\tikz[font=\tiny,scale=0.7]{\input{\imagesfolder/sch/reidem_lemm_12}}}\oplus \NB{\tikz[font=\tiny,scale=0.7]{\input{\imagesfolder/sch/reidem_lemm_14_xz}}}$};
    \node (At) at (1.1,1.9) {$\NB{\tikz[font=\tiny,scale=0.7]{\input{\imagesfolder/sch/reidem_lemm_14_z}}}$};
    \draw[-to] (Bt) -- (At) node[pos=0.5, above, scale = 0.7] {$\left(\mapunzip \,\, x-z\right)$};
    \draw[-to] (Bb) -- (Ab) node[pos=0.5, above, scale = 0.7] {$\left(\mapunzip \,\, y-z \right)$};
    \draw[-to] (Zt) -- (Bt) node[pos=0.5, above, scale = 0.7]
    {$\left( \begin{array}{c}z-x \\ \mapunzip\end{array}\right)$};
    \draw[-to] (Zb) -- (Bb) node[pos=0.5, above, scale = 0.7]
    {$\left( \begin{array}{c} z-y \\ \mapunzip\end{array}\right)$};
    \draw[-to] (Zb) -- (Zt) node[pos=0.5, left, scale=0.7] {$\Id$};
    \draw[-to] (Ab) -- (At) node[pos=0.5, left, scale=0.7] {$\Id$};
    \draw[-to] (Bb) -- (Bt) node[pos=0.65, left, scale=0.6]
    {$\begin{pmatrix}  1 & \mapzip \\ 0 & 1 \end{pmatrix}$};
     \draw[-to, \crd] ($(Bb)+(0.08,-0.25)$) .. controls
     +(-0.05,-0.05) and +(0.05, -0.05) .. +(-0.15, 0)
     node[scale=0.7,\crd, pos=0.2, below] {$(1)$};
    \end{tikzpicture}}
. 
\end{equation} 
\end{proof}

The strategy for proving the following lemma is the same as the one used in the proof of Lemma \ref{col_slid_2}.
\begin{lem}
    \label{col_slid_orange}
    Let $a,b \in \N^*$, the following objects are isomorphic in the relative homotopy canopoly: 
    \begin{equation}\NB{\tikz[font=\tiny,scale=1.2]{\input{\imagesfolder/sch/orange_slid_5}}} \cong \NB{\tikz[font=\tiny,scale=1.2]{\input{\imagesfolder/sch/orange_slid_6}}} \quad \text{and} \quad \NB{\tikz[font=\tiny,scale=1.2]{\input{\imagesfolder/sch/orange_slid_7}}} \cong \NB{\tikz[font=\tiny,scale=1.2]{\input{\imagesfolder/sch/orange_slid_8}}}.
    \end{equation}
\end{lem}

Using the blist hypothesis, the previous lemma, and Lemma \ref{decomp_hollow}, an induction on \(c\) is enough to prove the following lemma.

\begin{lem}
    \label{col_slid_orange_gen}
    Let $a,b \in \N^*$, the following objects are isomorphic in the relative homotopy canopoly: 
    \begin{equation}\NB{\tikz[font=\tiny,scale=1.2]{\input{\imagesfolder/sch/orange_slid_9}}} \cong \NB{\tikz[font=\tiny,scale=1.2]{\input{\imagesfolder/sch/orange_slid_10}}} \quad \text{and} \quad \NB{\tikz[font=\tiny,scale=1.2]{\input{\imagesfolder/sch/orange_slid_11}}} \cong \NB{\tikz[font=\tiny,scale=1.2]{\input{\imagesfolder/sch/orange_slid_12}}}.
    \end{equation}
\end{lem}

We can now prove the Proposition \ref{slid_rd}. 
\begin{proof}
    We only detail the proof of the following isomorphism: 
    \begin{equation}
        \NB{\tikz[font=\tiny,xscale=-1,scale=1.2]{\input{\imagesfolder/sch/slid_lemm_19_b}}} \cong \NB{\tikz[font=\tiny,xscale=-1,scale=1.2]{\input{\imagesfolder/sch/slid_lemm_20_b}}}.
    \end{equation}
   We proceed by induction on \(a\). The base case is ensured by Lemma \ref{col_slid_2}. For the inductive step, we blist the \(a\)-colored leg of the tangle, use Lemma \ref{decomp_hollow} to decompose the hollow dot, apply the forkslide isomorphism~(\ref{forkslide}), and slide the orange and hollow red dots using the inductive hypothesis. The conclusion follows from the blist hypothesis. 
\end{proof}

%% file: sch/split_hollow.tex
\begin{scope}
    \draw[->] (0,0.2) -- (0,1) node[pos=1,above]{$a+b$} node[pos=0.5, draw=\crd, fill= white, circle , inner sep=1.5pt]{} node[pos=0.5, left]{$\color{\crd}\omega$};
    \draw[] (-.6,-1) -- (0,0.2) node[pos=0,below]{$a$};
    \draw[] (.6,-1) -- (0,0.2) node[pos=0,below]{$b$};
\end{scope}

%% file: sch/reidem_lemm_12_xz.tex
\begin{scope}
  \draw[orange] (.6,0) -- (1.1,0)  node[pos=0.7, below]{$\color{orange}\omega$};
  \coordinate (bl) at (-0.5, -1);
  \coordinate (br) at ( 0.5, -1);
  \coordinate (bm) at (  0,-0.3);
  \coordinate (tl) at (-0.5,  1);
  \coordinate (tr) at ( 0.5,  1);
  \coordinate (tm) at (  0, 0.3);
  \draw[>-]  (bl) .. controls +( 0, 0.5) and +(0,0) .. (bm);
  \draw[>-]  (br) .. controls +( 0, 0.5) and +(0,0) .. (bm);
  \draw[<-]  (tl) .. controls +( 0, -0.5) and +(0,0) .. (tm) coordinate[pos = 0.25] node[pos= 0.35 , draw=\crd, fill=white,regular polygon, regular polygon sides=3,inner sep=0.9pt] {} node [pos=0.3, left]{$\color{\crd}\lambda$};
  \draw[<-]  (tr) .. controls +( 0, -0.5) and +(0,0) .. (tm)
  coordinate[pos = 0.25] node[pos= 0.35 , draw=\crd, fill=white,regular polygon, regular polygon sides=3,inner sep=0.9pt] {} node[pos=0.3, right]{$\color{\crd}\lambda$};
  \draw [double] (bm) -- (tm) node[pos=0, draw=\crd, fill= white, circle , inner sep=1.5pt]{} node[pos=0, left]{$\color{\crd}s$};
  \draw[] (.8,-.4)  ..controls +(-.3,0.2) and +(-.3,-.2) .. (.8,.4)  node[pos=0.5, draw=orange, fill= white, circle , inner sep=1.5pt]{} node[pos=0, below]{$a$};
  \node at  (1.15,0) [draw=orange, fill= white, circle , inner sep=1.5pt]{};
  \fill (1.15,0) circle (0.05);
  \node at (1.15,.8){$\star_x^z$};
\end{scope}

%% file: sch/reidem_lemm_14_xz.tex
\begin{scope}
    \draw[orange] (.6,0) -- (1.2,0)  node[pos=0.7, below]{$\color{orange}\omega$};
    \coordinate (bl) at (-0.45, -1);
    \coordinate (br) at ( 0.4, -1);
    \coordinate (tl) at (-0.45,  1);
    \coordinate (tr) at ( 0.4,  1);
    \draw[->] (bl) -- (tl);
    \draw[->] (br) -- (tr);
    \draw[] (.9,-.4)  ..controls +(-.3,0.2) and +(-.3,-.2) .. (.9,.4)  node[pos=0.5, draw=orange, fill= white, circle , inner sep=1.5pt]{} node[pos=0, below]{$a$};
    \node at  (1.25,0) [draw=orange, fill= white, circle , inner sep=1.5pt]{};
    \node at (1.25,.8) {$\star_x^z$};
    \fill (1.25,0) circle (0.05);
\end{scope}

%% file: sch/reidem_lemm_14_z.tex
\begin{scope}
    \draw[orange] (.6,0) -- (1.2,0)  node[pos=0.7, below]{$\color{orange}\omega$};
    \coordinate (bl) at (-0.45, -1);
    \coordinate (br) at ( 0.4, -1);
    \coordinate (tl) at (-0.45,  1);
    \coordinate (tr) at ( 0.4,  1);
    \draw[->] (bl) -- (tl);
    \draw[->] (br) -- (tr);
    \draw[] (.9,-.4)  ..controls +(-.3,0.2) and +(-.3,-.2) .. (.9,.4)  node[pos=0.5, draw=orange, fill= white, circle , inner sep=1.5pt]{} node[pos=0, below]{$a$};
    \node at  (1.25,0) [draw=orange, fill= white, circle , inner sep=1.5pt]{};
    \fill (1.25,0) circle (0.05);
\end{scope}

%% file: sch/reidem_lemm_12_yz.tex
\begin{scope}
  \draw[orange] (.6,0) -- (1.1,0)  node[pos=0.7, below]{$\color{orange}\omega$};
  \coordinate (bl) at (-0.5, -1);
  \coordinate (br) at ( 0.5, -1);
  \coordinate (bm) at (  0,-0.3);
  \coordinate (tl) at (-0.5,  1);
  \coordinate (tr) at ( 0.5,  1);
  \coordinate (tm) at (  0, 0.3);
  \draw[>-]  (bl) .. controls +( 0, 0.5) and +(0,0) .. (bm);
  \draw[>-]  (br) .. controls +( 0, 0.5) and +(0,0) .. (bm);
  \draw[<-]  (tl) .. controls +( 0, -0.5) and +(0,0) .. (tm) coordinate[pos = 0.25] node[pos= 0.35 , draw=\crd, fill=white,regular polygon, regular polygon sides=3,inner sep=0.9pt] {} node [pos=0.3, left]{$\color{\crd}\lambda$};
  \draw[<-]  (tr) .. controls +( 0, -0.5) and +(0,0) .. (tm)
  coordinate[pos = 0.25] node[pos= 0.35 , draw=\crd, fill=white,regular polygon, regular polygon sides=3,inner sep=0.9pt] {} node[pos=0.3, right]{$\color{\crd}\lambda$};
  \draw [double] (bm) -- (tm) node[pos=0, draw=\crd, fill= white, circle , inner sep=1.5pt]{} node[pos=0, left]{$\color{\crd}s$};
  \draw[] (.8,-.4)  ..controls +(-.3,0.2) and +(-.3,-.2) .. (.8,.4)  node[pos=0.5, draw=orange, fill= white, circle , inner sep=1.5pt]{} node[pos=0, below]{$a$};
  \node at  (1.15,0) [draw=orange, fill= white, circle , inner sep=1.5pt]{};
  \node at (1.4,.8){$\star_y^z$};
  \fill (1.15,0) circle (0.05);
\end{scope}

%% file: sch/reidem_lemm_14_yz.tex
\begin{scope}
    \draw[orange] (.6,0) -- (1.2,0)  node[pos=0.7, below]{$\color{orange}\omega$};
    \coordinate (bl) at (-0.45, -1);
    \coordinate (br) at ( 0.4, -1);
    \coordinate (tl) at (-0.45,  1);
    \coordinate (tr) at ( 0.4,  1);
    \draw[->] (bl) -- (tl);
    \draw[->] (br) -- (tr);
    \draw[] (.9,-.4)  ..controls +(-.3,0.2) and +(-.3,-.2) .. (.9,.4)  node[pos=0.5, draw=orange, fill= white, circle , inner sep=1.5pt]{} node[pos=0, below]{$a$};
    \node at  (1.25,0) [draw=orange, fill= white, circle , inner sep=1.5pt]{};
    \node at (1.5,.8) {$\star_y^z$};
    \fill (1.25,0) circle (0.05);
\end{scope}

%% file: Z_functoriality.tex
\begin{dfn}
    Let $\TD$ be the subcanopoly of $\WT$ defined by: 
    \begin{itemize}
        \item The objects are colored oriented tangle diagrams in $\Disk$.
        \item The morphisms are two dimensional oriented framed colored cobordisms between tangles, embedded in $\Disk \times \interval \times \interval$. 
    \end{itemize} 
\end{dfn}

\begin{thm}[{\cite[Th.4.5]{ETW17}}]
    The construction of the colored Khovanov--Rozansky complex defines a canopoly morphism
    \[\colcom_N : \TD \rightarrow \mathcal{C}(\catf).\]
\end{thm}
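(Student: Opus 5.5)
The statement is a cited theorem of Ehrig--Tubbenhauer--Wedrich, so my plan is to reconstruct its proof from the tools already assembled here, working on the forgetful side $\mathcal{C}(\catf)$ where no $\Witt$-equivariance has to be tracked. The assignment $\colcom_N$ on objects is already specified locally: each crossing goes to the complex built from the $C^\pm_i$, each web part goes to itself, and general diagrams go to the planar composition. Compatibility of the assignment with planar arc diagrams therefore holds by construction, since the complex of a glued diagram is the tensor product of the local complexes, and Proposition~\ref{prop:htpcanopoly} shows that $\mathcal{C}(\catf)$ is a canopoly. So the real content is to define $\colcom_N$ on morphisms of $\TD$ and to check that it is well defined and respects composition and planar gluing.

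I would present cobordisms by movies, that is, by composites of elementary pieces. These are Reidemeister moves, Morse births and deaths of colored circles, and saddles.
\begin{itemize}
\item To Reidemeister moves I assign the homotopy equivalences produced in Section~\ref{sec_invar}, after forgetting red dots via $\forg$. For R2 and R3 these are Propositions~\ref{R2_col}, \ref{R2_col_2} and~\ref{reidem_3}. For R1 I use the colored R1 isomorphism; framing makes its grading shift harmless.
\item To births and deaths I assign cup and cap foams of thickness $a$.
\item To saddles I assign saddle foams.
\end{itemize}
Each piece is local, so planar compatibility on morphisms reduces to naturality of the tensor product of local complexes.

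The key step is well-definedness: two movies of isotopic framed cobordisms must give homotopic chain maps, up to no sign at all. I would use the Carter--Saito movie moves, which are local. Because the canopoly structure lets each movie move be checked inside a disk, it suffices to treat the finitely many local tangles with their colorings. For each such tangle, both sides of a movie move are degree-zero homotopy equivalences between the complexes of the same diagram. If that diagram is $\simple$, its degree-zero endomorphisms form a rank-one space, so the two maps agree up to a scalar. The scalar is then fixed by restricting to a single convenient web, for instance the lowest-degree resolution.

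Simplicity is established by Bar-Natan's strategy, using the lemmas already available:
\begin{itemize}
\item pairings are $\simple$ by Lemma~\ref{pairing};
\item adding crossings at the boundary preserves simplicity by Lemma~\ref{lem_TX};
\item splitting a boundary strand preserves simplicity by Lemma~\ref{lem_TY};
\item every tangle in the movie moves is a crossingless tangle with boundary crossings added, so it is $\simple$.
\end{itemize}
The higher colors are handled by the blist hypothesis, Proposition~\ref{blisthyp}, which reduces them to color $1$.

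I expect the main obstacle to be the movie moves involving closed components, where circles are born or die. There the complex is not $\simple$ in the naive sense, because $\ssp$ of an $a$-colored circle is $\croch{N}{a}$-dimensional. I would handle this as ETW do. First use $\bend$ to turn every morphism into a closed-cup situation. Then use the fact that the lowest-degree part of $\croch{N}{a}$ is one-dimensional, exactly as in the proof of Lemma~\ref{pairing}, to pin down the scalar. The sign ambiguity remaining at that point should be resolved by the framed, $q$-shifted normalizations, which are designed to make the R1 scalars equal to $1$.
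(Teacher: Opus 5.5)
\textbf{There is a genuine gap.} The paper gives no argument of its own for this statement; it is imported from \cite{ETW17}. Your plan breaks down exactly where this theorem goes beyond Bar-Natan's: the absence of any scalar ambiguity.

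For the movie moves built only from Reidemeister moves, your simplicity argument is fine. The tangles involved are isotopic to pairings with crossings added at the boundary, so Lemmas~\ref{pairing} and~\ref{lem_TX} make them $\simple$, and the two sides agree up to a unit of $\scalars$. (Lemma~\ref{lem_TY} and the blist hypothesis play no role here, since Lemma~\ref{pairing} already allows arbitrary colors.) But nothing in your plan shows that this unit is $1$.

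\begin{itemize}
\item Evaluating ``on a single convenient web'' presupposes explicit components of every colored Reidemeister equivalence. The ones you propose to use, those of Section~\ref{sec_invar}, are produced by blisting, forkslides and induction. The proof of Proposition~\ref{blisthyp} relates $\cpd_{(k-1,1)}(\psi,m(e_0))$ to the blisted equivalence only up to an unspecified $\alpha\in\scalars\setminus\{0\}$.
\item Since the two sides agree only up to homotopy, you would also need a reason why null-homotopic maps cannot contribute to the component you evaluate.
\item Your closing remedy, that the residual sign is fixed by the framed, $q$-shifted normalizations, does not work. Shifts $q^kt^\ell$ and framing conventions change gradings, never the scalar by which two chain maps differ.
\end{itemize}
That scalar is dictated by the foam relations and has to be computed move by move. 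This is precisely the part of \cite[Th.~4.5]{ETW17} that your outline does not reproduce.

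Second, your account of the movie moves containing Morse pieces is inaccurate. These moves do not compare two homotopy equivalences of one diagram. Both sides are maps $\colcom_N(T_0)\to\colcom_N(T_1)$ of nonzero quantum degree with $T_0\neq T_1$. This includes the saddle moves, in which nothing is born or dies, so the obstacle is not specific to closed components.

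Your $\bend$ idea is the right tool, but it must be applied to all of these moves:
\begin{itemize}
\item bend to morphisms from $\varnothing$ into the complex of the closed diagram obtained by gluing $T_1$ to the mirror of $T_0$;
\item reduce that diagram to a crossingless one using invariance;
\item check that the bent composites land in the bottom degree, where the morphism space has rank one.
\end{itemize}
For example, a thickness-$a$ saddle has degree $a(N-a)$ and bends to degree $-a(N-a)$, the lowest power of $q$ in $\croch{N}{a}$. This again yields agreement only up to a unit, so the same explicit scalar computation is needed there as well.
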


By introducing a $\Witt$-action on colored Khovanov--Rozansky homology, one might expect the target canopoly of our canopoly functor to be a collection of category of $\Witt$-modules. However, this is not the case, since the morphisms induced by cobordisms are not, in general, $\Witt$-equivariant. In the following, we define a source canopoly whose image under the canopoly morphism $\colcom_N$ consists of $\Witt$-equivariant morphisms.

In order for the image of a cobordism in the canopoly $\TD$ to be $\Witt$-equivariant, we introduce red dots on tangles. These dots compensate for the $\Witt$-action on cups, caps, and saddles in two-dimensional cobordisms. Using the sliding lemmas, we can gather the red dots (hollow, solid and triangular) on each connected component of the tangle. This observation motivates the following definition:

\begin{dfn}
    Let us define the canopoly $\rTD$:
    \begin{itemize}
        \item The objects are quadruples \((T,(a_i),(b_i),(c_i))\), where \(T\) is an object of \(\TD\) and \((a_i),(b_i),(c_i) \in \scalars^{h}\) are respectively the labels of the hollow, solid, and triangular red dots on each component of \(T\). Here, \(h\) denotes the number of components of \(T\).
        \item A morphism $C$ from $(T,(a_i),(b_i),(c_i))$ to $(T',(a'_i),(b'_i),(c'_i))$ is a morphism in $\TD$ from $T$ to $T'$ such that for all connected component of $C$ denoted $C_j$:
        \begin{equation}
            \label{id_1}
            \chi(C_j) - k_j = 2 \left (\sum_{\partial_1 C_i}b'_i - \sum_{\partial_0 C_i}b_i \right ), \quad \text{and} \quad k_j = - 2 \left (\sum_{\partial_1 C_i}a'_i - \sum_{\partial_0 C_i}a_i \right ) = 2 \left (\sum_{\partial_1 C_i}c'_i - \sum_{\partial_0 C_i}c_i \right ).      
        \end{equation}
        where $\chi$ is the Euler characteristic and $k_j$ is the framing of $C_j$.
    \end{itemize} 
\end{dfn}

\begin{rmk}
    The identity \eqref{id_1} comes from the dots needed for a Reidemeister~1 isomorphism, a saddle, a cup, or a cap.
\end{rmk}

\begin{thm}
    The construction of the colored Khovanov--Rozansky complex defines a canopoly morphism
    \[\colcom_N : \rTD \rightarrow \mathcal{C}^{\Witt}(\mathrm{r}\extcatf),\]
    where the \(\Witt\)-action on the image of a cobordism is trivial.
\end{thm}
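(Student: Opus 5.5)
Three things have to be checked, in this order: that the complexes carry the right $\Witt$-structure, that $\colcom_N$ is compatible with planar operations, and that every elementary cobordism in $\rTD$ is sent to a $\Witt$-equivariant map. The first point is immediate: each crossing complex $C^\pm$ carries red dots, chosen so that every differential $d_i^\pm$ is killed by the twisted action of Proposition~\ref{flatness}. Compatibility with planar operations is then automatic. $\colcom_N$ is defined locally and the canopoly structures on $\ralgtop$ (Proposition~\ref{prop:canord}), on $\relcatt$ (Proposition~\ref{prop:RHcanop}) and on $\extrelcatt$ are all inherited from horizontal gluing, with the Leibniz rule for $\Witt$ holding for both compositions. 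So for objects, $\colcom_N(T,(a_i),(b_i),(c_i))$ is the ETW complex of $T$ with the prescribed hollow, solid and triangular dots placed on each component. By the sliding results (Lemma~\ref{col_slid_1}, Proposition~\ref{slid_rd}, Proposition~\ref{col_slid_3}, Lemmas~\ref{lemm_rd_3} and~\ref{circ_tri}) this is independent, up to isomorphism in the relative homotopy canopoly, of where on the component the dots sit.

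For morphisms I decompose a framed cobordism in $\TD$ into elementary pieces. These are Reidemeister moves and movie moves (isotopies of the diagram), planar isotopies, and Morse moves: cups, caps and saddles on $a$-colored strands. For Reidemeister 2 and 3 the isomorphisms constructed in Propositions~\ref{R2_col}, \ref{R2_col_2} and~\ref{reidem_3} are already $\Witt$-equivariant, since they live in $\relcatt$. Reidemeister 1 changes the framing by $\pm1$. The colored R1 proposition shows that its isomorphism acquires hollow, solid and triangular dots with labels $\pm\frac12$, and this matches the bookkeeping in~\eqref{id_1} via the term $k_j$. For the Morse moves I compute $\wgen_n*\foam$ for the cup, cap and saddle foams of thickness $a$ using the formulas of Section~\ref{sec_action}. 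The decoration produced is $\frac12\sum_{k+\ell=n}p_k\hat p_\ell$, which is exactly the twist of a solid red dot of label $\pm\frac12$ (Lemma~\ref{lem_reddot}). Hence a cup or cap changes $\sum b$ by $\pm\frac12$, matching the change in $\chi(C_j)$, and a saddle merges or splits components with the sums adding up correctly. Since every dot can be slid to a common point of a component, the equivariance condition only needs to be checked per connected component $C_j$, and summing the local contributions over the elementary pieces of $C_j$ gives precisely~\eqref{id_1}.

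The morphism assigned to $C$ is then the ETW map of \cite[Th.~4.5]{ETW17}, composed with the dot-sliding isomorphisms. It is equivariant because each constituent is equivariant. It is well defined because any two decompositions differ by movie moves: ETW shows the induced maps agree in $\mathcal{C}(\catf)$ up to the canonical sign and scalar conventions. By the relative simplicity argument of Lemma~\ref{pairing}, this equality lifts to $\relcatt$, since there the identification is forced in degree $0$. Working in the extended canopoly $\mathrm{r}\extcatf$ is needed so that the orange-dot and $z$-variable isomorphisms used in Proposition~\ref{slid_rd} make sense as morphisms of the target.

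I expect the main obstacle to be this last step: showing that equivariant lifts of ETW's maps are independent of the chosen movie. In $\mathcal{C}(\catf)$, functoriality of ETW only holds up to homotopy. Lifting it to the Verdier quotient requires that homotopic equivariant maps remain equal in the target. My first attempt would be to reduce each movie move to a comparison of endomorphisms of complexes of $\simple$ tangles, using Lemmas~\ref{lem_TX} and~\ref{lem_TY}, and then invoke the rank-one degree-$0$ property exactly as in the proof of the blist hypothesis (Proposition~\ref{blisthyp}).
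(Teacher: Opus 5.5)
Your outline matches the argument the paper has in mind; the paper gives nothing beyond a sketch. ETW's canopoly morphism supplies the maps. Red dots absorb the $\Witt$-action on cups, caps and saddles. The sliding lemmas gather dots on each component, and the colored Reidemeister isomorphisms handle isotopies.

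\textbf{The gap: the well-definedness step.} You treat $\relcatt$ as the target and claim that ETW's movie-move equalities lift there ``by the relative simplicity argument''. That step does not work. Being $\simple$ is a statement about $\en(\forg(\colcom_N(\wtd)))$, i.e.\ after forgetting. The functor $\forg\colon\relcatt\to\mathcal{C}(\catf)$ need not be faithful, and nothing in the paper shows that it is. A Verdier quotient by $\Ker(\mathrm{For})$ behaves like a derived category, and Ext-type morphisms can die under $\forg$, just as they do under $D(\mathfrak{g}\text{-mod})\to D(\mathrm{Vect})$. So knowing that two morphisms agree after $\forg$, or that a degree-$0$ endomorphism becomes a scalar after $\forg$, tells you nothing in $\relcatt$.

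\textbf{The lift is not needed.} The target in the statement is $\mathcal{C}^{\Witt}(\mathrm{r}\extcatf)$, not $\relcatt$. It is the homotopy category of the $\Witt$-enriched category $\mathrm{r}\extcatf$: chain maps modulo arbitrary extended-foam homotopies, with $\Witt$ acting on Hom spaces via \eqref{act_mc}. In a relative homotopy category the clause ``the $\Witt$-action on the image of a cobordism is trivial'' would be empty. These Hom spaces are those of $\mathcal{C}(\extcatf)$, so well-definedness is exactly ETW's theorem. What remains is to show that each $\colcom_N(C)$ is killed by the twisted action up to homotopy. It suffices to check generators, since invariant maps are closed under composition (Leibniz) and under inverses of isomorphisms.

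\textbf{A missing bridge.} The paper's Reidemeister and dot-sliding isomorphisms are roofs in the relative homotopy canopoly, not ETW's chain maps. To use them:
\begin{enumerate}
\item Push them into $\mathcal{C}^{\Witt}(\mathrm{r}\extcatf)$ through the functor given by the universal property of the Verdier quotient. Objects of $\Ker(\mathrm{For})$ are contractible there, and all images of such morphisms are invariant.
\item Use simplicity of the local tangles (Lemmas~\ref{pairing} and~\ref{lem_TX}) to show that ETW's map is an element of $\scalars$ times that image, hence invariant. This is where simplicity genuinely enters. For dot slides it shows that the identity map is invariant between the two dottings, which is what ``gathering dots'' means in this target.
\end{enumerate}

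\textbf{R1 bookkeeping.} Carry this out rather than asserting it. By the colored R1 proposition, a pure R1 piece changes the hollow and solid labels by opposite amounts, $\pm\frac12$ and $\mp\frac12$. With $\chi(C_j)=0$, the two identities in \eqref{id_1} force these changes to be equal. So one sign there must be corrected, for instance to $\chi(C_j)+k_j$, before the match you claim holds.
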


If we restrict the previous canopoly morphism to the category of tangles without boundary (\(\bnd = \varnothing\)), that is, to the category of colored oriented links, we can compose this functor with the TQFT-like functor \(\ssp_N\). This gives the following statement:

\begin{cor}
    The construction of the colored Khovanov--Rozansky complex defines a functor
    \[\colcom_N : \rTD(\varnothing) \rightarrow \mathcal{C}^{\Witt}(\scalars_N\text{-proj}_{\text{gr}}),\]
    where the image of a cobordism is a $\Witt$-equivariant map. By taking the homology of the complex defining the functor,
    \[\mathrm{cHKRW}_N : \rTD(\varnothing) \rightarrow \scalars_N \# \Witt\text{-proj}_{\text{bigr}},\]
    where the target category is a category of $\scalars_N$ and $\Witt$ bigraded projective modules.
\end{cor}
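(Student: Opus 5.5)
The corollary follows by composing the canopoly morphism of the preceding theorem, restricted to the boundary sequence $\bnd=\varnothing$, with the state space functor $\ssp_N$, and then taking homology.

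\textbf{Step 1: restriction to links.} For $\bnd=\varnothing$, a canopoly morphism restricts to an ordinary functor. This gives
\[\colcom_N:\rTD(\varnothing)\to\mathcal{C}^{\Witt}(\mathrm{r}\extcatf(\varnothing)).\]
Each object is a closed colored link diagram with red dots. Its image is a complex of closed red-dotted webs, and its differentials are $\Witt$-equivariant foams. The image of a cobordism is a chain map made of foams $\foam$ satisfying $\wgen_n*\foam=0$. Only genuine foams appear on links, since the extended dots and trajectories were used only inside the proofs of the sliding lemmas. So we may land in $\mathcal{C}(\ralgtop(\varnothing))$, possibly after composing with the relative homotopy quotient.

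\textbf{Step 2: apply $\ssp_N$ termwise.} By \cite[Theorem 4.4]{QRSW24}, $\ssp_N$ of a closed web is a $\Witt$-module. By Proposition~\ref{prop_MC}, a red dot twists this action by its Maurer--Cartan element. By Proposition~\ref{flatness}, a foam satisfying $\wgen_n*\foam=0$ induces a $\Witt$-equivariant $\scalars_N$-linear map between the twisted state spaces. Hence $\ssp_N$ sends $\ralgtop(\varnothing)$ to $\scalars_N\#\Witt$-modules, as in the remark following Proposition~\ref{cano_algtop}, now in its twisted form.

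The state spaces are graded free $\scalars_N$-modules of finite rank by \cite{RW20a}, so they are projective. Applied termwise, $\ssp_N$ turns chain homotopies built from equivariant foams into equivariant homotopies. It therefore defines a functor
\[\mathcal{C}(\ralgtop(\varnothing))\to\mathcal{C}^{\Witt}(\scalars_N\text{-proj}_{\text{gr}}).\]

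\textbf{Step 3: the main obstacle, the Verdier quotient.} The invariance isomorphisms live in $\relcat{\varnothing}$, so they are only zigzags whose cones become contractible after forgetting the $\Witt$-action. I would argue that such maps become quasi-isomorphisms after applying $\ssp_N$. They are honest chain maps of $\Witt$-equivariant complexes, and once the $\Witt$-structure is forgotten they are homotopy equivalences of $\scalars_N$-complexes. So they induce isomorphisms on homology, and these isomorphisms are $\Witt$-equivariant because the maps themselves are. Therefore the functor obtained by taking homology after $\ssp_N$ factors through $\relcat{\varnothing}$ by the universal property of the Verdier quotient.

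This factorization is exactly what lets the homology version be defined on isotopy classes and cobordisms in $\rTD(\varnothing)$. It is also why the target at the chain level should be read up to this relative equivalence.

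\textbf{Step 4: homology.} Taking cohomology of the complexes gives bigraded modules, with homological and quantum gradings. Since $\Witt$ acts by chain maps, it acts on homology, compatibly with $\scalars_N$ through the smash product relation $\wgen_n(fm)=(\wgen_n f)m+f\,\wgen_n m$ inherited from the foam action. Cobordisms induce $\Witt$-equivariant maps.

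Functoriality is inherited from the theorem, composed with the functors $\ssp_N$ and $H^*$. This yields
\[\mathrm{cHKRW}_N:\rTD(\varnothing)\to\scalars_N\#\Witt\text{-mod}_{\text{bigr}},\]
with the projectivity label carried over as in the paper's statement.
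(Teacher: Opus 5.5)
Your route is the paper's: restrict the canopoly morphism of the preceding theorem to \(\bnd=\varnothing\), compose with \(\ssp_N\), and take homology. Your Step~3 correctly supplies what the paper leaves implicit. A morphism whose cone is killed by \(\mathrm{For}\) goes under \(\ssp_N\) to a \(\Witt\)-equivariant chain map that is a homotopy equivalence of \(\scalars_N\)-complexes. Hence \(H^*\circ\ssp_N\) inverts the localizing class and factors through the Verdier quotient.

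Two points need correcting.

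First, your claim that only genuine foams occur, so that you may work in \(\mathcal{C}(\ralgtop(\varnothing))\), is unjustified. The red-dot sliding isomorphisms (Lemma~\ref{slid_bigproof}, Proposition~\ref{slid_rd}) are needed to move dots along components, and hence to define images of isotopies and cobordisms in \(\rTD\). These isomorphisms are zigzags through complexes such as \(\mathrm{Cone}(x-z)\), whose terms are extended webs. So these images are morphisms of \(\extrelcat{\varnothing}\), not a priori of \(\relcat{\varnothing}\); this is why the preceding theorem has its target built on \(\mathrm{r}\extcatf\). The fix is painless:
\begin{itemize}
\item \(\ssp_N\) is well defined on extended foams by the very definition of \(\approx\);
\item \(\ssp_N(\widetilde{\web})=\ssp_N(\web)\otimes_{\scalars_N}\scalars_N[z_1,\ldots,z_n]\) is still graded free over \(\scalars_N\), though not of finite rank;
\item your Steps~2--3 therefore go through verbatim for \(\mathcal{C}(\extralgtop(\varnothing))\) and its relative quotient.
\end{itemize}

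Second, projectivity is not inherited by homology. Equivariant homology generally has \(\scalars_N\)-torsion: for the trefoil with \(N=2\) one finds a summand \(\scalars_2/(E_1^2-4E_2)\), up to shifts. So what your argument proves is the homology statement with target bigraded \(\scalars_N\#\Witt\)-modules. State that explicitly instead of ``carrying over the projectivity label''. No argument of this kind, including the paper's one-line justification, produces projective modules at the level of homology.
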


%% file: biblio.bib
@misc{QRSW23,
  author        = {Qi, You and Robert, Louis-Hadrien and Sussan, Joshua and Wagner, Emmanuel},
  title         = {Symmetries of equivariant {K}hovanov--{R}ozansky homology},
  year          = {2023},
  eprint        = {2306.10729},
  archivePrefix = {arXiv},
  primaryClass  = {math.GT},
  url           = {https://arxiv.org/abs/2306.10729},
}

@book{neeman2014triangulated,
  author    = {Neeman, Amnon},
  title     = {Triangulated categories},
  series    = {Annals of Mathematics Studies},
  volume    = {148},
  publisher = {Princeton University Press},
  address   = {Princeton, NJ},
  year      = {2001},
  pages     = {viii+449},
  isbn      = {0-691-08685-0; 0-691-08686-9},
  doi       = {10.1515/9781400837212},
  url       = {https://doi.org/10.1515/9781400837212},
}

@article{qi_p-differential_2022,
  author    = {Qi, You and Sussan, Joshua},
  title     = {On some {$p$}-differential graded link homologies},
  journal   = {Forum Math. Pi},
  volume    = {10},
  year      = {2022},
  pages     = {Paper No. e26, 58},
  doi       = {10.1017/fmp.2022.19},
  url       = {https://doi.org/10.1017/fmp.2022.19},
}

@article{QRSW24,
  author        = {Qi, You and Robert, Louis-Hadrien and Sussan, Joshua and Wagner, Emmanuel},
  title         = {Symmetries of {$\mathfrak{gl}_N$}-foams},
  journal       = {Quantum Topol.},
  volume        = {16},
  year          = {2025},
  doi           = {10.4171/QT/215},
  eprint        = {2212.10106},
  archivePrefix = {arXiv},
  primaryClass  = {math.GT},
}

@article{RW20a,
  author    = {Robert, Louis-Hadrien and Wagner, Emmanuel},
  title     = {A closed formula for the evaluation of foams},
  journal   = {Quantum Topol.},
  volume    = {11},
  number    = {3},
  year      = {2020},
  pages     = {411--487},
  doi       = {10.4171/qt/139},
  url       = {https://doi.org/10.4171/qt/139},
}

@misc{QRSW21,
  author        = {Qi, You and Robert, Louis-Hadrien and Sussan, Joshua and Wagner, Emmanuel},
  title         = {A categorification of the colored Jones polynomial at a root of unity},
  year          = {2021},
  eprint        = {2111.13195},
  archivePrefix = {arXiv},
  primaryClass  = {math.QA},
}

@article{ETW17,
  author    = {Ehrig, Michael and Tubbenhauer, Daniel and Wedrich, Paul},
  title     = {Functoriality of colored link homologies},
  journal   = {Proc. Lond. Math. Soc. (3)},
  volume    = {117},
  number    = {5},
  year      = {2018},
  pages     = {996--1040},
  doi       = {10.1112/plms.12154},
  url       = {https://doi.org/10.1112/plms.12154},
}

@article{BHMV92,
  author    = {Blanchet, C. and Habegger, N. and Masbaum, G. and Vogel, P.},
  title     = {Three-manifold invariants derived from the {K}auffman bracket},
  journal   = {Topology},
  volume    = {31},
  number    = {4},
  year      = {1992},
  pages     = {685--699},
  doi       = {10.1016/0040-9383(92)90002-Y},
  url       = {https://doi.org/10.1016/0040-9383(92)90002-Y},
}

@article{Kho0,
  author    = {Khovanov, Mikhail},
  title     = {A categorification of the {J}ones polynomial},
  journal   = {Duke Math. J.},
  volume    = {101},
  number    = {3},
  year      = {2000},
  pages     = {359--426},
  doi       = {10.1215/S0012-7094-00-10131-7},
  url       = {https://doi.org/10.1215/S0012-7094-00-10131-7},
}

@article{KR08a,
  author    = {Khovanov, Mikhail and Rozansky, Lev},
  title     = {Matrix factorizations and link homology},
  journal   = {Fund. Math.},
  volume    = {199},
  number    = {1},
  year      = {2008},
  pages     = {1--91},
  doi       = {10.4064/fm199-1-1},
  url       = {https://doi.org/10.4064/fm199-1-1},
}

@article{KR08b,
  author    = {Khovanov, Mikhail and Rozansky, Lev},
  title     = {Matrix factorizations and link homology. II},
  journal   = {Geom. Topol.},
  volume    = {12},
  number    = {3},
  year      = {2008},
  pages     = {1387--1425},
  doi       = {10.2140/gt.2008.12.1387},
  url       = {https://doi.org/10.2140/gt.2008.12.1387},
}

@article{KR16,
  author    = {Khovanov, Mikhail and Rozansky, Lev},
  title     = {Positive half of the {W}itt algebra acts on triply graded link homology},
  journal   = {Quantum Topol.},
  volume    = {7},
  number    = {4},
  year      = {2016},
  pages     = {737--795},
  doi       = {10.4171/QT/84},
  url       = {https://doi.org/10.4171/QT/84},
}

@article{GOR13,
  author    = {Gorsky, Eugene and Oblomkov, Alexei and Rasmussen, Jacob},
  title     = {On stable {K}hovanov homology of torus knots},
  journal   = {Exp. Math.},
  volume    = {22},
  number    = {3},
  year      = {2013},
  pages     = {265--281},
  doi       = {10.1080/10586458.2013.798553},
  url       = {https://doi.org/10.1080/10586458.2013.798553},
}

@article{GHM24,
  author    = {Gorsky, Eugene and Hogancamp, Matthew and Mellit, Anton},
  title     = {Tautological classes and symmetry in {K}hovanov--{R}ozansky homology},
  journal   = {Duke Math. J.},
  volume    = {173},
  number    = {13},
  year      = {2024},
  pages     = {2481--2561},
  doi       = {10.1215/00127094-2023-0063},
  url       = {https://doi.org/10.1215/00127094-2023-0063},
}

@article{CG24,
  author    = {Chandler, Alex and Gorsky, Eugene},
  title     = {Structures in {HOMFLY}-{PT} homology},
  journal   = {Exp. Math.},
  volume    = {33},
  number    = {4},
  year      = {2024},
  pages     = {817--842},
  doi       = {10.1080/10586458.2023.2300412},
  url       = {https://doi.org/10.1080/10586458.2023.2300412},
}

@article{GLW18,
  author    = {Grigsby, J. Elisenda and Licata, Anthony M. and Wehrli, Stephan M.},
  title     = {Annular {K}hovanov homology and knotted {S}chur--{W}eyl representations},
  journal   = {Compos. Math.},
  volume    = {154},
  number    = {3},
  year      = {2018},
  pages     = {459--502},
  doi       = {10.1112/S0010437X17007540},
  url       = {https://doi.org/10.1112/S0010437X17007540},
}

@article{Kho04,
  author    = {Khovanov, Mikhail},
  title     = {${\mathfrak{sl}_3}$ link homology},
  journal   = {Algebr. Geom. Topol.},
  volume    = {4},
  year      = {2004},
  pages     = {1045--1081},
  doi       = {10.2140/agt.2004.4.1045},
  url       = {https://doi.org/10.2140/agt.2004.4.1045},
}

@article{KR07,
  author    = {Khovanov, Mikhail and Rozansky, Lev},
  title     = {Topological {L}andau--{G}inzburg models on the world-sheet foam},
  journal   = {Adv. Theor. Math. Phys.},
  volume    = {11},
  number    = {2},
  year      = {2007},
  pages     = {233--259},
  doi       = {10.4310/atmp.2007.v11.n2.a2},
  url       = {https://doi.org/10.4310/atmp.2007.v11.n2.a2},
}

@article{QR16,
  author    = {Queffelec, Hoel and Rose, David E. V.},
  title     = {The {$\mathfrak{sl}_n$} foam 2-category: a combinatorial formulation of {K}hovanov--{R}ozansky homology via categorical skew {H}owe duality},
  journal   = {Adv. Math.},
  volume    = {302},
  year      = {2016},
  pages     = {1251--1339},
  doi       = {10.1016/j.aim.2016.07.027},
  url       = {https://doi.org/10.1016/j.aim.2016.07.027},
}

@article{Bar05,
  author    = {Bar-Natan, Dror},
  title     = {Khovanov's homology for tangles and cobordisms},
  journal   = {Geom. Topol.},
  volume    = {9},
  year      = {2005},
  pages     = {1443--1499},
  doi       = {10.2140/gt.2005.9.1443},
  url       = {https://doi.org/10.2140/gt.2005.9.1443},
}

@article{Wang24,
  author    = {Wang, Joshua},
  title     = {On {$\mathfrak{sl}(N)$} link homology with mod {$N$} coefficients},
  journal   = {Quantum Topol.},
  volume    = {15},
  number    = {1},
  year      = {2024},
  pages     = {87--121},
  doi       = {10.4171/qt/194},
  url       = {https://doi.org/10.4171/qt/194},
}

@article{Jon21,
  author    = {Jones, V. F. R.},
  title     = {Planar algebras, I},
  journal   = {New Zealand J. Math.},
  volume    = {52},
  year      = {2021},
  pages     = {1--107},
  doi       = {10.53733/172},
  url       = {https://doi.org/10.53733/172},
}

@misc{GR25,
  author        = {Gu{\'e}rin, Alexis and Roz, Felix},
  title         = {An action of the {W}itt algebra on {K}hovanov--{R}ozansky homology},
  year          = {2025},
  eprint        = {2501.19096},
  archivePrefix = {arXiv},
  primaryClass  = {math.GT},
  url           = {https://arxiv.org/abs/2501.19096},
}

@article{W07,
  author    = {Webster, Ben},
  title     = {Khovanov--{R}ozansky homology via a canopolis formalism},
  journal   = {Algebr. Geom. Topol.},
  volume    = {7},
  year      = {2007},
  pages     = {673--699},
  doi       = {10.2140/agt.2007.7.673},
  url       = {https://doi.org/10.2140/agt.2007.7.673},
}

@article{MSV09,
  author    = {Mackaay, Marco and Sto{\v s}i{\'c}, Marko and Vaz, Pedro},
  title     = {{$\mathfrak{sl}(N)$}-link homology ({$N\geq4$}) using foams and the {K}apustin--{Li} formula},
  journal   = {Geom. Topol.},
  volume    = {13},
  number    = {2},
  year      = {2009},
  pages     = {1075--1128},
  doi       = {10.2140/gt.2009.13.1075},
  url       = {https://doi.org/10.2140/gt.2009.13.1075},
}

@article{MOY98,
  author    = {Murakami, Hitoshi and Ohtsuki, Tomotada and Yamada, Shuji},
  title     = {HOMFLY polynomial via an invariant of colored plane graphs},
  journal   = {Enseign. Math. (2)},
  volume    = {44},
  number    = {3--4},
  year      = {1998},
  pages     = {325--360},
}

@article{MN08,
  author    = {Morrison, Scott and Nieh, Ari},
  title     = {On {K}hovanov's cobordism theory for {$\mathfrak{su}_3$} knot homology},
  journal   = {J. Knot Theory Ramifications},
  volume    = {17},
  number    = {9},
  year      = {2008},
  pages     = {1121--1173},
  doi       = {10.1142/S0218216508006555},
  url       = {https://doi.org/10.1142/S0218216508006555},
}

@incollection{Ca14,
  author    = {Caprau, Carmen},
  title     = {A cohomology theory for colored tangles},
  booktitle = {Knots in Poland. III. Part 1},
  series    = {Banach Center Publ.},
  volume    = {100},
  pages     = {13--25},
  publisher = {Polish Acad. Sci. Inst. Math.},
  address   = {Warsaw},
  year      = {2014},
  doi       = {10.4064/bc100-0-1},
  url       = {https://doi.org/10.4064/bc100-0-1},
}

@misc{ES25,
  author        = {Ebert, Mark and Schelstraete, L{\'e}o},
  title         = {A local {$\mathfrak{gl}_{1|1}$}-action on odd Khovanov homology},
  year          = {2025},
  eprint        = {2511.00947},
  archivePrefix = {arXiv},
  primaryClass  = {math.GT},
}

@article{KS25,
  author        = {Khovanov, Mikhail and Sano, Taketo},
  title         = {Symmetries of equivariant Khovanov homology},
  year          = {2025},
  eprint        = {2509.03785v2},
  archivePrefix = {arXiv},
  primaryClass  = {math.GT},
}

@article{Ru74,
  author    = {Rudakov, A. N.},
  title     = {Irreducible representations of infinite-dimensional Lie algebras of Cartan type},
  journal   = {Math. USSR-Izv.},
  volume    = {8},
  number    = {4},
  year      = {1974},
  pages     = {836--866},
  doi       = {10.1070/IM1974v008n04ABEH002129},
  url       = {https://doi.org/10.1070/IM1974v008n04ABEH002129},
}

@article{Shumakovitch_2014,
  author    = {Shumakovitch, Alexander N.},
  title     = {Torsion of {K}hovanov homology},
  journal   = {Fund. Math.},
  volume    = {225},
  number    = {1},
  year      = {2014},
  pages     = {343--364},
  doi       = {10.4064/fm225-1-16},
  url       = {https://doi.org/10.4064/fm225-1-16},
}

@article{CK08,
  author    = {Cautis, Sabin and Kamnitzer, Joel},
  title     = {Knot homology via derived categories of coherent sheaves. I. The {$\mathfrak{sl}(2)$}-case},
  journal   = {Duke Math. J.},
  volume    = {142},
  number    = {3},
  year      = {2008},
  pages     = {511--588},
  doi       = {10.1215/00127094-2008-012},
  url       = {https://doi.org/10.1215/00127094-2008-012},
}

@article{CK09,
  author    = {Cautis, Sabin and Kamnitzer, Joel},
  title     = {Knot homology via derived categories of coherent sheaves. II. {$\mathfrak{sl}_m$} case},
  journal   = {Invent. Math.},
  volume    = {174},
  number    = {1},
  year      = {2008},
  pages     = {165--232},
  doi       = {10.1007/s00222-008-0138-6},
  url       = {https://doi.org/10.1007/s00222-008-0138-6},
}

@article{Man07,
  author    = {Manolescu, Ciprian},
  title     = {Link homology theories from symplectic geometry},
  journal   = {Adv. Math.},
  volume    = {211},
  number    = {1},
  year      = {2007},
  pages     = {363--416},
  doi       = {10.1016/j.aim.2006.09.007},
  url       = {https://doi.org/10.1016/j.aim.2006.09.007},
}

@article{SS06,
  author    = {Seidel, Paul and Smith, Ivan},
  title     = {A link invariant from the symplectic geometry of nilpotent slices},
  journal   = {Duke Math. J.},
  volume    = {134},
  number    = {3},
  year      = {2006},
  pages     = {453--514},
  doi       = {10.1215/S0012-7094-06-13432-4},
  url       = {https://doi.org/10.1215/S0012-7094-06-13432-4},
}

@article{Yo11,
  author    = {Yonezawa, Yasuyoshi},
  title     = {Quantum {$({\mathfrak{sl}_n}, \wedge V_n)$} link invariant and matrix factorizations},
  journal   = {Nagoya Math. J.},
  volume    = {204},
  year      = {2011},
  pages     = {69--123},
  doi       = {10.1215/00277630-1431840},
  url       = {https://doi.org/10.1215/00277630-1431840},
}

@article{Wu07,
  author    = {Wu, Hao},
  title     = {A colored {$\mathfrak{sl}(N)$}-homology for links in {$S^3$}},
  journal   = {Dissertationes Math.},
  volume    = {499},
  year      = {2009},
  doi       = {10.4064/dm499-0-1},
  url       = {https://doi.org/10.4064/dm499-0-1},
}
